\documentclass[11pt]{amsart}

\usepackage[T1]{fontenc}
\usepackage{lmodern}
\usepackage{microtype}
\usepackage[a4paper,margin=30mm]{geometry}
\usepackage{mathtools}
\usepackage{amssymb,amsmath,amsthm,amscd}
\usepackage{mathrsfs}
\usepackage{enumitem}
\usepackage{aliascnt}
\usepackage[numbers,sort&compress]{natbib}
\usepackage[hypertexnames=false,hidelinks]{hyperref}

\setlist[enumerate]{leftmargin=2.25em}

\newtheorem{theorem}{Theorem}[section]
\newaliascnt{proposition}{theorem}
\newtheorem{proposition}[proposition]{Proposition}
\aliascntresetthe{proposition}
\newaliascnt{lemma}{theorem}
\newtheorem{lemma}[lemma]{Lemma}
\aliascntresetthe{lemma}
\newaliascnt{corollary}{theorem}
\newtheorem{corollary}[corollary]{Corollary}
\aliascntresetthe{corollary}
\theoremstyle{remark}
\newaliascnt{remark}{theorem}
\newtheorem{remark}[remark]{Remark}
\aliascntresetthe{remark}

\usepackage[capitalise,noabbrev]{cleveref}

\crefname{theorem}{Theorem}{Theorems}
\crefname{proposition}{Proposition}{Propositions}
\crefname{lemma}{Lemma}{Lemmas}
\crefname{corollary}{Corollary}{Corollaries}
\crefname{remark}{Remark}{Remarks}
\Crefname{theorem}{Theorem}{Theorems}
\Crefname{proposition}{Proposition}{Propositions}
\Crefname{lemma}{Lemma}{Lemmas}
\Crefname{corollary}{Corollary}{Corollaries}
\Crefname{remark}{Remark}{Remarks}

\numberwithin{equation}{section}

\newcommand{\F}{\mathbf F}
\newcommand{\Ga}{\mathbf G_{\mathrm a}}
\newcommand{\Gm}{\mathbf G_{\mathrm m}}
\newcommand{\GL}{\operatorname{GL}}
\newcommand{\Aut}{\operatorname{Aut}}
\newcommand{\Der}{\operatorname{Der}}
\newcommand{\Dist}{\operatorname{Dist}}
\newcommand{\Lie}{\operatorname{Lie}}
\newcommand{\Sym}{\operatorname{Sym}}
\newcommand{\im}{\operatorname{im}}
\newcommand{\gr}{\operatorname{gr}}
\newcommand{\Cal}{\operatorname{Cal}}
\newcommand{\id}{\operatorname{id}}
\newcommand{\red}{\mathrm{red}}
\newcommand{\alg}{\mathrm{alg}}
\newcommand{\dR}{\mathrm{dR}}
\newcommand{\ad}{\operatorname{ad}}
\newcommand{\Ad}{\operatorname{Ad}}
\newcommand{\Sq}{\operatorname{Sq}}
\newcommand{\cH}{\mathcal H}
\newcommand{\cE}{\mathcal E}
\newcommand{\cL}{\mathcal L}
\newcommand{\cW}{\mathcal W}
\newcommand{\cR}{\mathscr R}
\newcommand{\fm}{\mathfrak m}
\newcommand{\abs}[1]{\lvert#1\rvert}

\title[Extremal Derived Length and Unipotent Radicals]{Extremal Derived Length\\
in the Unipotent Radicals of Cartan-Type Automorphism Groups}

\author{Chao Ma}
\address{Independent Researcher, London, United Kingdom}
\email{raymond.ma@me.com}
\urladdr{https://orcid.org/0009-0004-2456-9098}
\date{}

\subjclass[2020]{20F14, 20D15, 20F18, 17B50, 20G15, 14L15, 13N10}
\keywords{derived length, derived series, lower central series, finite
\(p\)-group, unipotent radical, automorphism group scheme, Cartan-type Lie
algebra, Frattini quotient, Cartier operator, Verschiebung}

\hypersetup{
  pdftitle={Extremal Derived Length in the Unipotent Radicals of Cartan-Type Automorphism Groups},
  pdfauthor={Chao Ma},
  pdfcreator={Chao Ma},
  pdfsubject={Unipotent radicals of the automorphism group schemes of the restricted Cartan-type Lie algebras, and the lower central and derived series they carry},
  pdfkeywords={derived length, nilpotency class, lower central series, derived series, unipotent radical, automorphism group scheme, Cartan-type Lie algebra, special Lie algebra, Witt-Jacobson algebra, Hamiltonian Lie algebra, contact Lie algebra, congruence filtration, Cartier operator, Verschiebung, Frobenius thickening, modular Lie algebra, finite p-group}
}

\begin{document}

\begin{abstract}
Over perfect fields of characteristic \(p\ge5\), the unipotent radicals of
the height-one special, Witt--Jacobson, and Hamiltonian automorphism groups
in at least three, two, and four variables, respectively, attain the
largest derived length allowed by their nilpotency classes.  More strongly, every term of each derived series equals the
corresponding power-of-two term of the lower central series.  These
equalities hold on finite-field points, with explicit Frattini quotients
and lower \(p\)-central series.  For the special family, a
Cartier quotient measures the difference between the congruence and lower
central filtrations.  The Hamiltonian correction comes from flux and the
socle line; the Witt--Jacobson family has none.  We also identify the contact automorphism group scheme and, at
arbitrary divided-power height, compute the Cartier--Verschiebung maps on
restricted derivations and the Verschiebung filtration and distribution
algebra of the Frobenius thickening of the special radical.
\end{abstract}

\maketitle
\section{Introduction}
\label{sec:introduction}

In any group one has \(G^{(j)}\subseteq\gamma_{2^j}(G)\), so a nilpotent
group of class \(c\) has derived length at most
\(\lceil\log_2(c+1)\rceil\).  The unipotent radical \(U_S\) of the automorphism group of the special algebra \(S(n;1)^{(1)}\) attains this bound at every step: by
\cref{thm:special-main,cor:extremal-derived-length}, with \(D=n(p-1)\),
\[
 U_S^{(j)}=\gamma_{2^j}(U_S)\quad(j\ge1),
 \qquad
 \operatorname{cl}(U_S)=D-2,
 \qquad
 \operatorname{dl}(U_S)=\lceil\log_2(D-1)\rceil,
\]
and the same holds for the finite groups \(U_S(\F_q)\), for which the
Frattini quotient and the lower \(p\)-central series are also obtained in
closed form.  The mechanism is a congruence filtration whose graded Lie
algebra is generated by brackets in all but finitely many degrees; a single
Cartier quotient governs the exceptional degrees.

The Hamiltonian family behaves in the same way, one step lower.  Let \(U_H\)
be the tangent-to-identity symplectic substitution group of
\(H(2r;\mathbf1)^{(2)}\), let \(K_H\) be the kernel of its flux character and
\(Z_M\) its socle line, and put \(D_H=2r(p-1)\).  For \(r\ge2\) we prove in
\cref{thm:hamiltonian-series,thm:hamiltonian-derived} that
\[
 F^{j+1}U_H\cap K_H=\Gamma_j(U_H)\times Z_M
 \quad(2\le j\le D_H-2),
 \qquad
 U_H^{(j)}=\Gamma_{2^j}(U_H)\quad(j\ge1),
\]
so that
\[
 \operatorname{cl}(U_H)=D_H-3,
 \qquad
 \operatorname{dl}(U_H)=\lceil\log_2(D_H-2)\rceil .
\]
The drop by one relative to the special case is exactly the socle line, which
is central and is not reached by brackets.
These series formulas also hold on finite-field points.  For \(q=p^f\),
\cref{thm:hamiltonian-finite} identifies the lower \(p\)-central series
with the lower central series and gives
\[
 U_H(\F_q)/\Phi(U_H(\F_q))
 \simeq (\F_q,+)^{\binom{2r+2}{3}+2r+1}.
\]
For \(r=2\) and \(p=5\), the nilpotency class is \(13\), the derived
length is \(4\), and the minimum number of generators is \(25f\).

The commutant of the positive part of a graded simple Lie algebra of Cartan
type was determined by Kostrikin and Shafarevich
\cite{KostrikinShafarevich1969}: in the restricted simple case the algebra is
generated by its components of degree \(-1\) and \(1\), and the positive part
is generated by its degree-one component.  We use these statements in the form
of \cite[Ch.~4, Theorems~7.2--7.5]{StradeFarnsteiner1988}.  The height-one
truncation departs from them in finitely many degrees.  A single Cartier
quotient governs these degrees, and together they determine the derived and
lower central series.

Let \(k\) be a perfect field of characteristic \(p\ge5\).  Two features of
automorphism group schemes of restricted Cartan-type Lie algebras are
specific to positive characteristic: the schemes need not be smooth, and
the reduced identity components can carry large unipotent radicals.  Both
are visible in the divided-power coordinate algebra.  The notation for
the four classical Cartan families follows \cite{Strade2004}; the relevant
coordinate realizations of their automorphism schemes are those of
\cite{BahturinKochetov2011}.

Waterhouse treated the Witt--Jacobson automorphism scheme and its forms
\cite{Waterhouse1971}.  Wilson determined the quotients of the canonical
filtration of graded Cartan-type automorphism groups
\cite[Theorems~1 and~2]{Wilson1975}.  Shu and Shen gave functorial coordinate
descriptions for the special, Hamiltonian, and contact families
\cite{ShuShen1995}.  Skryabin described Witt--Jacobson automorphism group
schemes over a base ring \cite{Skryabin2001}.  He also proved that
Cartan-type isomorphisms over commutative rings arise from admissible
coordinate-algebra isomorphisms and compatible coefficient modules carrying
the defining differential forms \cite{Skryabin1995}.  Premet, and later
Shen and Shu in the adjoint graded setting, studied the
algebraic groups generated by exponential one-parameter subgroups attached
to \(p\)-nilpotent elements \cite{Premet1985,ShenShu1998}.  Here we study
instead the internal structure of the tangent-to-identity unipotent kernels.

Write
\[
 O(n;1)=k[x_1,\ldots,x_n]/(x_1^p,\ldots,x_n^p),
 \qquad \fm=(x_1,\ldots,x_n),
\]
and let \(W(n;1)\), \(S(n;1)^{(1)}\), \(H(2r;1)^{(2)}\), and
\(K(2r+1;1)^{(1)}\) denote the simple restricted Lie algebras of Witt--Jacobson,
special, Hamiltonian, and contact type, with the usual derived modification
in the exceptional contact case.  Their coordinate automorphism groups are,
respectively, the full automorphism group of \(O(n;1)\) and the stabilizers
of the volume line, the symplectic line, and the contact line.

Each tangent-to-identity kernel carries the congruence filtration
\[
 F^sU=\{g\in U:g(x_i)-x_i\in\fm^s\ \text{for all }i\},\qquad s\ge2.
\]
This is the filtration of \cite[Theorem~1]{Wilson1975}, where its successive
quotients are identified and every element of a layer is written as
\(\id+\ad D+(\text{higher order})\) with \(D\) homogeneous.

\begin{remark}[Grading convention]
\label{rem:grading-convention}
Homogeneous components of a Cartan-type Lie algebra are indexed throughout by
the standard grading, in which \(x^a\partial_j\) has degree \(\abs a-1\); see
\cite{KostrikinShafarevich1969} and \cite[Ch.~4]{StradeFarnsteiner1988}.  A
substitution with \(g(x_i)-x_i\in\fm^s\) has tangent field with coefficients
in \(\fm^s\), hence Lie degree \(s-1\).  Accordingly the congruence subgroup
\(F^sU\) is assigned \emph{filtration degree \(s-1\)}, and the layer
\(\cL_d\) of coefficient degree \(d\) sits in Lie degree \(d-1\).  We write \(\gr^sU=F^sU/F^{s+1}U\) for the layer with coordinate label \(s\)
and \(\gr_eU\) for the layer in Lie degree \(e\), so that
\(\gr_eU=\gr^{e+1}U\).  With this normalization the
bracket and the \(p\)-map on the associated graded are additive
(\cref{prop:special-realization}).
\end{remark}  By
Kreknin \cite{Kreknin1971}, \(\Aut L=\Aut_0L\) for \(p\ge5\), so the
filtration exhausts the automorphism group.  Wilson's description
yields the inclusions \([F^aU,F^bU]\subseteq F^{a+b-1}U\).  We prove the corresponding equalities, and they determine the lower central
and derived series.
In the associated graded, commutators become Lie brackets and \(p\)-th
powers become restricted powers.  Exceptional quotients occur precisely in
the degrees where the truncated de Rham complex has cohomology.  There
bracket generation fails, and the classical Cartier operator describes the
quotients \cite{Cartier1957}.  Related calculations appear in
\cite{MaDerived2026} for the derived algebra of nonlinear divergence-free
polynomial fields, and in \cite{MaAugmentation2026} for the augmentation
filtration of special Nottingham congruence series.

For type \(S\), only one Cartier degree occurs.  Let
\(\chi_{\Box}:U_S\to H_{\dR}^{n-1}(O(n;1))\) be the Cartier character,
and set
\[
 K_s=F^sU_S\cap\ker\chi_{\Box}\qquad(s\ge3).
\]
Then
\[
 \gamma_j(U_S)=K_{j+1}\quad(j\ge2),\qquad
 U_S^{(j)}=K_{2^j+1}\quad(j\ge1),
\]
so that
\[
 \operatorname{cl}(U_S)=D-2,\qquad
 \operatorname{dl}(U_S)=\left\lceil\log_2(D-1)\right\rceil.
\]
Over \(\F_q\), the same filtration gives
\[
 U_S(\F_q)/\Phi(U_S(\F_q))
 \simeq
 \cL_2(\F_q)\oplus
 \bigl((\det V^*)^{p-1}\otimes V^{(1)}\bigr)(\F_q)
\]
and
\[
P_i(U_S(\F_q))=F^{i+1}U_S(\F_q)\cap\ker\chi_{\Box}
 \qquad(i\ge2).
\]
The Cartier summand also lies in the image of the five-term homology
sequence (\cref{thm:cartier-relation-map}).

For the Hamiltonian family, homogeneous Poisson brackets generate every
component except the box socle.  When \(r+1\not\equiv0\pmod p\), an algebraic
Calabi homomorphism detects the socle line.  At a resonance
\[
 r+1=hp,
\]
the Calabi homomorphism vanishes on the socle.  Symplectic-equivariant
linearization of the lower-central quotients and a calculation of
low-degree symplectic covariants handle this case.  For every \(r\ge2\),
the socle line has trivial reduced intersection with the algebraic derived
subgroup.

A canonical Frobenius thickening \(\mathbf U_S^F\) of \(U_S\) carries the
non-smooth structure in type \(S\); we compute its Verschiebung layers and
its distribution algebra.  At arbitrary divided-power
height, compatible Cartier--Verschiebung morphisms on the de Rham complexes
induce maps on restricted derivations.  For the remaining height-one families we determine the
Witt--Jacobson and Hamiltonian reduced radicals, and identify the full contact
automorphism group scheme with the coordinate contact stabilizer.

\section{Coordinate groups and filtrations}
\label{sec:setup}

Throughout, \(k\) is a perfect field of characteristic \(p\ge5\).  For a
finite extension \(\F_q/k_0\), where \(k_0=\F_p\), all coordinate groups and
filtrations are formed over \(k_0\) and then evaluated on \(\F_q\).
The affine group-scheme and distribution conventions are those of
\cite{Waterhouse1979}.

\subsection{The height-one divided-power algebra}

Write
\[
 O(n;1)=k[x_1,\ldots,x_n]/(x_1^p,\ldots,x_n^p),
 \qquad
 \omega_S=dx_1\wedge\cdots\wedge dx_n.
\]
The maximal ideal is \(\fm=(x_1,\ldots,x_n)\), and the top box degree is
\[
 D=n(p-1).
\]
The Witt--Jacobson algebra is \(W(n;1)=\Der_k O(n;1)\).  Contraction with
\(\omega_S\) identifies divergence-free fields with closed
\((n-1)\)-forms; a divergence-free field is \emph{exact} when its
contraction is an exact form.
Write \(\cL_d\) for the degree-\(d\) divergence-free component.

The natural adjoint morphisms identify the full automorphism schemes of
\(W(n;1)\), \(S(n;1)^{(1)}\), and \(H(2r;1)^{(2)}\) with the coordinate
automorphism scheme, the volume-line stabilizer, and the symplectic-line
stabilizer, respectively
\cite[Theorems~3.1, 3.2, and~3.5]{BahturinKochetov2011}.  Since the adjoint
morphisms and their inverse coordinate constructions are defined over the
prime field, these identifications descend from an algebraic closure to
every perfect ground field.
The Witt--Jacobson comparison originates in \cite{Waterhouse1971}.

Let
\[
 G_S=\left\{g\in\Aut_k O(n;1):g^*(k\omega_S)=k\omega_S\right\}_{\red}.
\]
The linear part gives a split epimorphism \(G_S\to\GL_n\); its kernel is
\[
 U_S=\left\{g\in\Aut_k O(n;1):
 g(x_i)\equiv x_i\pmod{\fm^2},\ g^*\omega_S=\omega_S\right\}.
\]
For \(s\ge2\), set
\[
 F^sU_S=\left\{g\in U_S:g(x_i)-x_i\in\fm^s\ \forall i\right\}.
\]

For the Witt--Jacobson family write
\[
 G_W=(\mathbf{Aut}(O(n;1)))_{\red},
 \qquad U_W=\ker(G_W\to\GL_n),
\]
where the map records the linear part.

\subsection{Hamiltonian coordinates}

For the Hamiltonian family put
\[
 O_H=O(2r;1)
 =k[q_1,\ldots,q_r,p_1,\ldots,p_r]/
 (q_1^p,\ldots,q_r^p,p_1^p,\ldots,p_r^p)
\]
and
\[
 \omega_H=\sum_{i=1}^r dq_i\wedge dp_i.
\]
For \(f\in O_H\), define
\[
 X_f=\sum_{i=1}^r
 \left(
 \frac{\partial f}{\partial p_i}\frac{\partial}{\partial q_i}
 -
 \frac{\partial f}{\partial q_i}\frac{\partial}{\partial p_i}
 \right),
 \qquad
 \{f,g\}=X_f(g).
\]
Then \([X_f,X_g]=X_{\{f,g\}}\).

Let
\[
 G_H=
 \operatorname{Stab}_{\Aut(O_H)}(k\omega_H)_{\red}.
\]
The linear-part morphism has image \(\operatorname{CSp}_{2r}\); denote its
kernel, the reduced tangent-to-identity symplectic substitution group, by
\(U_H\).  With
\[
 \theta_H=\frac12\sum_{i=1}^r(q_i\,dp_i-p_i\,dq_i),
 \qquad d\theta_H=\omega_H,
\]
the flux character is
\[
 \chi_H(g)=[g^*\theta_H-\theta_H]\in H^1_{\dR}(O_H).
\]
The tangent-to-identity action on \(H^1_{\dR}(O_H)\) is trivial, hence
\(\chi_H\) is a group homomorphism.  Write
\[
 K_H=\ker\chi_H .
\]

For \(s\ge2\), put
\[
 F^sU_H=\{g\in U_H:g(x)-x\in\fm^s\text{ for every linear }x\}.
\]

The box socle and its Hamiltonian field are
\[
 M=\prod_{i=1}^r q_i^{p-1}p_i^{p-1},
 \qquad
 X_M.
\]
Put
\[
 D_H=2r(p-1),\qquad N=D_H-2.
\]
Scalar dilation acts on \(X_M\) with weight \(N\).  The corresponding
socle one-parameter subgroup is denoted \(Z_M\simeq\Ga\).

Write \(\cH_d\) for the coefficient-degree-\(d\) component of symplectic
fields and \(\cE_d=X_{O_{H,d+1}}\) for its Hamiltonian-exact subspace.
Contraction with \(\omega_H\) gives
\[
 \cH_d/\cE_d\simeq
 \begin{cases}
 H^1_{\dR}(O_H),&d=p-1,\\
 0,&d\ne p-1.
 \end{cases}
\]

\subsection{Algebraic and abstract commutators}

For a smooth algebraic group \(G\), write \([G,G]_{\alg}\) for the
subgroup generated by commutators.  If \(A,B\subseteq G\) are closed and
connected, the subgroup generated by the commutators \((a,b)\) is already
closed and connected \cite[I,~2.3]{Borel1991},
\cite[\S17.1]{Humphreys1975}, so the abstract and algebraic commutator subgroups of
closed connected subgroups agree.  The algebraic lower-central series is
\[
 \Gamma_1(G)=G,\qquad
 \Gamma_{j+1}(G)=[G,\Gamma_j(G)]_{\alg}.
\]
For a finite-point group \(G(\F_q)\), brackets denote abstract commutators.
For finite \(p\)-groups, set
\[
 \Phi(P)=P^p[P,P],\qquad
 P_1(P)=P,\qquad
 P_{i+1}(P)=P_i(P)^p[P_i(P),P]
\]
for the Frattini and lower \(p\)-central series of a finite \(p\)-group.

\begin{lemma}[Frobenius decomposition]
\label{lem:frobenius-decomposition}
Let \(E_1,\ldots,E_m,F\) be vector groups with linear actions of a
torus \(T\).  Every \(T\)-equivariant multi-homomorphism
\[
 \Psi:E_1\times\cdots\times E_m\longrightarrow F
\]
is a finite sum of components homogeneous of degree \(p^{e_i}\)
in the \(i\)-th variable.  A component from characters
\(\lambda_1,\ldots,\lambda_m\) to a target character \(\lambda\) can be
nonzero only when
\[
 \lambda=\sum_{i=1}^m p^{e_i}\lambda_i.
\]
\end{lemma}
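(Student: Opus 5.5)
The plan is to reduce to the case of a single variable, use the classical description of additive polynomials over a field of characteristic \(p\), and then read off the weight constraint from torus equivariance. Fix bases of \(E_1,\ldots,E_m,F\) that diagonalize the \(T\)-actions; \(T\) is split, or becomes split after a finite separable extension, which does not affect the conclusion because it can be checked after base change. Write the coordinates of \(\Psi\) as polynomials in the coordinates of the \(E_i\). A multi-homomorphism is, by definition, additive in each vector-group variable when the others are held fixed, functorially in the test algebra. So it suffices to understand morphisms of schemes \(\Ga^{a}\times Y\to\Ga\) that are additive in the \(\Ga^a\) variable, uniformly over a base \(Y\).

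First, I would use the standard fact that an additive polynomial in characteristic \(p\) (with coefficients in any reduced or even arbitrary commutative \(k\)-algebra \(R\), tested functorially) is a \(p\)-polynomial \(\sum_{j,e} c_{j,e}\,t_j^{p^e}\). The proof expands \(P(t+t')-P(t)-P(t')=0\) in \(R[t,t']\) and notes that a monomial \(t^\alpha\) survives only if every binomial coefficient \(\binom{\alpha}{\beta}\), for \(0<\beta<\alpha\), vanishes. By Lucas' theorem this forces \(\alpha\) to be a single coordinate power \(p^e\e_j\). I would apply this successively in each variable \(E_i\), with \(R\) the coordinate ring of the other factors. The conclusion is that every monomial occurring in \(\Psi\) has, in the block of coordinates for \(E_i\), the form \(x_{i,j_i}^{p^{e_i}}\) for a single \(j_i\) and a single exponent \(e_i\). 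Since \(\Psi\) is a polynomial, only finitely many tuples \((e_1,\ldots,e_m)\) occur. Grouping the monomials by these tuples writes \(\Psi\) as a finite sum of components \(\Psi_{(e)}\), each additive in every variable and homogeneous of degree \(p^{e_i}\) in the \(i\)-th variable. Each \(\Psi_{(e)}\) is again \(T\)-equivariant, because the \(T\)-action preserves the multidegree.

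For the weight statement, let \(x_{i,j}\) be a coordinate of weight \(-\lambda_i\) on \(E_i\), that is, dual to a vector of weight \(\lambda_i\), and let \(y\) be a coordinate of \(F\) of weight \(-\lambda\). Equivariance means that \(\Psi^*(y)\) is a \(T\)-semi-invariant of weight \(-\lambda\) in the coordinate ring. A monomial \(\prod_i x_{i,j_i}^{p^{e_i}}\) has weight \(-\sum_i p^{e_i}\lambda_i\). Equating weights monomial by monomial gives \(\lambda=\sum_i p^{e_i}\lambda_i\) whenever the corresponding coefficient is nonzero. This is exactly the statement of \cref{lem:frobenius-decomposition}, applied to the component between the weight spaces \(\lambda_1,\ldots,\lambda_m\) and \(\lambda\).

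The main obstacle is the bookkeeping needed to make the one-variable additive-polynomial fact work with coefficients in the coordinate ring of the remaining factors. The coefficients \(c_{j,e}\) are then polynomials in the other variables, and one has to iterate without losing multi-additivity. I would handle this by treating the whole coordinate ring \(k[E_1\times\cdots\times E_m]\) as multigraded by the blocks. Additivity in block \(i\) is then a condition on the block-\(i\) exponent vectors alone, because the binomial expansion does not touch the other blocks. This allows all blocks to be treated simultaneously rather than inductively.
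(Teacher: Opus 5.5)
Your proposal is correct and is essentially the paper's argument: choose linear coordinates adapted to the \(T\)-weights, read off from \(f(X+Y)=f(X)+f(Y)\) that every surviving exponent in each block has all intermediate binomial coefficients zero and is therefore a single \(p\)-th power, group the monomials by their exponent tuple, and compare \(T\)-weights monomial by monomial. Your version is more explicit than the paper's on a few points: that a surviving block monomial involves only one coordinate, that coefficients may lie in the coordinate ring of the other blocks, and that each component stays equivariant because the linear action preserves block degrees.
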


\begin{proof}
Choose linear coordinates on the vector groups.  Each coordinate function
of \(\Psi\) is additive in every block of variables.  If a polynomial
\(f(X_1,\ldots,X_s)\) is additive, comparison of coefficients in
\(f(X+Y)=f(X)+f(Y)\) shows that every exponent \(a\) which occurs satisfies
\(\binom{a}{b}=0\) in \(k\) for \(0<b<a\); hence \(a\) is a power of
\(p\).  Applying the same argument successively to the \(m\) blocks yields the
stated decomposition.  Torus equivariance on each monomial component then
forces the displayed character identity.
\end{proof}
\section{Main theorems}
\label{sec:main-theorems}

\begin{theorem}[Special automorphism groups]
\label{thm:special-main}
Let \(n\ge3\).  The reduced identity component of the automorphism scheme of
\(S(n;1)^{(1)}\) is
\[
 \Aut(S(n;1)^{(1)})_{\red}^{\circ}
 \simeq U_S\rtimes\GL_n,
\]
and \(U_S\) is its unipotent radical.

Let \(D=n(p-1)\), let
\[
 \chi_{\Box}:U_S\longrightarrow
 H^{n-1}_{\dR}(O(n;1))
 \simeq(\det V^*)^{p-1}\otimes V^{(1)}
\]
be the Cartier character, and put
\[
 K_s=F^sU_S\cap\ker\chi_{\Box}\qquad(s\ge3).
\]
Then
\[
 [K_s,U_S]=K_{s+1},\qquad [K_s,K_s]=K_{2s-1},
\]
\[
 \gamma_j(U_S)=K_{j+1}\quad(j\ge2),\qquad
 U_S^{(j)}=K_{2^j+1}\quad(j\ge1).
\]
In particular,
\[
 \operatorname{cl}(U_S)=D-2,\qquad
 \operatorname{dl}(U_S)=\left\lceil\log_2(D-1)\right\rceil.
\]

For every \(q=p^f\),
\[
 [U_S(\F_q),U_S(\F_q)]
 =\Phi(U_S(\F_q))
 =\ker(\,j_2,\chi_{\Box}\,)(\F_q),
\]
\[
 U_S(\F_q)/\Phi(U_S(\F_q))
 \simeq
 \cL_2(\F_q)\oplus
 \bigl((\det V^*)^{p-1}\otimes V^{(1)}\bigr)(\F_q),
\]
and
\[
 P_i(U_S(\F_q))
 =F^{i+1}U_S(\F_q)\cap\ker\chi_{\Box}
 \qquad(i\ge2).
\]
In particular, the minimal number of abstract generators of
\(U_S(\F_q)\) is
\[
 f\left(\frac{n(n-1)(n+2)}2+n\right).
\]
\end{theorem}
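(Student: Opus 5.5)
The plan is to pass to the associated graded Lie algebra of the congruence filtration and reduce every group-theoretic statement to bracket generation in Cartan-type graded pieces. There is one exceptional degree, and the Cartier character controls it.

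\emph{Step 1: structure and graded layers.} By \cite[Theorems~3.1--3.2]{BahturinKochetov2011} the automorphism scheme of \(S(n;1)^{(1)}\) is the volume-line stabilizer. The linear-part map to \(\GL_n\) is split by linear substitutions, and its kernel \(U_S\) is unipotent and connected because it is filtered by vector groups. Hence \(\Aut(S(n;1)^{(1)})^\circ_{\red}\simeq U_S\rtimes\GL_n\), and since \(\GL_n\) is reductive, \(U_S\) is the unipotent radical. Next I would identify \(\gr^sU_S\) with the divergence-free component \(\cL_s\) via Wilson's description \cite[Theorem~1]{Wilson1975}: write \(g=\exp(\ad D)\cdots\) with \(D\) homogeneous. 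Under this identification the group commutator \(F^a\times F^b\to F^{a+b-1}\) induces the Lie bracket \(\cL_a\times\cL_b\to\cL_{a+b-1}\), and \(p\)-th powers induce the restricted \(p\)-map, which raises Lie degree by a factor of \(p\).

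\emph{Step 2: bracket generation.} The key graded input is that
\[
[\cL_2,\cL_s]+\dots=\text{exact part of }\cL_{s+1}.
\]
More precisely, \(\bigoplus_{s\ge2}\cL_s\) modulo the single non-exact degree is generated by \(\cL_2\). Moreover \([\cL_a,\cL_b]\) equals the full exact component of degree \(a+b-1\) whenever \(a,b\ge3\) or the target is below the top. I would deduce this from the Kostrikin--Shafarevich statements \cite[Ch.~4, Theorems~7.2--7.5]{StradeFarnsteiner1988}: the positive part is generated in degree one. I would then correct for truncation. The quotient \(\cL_d/(\text{exact})\) is the de Rham cohomology \(H^{n-1}_{\dR}\). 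It is nonzero only in the top-minus-one coefficient degree, where the Cartier isomorphism identifies it with \((\det V^*)^{p-1}\otimes V^{(1)}\). This defines \(\chi_{\Box}\), and \(\chi_\Box\) is a homomorphism because commutators land in exact forms. Since \(U_S\) is tangent-to-identity, it acts trivially on cohomology. The main obstacle is to check that brackets really do hit the whole exact part in each degree, including the two degrees adjacent to the top. I would first try explicit generators \(D_{ij}(x^a)\) together with Lemma~\ref{lem:frobenius-decomposition}, which rules out Frobenius-twisted contributions by weight.

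\emph{Step 3: lifting to groups.} From the graded surjectivity I would get \([K_s,U_S]F^{s+2}=K_{s+1}\) and \([K_s,K_s]F^{2s}=K_{2s-1}\). Descending induction on filtration degree, which is finite because \(F^{D+1}=1\), then upgrades these to the exact equalities \([K_s,U_S]=K_{s+1}\) and \([K_s,K_s]=K_{2s-1}\). Since closed connected commutator subgroups coincide with algebraic ones, these are honest equalities of subgroups. Induction then gives \(\gamma_j=K_{j+1}\); here I would use \([U_S,U_S]=K_3\), which holds because \(\chi_\Box\) and \(j_2\) are the abelianization. The same induction gives \(U_S^{(j)}=K_{2^j+1}\), since \(2(2^j+1)-1=2^{j+1}+1\). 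The last nontrivial layer is \(K_{D-1}\) in coefficient degree \(D-1\), which yields \(\operatorname{cl}=D-2\). The derived length is the least \(j\) with \(2^j+1>D-1\), namely \(\lceil\log_2(D-1)\rceil\).

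\emph{Step 4: finite fields.} Everything in the argument is defined over \(\F_p\), and the layers are vector groups. So \(H^1\)-vanishing for vector groups gives surjectivity of \(F^s(\F_q)\to\gr^s(\F_q)\), and the same filtration arguments apply to abstract commutators on \(\F_q\)-points. I would check separately that graded bracket generation holds over \(\F_q\), which follows from linearity over \(\F_q\). The \(p\)-power map sends \(\gr^s\) to degree roughly \(p(s-1)+1\), which is already inside the commutator filtration. Hence \(\Phi=[U,U]=\ker(j_2,\chi_\Box)\), and \(P_i=\gamma_i\) on points. The generator count is \(f\bigl(\dim\cL_2+n\bigr)\). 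Here \(\dim\cL_2=n\binom{n+1}{2}-\binom{n+1}{2}\cdot\)(divergence constraints), and I expect this to come out to \(n(n-1)(n+2)/2\) by direct counting.
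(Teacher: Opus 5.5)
Your architecture matches the paper's: the congruence filtration with layers \(\cL_d\), a single exceptional degree detected by \(\chi_{\Box}\), leading-term elimination through the finite filtration, and power depth for \(\Phi\) and \(P_i\). The lower-central half goes through. Kostrikin--Shafarevich generation of the positive part of \(S(n;1)^{(1)}\) by its degree-one component is exactly \([\cL_2,E_{d-1}]=E_d\) for \(3\le d\le D-1\), which the paper checks by hand in \cref{lem:special-exact-generation}.

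The gap is the self-commutator recurrence \([K_s,K_s]=K_{2s-1}\). Both \(U_S^{(j)}=K_{2^j+1}\) and the derived length, the point of the theorem, depend on it. You assert that \([\cL_a,\cL_b]\) fills the exact component of degree \(a+b-1\) whenever \(a,b\ge3\), and propose to deduce this from Kostrikin--Shafarevich. But generation in degree one says nothing about brackets of two high-degree factors. For instance, the graded filiform algebra spanned by \(x,e_1,e_2,\ldots\) with \([x,e_i]=e_{i+1}\) is generated in degree one, yet all brackets between components of degree \(\ge2\) vanish. Your fallback, explicit \(D_{ij}(x^a)\) together with \cref{lem:frobenius-decomposition}, is the wrong tool. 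On the associated graded the commutator is the bilinear Lie bracket, so there are no Frobenius twists to exclude; that lemma only enters the resonant Hamiltonian analysis.

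What is missing is the paper's two-step argument. First prove only the balanced case \([E_s,E_s]=E_{2s-1}\) for \(s\ge3\), \(2s\le D\) (\cref{lem:balanced-special}):
\begin{enumerate}
\item For a monomial \(h\) of degree \(2s\) with \(a_i,a_j\le s\), factor \(h=fg\) with \(\deg f=\deg g=s\) and \(\partial_if=\partial_jg=0\). Then \([f\partial_i,g\partial_j]=-D_{ij}(h)\).
\item If \(h=x_i^Ar\) with \(A>s\), use \(X=x_i^s\partial_j\) and the divergence-corrected field \(Y=x_i^{A-s}r\,\partial_i-\frac{A-s}{b+1}x_i^{A-s-1}x_jr\,\partial_j\).
\end{enumerate}
Here \(\cL_s=E_s\) because \(s<d_0\) for \(n\ge3\). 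Second, use Jacobi: the span of all \([E_a,E_b]\) with \(a,b\ge s\) is stable under \(\ad\cL_2\). So \(E_{d+1}=[\cL_2,E_d]\) propagates \(E_{2s-1}\) to every \(E_d\) with \(d\ge2s-1\). This needs far less than your unbalanced claim, which you would otherwise have to establish case by case.

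There are also some smaller slips.
\begin{enumerate}
\item \(H^{n-1}_{\dR}(O(n;1))\) lives in coefficient degree \(d_0=(n-1)(p-1)=D-(p-1)\), not next to the top. The degrees adjacent to the top are not exceptional.
\item In Step~3 the correcting subgroups must be \(K_{s+2}\) and \(K_{2s}\), not \(F^{s+2}\) and \(F^{2s}\), which need not lie in \(\ker\chi_{\Box}\).
\item The identification \(\gr_dK_s=E_d\) for \(d<d_0\) requires removing the Cartier value of a lift by a degree-\(d_0\) shear.
\item \(\dim\cL_2=n\binom{n+1}2-n\), since the divergence maps \(\cW_2\) onto the \(n\)-dimensional space of linear forms.
\end{enumerate}
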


\begin{corollary}[Extremal derived length]
\label{cor:extremal-derived-length}
For every \(j\ge1\),
\[
 U_S^{(j)}=\gamma_{2^j}(U_S).
\]
Thus the inclusion \(G^{(j)}\subseteq\gamma_{2^j}(G)\), valid in every
group, is an equality at every step for \(U_S\).  Consequently \(U_S\) has
the largest derived length its nilpotency class permits: a nilpotent group
of class \(c\) has derived length at most \(\lceil\log_2(c+1)\rceil\), and
here \(c=D-2\) while \(\operatorname{dl}(U_S)=\lceil\log_2(D-1)\rceil\).

The same holds for the finite groups \(U_S(\F_q)\): for every \(q=p^f\),
\[
 U_S(\F_q)^{(j)}=\gamma_{2^j}\bigl(U_S(\F_q)\bigr)
 \qquad(j\ge1),
\]
with the same class and derived length.
\end{corollary}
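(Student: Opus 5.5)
The corollary follows from \cref{thm:special-main} by comparing indices; the plan is to read both series off the filtration \(K_s\).

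First, \cref{thm:special-main} gives \(U_S^{(j)}=K_{2^j+1}\) for \(j\ge1\) and \(\gamma_i(U_S)=K_{i+1}\) for \(i\ge2\). Taking \(i=2^j\ge2\) yields \(\gamma_{2^j}(U_S)=K_{2^j+1}=U_S^{(j)}\), which is the displayed equality. No separate case \(j=0\) is needed.

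For extremality, we need a bound on derived length in terms of class. Since \(G^{(j)}\subseteq\gamma_{2^j}(G)\), a group of class \(c\) has \(G^{(j)}=1\) as soon as \(2^j\ge c+1\), so \(\operatorname{dl}\le\lceil\log_2(c+1)\rceil\). With \(c=D-2\) this bound is \(\lceil\log_2(D-1)\rceil\), which is exactly the value of \(\operatorname{dl}(U_S)\) in \cref{thm:special-main}. As a consistency check, we should confirm that \(K_s\ne1\) for \(s\le D-1\) and \(K_D=1\). This holds because \(F^sU_S\) has coefficient degree at most \(D-1\) for vector fields with coefficients in \(O(n;1)\), and the Cartier layer sits in a single low degree. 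Hence \(U_S^{(j)}=K_{2^j+1}\) is trivial exactly when \(2^j+1\ge D\).

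For the finite groups, the step needing care is that the theorem's series statements for \(\F_q\)-points are not spelled out for \(\gamma_j\) and derived terms. I would re-run the argument on points: the identities \([K_s,U_S]=K_{s+1}\) and \([K_s,K_s]=K_{2s-1}\) descend to \(\F_q\)-points. The graded brackets are surjective bilinear maps of vector groups defined over \(\F_p\), and surjectivity on \(\F_q\)-points follows because the images are spanned by brackets of \(\F_p\)-rational homogeneous elements, as the Kostrikin--Shafarevich generation statements are rational. Combined with the filtration being separated and finite, successive approximation then gives \(\gamma_j(U_S(\F_q))=K_{j+1}(\F_q)\) and \(U_S(\F_q)^{(j)}=K_{2^j+1}(\F_q)\). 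For \(j=1\), the finite-field part of \cref{thm:special-main} supplies the base case directly via \([U_S(\F_q),U_S(\F_q)]=\ker(j_2,\chi_\Box)(\F_q)=K_3(\F_q)\). The main obstacle is the point-level lifting of surjectivity for the \(K_s\times K_s\) brackets. I expect to handle it by writing each graded element of \(\gr K_{2s-1}\) as a sum of \(\F_q\)-rational brackets and inducting downward in degree. The class and derived length then match the algebraic ones, since each \(K_s(\F_q)\) is nontrivial exactly when \(K_s\) is.
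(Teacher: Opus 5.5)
Your proposal is correct and follows the paper's proof. You set \(i=2^j\) in \(\gamma_i(U_S)=K_{i+1}\) and \(U_S^{(j)}=K_{2^j+1}\), use the universal inclusion \(G^{(j)}\subseteq\gamma_{2^j}(G)\) for extremality, and form both recurrences of \cref{prop:special-commutators} over \(\F_q\) starting from \(K_3(\F_q)=[U_S(\F_q),U_S(\F_q)]\), exactly as in \cref{prop:special-frattini}. Your consistency check is not needed, since the theorem already gives the class and derived length, and its stated reason is slightly off: it should be \(\cL_D=0\) and \(E_{D-1}\ne0\), because general fields over \(O(n;1)\) do reach coefficient degree \(D\).
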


\begin{proof}
By \cref{thm:special-main}, \(U_S^{(j)}=K_{2^j+1}\) for \(j\ge1\) and
\(\gamma_i(U_S)=K_{i+1}\) for \(i\ge2\); taking \(i=2^j\) gives the
displayed equality.  The inclusion \(G^{(j)}\subseteq\gamma_{2^j}(G)\) holds in every group, and if \(\gamma_{c+1}(G)=1\) then \(G^{(j)}=1\) as soon as
\(2^j\ge c+1\).

Both recurrences of \cref{prop:special-commutators} are identities of the
congruence filtration and are formed over \(\F_q\) exactly as in the proof
of \cref{prop:special-frattini}.  They give
\(\gamma_i(U_S(\F_q))=K_{i+1}(\F_q)\) and
\(U_S(\F_q)^{(j)}=K_{2^j+1}(\F_q)\), whence the finite-point statement.
\end{proof}

\begin{theorem}[Cartier classes in the five-term sequence]
\label{thm:cartier-relation-map}
Put
\[
 B_q=\cL_2(\F_q),\qquad
 C_q=H^{n-1}_{\dR}(O(n;1))(\F_q),\qquad
 A_q=B_q\oplus C_q.
\]
The canonical summand
\[
 \cR_{\Box,q}^{\mathrm{Car}}
 =(B_q\otimes_{\F_p}C_q)\oplus H_2(C_q;\F_p)
\]
is contained in the image of
\[
 H_2(U_S(\F_q);\F_p)\longrightarrow H_2(A_q;\F_p).
\]
The \(H_2(C_q;\F_p)\)-summand splits.
\end{theorem}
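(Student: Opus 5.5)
The plan is to use the five-term exact sequence for the central-type extension \(1\to N\to P\to A_q\to1\), where \(P=U_S(\F_q)\), \(N=\Phi(P)=\ker(j_2,\chi_{\Box})(\F_q)\) and \(A_q=P/N\). By \cref{thm:special-main} we have \(P/\Phi(P)\simeq B_q\oplus C_q\), so the quotient map is \(P\to A_q\). The sequence reads
\[
 H_2(P;\F_p)\to H_2(A_q;\F_p)\to N/[N,P]N^p\to H_1(P;\F_p)\to H_1(A_q;\F_p)\to0 .
\]
Since \(H_1(P)\to H_1(A_q)\) is an isomorphism, the image of \(H_2(P)\) equals the kernel of the transgression \(\tau:H_2(A_q;\F_p)\to N/[N,P]N^p=P_2/P_3\). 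By \cref{thm:special-main}, \(P_2/P_3=\gr^3U_S(\F_q)\) on \(\ker\chi_{\Box}\), which is the degree-\(3\) layer \(\cL_3(\F_q)\).

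First I describe \(H_2(A_q;\F_p)\) for the elementary abelian group \(A_q\). For \(p\ge5\) it is \(\Lambda^2_{\F_p}A_q\oplus A_q\), where the second summand comes from Bocksteins and is identified with \(p\)-th powers. Decomposing gives
\[
 \Lambda^2A_q=\Lambda^2B_q\oplus(B_q\otimes C_q)\oplus\Lambda^2C_q ,
\]
so \(H_2(C_q)=\Lambda^2C_q\oplus C_q\) and \(\cR^{\mathrm{Car}}_{\Box,q}\) is exactly the part of \(H_2(A_q)\) involving \(C_q\). Under \(\tau\), an element \(a\wedge b\) goes to the class of the commutator \([\tilde a,\tilde b]\), and a Bockstein class \(a\) goes to \(\tilde a^{\,p}\), computed modulo \(P_3\).

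The main step is to show that \(\tau\) kills these summands. For the \(p\)-th powers, a lift \(\tilde c\) of a Cartier class can be chosen in \(F^{s}U_S\) with \(s\) large, namely in the layer of coefficient degree \(D-n+1\) where \(\chi_{\Box}\) lives. Then \(\tilde c^{\,p}\) lies in \(F^{s'}\) with \(s'>3\), since the \(p\)-map multiplies the Lie degree by \(p\) (\cref{rem:grading-convention}), and so it vanishes in \(P_2/P_3\). For the commutators, \([\tilde b,\tilde c]\) and \([\tilde c,\tilde c']\) lie in \(F^{s+1}\subseteq F^4\) by Wilson's inclusions \([F^a,F^b]\subseteq F^{a+b-1}\). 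This requires \(s\ge3\), which holds because \(D-n+1\ge3\) for \(n\ge3\) and \(p\ge5\). Hence \(\tau\) vanishes on \(\cR^{\mathrm{Car}}_{\Box,q}\), and the summand lies in the image of \(H_2(P)\).

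The hard part is choosing the lifts so that they lie deep in the filtration while still mapping onto \(C_q\) under \(\chi_{\Box}\). This needs the Cartier character to be nonzero on some deep layer \(\gr^sU_S\) with \(s\ge3\), and I will read this off from the construction of \(\chi_{\Box}\) via \(H^{n-1}_{\dR}\) in the top-degree cohomology. I expect this bookkeeping of the filtration degree of the Cartier layer, together with checking that cross terms in \(B_q\otimes C_q\) are not corrected by \(j_2\)-components, to be the main obstacle.

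For the splitting of \(H_2(C_q;\F_p)\), I will construct a homomorphism \(P\to C_q\) that restricts to the identity on the chosen lifts, namely \(\chi_{\Box}\) itself. The composite \(H_2(P)\to H_2(A_q)\to H_2(C_q)\) is the map induced by \(\chi_{\Box}\). Because \(\chi_{\Box}\) is split by a section \(C_q\to P\), given either by an elementary abelian subgroup of deep lifts or by the one-parameter subgroups from the Cartier layer, which commute modulo higher terms, the summand \(H_2(C_q)\) is a direct summand of the image. If the lifts fail to commute exactly, I will fall back on the splitting \(H_2(A_q)\to H_2(C_q)\) induced by the projection \(A_q\to C_q\) and argue at the level of images.
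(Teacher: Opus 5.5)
Your proposal is essentially the paper's argument in \cref{prop:special-transgression}: the image of \(H_2\) is the kernel of the transgression into \(P_2/P_3\simeq\cL_3(\F_q)\), mixed commutators land in \(F^4U_S(\F_q)\cap\ker\chi_{\Box}=P_3\), and a section \(C_q\to U_S(\F_q)\) gives the splitting. The paper gets the vanishing on \(H_2(C_q;\F_p)\) by naturality through that section rather than by your direct depth count on commutators and \(p\)-th powers, and both work. There are two small corrections, neither affecting validity. First, the Cartier layer sits in coefficient degree \(d_0=(p-1)(n-1)\), not \(D-n+1\); this is harmless, since every lift of a \(C_q\)-class lies in \(\ker j_2=F^3U_S\), and commutators and \(p\)-th powers are automatically in \(\ker\chi_{\Box}\). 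Second, the section you want is given exactly by the shears \(s_i(a)\) of \cref{prop:special-cartier}, which commute on the nose and have order \(p\), so no ``modulo higher terms'' fallback is needed.
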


\begin{theorem}[Canonical Frobenius thickening]
\label{thm:frobenius-thickening}
Let \(\mathbf G_S\) be the full automorphism group scheme of
\(S(n;1)^{(1)}\), and let
\[
 \mathbf U_S^F
 =\ker\!\left(
 \mathbf G_S\xrightarrow{\overline F}G_S^{(p)}
 \longrightarrow\GL_n^{(p)}
 \right).
\]
Then
\[
 (\mathbf U_S^F)_{\red}=U_S
\]
and there is an exact sequence
\[
 1\longrightarrow(\mathbf G_S)_1
 \longrightarrow\mathbf U_S^F
 \xrightarrow{\overline F}U_S^{(p)}
 \longrightarrow1.
\]
Moreover
\[
 \Lie_0(\mathbf U_S^F)=C_S,\qquad
 \Lie_j(\mathbf U_S^F)=S(n;1)_{(1)}\quad(j\ge1),
\]
and multiplication induces
\[
 u(C_S)\otimes_{u(S(n;1)_{(1)})}\Dist(U_S)
 \xrightarrow{\sim}\Dist(\mathbf U_S^F).
\]
\end{theorem}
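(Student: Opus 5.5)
Let \(\mathbf U=\mathbf U_S^F\). I would prove the statement in three stages: first the exact sequence and the reduced subgroup, then the graded Lie algebras, then the distribution algebra.

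\textbf{Exact sequence and reduced subgroup.} The relative Frobenius \(\overline F:\mathbf G_S\to\mathbf G_S^{(p)}\) factors through \((\mathbf G_S)_{\red}^{(p)}\). The reason is that \(\mathbf G_S\) is a finite-type group scheme over a perfect field. Its coordinate ring modulo nilradical is reduced, and \(p\)-th powers kill every nilpotent of height one. For the nilpotents of \(\mathbf G_S\) I would invoke the coordinate description of \cite{BahturinKochetov2011}: the non-reduced part comes only from the coefficients of \(g(x_i)\) on monomials \(x_j^{p}\)-type truncations. These are divided-power coordinates of Frobenius height one. This gives \(\overline F\) with image in \(G_S^{(p)}\). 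Its kernel is \((\mathbf G_S)_1\) by definition.

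Composing with the linear part \(G_S^{(p)}\to\GL_n^{(p)}\) and taking the preimage of \(U_S^{(p)}=\ker(G_S^{(p)}\to\GL_n^{(p)})\), we get \(1\to(\mathbf G_S)_1\to\mathbf U\to U_S^{(p)}\). Surjectivity onto \(U_S^{(p)}\) follows because \(U_S\subseteq\mathbf U\) and \(\overline F|_{U_S}\) is the Frobenius isogeny of the smooth group \(U_S\), which is faithfully flat. Reducedness then goes as follows. \(U_S\) is smooth and contained in \(\mathbf U\), and the map \(\mathbf U_{\red}\to U_S^{(p)}\) has infinitesimal kernel. Hence \(\mathbf U_{\red}\) and \(U_S\) have the same \(\bar k\)-points and the same dimension, so they are equal.

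\textbf{Graded Lie algebras.} I would define \(\Lie_j\) via the Verschiebung filtration, that is, the kernels of the iterated Frobenius on \(\mathbf U\). The layer \(\Lie_0\) is \(\Lie(\mathbf U)=\Lie((\mathbf G_S)_1)\cap\) (the preimage of \(0\) in \(\mathfrak{gl}_n^{(p)}\)). Since \((\mathbf G_S)_1\) is the full Frobenius kernel, its Lie algebra is \(\Der S(n;1)^{(1)}\), and the restriction imposed by the \(\GL_n^{(p)}\)-condition cuts out \(C_S\). To identify \(C_S\) concretely I would use the known derivation algebra of \(S(n;1)^{(1)}\), namely \(CS(n;1)\) together with the outer degree-\((p-1)\) pieces \(\partial_i^{p}\)-type \(p\)-derivations.

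For \(j\ge1\), the layers come from \(\Lie(U_S)=S(n;1)_{(1)}\) pulled back along \(\overline F^j\). Since \(U_S\) is smooth, its Frobenius kernels have Lie algebra \(S(n;1)_{(1)}\) at every stage.

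\textbf{Distribution algebra.} Multiplication \((\mathbf G_S)_1\times U_S\to\mathbf U\) is surjective, by the exact sequence and reducedness. Its scheme-theoretic fibres are torsors under \((\mathbf G_S)_1\cap U_S=(U_S)_1\). So \(\mathbf U\simeq(\mathbf G_S)_1\times^{(U_S)_1}U_S\) as schemes. Then \(\Dist(\mathbf U)=u(C_S)\otimes_{u(S(n;1)_{(1)})}\Dist(U_S)\), using \(\Dist((\mathbf G_S)_1)=u(\Lie)\) restricted to the unipotent part and \(\Dist((U_S)_1)=u(S(n;1)_{(1)})\). Flatness of \(\Dist(U_S)\) over \(u\) follows from PBW for the smooth group \(U_S\).

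\textbf{Main obstacle.} I expect the hardest step to be the precise identification of \(\Lie_0\) with \(C_S\). It requires checking which restricted derivations of \(S(n;1)^{(1)}\) exponentiate into \((\mathbf G_S)_1\) with trivial linear Frobenius image. That check needs the explicit \(p\)-structure, in particular \(\partial_i^p=0\) and the outer derivations of degree \(p-1\).
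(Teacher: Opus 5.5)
The exact-sequence and reduced-subgroup steps are essentially sound. Getting surjectivity from the faithfully flat Frobenius of \(U_S\subseteq\mathbf U\) is a clean alternative to the paper's base change. The genuine gap is in the layers \(\Lie_j\).

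\emph{The definition fails.} With your definition, Lie algebras of the Frobenius kernels of \(\mathbf U=\mathbf U_S^F\), the claim is false. For \(r\ge1\) one has \((\mathbf G_S)_1\subseteq\mathbf U_r\subseteq\mathbf G_S\), so every \(\mathbf U_r\) has Lie algebra \(C_S\). The filtration in the statement is the Verschiebung filtration on \(\Dist(\mathbf U)\): \(\Lie_j\) consists of the primitives admitting a divided-power sequence through level \(p^j\).

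\emph{The upper bound is missing.} Its real content is \(\Lie_1(\mathbf U)\subseteq S(n;1)_{(1)}\), and you give no argument for it. Smoothness of \(U_S\) only yields the lower bound, since its tangent vectors have divided-power sequences of every length. The missing idea is to push a sequence \(1,x,\delta_2,\ldots,\delta_p\) forward under \(\Dist(\overline F)\). The \(\delta_i\) with \(0<i<p\) are killed, so the image of \(\delta_p\) is primitive, and it equals the twist \(x^{(p)}\). Because \(\overline F\) maps \(\mathbf U\) into \(U_S^{(p)}\), this gives \(x^{(p)}\in\Lie(U_S)^{(p)}\), hence \(x\in S(n;1)_{(1)}\). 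This is what removes \(\partial_i\) and \(\mathfrak{gl}_n\) from \(\Lie_1\), even though \(\mathfrak{gl}_n\) does have divided powers in \(\Dist(\mathbf G_S)\). Applying the differential of \(\overline F\) to \(x\) itself gives nothing, because \(d\overline F=0\).

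The step you call the main obstacle is in fact immediate. \(\Lie_0(\mathbf U)=\Lie(\mathbf G_S)\) by the inclusions above, and \(\Lie(\mathbf G_S)\) is the algebra of fields with \(\operatorname{div}\in k\), that is, \(C_S\). The \(\GL_n^{(p)}\)-condition cuts nothing out, again because \(d\overline F=0\). At height one there are no \(\partial_i^p\)-type outer derivations, since \(\partial_i^p=0\) on \(O(n;1)\). Likewise \(\Dist((\mathbf G_S)_1)=u(C_S)\) in full; it contains toral elements such as \(x_i\partial_i\), so no \emph{unipotent part} enters.

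In your first stage, the nilpotent coordinates are the constant terms \(a_i\) of \(g(x_i)\), constrained by \(g(x_i)^p=a_i^p=0\). They are not coefficients on \(x_j^p\)-type monomials, which vanish in \(O(n;1)\). You also need that every nilpotent of \(k[\mathbf G_S]\) dies under \(p\)-th powers; this is what makes \(\overline F\) exist at all, and it fails for \(\mu_{p^2}\). It rests on smoothness of the augmentation-preserving stabilizer, i.e. \(\mathbf G_S\simeq G_S\times\alpha_p^n\), which the paper proves by a residue-and-lifting argument over square-zero extensions.

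Your contracted-product route to the distribution algebra can work. Flatness over \(u(S(n;1)_{(1)})\) alone does not give the isomorphism, however. On each \((\mathbf U)_r=(\mathbf G_S)_1(U_S)_r\) you must either identify the coordinate ring with the \((U_S)_1\)-invariants and dualize to coinvariants, or do as the paper does. The paper combines surjectivity of multiplication with the dimension count \(p^{\dim C_S+(r-1)\dim S(n;1)_{(1)}}\).
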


\begin{theorem}[The Hamiltonian socle line]
\label{thm:hamiltonian-main}
Let \(r\ge1\), let \(U_H\) be the reduced height-one tangent-to-identity
symplectic substitution group, and let \(Z_M\) be its socle Hamiltonian
line.

\begin{enumerate}
\item If \(r=1\), then \(Z_M\subseteq[U_H,U_H]_{\alg}\).
\item If \(r\ge2\), then
\[
 \bigl(Z_M\cap[U_H,U_H]_{\alg}\bigr)_{\red}=1.
\]
\item For every \(q=p^f\) and \(r\ge2\),
\[
 Z_M(\F_q)\cap[U_H(\F_q),U_H(\F_q)]=1.
\]
\end{enumerate}

\end{theorem}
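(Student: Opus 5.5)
We treat the three parts separately. The tool for all three is a character on \(U_H\) (or on a large subgroup) that is nonzero on \(Z_M\) and kills commutators. Where no honest character exists, we substitute a graded obstruction computed on the associated graded Lie algebra of the congruence filtration.

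\emph{Part (1), \(r=1\).} Here \(N=2p-4\) and \(X_M\) lies in Lie degree \(N\). We show that \(X_M\) is a sum of Poisson brackets of homogeneous elements. For instance,
\[
 \{q^{a}p^{b},q^{c}p^{d}\}=(bc-ad)\,q^{a+c-1}p^{b+d-1}
\]
with \(a+c=b+d=p\), say \(a=b=1\) and \(c=d=p-1\), gives coefficient \((p-1)-(p-1)\). That choice fails, so we pick \(a=1,b=2,c=p-1,d=p-2\). Then
\[
 bc-ad=2(p-1)-(p-2)=p\equiv0,
\]
which also fails. In general \(bc-ad\) with \(a+c=b+d=p\) equals \(bp-ap\equiv0\). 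So degree-wise generation of the socle in one step fails. We must instead realize \(Z_M\) via group commutators whose higher-order terms contribute: a commutator \((\exp X_f,\exp X_g)\) of elements in degrees \(e_1,e_2\) with \(e_1+e_2<N\) has a degree-\(N\) component given by iterated brackets \([X_f,[X_f,X_g]]\), etc. The plan is to compute the image of \([F^aU_H,F^bU_H]\) in \(\gr_NU_H\). We use \cref{lem:frobenius-decomposition}, restricted to the diagonal torus of \(\operatorname{CSp}_2\), to list which \(p\)-power components can reach the weight of \(X_M\). We then exhibit an explicit commutator, for example one involving the flux field \(X_{q^{p-1}}\)-type non-Hamiltonian symplectic fields from \(\cH_{p-1}\setminus\cE_{p-1}\), whose socle component is nonzero. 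These fields bracket to \(X_M\) because \(r=1\) makes the flux pairing land in the top class: \(q^{p-1}dp\) and \(p^{p-1}dq\) pair to the box.

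\emph{Part (2), \(r\ge2\).} We construct the algebraic Calabi homomorphism
\[
 \Cal(g)=\int (g^*\theta_H-\theta_H)\wedge\theta_H\wedge\omega_H^{r-1}
\]
(top-coefficient extraction) on \(K_H\), and first show it is a homomorphism trivial on \([U_H,U_H]_{\alg}\). Evaluated on \(Z_M\), it gives \((r+1)\) times a unit times the parameter. When \(r+1\not\equiv0\pmod p\) this proves reduced triviality of the intersection, since \([U_H,U_H]\subseteq K_H\) because \(\chi_H\) is a character. At the resonance \(r+1=hp\) we follow the introduction. We linearize the lower-central quotients \(\Gamma_j/\Gamma_{j+1}\) \(\operatorname{Sp}_{2r}\)-equivariantly, using \cref{lem:frobenius-decomposition} to bound the possible multi-homomorphisms landing in the trivial \(\operatorname{Sp}\)-representation spanned by \(X_M\). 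The remaining task is to classify low-degree symplectic covariants from \(\gr\) pieces into the trivial module and show that none reach \(X_M\). Since \(\operatorname{Sp}\) invariants of tensor products of truncated-polynomial layers are controlled by the pairing \(\omega_H\), the only candidate is the Poisson pairing \(\cH_d\times\cH_{N-d}\to k X_M\). This pairing is identically zero by the \(p\)-divisibility computed above. For \(r\ge2\), flux classes no longer pair into the socle, which rules out the non-exact route of part (1).

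\emph{Part (3).} This follows by the same argument on \(\F_q\)-points. \(\Cal\), or at resonance the equivariant graded obstruction, is defined over \(\F_p\) and kills abstract commutators in \(U_H(\F_q)\), while it remains injective on \(Z_M(\F_q)\simeq\F_q\).

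The main obstacle is the resonant case of (2). There no character is available, and one must ensure that Frobenius-twisted contributions (\(p\)-th powers in commutator expansions, allowed by \cref{lem:frobenius-decomposition}) cannot land on the socle weight. We would first attempt this by a torus-weight count: we would show that \(N\) cannot be written as \(p^{e_1}d_1+p^{e_2}d_2\) with \(e_i\ge1\) and admissible \(d_i\).
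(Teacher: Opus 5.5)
\textbf{The genuine gap: the resonant case \(r+1=hp\) of part (2).} You propose to handle the main obstacle by showing that \(N\) is not of the form \(p^{e_1}d_1+p^{e_2}d_2\) with \(e_i\ge1\) and admissible \(d_i\). That claim is false. Take \(d_1=d_2=p-2\) (the flux weight) and \(e_1=e_2=e\); the equation becomes \((p-2)p^{e-1}=h(p-1)-1\).
\begin{itemize}
\item It holds for \(h=e=1\), i.e.\ \(r=p-1\); for \(p=5\), \(r=4\) one gets \(N=30=15+15\).
\item It holds for one \(h\) for every \(e\), since \(p-1\) always divides \((p-2)p^{e-1}+1\).
\end{itemize}
The symplectic torus does not exclude this either. \(F^{(e)}=V^{(e+1)}\) has weights \(\pm p^{e+1}\varepsilon_i\), and the two lines of one pair combine to weight \(0\), the weight of \(X_M\).

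So an alternating Frobenius-twisted component \(F\times F\to Z_M\) of the commutator pairing survives every weight count. Your Lie-level remark that flux classes no longer bracket into the socle cannot see it, because such components have zero differential. The paper kills it with a group-level fact that you are missing. In \cref{lem:rank-one-socle-gamma3}, a second pair \(j\) (this needs \(r\ge2\)) gives \(m_i=\{q_ju^2,-p_ju^{p-3}\}\) with \(u=q_ip_i\). The two associated one-parameter subgroups are shown to lie in \(\Gamma_2\), so \(Z_i\subseteq\Gamma_3\). Hence the commutator of the two flux-shear lines of pair \(i\) is trivial in \(\Gamma_2/\Gamma_3\), Frobenius terms included.

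Your sketch also does not exclude that \(Z_M\) first drops out at some \(\Gamma_j\) with \(j\ge3\). Odd \(j\) fall to parity, but even \(j\ge4\) require four further ingredients:
\begin{itemize}
\item the single-weight prefix \(\Gamma_{j-1}/\Gamma_j\simeq O_{H,j+1}\);
\item an \(\operatorname{Sp}_{2r}\)-equivariant socle retraction, which rests on \(H^1(\operatorname{Sp}_{2r},O_{H,d}^{(a)})=0\), because the linearization theorem gives only torus-equivariance;
\item Fox and augmentation depth, to kill components linear in either variable;
\item the classification of \(\operatorname{Sp}_{2r}\)-maps into \(\Sym^kV\) for \(k=1,3\).
\end{itemize}
Your assertion that the only candidate is the Poisson pairing \(\cH_d\times\cH_{N-d}\to kX_M\) covers only the untwisted bilinear part.

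\textbf{The nonresonant argument also needs repair.} Exact top forms have zero integral. Normalizing \(\int\) on \(dq_1\wedge dp_1\wedge\cdots\wedge dq_r\wedge dp_r\), your invariant is therefore
\[
\int dS_g\wedge\theta_H\wedge\omega_H^{r-1}=-\int S_g\,\omega_H^{r}=-r!\,\Cal_H(g).
\]
On \(Z_M\) it equals \(-(r+1)!\,a\), not a unit times \((r+1)a\). It vanishes identically for \(r\ge p\), and it detects \(Z_M\) only when \(2\le r\le p-2\). Use \(\int S_g\) itself instead.

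More seriously, vanishing on \([U_H,U_H]_{\alg}\) is not formal. The invariant is defined only on \(K_H\), and for \(r=1\) it is nonzero on commutators (part (1) together with \(\Cal_H(z_a)=2a\)), so \(r\ge2\) has to be used somewhere. The paper first proves conjugation invariance under \(U_H\), which needs \cref{lem:calabi-primitive-trace}. It then treats commutators of same-pair flux shears, whose normalized primitives lie in \(k[q_i,p_i]\) and so miss \(M\) exactly when \(r\ge2\).

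\textbf{Parts (1) and (3) are fine.} The flux fields \(q^{p-1}\partial_p\) and \(p^{p-1}\partial_q\) bracket to \(\pm X_M\) in the top layer \(F^{2p-3}U_H=Z_M\). So the flux-shear commutators are exactly the points of \(Z_M\), and no higher-order or Frobenius analysis is needed. Part (3) follows from (2), because abstract commutators lie in \([U_H,U_H]_{\alg}(\F_q)\) and \(\F_q\)-points factor through the reduced subscheme.
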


\begin{theorem}[Hamiltonian finite-point series]
\label{thm:hamiltonian-finite}
Let \(r\ge2\), \(q=p^f\), and \(U_{H,q}=U_H(\F_q)\).
For every \(j\ge1\) and \(a\ge0\),
\[
 \gamma_j(U_{H,q})=\Gamma_j(U_H)(\F_q),\qquad
 P_j(U_{H,q})=\gamma_j(U_{H,q}),\qquad
 U_{H,q}^{(a)}=\gamma_{2^a}(U_{H,q}).
\]
The Frattini subgroup and quotient satisfy
\[
 \Phi(U_{H,q})=\Gamma_2(U_H)(\F_q),\qquad
 U_{H,q}/\Phi(U_{H,q})
 \simeq(\F_q,+)^{\binom{2r+2}{3}+2r+1}.
\]
Consequently the minimal number of generators of \(U_{H,q}\) is
\[
 f\left(\binom{2r+2}{3}+2r+1\right),
\]
and
\[
 \operatorname{cl}(U_{H,q})=D_H-3,\qquad
 \operatorname{dl}(U_{H,q})=\lceil\log_2(D_H-2)\rceil.
\]
\end{theorem}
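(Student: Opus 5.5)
The strategy is to transfer the algebraic statements about $U_H$ to the finite-point groups $U_{H,q}$ through the congruence filtration. We take the algebraic results of \cref{thm:hamiltonian-series,thm:hamiltonian-derived} as input: $F^{j+1}U_H\cap K_H=\Gamma_j(U_H)\times Z_M$ for $2\le j\le D_H-2$, and $U_H^{(j)}=\Gamma_{2^j}(U_H)$. We also use \cref{thm:hamiltonian-main}(3), which gives $Z_M(\F_q)\cap[U_{H,q},U_{H,q}]=1$.

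\emph{Step 1: graded layers.} First describe the layers $\gr_eU_H(\F_q)$ as the $\F_q$-points of vector groups. In Lie degree $e$ this layer is $\cH_{e+1}(\F_q)$. The group commutator induces the Poisson bracket $\gr_a\times\gr_b\to\gr_{a+b}$, and the $p$-th power map induces the restricted $p$-map, which raises the degree by a factor of $p$. By \cref{lem:frobenius-decomposition}, any torus-equivariant additive map between layers is a sum of Frobenius twists, so the bracket identities over $\bar k$ can be evaluated on $\F_q$-points. Since Hamiltonian-exact Poisson brackets are defined over $\F_p$, for every $e\ge1$ outside the socle degree the commutator map $\gr_1(\F_q)\times\gr_{e}(\F_q)\to\gr_{e+1}(\F_q)$ has image spanning the exact part $\cE_{e+1}(\F_q)$. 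Surjectivity on $\F_q$-points is the key point. The plan is to obtain it from the Kostrikin--Shafarevich generation statement (positive part generated in degree one), applied to the $\F_p$-form and base-changed. The spans are $\F_p$-linear combinations of commutators, and this suffices because the layers are elementary abelian.

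\emph{Step 2: lower central series.} We get $\gamma_j(U_{H,q})\subseteq\Gamma_j(U_H)(\F_q)$ directly. For the reverse inclusion, run a descending induction along the filtration, using Step 1 to fill each layer of $\Gamma_j(U_H)(\F_q)$ by iterated commutators. Two corrections must be handled. The flux layer ($d=p-1$) is excluded because $\chi_H$ vanishes on commutators. The socle is excluded by \cref{thm:hamiltonian-main}(3). For the $p$-central series, note that $p$-th powers map $\gr_e$ into $\gr_{pe}$ and land inside $\Gamma_{j+1}$ whenever $pe\ge e+1$, apart from the flux and socle components. We then have to show that $p$-th powers of elements of $\Gamma_j$ have trivial flux and socle components. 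For the flux we use $\chi_H(g^p)=p\chi_H(g)=0$. For the socle we use an equivariance argument with \cref{lem:frobenius-decomposition}: the socle weight $N$ cannot equal $p$ times a weight of $\Gamma_j$ whose socle part is nonzero. I would check this numerically; the identity $\chi(g^p)=0$ for an additive character $\chi$ should handle it cleanly, provided the socle is detected by an additive character on the relevant quotient, namely the Calabi homomorphism or its resonance replacement. This gives $P_j=\gamma_j$ and $\Phi=\Gamma_2(U_H)(\F_q)$.

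\emph{Step 3: Frattini quotient, class, and derived length.} Compute $U_H/\Gamma_2$ as an extension built from the following pieces. The degree-one layer consists of the cubic Hamiltonians, of dimension $\binom{2r+2}{3}$. The flux quotient $H^1_{\dR}(O_H)$ has dimension $2r$. The socle line contributes $1$. This gives $(\F_q,+)^{\binom{2r+2}{3}+2r+1}$, and the group is elementary abelian because $\Phi$ contains all $p$-th powers. Burnside's basis theorem then yields the generator count. The derived series follows by the same layer-filling argument using the $[K_s,K_s]$-type recurrence, giving $U_{H,q}^{(a)}=\Gamma_{2^a}(U_H)(\F_q)=\gamma_{2^a}(U_{H,q})$. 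The class and derived length are read off from $\Gamma_{D_H-3}\ne1=\Gamma_{D_H-2}$.

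I expect the main obstacle to be the control of the socle line on $\F_q$-points in the resonant case $r+1\equiv0\pmod p$. In that case the Calabi homomorphism does not detect the socle, so I would rely entirely on \cref{thm:hamiltonian-main}(3) together with the equivariant linearization to exclude socle contributions from both commutators and $p$-th powers.
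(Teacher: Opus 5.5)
Your filling argument for \(\gamma_j(U_{H,q})=\Gamma_j(U_H)(\F_q)\) and for the derived series is essentially the paper's. One citation needs correcting. The leftover socle element \(z=c^{-1}g\) is only known to lie in \(Z_M(\F_q)\cap\Gamma_j(U_H)(\F_q)\), so what kills it is part~(2) of \cref{thm:hamiltonian-main} (or the direct product in \cref{thm:hamiltonian-series}), read on \(\F_q\)-points. Part~(3) concerns abstract commutators and would be circular here.

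\textbf{The gap.} The genuine gap is the power step for \(j=1\), namely \(U_{H,q}^p\subseteq[U_{H,q},U_{H,q}]\). Several conclusions depend on it: \(\Phi(U_{H,q})=\Gamma_2(U_H)(\F_q)\), \(P_2=\gamma_2\), the order of the Frattini quotient, and the generator count. For \(j\ge2\) nothing is needed: \(g^p\) stays in \(\Gamma_j(U_H)(\F_q)\), which meets \(Z_M\) trivially, so \(g^p\in\Gamma_j(U_H)\cap F^{1+pj}U_H=\Gamma_{pj}(U_H)\subseteq\Gamma_{j+1}(U_H)\). Neither mechanism you propose settles \(j=1\).
\begin{enumerate}
\item Your weight argument is applied to \(g\mapsto g^p\) on \(U_H\), which is not a homomorphism, so \cref{lem:frobenius-decomposition} does not apply. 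It also fails as a heuristic at resonance: \(N=p\cdot2(h(p-1)-1)\), and \(2(h(p-1)-1)\) is a genuine scalar weight of \(U_H\). So \(p\)-th powers from that layer have their first possible symbol in the socle layer.
\item The Calabi homomorphism is defined only on \(K_H\), whereas \(g\) may carry flux. It also vanishes on \(Z_M\) when \(p\mid r+1\), and the ``resonance replacement'' is never named.
\end{enumerate}

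\textbf{How the paper closes it.} The argument is group-theoretic, via \cref{lem:hamiltonian-finite-generators}. \(U_{H,q}\) is generated by \(\F_q\)-points of homomorphisms \((\F_q,+)\to U_H\): the \(T_{\ell,a}\), the flux shears, and \(z_a\). Hence its abelianization is generated by elements of order \(p\) and is elementary abelian. The left-normed commutator is multilinear modulo \(\gamma_{j+1}\) and factors through the abelianization. This gives \(\gamma_j^p\subseteq\gamma_{j+1}\), and hence \(P_j=\gamma_j\) for all \(j\). The same generators give a surjection from \((\F_q,+)^{\binom{2r+2}{3}+2r+1}\) onto the abelianization, and the order count makes it an isomorphism.

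\textbf{Repairing your own route.} If you want to keep an equivariance argument, run it on the commutative quotient \(A_H=U_H/\Gamma_2(U_H)\). There \([p]\) is additive and the only scalar weights are \(1,p-2,N\), so parity of \(N\) forces \([p]=0\) on \(A_H\). This is the first assertion of \cref{prop:lower-central-vector-groups}, and its proof uses no resonance hypothesis. Equivalently, the additive character detecting the socle on all of \(U_H\) is the projection \(U_H\to A_H\to T_M\) from \eqref{eq:AH-weight-splitting}, not \(\Cal_H\).
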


\begin{theorem}[Witt--Jacobson, Hamiltonian, and contact automorphism groups]
\label{thm:witt-contact-main}
For the height-one Witt--Jacobson group with \(n\ge2\),
\[
 G_W=U_W\rtimes\GL_n,
 \qquad R_u(G_W)=U_W,
\]
\[
 [U_W,U_W]_{\alg}=F^3U_W,\qquad
 \gamma_j(U_W)=F^{j+1}U_W,
\]
and, for every \(s\ge2\),
\[
 [F^sU_W,F^sU_W]_{\alg}=F^{2s-1}U_W.
\]
Moreover, with \(D=n(p-1)\), for every \(j\ge0\),
\[
 U_W^{(j)}=F^{2^j+1}U_W=\gamma_{2^j}(U_W).
\]
Consequently
\[
 \operatorname{cl}(U_W)=D-1,\qquad
 \operatorname{dl}(U_W)=\left\lceil\log_2D\right\rceil.
\]
For every \(q=p^f\),
\[
 \Phi(U_W(\F_q))=F^3U_W(\F_q),\qquad
 P_j(U_W(\F_q))=F^{j+1}U_W(\F_q).
\]
The same self-commutator and derived-series equalities hold on
\(U_W(\F_q)\).

For the height-one Hamiltonian group,
\[
 G_H=U_H\rtimes\operatorname{CSp}_{2r},
 \qquad R_u(G_H)=U_H.
\]

For the simple height-one contact algebra, including the exceptional
codimension-one derived case, the adjoint morphism from the coordinate
contact group scheme to the full automorphism group scheme is an isomorphism.
\end{theorem}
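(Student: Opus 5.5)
The plan splits the theorem into its three families: a structural part, a commutator computation for \(U_W\), and a scheme-theoretic comparison for the contact case.

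\emph{Witt--Jacobson structure.} The linear part \(g\mapsto (x_i\mapsto g(x_i)\bmod \fm^2)\) is a split epimorphism \(G_W\to\GL_n\), with section given by linear substitutions. Its kernel \(U_W\) is the congruence subgroup \(F^2U_W\). It is unipotent because it acts unipotently on the finite-dimensional space \(O(n;1)\), preserving the \(\fm\)-adic flag and acting trivially on its graded pieces. It is connected and smooth because each layer \(\gr^sU_W\) is a vector group. Following Wilson \cite[Theorem~1]{Wilson1975}, I identify this layer with \(W(n;1)_{s-1}\) via \(g\mapsto\) the degree-\(s\) part of \(g(x_i)-x_i\); for \(W\) there is no closedness constraint, so every homogeneous field is realized by \(x_i\mapsto x_i+f_i\). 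Since \(\GL_n\) is reductive, \(R_u(G_W)=U_W\). The same argument, with \(\operatorname{CSp}_{2r}\) as the image of the linear part, gives \(G_H=U_H\rtimes\operatorname{CSp}_{2r}\) and \(R_u(G_H)=U_H\).

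\emph{Commutator identities for \(U_W\).} The inclusions \([F^aU_W,F^bU_W]\subseteq F^{a+b-1}U_W\) are already known. On the associated graded, the commutator of elements of \(\gr^aU_W\) and \(\gr^bU_W\) is the Lie bracket \(W_{a-1}\times W_{b-1}\to W_{a+b-2}\). The key input is bracket generation in \(W(n;1)\) for \(n\ge2\): the positive part is generated by \(W_1\), so \([W_1,W_e]=W_{e+1}\) for all \(e\ge1\) up to the top degree \(D-1\); moreover \([W_e,W_e]=W_{2e}\) for \(e\ge1\). I would prove the latter directly with monomials. One checks \([x^a\partial_i,x^b\partial_j]\) and uses the two-variable freedom, so that \(x_i x^c\partial_j\) and \(x_j\partial_j\)-type terms can be separated. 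Care is needed when an exponent reaches \(p-1\), and the case \(2e=D-1\) needs a separate check, since the top field \(x^{(p-1)}\partial_j\) must arise as a bracket of degree-\(e\) monomials.

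From this, successive approximation gives equalities. Since \(U_W\) is unipotent with finitely many layers, any \(g\in F^{c}\) can be written as a product of commutators modulo \(F^{c+1}\), and one descends to the bottom. The required surjectivity on each layer is algebraic: the commutator map lands in a closed connected subgroup (Borel), and its image in the layer is a subgroup containing a spanning set, hence everything. This yields \([U_W,U_W]_{\alg}=F^3U_W\), \(\gamma_j(U_W)=F^{j+1}U_W\) by induction on \(j\), and \([F^sU_W,F^sU_W]=F^{2s-1}U_W\). The derived series then follows by induction: \(U_W^{(j+1)}=[F^{2^j+1},F^{2^j+1}]=F^{2^{j+1}+1}\). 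Since \(F^{D+1}U_W\ne 1=F^{D+2}U_W\) (the top layer \(W_{D-1}\ne0\)), this gives class \(D-1\) and derived length \(\lceil\log_2 D\rceil\).

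\emph{Finite points.} Over \(\F_q\) the same successive approximation works with abstract commutators, because the layers are \(\F_q\)-vector spaces and the bracket surjects onto them by \(\F_q\)-bilinearity. For \(\Phi\) and \(P_j\) I need \(p\)-th powers to land one level deeper: \(g^p\in F^{p(s-1)+1}\subseteq F^{s+2}\) for \(s\ge2\), since the \(p\)-map is additive in degree. Then \(P_j=\gamma_j=F^{j+1}\).

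\emph{Contact.} This last step is the main obstacle. I would show the adjoint morphism is injective by checking that the centralizer is trivial, and then show it is surjective on \(R\)-points for all \(k\)-algebras \(R\) using Skryabin's theorem \cite{Skryabin1995}: isomorphisms of Cartan-type algebras come from admissible coordinate isomorphisms carrying the contact form line. For the non-reduced part I would compare Lie algebras, checking that the derivations of \(K(2r+1;1)^{(1)}\), including the exceptional case \(p\mid 2r+4\), coincide with the Lie algebra of the coordinate contact stabilizer. The difficulty lies in that exceptional derived case, where extra outer derivations must be matched with infinitesimal contact substitutions.
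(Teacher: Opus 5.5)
\textbf{Main gap: the contact comparison.} You have placed the difficulty in the wrong step. At height one there are no extra outer derivations to match, even when $p\mid r+2$. $\Lie(\mathbf C_K)$ is the full contact algebra $K'$, and $\ad:K'\to\Der(L)$ is an isomorphism by Strade's derivation theorem, so the Lie-algebra comparison is the easy part.

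What a Lie-algebra comparison cannot do is control the non-reduced structure. A homomorphism of group schemes that is an isomorphism on reduced subgroups and on Lie algebras need not be an isomorphism; consider $\alpha_p\hookrightarrow\alpha_{p^2}$. Concretely, by Allen--Sweedler no element of $K'$ outside $\mathfrak c_{K,(0)}$ (for instance the negative-degree directions $1,q_i,p_i$) admits a divided-power sequence through level $p$ in $\mathbf C_K$. You must rule out that such elements acquire one in $\mathbf{Aut}(L)$.

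The paper does this by comparing every Verschiebung layer $\Lie_j$:
\begin{itemize}
\item a length-$p$ sequence through a derivation $D$ forces $[\Sq(D)]=0$ in $H^2(L,L)$ (\cref{lem:length-p-obstruction});
\item Viviani's computation gives $[\Sq(y)]\ne0$ for every nonzero $y\in K'_{-2}\oplus K'_{-1}$;
\item \cref{lem:scheme-comparison}, via Dieudonn\'e's Hopf-subalgebra criterion, then upgrades equality of all $\Lie_j$ to an isomorphism.
\end{itemize}

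If you instead want Skryabin's theorem to give bijectivity on $R$-points for every $R$, the tangent comparison becomes superfluous. But you would then have to check that the theorem applies over non-reduced bases, where automorphisms need not preserve the standard filtration. You would also need it to cover the derived algebra in the exceptional case and the specific contact line on $O_K\otimes R$. As written, your plan assigns the non-reduced part to a tangent-space comparison, and that cannot finish the argument.

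\textbf{Radical and Witt--Jacobson parts.} These follow the paper's route, but several points need repair.
\begin{itemize}
\item For $U_H$, proving connectedness via the layers requires the leading-field maps onto $\cH_d$ to be surjective (\cref{prop:hamiltonian-realization}). This is not automatic as it is for $W$. The paper's scalar-dilation contraction avoids the issue.
\item The top nonzero congruence subgroup is $F^DU_W$, not $F^{D+1}U_W$; your version would give class $D$.
\item The case $2e=D-1$ cannot occur, since $D=n(p-1)$ is even.
\item A subgroup of a vector group that contains a spanning set need not be the whole group. What you actually use is that the set of leading brackets is stable under $k$-scaling, so it generates the $k$-span.
\item $[W_e,W_e]=W_{2e}$ fills only the first layer of $[F^sU_W,F^sU_W]_{\alg}$. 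For the layers $d>2s-1$ you need one of two arguments. Either use the general identity $[\cW_a,\cW_b]=\cW_{a+b-1}$ with $a=s$, $b=d-s+1$ (\cref{lem:witt-balanced}). Or use normality of $[F^sU_W,F^sU_W]_{\alg}$ in $U_W$ together with $[\cW_2,\cW_{d-1}]=\cW_d$ to push a full layer upward.
\end{itemize}
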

\section{The special group}
\label{sec:special-group}

Write \(O=O(n;1)\), \(D=n(p-1)\), and
\[
 d_0=(p-1)(n-1).
\]
Contraction with \(\omega_S\) identifies the homogeneous component
\(\cL_d\) of divergence-free fields with closed \((n-1)\)-forms of coefficient degree \(d\).  Let
\[
 E_d=\ker\!\left(
 \cL_d\longrightarrow H^{n-1}_{\dR}(O)
 \right)
\]
be its exact subspace.

\subsection{The reduced radical and its graded Lie algebra}

\begin{proposition}
\label{prop:special-radical}
The linear-part map gives a split exact sequence
\[
 1\longrightarrow U_S\longrightarrow G_S
 \longrightarrow\GL_n\longrightarrow1,
\]
and
\[
 U_S=R_u(G_S).
\]
\end{proposition}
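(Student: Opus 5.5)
The plan is to verify the four ingredients of the statement directly from the coordinate description: surjectivity with a section, identification of the kernel, unipotence and connectedness of the kernel, and reductivity of the quotient.

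\emph{Section and kernel.} First I construct the section. A linear substitution \(A\in\GL_n\), acting by \(x_i\mapsto\sum_j a_{ji}x_j\), preserves the ideal \((x_1^p,\ldots,x_n^p)\) because \((\sum_j a_{ji}x_j)^p=\sum_j a_{ji}^p x_j^p\). Hence it defines an automorphism of \(O(n;1)\). It satisfies \(A^*\omega_S=\det(A)\,\omega_S\), so it lies in the stabilizer of the line \(k\omega_S\). Its linear part is \(A\) itself, which gives a homomorphic section \(\GL_n\to G_S\) and in particular surjectivity of the linear-part map.

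Next I identify the kernel with \(U_S\). Suppose \(g\in G_S\) has trivial linear part and \(g^*\omega_S=\lambda\omega_S\) with \(\lambda\in k^\times\). Then \(g^*\omega_S=\det\bigl(\partial g(x_i)/\partial x_j\bigr)\,\omega_S\). The Jacobian matrix is congruent to the identity modulo \(\fm\), so its determinant is \(\equiv1\pmod{\fm}\). Comparing with the constant \(\lambda\) forces \(\lambda=1\). The kernel is therefore exactly the group of tangent-to-identity substitutions with \(g^*\omega_S=\omega_S\), namely \(U_S\), taken with its reduced structure. Since \(k\) is perfect, \(G_S\) and \(U_S\) are smooth. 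The sequence is thus split exact with \(G_S\simeq U_S\rtimes\GL_n\).

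\emph{Unipotence and connectedness.} The congruence filtration \(F^sU_S\) consists of normal subgroups. Each layer \(F^sU_S/F^{s+1}U_S\) embeds, via \(g\mapsto(g(x_i)-x_i \bmod \fm^{s+1})_i\), additively into a vector group; this uses \([F^a,F^b]\subseteq F^{a+b-1}\), or more simply that composition adds the degree-\(s\) parts. The filtration terminates, since \(\fm^{D+1}=0\) gives \(F^{D+1}U_S=1\). Hence \(U_S\) is unipotent.

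For connectedness, I use the central scalar torus \(t\mapsto t\cdot\id\) in \(\GL_n\). Conjugation \(g\mapsto t\,g\,t^{-1}\) multiplies each degree-\(d\) coefficient of \(g(x_i)-x_i\) by \(t^{d-1}\). This gives a morphism \(\mathbf A^1\times U_S\to U_S\) that extends to \(t=0\) with value the identity. It stays inside \(U_S\) because the volume condition is preserved by conjugation and is closed. Every point is thereby joined to \(1\), so \(U_S\) is connected.

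\emph{Radical.} Now \(U_S\) is a connected, normal, unipotent smooth subgroup of the identity component of \(G_S\), and the quotient \(G_S/U_S\simeq\GL_n\) is reductive. It follows that \(R_u(G_S)\subseteq U_S\), because the image of \(R_u\) in \(\GL_n\) is a connected normal unipotent subgroup and hence trivial. The reverse inclusion \(U_S\subseteq R_u(G_S)\) holds by maximality. So \(U_S=R_u(G_S)\).

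The main point to handle carefully is the scheme-theoretic one. The kernel of the reduced morphism \(G_S\to\GL_n\) need not a priori be reduced, so I will state everything at the level of reduced groups, that is, \(k\)-points over \(\bar k\) followed by Galois descent. The splitting guarantees that the reduced kernel coincides with the reduced \(U_S\) defined in \cref{sec:setup}.
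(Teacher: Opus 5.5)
Your proof is correct and follows the paper's argument: the linear section with $A^*\omega_S=\det(A)\,\omega_S$, the constant term of the Jacobian forcing $\lambda=1$, contraction by scalar dilation for connectedness, and reductivity of $\GL_n$ to identify the radical. The only variation is in unipotence: you use the vector-group layers of the congruence filtration, whereas the paper notes that an identity-linear substitution is upper unitriangular on the monomial basis ordered by total degree, and both arguments work.
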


\begin{proof}
Linear substitution by \(A\in\GL_n\) sends \(\omega_S\) to
\(\det(A)\omega_S\), providing a section of the linear-part map.  If
\(g\) lies in the identity-linear kernel, preservation of the line
\(k\omega_S\) and comparison of constant terms in the Jacobian imply
\(g^*\omega_S=\omega_S\).

Order the monomial basis of \(O\) by total degree.  An identity-linear
substitution is upper unitriangular in this basis, so \(U_S\) is unipotent.
Scalar dilation contracts it:
\[
 \delta_tg\delta_t^{-1}(x_i)
 =x_i+\sum_{\abs a\ge2}t^{\abs a-1}c_{i,a}x^a.
\]
The action extends regularly to \(t=0\) with value the identity, so \(U_S\)
is connected.  The quotient \(G_S/U_S\simeq\GL_n\) is reductive, so every
connected normal unipotent subgroup of \(G_S\) is contained in \(U_S\).
\end{proof}

\begin{proposition}[Associated graded of the congruence filtration]
\label{prop:special-realization}
For \(2\le d\le D-1\), the leading-field map induces
\[
 F^dU_S/F^{d+1}U_S\simeq\cL_d.
\]
In the Lie degrees of \cref{rem:grading-convention} this reads
\(\gr_{d-1}U_S\simeq\cL_d\), and the associated graded is a graded Lie
algebra:
\[
 [\,\gr_aU_S,\gr_bU_S\,]\subseteq\gr_{a+b}U_S
\]
is the ordinary Lie bracket of vector fields, while the first possible symbol
of a \(p\)-th power from degree \(a\) lies in degree \(pa\) and equals the
restricted power.  In the coordinate labels the same two statements read
\([F^aU_S,F^bU_S]\subseteq F^{a+b-1}U_S\) and depth \(1+p(a-1)\).
\end{proposition}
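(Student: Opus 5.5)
The plan is to pass from a substitution to its leading term and compare that leading term with a vector field, then read off commutators and \(p\)-th powers from the Baker--Campbell--Hausdorff formula in its truncated, characteristic-\(p\) form. Let \(g\in F^dU_S\) with \(2\le d\le D-1\). I would write \(g(x_i)=x_i+f_i+(\text{terms in }\fm^{d+1})\) with \(f_i\) homogeneous of degree \(d\), and define the leading field by
\[
 \sigma_d(g)=\sum_i f_i\,\partial_i .
\]

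The first step is to show that \(\sigma_d\) is a homomorphism \(F^dU_S\to W(n;1)_{d-1}\) whose kernel is \(F^{d+1}U_S\). Composition gives \(gh(x_i)\equiv x_i+f_i+f_i'\) modulo \(\fm^{d+1}\), because substituting \(h\) into \(f_i\) only changes \(f_i\) by terms in \(\fm^{2d-1}\subseteq\fm^{d+1}\) (this uses \(d\ge2\)). The second step places the image in \(\cL_d\). The Jacobian determinant of \(g\) is \(1+\operatorname{div}(\sigma_d(g))+(\text{terms in }\fm^{d})\), where the divergence term is homogeneous of degree \(d-1\). The condition \(g^*\omega_S=\omega_S\) therefore forces the field to be divergence-free.

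The third step, surjectivity onto \(\cL_d\), is where I expect the real work. For \(D\in\cL_d\) one cannot simply use \(\exp\), because \(\exp(D)\) is only defined when \(D^p=0\) as an operator, and even then it need not preserve the volume exactly. I would try two routes.
\begin{enumerate}
\item For exact \(D\), write \(D\) as a sum of fields \(D_{ij}(f)=\partial_jf\,\partial_i-\partial_if\,\partial_j\) with \(f\) a monomial. For these, the truncated exponential \(\exp(D_{ij}(f))\) exists: the operator is nilpotent of order \(<p\) on the relevant monomials, since it raises degree, and \(d\ge2\) gives enough room. It is also volume-preserving, being a composition of elementary shears in which each coordinate change involves only the other variable.
\item For the non-exact classes, which occur only in the single Cartier degree \(d_0+1\), I would build the substitution directly. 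For example, \(x_i\mapsto x_i+c\,x^{a}\) with \(a_i=0\) and the other exponents equal to \(p-1\) is a shear with Jacobian \(1\).
\end{enumerate}
Alternatively, following Wilson \cite[Theorem~1]{Wilson1975}, I would correct an arbitrary lift order by order by solving the Jacobian equation in higher degrees. At each degree this reduces to showing that the divergence map is surjective onto the needed component, which holds below the top degree \(D\).

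The fourth step derives the graded identities. For \(g\in F^a\) and \(h\in F^b\), I would expand \(ghg^{-1}h^{-1}\) acting on \(x_i\). The first term surviving modulo \(\fm^{a+b}\) is \(\sigma_a(g)(\text{coefficients of }\sigma_b(h))\) minus the symmetric term. This is exactly the vector-field bracket, of degree \(a+b-2\). That gives \([F^aU_S,F^bU_S]\subseteq F^{a+b-1}U_S\) with symbol the Lie bracket, and additivity in Lie degrees \((a-1)+(b-1)\). For \(p\)-th powers, I would view \(g\) as the operator \(1+N\) on \(O\) with \(N\) raising degree by at least \(a-1\). Then \(g^p=1+N^p\), since \(p\) is the characteristic, and the leading part of \(N^p\) is \(\sigma(g)^p\), which is the restricted power of the derivation and has Lie degree \(p(a-1)\), i.e.\ depth \(1+p(a-1)\). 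Homogeneity with respect to the scalar dilations \(\delta_t\) (\cref{prop:special-radical}) makes these symbol maps well defined on the graded pieces.
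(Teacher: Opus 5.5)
\textbf{There is a gap in the surjectivity step.} Your Steps 1, 2 and 4 match the paper's argument: the leading field is a homomorphism with kernel $F^{d+1}U_S$, the divergence is the first Jacobian term, the commutator is read off by expansion, and $(1+N)^p=1+N^p$. Neither of your two routes for surjectivity closes as written.

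\textbf{Route (1).} Both justifications for $\exp(D_{ij}(f))$ are false.
\begin{enumerate}
\item The field $D_{ij}(f)=\partial_jf\,\partial_i-\partial_if\,\partial_j$ moves $x_i$ and $x_j$ at the same time, with coefficients depending on both whenever $f$ involves both. Its exponential is therefore not a composition of shears each depending only on the other variable.
\item Raising degree by $d-1$ in a box of top degree $n(p-1)$ does not give nilpotency of order $<p$. For $X=D_{12}(x_1x_2x_3)=x_1x_3\partial_1-x_2x_3\partial_2$ one has $X^{p-1}(x_1)=x_3^{p-1}x_1\ne0$.
\item Even nilpotency would not make the truncated exponential multiplicative. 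That requires the cross terms $\sum_{i+j\ge p}X^i(a)X^j(b)/(i!\,j!)$ to vanish.
\end{enumerate}
The conclusion is true, but for a different reason. Write $f=x_i^ax_j^bN$ with $a,b\ge1$ (otherwise $X$ is a genuine shear). Then $X=u\,(b\,x_i\partial_i-a\,x_j\partial_j)$ with $u=x_i^{a-1}x_j^{b-1}N\in\fm$ and $X(u)=(a-b)u^2$. Hence $X^m(O)\subseteq u^mO$, and $u^p=0$ kills the cross terms. The resulting substitution $x_i\mapsto x_i\alpha(u)$, $x_j\mapsto x_j\beta(u)$ has Jacobian $\alpha\beta+(b-1)u\alpha\beta'+(a-1)u\alpha'\beta$. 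This equals $1$ modulo $u^p$ by the characteristic-zero flow identity, whose coefficients below degree $p$ are $p$-integral. None of this is in your proposal.

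\textbf{The fallback route.} Correcting an arbitrary lift degree by degree is the paper's route, but you stop at ``surjective below the top degree~$D$''. The Jacobian error of a partial lift occurs in every function degree up to $D$. In degree $D$ there is nothing to correct with: $x_1^{p-1}\cdots x_n^{p-1}$ is not a divergence, and there are no fields of coefficient degree $D+1$. You must show that the top-degree error vanishes automatically.

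The paper does this with the integral coming from the change-of-variables formula. Concretely, $(\det J_g)\,\omega_S=g^*\omega_S=d(g^*\theta)$ is exact. An exact top form has zero coefficient at $x_1^{p-1}\cdots x_n^{p-1}\,dx_1\wedge\cdots\wedge dx_n$, because $\partial_i$ never produces exponent $p-1$ in $x_i$ inside the box. So $\det J_g$ never has a degree-$D$ term. Every error $\varepsilon_m\omega_S$ therefore lies in $d\Omega^{n-1}_{m+1}$, and for $m=D$ it is simply zero. With this added, the order-by-order correction proves surjectivity on its own, and route (1) and the separate Cartier shears become unnecessary.
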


\begin{proof}
If
\[
 g(x_i)=x_i+f_i+\text{terms of degree \(>d\)},\qquad \deg f_i=d,
\]
then the first homogeneous term in \(\det J_g-1\) is
\(\sum_i\partial_if_i\).  Hence the leading field
\(\sum_if_i\partial_i\) is divergence-free.

For surjectivity, let \(X\) be a divergence-free homogeneous field.  The
substitution \(x\mapsto x+X(x)\) preserves the volume form through its
leading degree.  Suppose inductively that a partial lift preserves
\(\omega_S\) modulo coefficient degree \(m\).  Its error is a closed top
form \(\varepsilon_m\omega_S\).  The zero integral forced by the
change-of-variables formula puts this form in the image of
\[
 d:\Omega^{n-1}_{m+1}(O)\longrightarrow\Omega^n_m(O).
\]
Choose a vector field \(Y_m\) whose contraction with \(\omega_S\) is a
primitive, and compose with \(x\mapsto x-Y_m(x)\).  The composition removes
the error without changing lower-degree terms.  Induction through the
finite congruence filtration produces an element of \(U_S\) with leading
field \(X\).

The commutator formula follows by expanding two substitutions to the first
nonzero common degree.  For powers, apply the substitution difference
operator \(g-1\) repeatedly.  In characteristic \(p\), the intermediate
binomial coefficients vanish, and the first possible nonzero term is the
\(p\)-fold iterate of the leading derivation, at depth
\(1+p(a-1)\).  Its symbol is the restricted power.
\end{proof}

\subsection{The Cartier character}

The tensor-product decomposition of the truncated de Rham complex gives
\[
 H^\bullet_{\dR}(O)
 \simeq
 \Lambda_k\bigl(
 [x_1^{p-1}dx_1],\ldots,[x_n^{p-1}dx_n]
 \bigr).
\]
In degree \(n-1\) this has basis
\[
 \eta_i=
 \left[
 \left(\prod_{j\ne i}x_j^{p-1}\right)
 dx_1\wedge\cdots\wedge\widehat{dx_i}\wedge\cdots\wedge dx_n
 \right].
\]
All these classes have coefficient degree \(d_0\).

Choose
\[
 \theta=x_1\,dx_2\wedge\cdots\wedge dx_n,
 \qquad d\theta=\omega_S.
\]

\begin{proposition}
\label{prop:special-cartier}
The formula
\[
 \chi_{\Box}(g)=[g^*\theta-\theta]
\]
defines a surjective homomorphism
\[
 \chi_{\Box}:U_S\longrightarrow H^{n-1}_{\dR}(O)
 \simeq(\det V^*)^{p-1}\otimes V^{(1)}.
\]
Its associated-graded map is zero outside degree \(d_0\), and
\[
 \ker(\gr^{d_0}\chi_{\Box})=E_{d_0}.
\]
\end{proposition}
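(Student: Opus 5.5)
We prove \cref{prop:special-cartier} in four steps: well-definedness, the homomorphism property, the graded statement, and then surjectivity from the graded statement.

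\emph{Well-definedness.} For \(g\in U_S\) we have \(g^*\omega_S=\omega_S\), so
\[
 d(g^*\theta-\theta)=g^*d\theta-d\theta=g^*\omega_S-\omega_S=0 .
\]
Hence \(g^*\theta-\theta\) is a closed \((n-1)\)-form and defines a class in \(H^{n-1}_{\dR}(O)\). The class does not depend on the primitive \(\theta\). Indeed, if \(d\theta'=\omega_S\), then \(\theta'-\theta\) is closed. By the exterior-algebra description of \(H^\bullet_{\dR}(O)\), \(\theta'-\theta\) equals a linear combination of the top-box representatives of the \(\eta_i\) plus an exact form.

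\emph{Homomorphism property.} We use the cocycle identity
\[
 (gh)^*\theta-\theta=h^*(g^*\theta-\theta)+(h^*\theta-\theta).
\]
So it suffices that every \(h\in U_S\) acts trivially on \(H^{n-1}_{\dR}(O)\). The action of \(U_S\) on cohomology is algebraic and unipotent. It commutes with scalar dilation up to conjugation, and \(H^{n-1}_{\dR}(O)\) is concentrated in the single coefficient degree \(d_0\). An identity-linear substitution changes a form only by terms of strictly higher coefficient degree, so it induces the identity on the degree-\(d_0\) cohomology. The same argument, applied to \(\theta'-\theta\) in the previous step, shows the class is independent of \(\theta\) up to terms that \(U_S\) does not move.

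\emph{The graded statement.} Take \(g\in F^sU_S\) with leading field \(X\in\cL_s\). The lowest-degree part of \(g^*\theta-\theta\) is the Lie derivative
\[
 L_X\theta=d\iota_X\theta+\iota_X\omega_S .
\]
Modulo exact forms this is \([\iota_X\omega_S]\). Higher-order terms of \(g\) contribute closed forms of coefficient degree \(>s\), and these have zero class unless their degree is \(d_0\). To control this, I would work degree by degree. Dilation-equivariance (\cref{lem:frobenius-decomposition} with the torus \(\Gm\)) shows that \(\chi_\Box\) restricted to \(F^sU_S\) factors through \(\gr\). Then \(\gr^s\chi_\Box(X)=[\iota_X\omega_S]\), which vanishes for \(s\ne d_0\) because the target lives in degree \(d_0\). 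At \(s=d_0\), the kernel is exactly \(E_{d_0}\) by the definition of \(E_{d_0}\).

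\emph{Surjectivity.} Every class \(\eta_i\) is represented by a closed \((n-1)\)-form of degree \(d_0\), hence by some \(X\in\cL_{d_0}\). By \cref{prop:special-realization}, \(X\) lifts to an element of \(F^{d_0}U_S\) with leading field \(X\). Its image under \(\chi_\Box\) is \(\eta_i\), since the image is exactly the graded term. The identification of the target with \((\det V^*)^{p-1}\otimes V^{(1)}\) comes from the \(\GL_n\)-action on the basis \(\eta_i\). The factor \(\prod_{j\ne i}x_j^{p-1}\) transforms by \(p\)-th powers, which gives the Frobenius twist, and the \(\det\) factor comes from \(dx_1\wedge\cdots\wedge\widehat{dx_i}\wedge\cdots\wedge dx_n\). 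This identification is checked on the torus and the Weyl group.

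\emph{Main obstacle.} The hardest step is the homomorphism property: showing that \(U_S\) acts trivially on \(H^{n-1}_{\dR}(O)\) and that the choice of primitive does not matter. The plan above rests this on degree concentration. If that argument falls short for nonhomogeneous \(g\), I would compute the induced action on the explicit representatives \(\prod_{j\ne i}x_j^{p-1}dx_{\hat i}\) directly, using the Cartier isomorphism. Under that isomorphism the \(\eta_i\) correspond to \(C^{-1}\) of the forms \(dx_{\hat i}\) weighted by the volume, and \(g\) acts on these through its linear part, which is the identity.
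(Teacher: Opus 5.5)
Your proof is correct in substance. It agrees with the paper on three of the four steps: closedness of \(g^*\theta-\theta\); additivity, via the cocycle identity plus triviality of the tangent-to-identity action on \(H^{n-1}_{\dR}(O)\), which is forced because that group is concentrated in coefficient degree \(d_0\); and the graded formula \(X\mapsto[\iota_X\omega_S]\) from Cartan's formula.

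The real difference is surjectivity. The paper writes down the shears \(s_i(a)\colon x_i\mapsto x_i+a\prod_{j\ne i}x_j^{p-1}\), notes that they have Jacobian one, and computes \(\chi_{\Box}(s_i(a))=(-1)^{i-1}a\eta_i\) directly. You instead lift a preimage under \(\cL_{d_0}\to H^{n-1}_{\dR}(O)\) using \cref{prop:special-realization}, and read off its value from the graded formula. Your route makes surjectivity a formal consequence of the graded statement, but it relies on the inductive volume correction. The paper's route gives an explicit homomorphic section \(\Ga^n\to U_S\) over \(\F_p\), whose commuting order-\(p\) points are reused in \cref{prop:special-abelianization} and \cref{prop:special-transgression}. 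For the fields \(\pm\prod_{j\ne i}x_j^{p-1}\partial_i\) no correction is needed, so your lift can be taken to be the shear anyway.

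Two statements need repair.

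First, the appeal to \cref{lem:frobenius-decomposition} is misplaced: that lemma concerns multi-homomorphisms between vector groups, and \(U_S\) is not one. Its claimed conclusion is also false. For \(s<d_0\), the restriction of \(\chi_{\Box}\) to \(F^sU_S\) does not factor through \(\gr^s\), because the shears lie in \(F^{d_0}U_S\subseteq F^{s+1}U_S\) yet have nonzero Cartier value. What you actually need, and what your surjectivity step silently uses, is:
\begin{itemize}
\item \(\chi_{\Box}(F^{d_0+1}U_S)=0\);
\item \(\chi_{\Box}(g)=[\iota_X\omega_S]\) for \(g\in F^{d_0}U_S\) with leading field \(X\).
\end{itemize}
Both follow from your own degree remark once you note that \(\mathcal L_X\theta\) is closed, because \(\operatorname{div}X=0\). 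Then the remainder \(g^*\theta-\theta-\mathcal L_X\theta\) is closed, its homogeneous components are closed of coefficient degree \(>s\), and each is exact unless its degree is \(d_0\).

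Second, your weight heuristic would not produce the stated target module. With \(x_j\) of weight \(t_j\), the form representative of \(\eta_i\) has weight \(\prod_{j\ne i}t_j^{p}\), which gives \((\det V^*)^{p}\otimes V^{(1)}\). The exponent \(p-1\) is the weight of the field \(\prod_{j\ne i}x_j^{p-1}\partial_i\). That is the structure for which \(\chi_{\Box}\) is equivariant under conjugation by \(\GL_n\), since \(A^*\theta=\det(A)\theta+(\text{closed})\). The paper only asserts this identification, so this point is peripheral.
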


\begin{proof}
The form \(g^*\theta-\theta\) is closed because \(g\) preserves
\(\omega_S\).  A tangent-to-identity substitution acts trivially on the
displayed Cartier basis: nonlinear terms raise coefficient degree, while
this cohomology group has support only in degree \(d_0\).  Hence
\[
 \chi_{\Box}(gh)=h^*\chi_{\Box}(g)+\chi_{\Box}(h)
 =\chi_{\Box}(g)+\chi_{\Box}(h).
\]

Put \(m_i=\prod_{j\ne i}x_j^{p-1}\).  The substitutions
\[
 s_i(a):\quad x_i\mapsto x_i+am_i,\qquad x_j\mapsto x_j\ (j\ne i)
\]
have inverse \(s_i(-a)\) and Jacobian one.  A direct pullback calculation
gives
\[
 \chi_{\Box}(s_i(a))=(-1)^{i-1}a\eta_i.
\]
The shears realize the Cartier basis, so \(\chi_{\Box}\) is surjective.

For a leading field \(X\), Cartan's formula gives
\[
 \mathcal L_X\theta
 =\iota_X\omega_S+d(\iota_X\theta).
\]
Thus the induced graded character is
\(X\mapsto[\iota_X\omega_S]\), whose kernel is the exact component.
\end{proof}

\subsection{Exact brackets and abelianization}

\begin{lemma}[Quadratic generation of exact components]
\label{lem:special-exact-generation}
For \(3\le d\le D-1\),
\[
 [\cL_2,E_{d-1}]=E_d.
\]
\end{lemma}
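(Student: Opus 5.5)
The plan is to prove the inclusion \([\cL_2,E_{d-1}]\subseteq E_d\) by a Cartan-calculus identity. The reverse inclusion I would deduce from the Kostrikin--Shafarevich generation theorem in the form of \cite[Ch.~4, Theorems~7.2--7.5]{StradeFarnsteiner1988}. I would then check by hand only the degrees where that citation might not apply verbatim.

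\emph{Inclusion.} Let \(X,Y\) be divergence-free fields, so that \(\iota_X\omega_S\) and \(\iota_Y\omega_S\) are closed. The identity
\[
 \iota_{[X,Y]}\omega_S=\mathcal L_X\iota_Y\omega_S-\iota_Y\mathcal L_X\omega_S
\]
holds. Since \(\mathcal L_X\omega_S=0\) and \(d\iota_Y\omega_S=0\), it reduces to
\[
 \iota_{[X,Y]}\omega_S=d\,\iota_X\iota_Y\omega_S .
\]
So every bracket of divergence-free fields is exact. Homogeneity puts \([\cL_2,E_{d-1}]\) in coefficient degree \(2+(d-1)-1=d\), hence inside \(E_d\).

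\emph{Identification with \(S(n;1)^{(1)}\).} Exact divergence-free fields are spanned by the fields
\[
 D_{ij}(f)=\partial_jf\,\partial_i-\partial_if\,\partial_j ,
\]
whose contraction with \(\omega_S\) is, up to sign, \(d\) of \(f\) times a standard \((n-2)\)-form. So \(E_d\) is exactly the component of Lie degree \(d-1\) of \(S(n;1)^{(1)}\), for \(2\le d\le D-1\); the top coefficient degree \(D-1\) is the top degree of \(S(n;1)^{(1)}\). Moreover \(\cL_2=E_2\), because \(H^{n-1}_{\dR}(O)\) is concentrated in coefficient degree \(d_0=(p-1)(n-1)\ge 8>2\). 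Hence \(\cL_2\) is the Lie-degree-one component \(S(n;1)^{(1)}_{1}\).

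\emph{Surjectivity.} By the cited theorems, the positive part of the restricted simple algebra \(S(n;1)^{(1)}\) is generated by its degree-one component. Generation by degree one means exactly that, for every \(e\ge2\),
\[
 S(n;1)^{(1)}_e=[S(n;1)^{(1)}_1,\,S(n;1)^{(1)}_{e-1}] .
\]
Indeed, every iterated bracket of degree-one elements can be rewritten, via Jacobi, as a left-normed bracket \([a_1,[a_2,\ldots]]\), which lies in \([S_1,S_{e-1}]\). Taking \(e=d-1\) with \(2\le e\le D-2\) gives \([\cL_2,E_{d-1}]=E_d\) for \(3\le d\le D-1\).

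\emph{Main obstacle.} The hard step is making sure the cited generation statement really covers every degree up to the top, including the degrees adjacent to \(d_0\) and the top degree \(D-1\). In those degrees the truncation could in principle break generation, and \(n\ge3\) is used. If the citation is not stated in this form, I would argue directly with the standard formula
\[
 [D_{ij}(f),D_{jk}(g)]=D_{ik}(\partial_jf\,\partial_jg)\quad\text{(up to lower-order corrections)},
\]
and more generally with \([\,x_l\partial_m\text{-type quadratic fields},D_{ij}(x^a)]\). The aim is to produce each spanning element \(D_{ij}(x^a)\) with \(\abs a=d+1\). Starting from \(D_{ij}(x^b)\) with \(\abs b=d\), bracketing with \(D_{ik}(x_k x_l x_m)\)-type elements raises one exponent at a time. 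When an exponent of \(x_i\) or \(x_j\) is already \(p-1\), I would use a third index \(k\notin\{i,j\}\), which exists because \(n\ge3\), to route the increase through \(D_{ik}\) or \(D_{jk}\). I would then rewrite the result using the relation \(\partial_k D_{ij}(f)+\partial_iD_{jk}(f)+\partial_jD_{ki}(f)=0\) among the \(D_{ij}\).
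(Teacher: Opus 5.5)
Your inclusion step is the paper's, via the Cartan identity \(\iota_{[X,Y]}\omega_S=d(\iota_X\iota_Y\omega_S)\). For the reverse inclusion you take a genuinely different route.

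\emph{Your route.} You identify \(E_d\) with the Lie-degree-\((d-1)\) component of \(S(n;1)^{(1)}\), and \(\cL_2=E_2\) with its degree-one component. You then deduce \([\cL_2,E_{d-1}]=E_d\) from the Kostrikin--Shafarevich statement that the positive part of \(S(n;1)^{(1)}\) is generated in degree one, via the standard nested-bracket argument. That statement is equivalent to the lemma. So your proof is complete exactly insofar as the citation applies verbatim. The paper's introduction attributes the same statement to the Strade--Farnsteiner theorems, so this is a legitimate reduction. Your worry about degrees near \(d_0\) is moot: everything happens inside \(S(n;1)^{(1)}\), and the Cartier classes never lie in \(E_d\).

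\emph{The paper's route.} The paper proves the generation from scratch on the monomial fields \(D_{xy}(x^Ay^BN)\). The planar identity with coefficient \(b(A+1)-a(B+1)\) covers everything except two situations: the boundary cases \((A,B)=(0,1),(1,0)\), and the doubly saturated plane \(A=B=p-1\), where every planar coefficient vanishes. In the saturated plane a third variable is needed, e.g.\ \([U_2,V_2]-2[U_1,V_1]=(C-2)D_{xy}(h)\), with separate identities for \(C=2\) and for \(N=1\).

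\emph{Comparison.} The paper's route is self-contained and shows exactly where \(n\ge3\) and the invertibility of \(2\) and \(3\) enter. Your route is shorter and makes clear that the lemma is the classical generation theorem for the derived special algebra.

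\emph{Your fallback.} As written, it would not replace the citation. It does not isolate the \(A=B=p-1\) obstruction. Moreover, the identity \([D_{ij}(f),D_{jk}(g)]=D_{ik}(\partial_jf\,\partial_jg)\) is false. For \(f=x_ix_j\) and \(g=x_jx_k\), the left side is \([x_i\partial_i-x_j\partial_j,\,x_j\partial_j-x_k\partial_k]=0\), while the right side is \(x_i\partial_i-x_k\partial_k\). Both sides have the same degree, so no lower-order correction can repair it.
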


\begin{proof}
For divergence-free fields \(X,Y\), Cartan's formula gives
\[
 \iota_{[X,Y]}\omega_S=d(\iota_X\iota_Y\omega_S),
\]
so \([\cL_2,E_{d-1}]\subseteq E_d\).  It therefore suffices to generate the
monomial Jacobian fields
\[
 D_{ij}(h)=\partial_jh\,\partial_i-\partial_ih\,\partial_j,
 \qquad \deg h=d+1,
\]
which span \(E_d\).

Fix such a monomial, rename the distinguished variables \(x,y\), and write
\[
 h=x^Ay^BN,
\]
with \(N\) independent of \(x,y\).  If \(a,b\ge0\), \(a+b=3\), and the
indicated exponents are nonnegative, then
\begin{equation}
\label{eq:special-plane-bracket}
\begin{aligned}
 &[D_{xy}(x^ay^b),
 D_{xy}(x^{A-a+1}y^{B-b+1}N)]\\
 &\hspace{24mm}
 =\bigl(b(A+1)-a(B+1)\bigr)D_{xy}(h).
\end{aligned}
\end{equation}
The first factor lies in \(\cL_2\), and the second in \(E_{d-1}\).

Assume first that \(A,B\ge1\).  The choices \((a,b)=(2,1)\) and
\((1,2)\) are both admissible.  Their coefficients vanish simultaneously
only when
\[
 A\equiv B\equiv-1\pmod p,
\]
hence, in the height-one box, only when \(A=B=p-1\).  Hence
\eqref{eq:special-plane-bracket} generates the target unless
\(A=B=p-1\).  If \(A=0\) and \(B\ge2\), the choice \((a,b)=(0,3)\)
has coefficient \(3\); the case \(B=0,A\ge2\) is symmetric.  When
\(A=B=0\), the Jacobian field is zero.

Only two boundary configurations remain.  First
let \((A,B)=(0,1)\), so \(h=yN\).  Since \(\deg N=d\ge3\), choose a
coordinate \(z\) occurring in \(N\), say \(N=z^CM_0\) with \(C>0\).  Put
\[
 U=z^2\partial_x,
 \qquad
 V=D_{yz}\!\left(\frac12yz^{C-1}M_0\right).
\]
Then \(U\in\cL_2\), \(V\in E_{d-1}\), and
\[
 V=-\frac12z^{C-1}M_0\partial_z
   +\frac{C-1}{2}yz^{C-2}M_0\partial_y,
 \qquad [U,V]=N\partial_x=D_{xy}(h),
\]
with the second summand omitted when \(C=1\).  The configuration
\((A,B)=(1,0)\) is symmetric.

It remains to take \(A=B=p-1\).  If \(N\ne1\), choose a coordinate
\(z\) in its support and write \(N=z^CM\), where \(C>0\) and \(M\) is
independent of \(z\).  Set
\[
 \begin{aligned}
 U_1&=z^2\partial_x,&
 V_1&=D_{yz}(x^Ay^Bz^{C-1}M),\\
 U_2&=yz\partial_x,&
 V_2&=D_{yz}(x^Ay^{B-1}z^CM).
 \end{aligned}
\]
All four fields lie in the required source spaces, and, since \(B=-1\) in
\(k\),
\[
 [U_2,V_2]-2[U_1,V_1]=(C-2)D_{xy}(h).
\]
If \(C\ne2\), the coefficient is nonzero.  If \(C=2\), put
\[
 U_3=z^2\partial_y,
 \qquad V_3=D_{xz}(x^Ay^BzM),
\]
and obtain
\[
 D_{xy}(h)=[U_1,V_1]-[U_3,V_3].
\]

If \(N=1\), choose any third coordinate \(z\) and define
\[
 R=x^Ay^{B-2}\partial_z
   =\frac12D_{yz}(x^Ay^{B-1}),
\]
\[
 T=D_{yz}(x^Ay^{B-2}z)
  =x^Ay^{B-2}\partial_y
   +3x^Ay^{B-3}z\partial_z.
\]
Then \(R,T\in E_{d-1}\) and
\[
 D_{xy}(h)=3[zy\partial_x,R]-[y^2\partial_x,T].
\]
All denominators used above are invertible because \(p\ge5\).  Hence every
monomial generator of \(E_d\) lies in \([\cL_2,E_{d-1}]\).
\end{proof}

Let
\[
 j_2:U_S\longrightarrow\cL_2
\]
be the quadratic leading jet.

\begin{proposition}
\label{prop:special-abelianization}
The map
\[
 \psi=(j_2,\chi_{\Box}):
 U_S\longrightarrow
 \cL_2\oplus H^{n-1}_{\dR}(O)
\]
is a surjective homomorphism with kernel
\([U_S,U_S]_{\alg}\).
\end{proposition}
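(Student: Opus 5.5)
The plan is to check that \(\psi\) is a homomorphism and surjective, and then to identify its kernel with \([U_S,U_S]_{\alg}\) by proving two inclusions. The inclusion \(\ker\psi\supseteq[U_S,U_S]_{\alg}\) is formal. The reverse inclusion comes from a descending induction along the congruence filtration, with \cref{lem:special-exact-generation} doing the work in each layer.

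\emph{Homomorphism and surjectivity.} The quadratic jet \(j_2\) is the composite \(U_S\to F^2U_S/F^3U_S\simeq\cL_2\) of \cref{prop:special-realization}. The commutator inclusion \([F^2,F^2]\subseteq F^3\) shows that this layer is abelian, so \(j_2\) is a homomorphism. By \cref{prop:special-cartier}, \(\chi_{\Box}\) is a homomorphism.

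For surjectivity, first note that \(d_0=(p-1)(n-1)\ge 2(p-1)\ge8>2\) since \(n\ge3\). So the Cartier shears \(s_i(a)\) lie in \(F^{d_0}U_S\subseteq F^3U_S\) and have \(j_2=0\), while their \(\chi_{\Box}\)-values span \(H^{n-1}_{\dR}(O)\). Next, any \(X\in\cL_2\) lifts to some \(g\in U_S\) by \cref{prop:special-realization}. Correcting \(g\) by a product of shears kills \(\chi_{\Box}(g)\) without changing \(j_2(g)\). Together these give surjectivity onto the direct sum. The target is a commutative vector group, so \([U_S,U_S]_{\alg}\subseteq\ker\psi\).

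\emph{The inclusion \(\ker\psi\subseteq[U_S,U_S]_{\alg}\).} Put \(C=[U_S,U_S]_{\alg}\), a closed connected normal subgroup. I would show by descending induction on \(s\) that
\[
K_s=F^sU_S\cap\ker\chi_{\Box}\subseteq C\qquad(s\ge3).
\]
Since \(F^{D}U_S=1\) (as \(D\) is the top degree and coefficients lie in \(O\)), the induction starts trivially for large \(s\). Then \(\ker\psi=K_3\) gives the claim.

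\emph{Inductive step.} Take \(g\in K_s\) with \(3\le s\le D-1\). Its leading field \(X\) lies in \(\cL_s\).
\begin{itemize}
\item If \(s\neq d_0\), every closed form in degree \(s\) is exact, so \(X\in E_s\).
\item If \(s=d_0\), then \(\chi_{\Box}(g)=0\) and \(\ker(\gr^{d_0}\chi_{\Box})=E_{d_0}\) from \cref{prop:special-cartier} again give \(X\in E_s\).
\end{itemize}
\cref{lem:special-exact-generation} now writes \(X=\sum_k[Y_k,Z_k]\) with \(Y_k\in\cL_2\) and \(Z_k\in E_{s-1}\).

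Lift each \(Y_k\) to \(a_k\in F^2U_S\) and each \(Z_k\) to \(b_k\in F^{s-1}U_S\) by \cref{prop:special-realization}. The group commutators \((a_k,b_k)\) lie in \(F^sU_S\) and, by the same proposition, have leading fields \([Y_k,Z_k]\). Hence
\[
g'=g\prod_k(a_k,b_k)^{-1}\in F^{s+1}U_S.
\]
Each commutator lies in \(\ker\chi_{\Box}\), since \(\chi_{\Box}\) is a homomorphism to an abelian group. So \(g'\in K_{s+1}\subseteq C\) by induction, and \(g\in C\).

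\emph{Where the care is needed.} The main subtlety is this lifting step. The realization of \(\gr\) as a Lie algebra has to be applied to arbitrary lifts, not special ones, and the step uses \(d_0\ge3\) so that the Cartier degree is a genuine layer inside \(F^3\). Beyond that, all the real difficulty sits in \cref{lem:special-exact-generation}, which the argument simply invokes.

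Finally, \(C\) is closed and connected, and the argument shows equality pointwise on \(\bar k\)-points. This gives equality of the reduced subgroups, and \(\ker\psi\) is smooth because it is a kernel of a surjection of smooth unipotent groups, whose dimension count matches the graded pieces.
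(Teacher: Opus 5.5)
Your proof is correct and follows the paper's argument essentially verbatim: the same Cartier-shear correction for surjectivity, and the same leading-term elimination via \cref{lem:special-exact-generation}, with your descending induction on \(s\) playing the role of the paper's ``iteration through the finite congruence filtration.'' The only imprecision is your reason for \(F^DU_S=1\): it holds because \(\cL_D=0\), since a field \(\sum_i c_iM\partial_i\) with \(M=x_1^{p-1}\cdots x_n^{p-1}\) has divergence \(\sum_i c_i\partial_iM\ne0\) unless all \(c_i=0\), and not merely because \(D\) is the top degree.
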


\begin{proof}
Both components are homomorphisms.  To prove joint surjectivity, fix
\((X,c)\) in the target and choose \(g\in U_S\) with \(j_2(g)=X\), using
\cref{prop:special-realization}.  The Cartier shears from
\cref{prop:special-cartier} lie in \(F^{d_0}U_S\subseteq F^3U_S\), so they
have zero quadratic jet and their \(\chi_{\Box}\)-values span
\(H^{n-1}_{\dR}(O)\).  Hence there is such a shear product \(s\) with
\[
 \chi_{\Box}(s)=c-\chi_{\Box}(g),
\]
and then \(\psi(gs)=(X,c)\).

Since the target is commutative, \([U_S,U_S]_{\alg}\subseteq\ker\psi\).
For the reverse inclusion, let \(g\in\ker\psi\), \(g\ne1\), and let \(d\)
be its leading degree.  The condition \(j_2(g)=0\) gives \(d\ge3\).  If
\(d\ne d_0\), the degree-\(d\) de Rham cohomology vanishes, so the leading
field \(X\in\cL_d\) lies in \(E_d\).  If \(d=d_0\), the equality
\(\chi_{\Box}(g)=0\) and the associated-graded description in
\cref{prop:special-cartier} give the same conclusion.

By \cref{lem:special-exact-generation}, write
\[
 X=\sum_{\nu}[A_\nu,B_\nu],
 \qquad A_\nu\in\cL_2,
 \quad B_\nu\in E_{d-1}.
\]
Choose lifts \(a_\nu\in U_S\) and \(b_\nu\in F^{d-1}U_S\) with these
leading fields.  The product
\[
 c=\prod_\nu[a_\nu,b_\nu]
\]
has leading field \(X\), belongs to \([U_S,U_S]_{\alg}\), and therefore
lies in \(\ker\psi\).  Hence \(c^{-1}g\in F^{d+1}U_S\cap\ker\psi\).  Iteration through the
finite congruence filtration places \(g\) in \([U_S,U_S]_{\alg}\).
\end{proof}

For \(s\ge3\), put
\[
 K_s=F^sU_S\cap\ker\chi_{\Box}.
\]
Since \(F^3U_S=\ker j_2\), the preceding proposition gives
\begin{equation}
\label{eq:special-K3-derived}
 K_3=\ker(j_2,\chi_{\Box})=[U_S,U_S]_{\alg}.
\end{equation}

\begin{lemma}[Balanced brackets]
\label{lem:balanced-special}
If \(s\ge3\) and \(2s\le D\), then
\[
 [E_s,E_s]=E_{2s-1}.
\]
\end{lemma}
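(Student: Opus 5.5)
The inclusion \([E_s,E_s]\subseteq E_{2s-1}\) is immediate from the Cartan identity \(\iota_{[X,Y]}\omega_S=d(\iota_X\iota_Y\omega_S)\) used in \cref{lem:special-exact-generation}. The work is the reverse inclusion. As there, I will show that every monomial Jacobian field \(D_{xy}(h)\) with \(\deg h=2s\) is a combination of brackets \([D_{uv}(f),D_{u'v'}(g)]\) with \(\deg f=\deg g=s+1\). The constraint \(2s\le D\) ensures that the relevant monomials fit inside the box, so both factors can be chosen inside the height-one truncation.

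\emph{Main case: the planar bracket.} For monomials in the two distinguished variables \(x,y\), the Poisson-type identity
\[
 [D_{xy}(x^ay^bN_1),D_{xy}(x^{a'}y^{b'}N_2)]
\]
is computed when \(N_1,N_2\) are independent of \(x,y\). Its \(D_{xy}\)-component is \((ab'-a'b)\,D_{xy}(x^{a+a'-1}y^{b+b'-1}N_1N_2)\), up to correction terms involving derivatives of \(N_1,N_2\). Those corrections vanish when the \(N_i\) are constants or when only one factor carries \(N\). Write \(h=x^Ay^BN\). I will split the exponent mass \(2s+2\) of \(hxy\) into two monomials of degree \(s+1\): a factor \(x^ay^b\) (possibly times part of \(N\)) and its complement. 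The aim is to choose the split so that the coefficient \(ab'-a'b\) is a unit in \(k\). The split has many degrees of freedom, and varying \((a,b)\) by one step changes the coefficient by an expression like \(b(A+1)-a(B+1)\). As in \eqref{eq:special-plane-bracket}, two adjacent choices fail simultaneously only when \(A\equiv B\equiv -1\pmod p\), that is, only when \(A=B=p-1\).

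\emph{Remaining cases.} These are the degenerate ones: \(A=B=p-1\), small \(A\) or \(B\), and the case where the degree-\(s\) pieces cannot be formed from \(x,y\) alone. For them I will imitate the boundary constructions of \cref{lem:special-exact-generation}, using a third coordinate \(z\in\operatorname{supp}N\), or any third coordinate when \(N=1\), which is available because \(n\ge3\). The step is to replace the quadratic fields \(z^2\partial_x\), \(yz\partial_x\) there by degree-\(s\) exact fields such as \(D_{xz}(x z^{c}M')\) and \(D_{yz}(\cdots)\), with the remaining mass distributed to the other factor. A suitable linear combination should then isolate \(D_{xy}(h)\) with a unit coefficient. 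Such coefficients are polynomials of degree at most \(3\) in the exponents, and the claim is that they are units because \(p\ge5\).

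An alternative route is an induction using \cref{lem:special-exact-generation}. If \(X\in E_{2s-1}\), write \(X=\sum[A_\nu,B_\nu]\) with \(A_\nu\in\cL_2\) and \(B_\nu\in E_{2s-2}\). One then tries to push the quadratic part across via Jacobi: \([A,[Y,Z]]=[[A,Y],Z]+[Y,[A,Z]]\), with \(Y\in E_s\), \(Z\in E_{s-1}\), and \([A,Z]\in E_s\). This requires first writing \(E_{2s-2}\) as \([E_s,E_{s-1}]\), which is an unbalanced analogue of the lemma and could itself be proved by the same plane-bracket method. I will attempt the direct monomial approach first, keeping the Jacobi induction as the fallback.

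\emph{Expected obstacle.} The main difficulty is the box corner \(A=B=p-1\) together with the near-top range \(2s\approx D\). There the complementary monomial of degree \(s+1\) is forced to sit close to the socle, and the available splits are few. I expect the explicit three-variable identities, and possibly the hypothesis \(n\ge3\), to be essential exactly there. I will check that configuration case by case, with \(N\) split between the two factors.
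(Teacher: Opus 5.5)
\paragraph{The planar identity cannot reach the box corner.}
Your planar identity is fine and in fact has no correction terms. Since \(N_1,N_2\) are free of \(x,y\), and \(a+a'=A+1\), \(b+b'=B+1\), the bracket is exactly \(\bigl(b(A+1)-a(B+1)\bigr)D_{xy}(h)\), as in \eqref{eq:special-plane-bracket}. Checking the admissible splits shows that a unit coefficient is available whenever \(D_{xy}(h)\ne0\) and \((A,B)\ne(p-1,p-1)\).

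At the corner \(A=B=p-1\), however, the coefficient is \(p(b-a)\equiv0\). This holds for every choice of \((a,b)\), every distribution of \(N\) between the factors, and every \(s\). The corner lies in the range as soon as \(s\ge p-1\); for example \(h=x^{p-1}y^{p-1}\) with \(s=p-1\), which is far from the top when \(n\) is large. So the obstruction is not a ``near-top'' phenomenon. For this case you only assert that some adaptation of the three-variable identities of \cref{lem:special-exact-generation} ``should'' produce a unit coefficient. No identity is exhibited, so the reverse inclusion is unproved in exactly the one case where your method breaks.

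Your fallback does not help. In \([A,[Y,Z]]=[[A,Y],Z]+[Y,[A,Z]]\) with \(Y\in E_s\) and \(Z\in E_{s-1}\), the term \([[A,Y],Z]\) lies in \([E_{s+1},E_{s-1}]\), not in \([E_s,E_s]\). The Jacobi step therefore does not preserve balance.

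\paragraph{The missing idea: use fields outside the \(xy\)-plane.}
Do not restrict yourself to Jacobian fields in the \(xy\)-plane. Because \(n\ge3\) and \(2s\le D\), one has \(s\le n(p-1)/2<(n-1)(p-1)=d_0\), so \(\cL_s=E_s\). Hence every divergence-free field of coefficient degree \(s\) is exact; in particular \(f\partial_x\) is exact whenever \(f\) is free of \(x\).

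Suppose \(h=fg\) with \(\deg f=\deg g=s\) and \(\partial_xf=0=\partial_yg\). Then \([f\partial_x,g\partial_y]=-D_{xy}(h)\). At the corner, take \(f=y^{p-1}N_1\) and \(g=x^{p-1}N_2\) with \(N=N_1N_2\) and \(\deg N_1=\deg N_2=s-p+1\); this is possible because \(s\ge p-1\) there.

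The paper uses this factorization whenever both distinguished exponents are at most \(s\). When one exponent \(A\) exceeds \(s\), it takes \(X=x_i^s\partial_j\) together with a two-term divergence-free \(Y\); your planar identity would also cover that case.
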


\begin{proof}
It suffices to treat \(D_{ij}(h)\) for a monomial \(h=x^a\) of
degree \(2s\).  If \(a_i,a_j\le s\), factor
\[
 h=fg,\qquad
 \deg f=\deg g=s,\qquad
 \partial_if=0,\quad\partial_jg=0.
\]
Assign all \(x_j\)-power to \(f\), all \(x_i\)-power to \(g\), and split
the remaining powers to reach degree \(s\).  Then
\[
 [f\partial_i,g\partial_j]=-D_{ij}(h).
\]

Suppose, after interchanging \(i,j\), that \(h=x_i^Ar\) with \(A>s\).
Let \(b\) be the exponent of \(x_j\) in \(r\), and put
\[
 X=x_i^s\partial_j,
\]
\[
 Y=x_i^{A-s}r\,\partial_i
 -\frac{A-s}{b+1}x_i^{A-s-1}x_jr\,\partial_j.
\]
Since \(A\le p-1\) and \(A>s\), one has \(s\le p-2\); hence
\(b\le\deg r<s\) and \(b+1<p\).  The denominator is therefore invertible, and all exponents remain in the
box.  Direct calculation gives
\[
 \operatorname{div}X=\operatorname{div}Y=0,\qquad
 [X,Y]=D_{ij}(h).
\]
Since \(s<d_0\), one has \(\cL_s=E_s\), so the preceding brackets lie in
\([E_s,E_s]\).  Exactness of brackets gives the reverse inclusion.
\end{proof}

\begin{proposition}
\label{prop:special-commutators}
For every \(s\ge3\),
\[
 [K_s,U_S]=K_{s+1},\qquad
 [K_s,K_s]=K_{2s-1}.
\]
\end{proposition}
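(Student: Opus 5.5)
Both equalities are proved by the same two-step pattern used for \cref{prop:special-abelianization}: first an inclusion into the target, then a leading-term descent through the finite congruence filtration.

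\emph{Inclusions.} By \cref{prop:special-realization}, \([F^sU_S,U_S]\subseteq[F^sU_S,F^2U_S]\subseteq F^{s+1}U_S\) and \([F^sU_S,F^sU_S]\subseteq F^{2s-1}U_S\). Since \(\chi_{\Box}\) is a homomorphism to a commutative group, every commutator lies in \(\ker\chi_{\Box}\). Together these give \([K_s,U_S]\subseteq K_{s+1}\) and \([K_s,K_s]\subseteq K_{2s-1}\); the algebraic commutator subgroups are closed and connected, so nothing is lost in passing to them. If \(s+1>D\) or \(2s-1>D\), the relevant \(K\) is trivial, since \(F^{D+1}U_S=1\) (a substitution with \(g(x_i)-x_i\in\fm^{D+1}=0\) is the identity). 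So only the nondegenerate ranges need work.

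\emph{Reverse inclusion for \([K_s,U_S]\).} Take \(g\in K_{s+1}\), \(g\ne1\), with leading degree \(d\ge s+1\), and leading field \(X\in\cL_d\).
\begin{enumerate}
\item \(X\) lies in \(E_d\). For \(d\ne d_0\) this holds because \(H^{n-1}_{\dR}\) is concentrated in degree \(d_0\). For \(d=d_0\) it follows from \(\chi_{\Box}(g)=0\) and \(\ker\gr^{d_0}\chi_{\Box}=E_{d_0}\) (\cref{prop:special-cartier}).
\item If \(d\le D-1\), \cref{lem:special-exact-generation} writes \(X=\sum[A_\nu,B_\nu]\) with \(A_\nu\in\cL_2\) and \(B_\nu\in E_{d-1}\).
\item Lift each \(A_\nu\) to \(a_\nu\in U_S\) and each \(B_\nu\) to \(b_\nu\in F^{d-1}U_S\) by \cref{prop:special-realization}.
\item Correct \(b_\nu\) into \(\ker\chi_{\Box}\) by multiplying with Cartier shears. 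The shears lie in degree \(d_0\), and the point is that the correction must not change the leading field. If \(d-1<d_0\), the shears sit in strictly higher filtration, so the leading field is untouched. If \(d-1=d_0\), then \(B_\nu\in E_{d_0}\) gives \(\gr\chi_{\Box}(b_\nu)=0\); we then correct by a shear product whose \(\chi_{\Box}\)-value is \(-\chi_{\Box}(b_\nu)\) but whose leading term is exact or of higher degree. I expect the cleanest route is to note that \(\chi_{\Box}\) restricted to \(F^{d_0+1}U_S\) is already zero, because \(\chi_{\Box}\) is zero on associated-graded layers above \(d_0\) and the filtration is finite. In that case \(b_\nu\in F^{d_0}\) with exact leading field has \(\chi_{\Box}(b_\nu)=0\) automatically.
\item Now \(b_\nu\in K_{d-1}\subseteq K_s\), and \(c=\prod[b_\nu,a_\nu]\in[K_s,U_S]\) has leading field \(\pm X\). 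Therefore \(c^{\mp1}g\in K_{d+1}\), and we iterate.
\end{enumerate}
The degree \(d=D\) needs separate treatment, since \cref{prop:special-realization} only covers \(d\le D-1\). I would argue that the divergence-free fields of coefficient degree \(D\) vanish in the reduced group, or handle them by the same exactness argument.

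\emph{Reverse inclusion for \([K_s,K_s]\).} Argue the same way with \(g\in K_{2s-1}\) of leading degree \(d\ge 2s-1\). For \(d=2s-1\), use \cref{lem:balanced-special} to write \(X=\sum[A_\nu,B_\nu]\) with \(A_\nu,B_\nu\in E_s\), lift both into \(K_s\) as above, and peel off the leading term. For \(d>2s-1\), use \([K_s,K_s]\supseteq[K_s,K_{d-s+1}]\). Here one applies \cref{lem:special-exact-generation}-type generation iterated \(s-2\) times (\(E_s\) is spanned by iterated brackets of \(\cL_2\) with \(E_3\)), but it is simpler to use the first part. One has \(K_{d}=[K_{d-1},U_S]\), and by induction on \(s\) together with the Hall–Witt/three-subgroup lemma, \([K_s,K_t]\supseteq K_{s+t-1}\). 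Concretely, I would prove \([K_s,K_t]=K_{s+t-1}\) for all \(s,t\ge3\) by downward induction on \(s+t\). The three-subgroup lemma applied to \(K_t=[K_{t-1},U_S]\) gives \([K_s,[K_{t-1},U_S]]\subseteq[[K_s,K_{t-1}],U_S][[K_s,U_S],K_{t-1}]\); this is the wrong direction, so instead I would rely on graded generation, namely \([E_s,E_t]=E_{s+t-1}\) whenever \(s+t-1\le D-1\). That identity is checked by the same monomial factorization as in \cref{lem:balanced-special}.

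\emph{Main obstacle.} I expect the main obstacle to be this last graded statement: that brackets of exact pieces generate the exact pieces in every degree. This includes the degrees \(2s-1<d\) where \cref{lem:balanced-special} is not directly stated. I would first try to deduce it from \cref{lem:balanced-special} plus \cref{lem:special-exact-generation} via the Jacobi identity, writing \(E_{t}=[\cL_2,E_{t-1}]\) and using \([E_s,[\cL_2,E_{t-1}]]\subseteq[[E_s,\cL_2],E_{t-1}]+[\cL_2,[E_s,E_{t-1}]]\), which lands back in \([E_s,E_{t}]\)-type terms by induction. The delicate part is the top degrees near \(D-1\), where the Cartier degree \(d_0\) and the box-corner exceptions of \cref{lem:special-exact-generation} interact.
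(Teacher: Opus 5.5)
Your proof of \([K_s,U_S]=K_{s+1}\) is correct and is the paper's argument: exact leading fields, \cref{lem:special-exact-generation}, lifts corrected into \(\ker\chi_{\Box}\), and leading-term elimination. Your observation that \(\chi_{\Box}\) kills \(F^{d_0+1}U_S\), and so factors through \(\gr^{d_0}\) on \(F^{d_0}U_S\), does justify step~4. The degree \(d=D\) needs nothing, since \(\cL_D=0\) and hence \(F^DU_S=1\).

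The gap is in \([K_s,K_s]=K_{2s-1}\) for leading degrees \(d>2s-1\). You reduce it to \([E_s,E_t]=E_{s+t-1}\) with \(s\ne t\), which neither you nor the paper proves. The factorization in \cref{lem:balanced-special} is built for two factors of the same degree \(s\): the split \(h=fg\) with \(\deg f=\deg g=s\), and the case \(A>s\) with \(X=x_i^s\partial_j\). So ``the same monomial factorization'' would require a new case analysis. Your fallback Jacobi deduction also runs in the wrong direction, just like the three-subgroup attempt you discarded. The inclusion \([E_s,[\cL_2,E_{t-1}]]\subseteq[[E_s,\cL_2],E_{t-1}]+[\cL_2,[E_s,E_{t-1}]]\) is an upper bound for \([E_s,E_t]\). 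With \([E_s,E_{t-1}]=E_{s+t-2}\) it only says \([E_s,E_t]\subseteq[E_{s+1},E_{t-1}]+E_{s+t-1}\), which is vacuous.

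The missing idea is that no single \([E_s,E_t]\) has to exhaust \(E_{s+t-1}\). Since \(K_a,K_b\subseteq K_s\) for \(a,b\ge s\), it suffices that the span \(\mathfrak B_s\) of all \([E_a,E_b]\) with \(a,b\ge s\) contains \(E_d\) for every \(d\ge2s-1\). The base case \(d=2s-1\) is \cref{lem:balanced-special}. For the step, apply \(\ad\cL_2\) to what is already generated and expand by Jacobi the other way round:
\[
 E_{d+1}=[\cL_2,E_d]
 \subseteq\sum_{\substack{a+b-1=d\\ a,b\ge s}}[\cL_2,[E_a,E_b]]
 \subseteq\sum_{\substack{a+b-1=d\\ a,b\ge s}}
 \bigl([E_{a+1},E_b]+[E_a,E_{b+1}]\bigr)
 \subseteq\mathfrak B_s .
\]
This uses \cref{lem:special-exact-generation} and \([\cL_2,E_a]\subseteq E_{a+1}\), which holds because brackets of divergence-free fields are exact. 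Lifting the factors into \(K_a\) and \(K_b\) and eliminating leading terms as in your first part then gives \([K_s,K_s]=K_{2s-1}\); this is the paper's proof. Only \cref{lem:special-exact-generation} up to degree \(D-1\) is used, so the top degrees and the Cartier degree \(d_0\) cause none of the trouble you anticipated.
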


\begin{proof}
By \cref{prop:special-cartier}, the degree-\(d\) component of \(K_s\) is
\(E_d\) for \(d\ge s\); when \(d<d_0\), a nonzero Cartier value of a
lift is removed by a degree-\(d_0\) Cartier shear without changing its
leading field.  For \(d\ge s+1\), \cref{lem:special-exact-generation} gives
\[
 E_d=[\cL_2,E_{d-1}],
\]
so the leading component of every element of \(K_{s+1}\) is realized by a
commutator from \([U_S,K_s]\).  Removing that component and iterating over
the finite congruence filtration yields
\[
 [K_s,U_S]=K_{s+1}.
\]

For the self-commutator, \cref{lem:balanced-special} gives
\(E_{2s-1}=[E_s,E_s]\).  Suppose the bracket span contains all components
through degree \(d\).  Since \(E_{d+1}=[\cL_2,E_d]\), Jacobi gives
\[
 [\cL_2,[E_a,E_b]]
 \subseteq[[\cL_2,E_a],E_b]+[E_a,[\cL_2,E_b]],
\]
and the right-hand side again consists of brackets of components of degree
at least \(s\).  Hence every \(E_d\), \(d\ge2s-1\), occurs in the graded self-commutator.
The same leading-term elimination gives
\([K_s,K_s]=K_{2s-1}\).  If \(2s>D\), both groups are trivial by depth.
\end{proof}

\begin{proof}[Proof of the lower-central and derived assertions in
\cref{thm:special-main}]
By \eqref{eq:special-K3-derived}, \([U_S,U_S]_{\alg}=K_3\).  Hence
\cref{prop:special-commutators} gives
\(\gamma_j(U_S)=K_{j+1}\), while its second recurrence gives
\[
 U_S^{(j)}=K_{2^j+1}.
\]
The highest nonzero exact component is \(E_{D-1}\), and the divergence-free
component of coefficient degree \(D\) vanishes.  Hence the
nilpotency class is \(D-2\) and the derived length is
\(\lceil\log_2(D-1)\rceil\).
\end{proof}

\subsection{Finite points and the lower
\texorpdfstring{\(p\)}{p}-central series}

\begin{lemma}[Power depth]
\label{lem:power-depth}
For every \(q=p^f\) and \(a\ge2\),
\[
 (F^aU_S(\F_q))^p\subseteq F^{1+p(a-1)}U_S(\F_q).
\]
\end{lemma}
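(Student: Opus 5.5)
The plan is to compute the action of \(g^p\) on coordinates directly, using the substitution difference operator as in the proof of \cref{prop:special-realization}. Let \(g\in F^aU_S(\F_q)\). Regard \(g\) as the algebra automorphism \(\phi=g^*\) of \(O=O(n;1)\otimes\F_q\), and put \(\Delta=\phi-\id\). Then \(g^p\) corresponds to \(\phi^p\), and in characteristic \(p\)
\[
 \phi^p-\id=(\id+\Delta)^p-\id=\Delta^p,
\]
because \(\id\) commutes with \(\Delta\) and the intermediate binomial coefficients vanish. So it suffices to show \(\Delta^p(x_i)\in\fm^{1+p(a-1)}\) for every \(i\).

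The central step is a degree-raising estimate for \(\Delta\). I will show \(\Delta(\fm^m)\subseteq\fm^{m+a-1}\) for all \(m\ge1\). For a generator we have \(\Delta(x_i)=g(x_i)-x_i\in\fm^a=\fm^{1+(a-1)}\). Since \(\phi\) is an algebra map, the twisted Leibniz rule
\[
 \Delta(uv)=\Delta(u)\,v+\phi(u)\,\Delta(v)
\]
holds. Also \(\phi\) preserves every power \(\fm^m\), since it is tangent to the identity and fixes \(\fm\). Induction on monomial degree then gives \(\Delta(x^b)\in\fm^{\abs b+a-1}\). Linearity together with \(\Delta(\fm^{m'})\subseteq\fm^{m'+a-1}\) for \(m'\ge m\) extends this from monomials to all of \(\fm^m\).

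Iterating \(p\) times gives \(\Delta^p(x_i)\in\fm^{1+p(a-1)}\). Hence \(g^p(x_i)-x_i\in\fm^{1+p(a-1)}\), and \(g^p\) still lies in \(U_S(\F_q)\), so \(g^p\in F^{1+p(a-1)}U_S(\F_q)\). When \(1+p(a-1)>D\) the filtration term is trivial. This is consistent, since then \(\fm^{1+p(a-1)}\) contains no nonzero coordinate corrections of the relevant kind; for \(1+p(a-1)\ge D+1\) we get \(\fm^{D+1}=0\).

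The lemma concerns \(p\)-th powers of individual elements, so \((F^aU_S)^p\) is read as the set of such powers. If it is meant as the subgroup they generate, this follows at once because \(F^{1+p(a-1)}U_S(\F_q)\) is a subgroup.

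I expect the main obstacle to be only bookkeeping: making sure the composition convention, \(g\) versus \(g^*\) and the order of composition, matches the definition of \(F^s\) through \(g(x_i)\). Both conventions give the same power \(g^p\) and the same filtration condition on \(\phi^p(x_i)\), so the argument is insensitive to this choice.
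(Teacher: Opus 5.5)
Your proposal is correct and follows the paper's proof: write \(g=\id+\Delta\), use \((\id+\Delta)^p=\id+\Delta^p\) in characteristic \(p\), and use that \(\Delta\) raises \(\fm\)-adic degree by \(a-1\), so \(\Delta^p\) raises it by \(p(a-1)\). The paper only asserts the degree-raising property of \(\Delta\), and your twisted Leibniz argument \(\Delta(uv)=\Delta(u)v+\phi(u)\Delta(v)\), with induction on monomial degree, supplies that missing justification.
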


\begin{proof}
Write \(g=\id+\Delta\) on the coordinate algebra.  The operator \(\Delta\)
raises \(\fm\)-adic degree by \(a-1\).  In the expansion of \(g^p\), the
terms of operator degree \(1,\ldots,p-1\) occur in cyclic \(p\)-orbits and
have zero total coefficient in characteristic \(p\).  Every surviving term contains at least \(p\) degree-raising factors.
Hence \(g^p-\id\) raises degree by at least \(p(a-1)\), as required.
\end{proof}

\begin{proposition}
\label{prop:special-frattini}
For \(q=p^f\),
\[
 [U_S(\F_q),U_S(\F_q)]
 =\Phi(U_S(\F_q))
 =\ker(j_2,\chi_{\Box})(\F_q),
\]
and, for \(i\ge2\),
\[
 P_i(U_S(\F_q))
 =F^{i+1}U_S(\F_q)\cap\ker\chi_{\Box}.
\]
\end{proposition}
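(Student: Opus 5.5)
The plan is to rerun the algebraic arguments of \cref{prop:special-abelianization,prop:special-commutators} on \(\F_q\)-points, and then to add a separate computation for \(p\)-th powers. Everything is formed over \(k_0=\F_p\). I first check that the ingredients behave well on finite points.
\begin{enumerate}
\item The layers satisfy \(F^dU_S(\F_q)/F^{d+1}U_S(\F_q)\simeq\cL_d(\F_q)\). The lifting in \cref{prop:special-realization} is an explicit recursive substitution with prime-field coefficients, so it produces \(\F_q\)-points.
\item The Cartier shears \(s_i(a)\), \(a\in\F_q\), are \(\F_q\)-points and realize \(C_q\).
\item \cref{lem:special-exact-generation} supplies spanning sets of monomial brackets with coefficients in \(\F_p\).
\end{enumerate}
Because \(\F_q\) is a field, each \(\F_q\)-linear combination \(X=\sum c_\nu[A_\nu,B_\nu]\) can be rewritten as \(\sum[c_\nu A_\nu,B_\nu]\). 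Its leading term is therefore realized by a product of abstract commutators \([a_\nu,b_\nu]\) of \(\F_q\)-points, where \(a_\nu\) lifts \(c_\nu A_\nu\) and \(b_\nu\) lifts \(B_\nu\) in \(F^{d-1}U_S(\F_q)\). Leading-term elimination through the finite filtration then gives \(\ker(j_2,\chi_{\Box})(\F_q)\subseteq[U_S(\F_q),U_S(\F_q)]\). The same argument gives \([K_s(\F_q),U_S(\F_q)]=K_{s+1}(\F_q)\). The reverse inclusion holds because \(j_2\) and \(\chi_{\Box}\) are homomorphisms to abelian groups.

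Next I compare \([U_S(\F_q),U_S(\F_q)]\) with \(\Phi(U_S(\F_q))\), which requires showing that \(p\)-th powers lie in \(\ker(j_2,\chi_{\Box})(\F_q)\). Both \(j_2\) and \(\chi_{\Box}\) are homomorphisms to \(\F_p\)-vector spaces, so they kill \(g^p\). Hence \(U_S(\F_q)^p\subseteq\ker\psi(\F_q)=[U_S(\F_q),U_S(\F_q)]\), and the commutator subgroup equals \(\Phi(U_S(\F_q))\). This identification gives the Frattini quotient \(\cL_2(\F_q)\oplus C_q\). Its \(\F_p\)-dimension is \(f\) times the sum of \(\dim\cL_2=n\binom{n+1}{2}-\binom{n+1}{2}\cdot\)(divergence constraints) and \(\dim C_q=n\). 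The divergence of a quadratic field is a linear form, which gives \(n\cdot n\) conditions, all of them independent. This yields \(n\binom{n+2}{2}-n^2-\dots\). I would recheck this count against \(\tfrac{n(n-1)(n+2)}2\). The raw count is \(n\binom{n+1}{2}-n=\tfrac{n(n-1)(n+2)}2\), using quadratic coefficients of degree \(2\) (\(\binom{n+1}2\) monomials) and \(n\) linear divergence conditions. Burnside's basis theorem then gives the generator count.

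For the lower \(p\)-central series I argue by induction on \(i\), with \(P_2=\Phi=K_3(\F_q)\) as the base case. Assume \(P_i=K_{i+1}(\F_q)\). Then
\[
 P_{i+1}=P_i^p[P_i,U_S(\F_q)]\supseteq[K_{i+1}(\F_q),U_S(\F_q)]=K_{i+2}(\F_q).
\]
For the reverse inclusion, \cref{lem:power-depth} places \(K_{i+1}(\F_q)^p\) in \(F^{1+pi}U_S\). Since \(1+pi\ge i+2\), it remains only to check that \(p\)-th powers lie in \(\ker\chi_{\Box}\), which holds because \(\chi_{\Box}\) is a homomorphism to an exponent-\(p\) group. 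Thus \(P_{i+1}=K_{i+2}(\F_q)\).

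I expect the main obstacle to be the first step. The surjectivity of the layer map on \(\F_q\)-points depends on the lifting procedure of \cref{prop:special-realization}. That procedure needs the change-of-variables integral argument to yield primitives defined over \(\F_q\). This is linear algebra over \(\F_p\), because \(d\) is defined over \(\F_p\) and its image is computed after base change. In addition, the Cartier-shear correction in degrees below \(d_0\) has to stay inside \(\F_q\)-points.
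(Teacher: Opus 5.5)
Your proposal is correct and follows essentially the same route as the paper: you rerun the leading-term elimination of \cref{prop:special-abelianization} and the first recurrence of \cref{prop:special-commutators} over $\F_q$, use the exponent-$p$ target of $(j_2,\chi_{\Box})$ to get $\Phi(U_S(\F_q))=[U_S(\F_q),U_S(\F_q)]$, and combine \cref{lem:power-depth} with $\chi_{\Box}(g^p)=0$ to run the induction $P_{i+1}=P_i^p[P_i,U_S(\F_q)]=K_{i+2}(\F_q)$. The generator-count digression is not needed for this proposition, and its intermediate claim of $n\cdot n$ divergence conditions should read $n$; your corrected count $n\binom{n+1}{2}-n$ is the right one.
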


\begin{proof}
The filtration argument in \cref{prop:special-abelianization} is defined
over \(\F_q\), and gives the same commutator equality on finite points.
The target of \((j_2,\chi_{\Box})\) has exponent \(p\), so every
\(p\)-th power lies in the commutator subgroup.  Hence the commutator
subgroup is the Frattini subgroup.

Set \(H_i=F^{i+1}U_S(\F_q)\cap\ker\chi_{\Box}\).  The commutator equality
gives \(P_2=H_2\).  The first recurrence in
\cref{prop:special-commutators}, formed over \(\F_q\), gives
\([H_i,U_S(\F_q)]=H_{i+1}\).  By \cref{lem:power-depth},
\[
 H_i^p\subseteq F^{1+pi}U_S(\F_q)\cap\ker\chi_{\Box}
 \subseteq H_{i+1}.
\]
Induction yields
\[
 P_{i+1}=P_i^p[P_i,U_S(\F_q)]=H_{i+1}.
\]
\end{proof}

\begin{proof}[Proof of \cref{thm:special-main}]
The radical, commutator, and finite-point formulas are
\cref{prop:special-radical,prop:special-commutators,%
prop:special-abelianization,prop:special-frattini}.  The Cartier quotient is
\cref{prop:special-cartier}.  Finally,
\[
 \dim_{\F_q}\cL_2=\frac{n(n-1)(n+2)}2,
\]
and the Frattini quotient has one additional \(n\)-dimensional Cartier
summand over \(\F_q\), giving the stated generator number.
\end{proof}

\subsection{The Frattini extension and its transgression}

Put
\[
 B_q=\cL_2(\F_q),\qquad
 C_q=H^{n-1}_{\dR}(O)(\F_q),\qquad
 A_q=B_q\oplus C_q.
\]
The five-term sequence for
\[
 1\longrightarrow P_2\longrightarrow U_S(\F_q)
 \longrightarrow A_q\longrightarrow1
\]
contains
\[
 H_2(U_S(\F_q);k_0)\longrightarrow H_2(A_q;k_0)
 \xrightarrow{\operatorname{tr}_q}P_2/P_3.
\]
The homological normalization is that of \cite[Chapter~VII]{Brown1982}.

\begin{proposition}
\label{prop:special-transgression}
Under
\[
 H_2(A_q;k_0)\simeq
 H_2(B_q;k_0)\oplus(B_q\otimes C_q)\oplus H_2(C_q;k_0),
\]
the transgression is zero on the last two summands.  On
\[
 H_2(B_q;k_0)\simeq
 \Lambda^2B_q\oplus\Gamma_1^{\mathrm{dp}}(B_q)
\]
it is
\[
 \beta_q\oplus0,
\qquad
 \beta_q(b\wedge b')=[b,b']\in\cL_3(\F_q),
\]
and \(\beta_q\) is surjective.
\end{proposition}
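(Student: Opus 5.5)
The plan is to compute the transgression through its standard description for a central extension. Since \(P_3\) contains \([P_2,U_S(\F_q)]\) and \(P_2^p\), the extension
\[
 1\to P_2/P_3\to U_S(\F_q)/P_3\to A_q\to1
\]
is central with elementary abelian kernel. For such an extension \(\operatorname{tr}_q\) has two standard parts \cite[Chapter~VII]{Brown1982}. On the exterior part \(\Lambda^2A_q\subseteq H_2(A_q;k_0)\) it sends \(a\wedge a'\) to the commutator \([\tilde a,\tilde a']\) of arbitrary lifts, taken modulo \(P_3\). On the divided-power (Bockstein) part \(\Gamma_1^{\mathrm{dp}}(A_q)\simeq A_q\) it sends \(a\) to the class of \(\tilde a^p\). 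By \cref{prop:special-frattini} we have \(P_2/P_3=K_3(\F_q)/K_4(\F_q)\). Since \(d_0=(p-1)(n-1)\ge 8>3\), the Cartier character vanishes on this graded piece, so \(P_2/P_3\simeq E_3(\F_q)=\cL_3(\F_q)\).

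First, the \(p\)-th powers. By \cref{lem:power-depth}, for any \(g\in U_S(\F_q)=F^2U_S(\F_q)\) we get \(g^p\in F^{p+1}U_S(\F_q)\). Here \(p+1\ge 6>4\), so \(g^p\in F^{p+1}U_S(\F_q)\cap\ker\chi_\Box\subseteq K_4(\F_q)=P_3\). Note that \(g^p\in\ker\chi_\Box\) holds because \(\chi_\Box\) is a homomorphism into an exponent-\(p\) group. Hence the divided-power parts of both \(H_2(B_q)\) and \(H_2(C_q)\) transgress to zero.

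Second, the exterior brackets involving \(C_q\). Lifts of elements of \(C_q\) can be taken to be the Cartier shears \(s_i(a)\) of \cref{prop:special-cartier}, which lie in \(F^{d_0}U_S(\F_q)\). By \cref{prop:special-realization}, a commutator with \(F^2\) lands in \(F^{d_0+1}\), and a commutator of two shears lands in \(F^{2d_0-1}\). Both are contained in \(F^4\), and commutators are in \(\ker\chi_\Box\), so they lie in \(P_3\). The transgression is independent of the choice of lifts because the extension is central. This kills \(B_q\otimes C_q\) and \(\Lambda^2C_q\), i.e.\ all of \(H_2(C_q)\).

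Third, on \(\Lambda^2B_q\), lifts \(\tilde b,\tilde b'\) in \(F^2\) have commutator in \(F^3\), and its leading field is \([b,b']\) by \cref{prop:special-realization}. So \(\beta_q(b\wedge b')=[b,b']\in\cL_3(\F_q)\). Surjectivity follows from \cref{lem:special-exact-generation} with \(d=3\), namely \([\cL_2,E_2]=E_3=\cL_3\). The proof of that lemma consists of explicit monomial brackets with coefficients in \(\F_p\), so it holds over \(\F_q\) and even over \(k_0\).

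The main obstacle is normalizing the decomposition of \(H_2(A_q;k_0)\) for the \(k_0\)-vector space \(A_q\). One needs \(H_2(A_q)\simeq\Lambda^2_{\F_p}A_q\oplus A_q\), with the Künneth cross term identified as \(B_q\otimes_{\F_p}C_q\). One also has to check that the transgression formulas above hold for exactly these summands. I would verify this by reducing to cyclic factors of order \(p\) and using naturality.
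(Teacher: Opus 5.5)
Your proposal is correct and follows the paper's proof. In both, the transgression is read off from commutators and $p$-th powers of lifts modulo $P_3=K_4(\F_q)$; the depth estimates of \cref{prop:special-realization,lem:power-depth} kill the mixed and divided-power parts; and surjectivity of $\beta_q$ is the $d=3$ case of \cref{lem:special-exact-generation}.

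The only variation is on $H_2(C_q;k_0)$. The paper uses that the commuting, order-$p$ Cartier shears form a homomorphic section $C_q\to U_S(\F_q)$ and concludes by naturality of the five-term sequence. You instead kill $\Lambda^2C_q$ and the divided-power part of $C_q$ directly by the same depth estimates. Both routes work.
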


\begin{proof}
The Cartier shears \(s_i(a)\) commute: their commutator depth is
\(2d_0-1>D\).  They have order \(p\) and give a group-theoretic section
\(C_q\to U_S(\F_q)\).  Naturality makes the transgression zero on
\(H_2(C_q;k_0)\).  A mixed class is represented by the commutator of a
quadratic lift and a Cartier shear.  Its depth is at least
\(d_0+1\), hence it belongs to \(P_3\); the mixed transgression is zero.

For the quadratic part, the normalized bar representatives show that the
exterior coordinate is the commutator of two lifts and the divided-power
coordinate is the \(p\)-th power of one lift.  Modulo \(P_3\), the former
has leading field \([b,b']\).  By \cref{lem:power-depth}, the latter lies
in \(F^{p+1}U_S(\F_q)\).  Moreover
\(\chi_{\Box}(g^p)=p\chi_{\Box}(g)=0\), since the Cartier target has
exponent \(p\).  Hence
\[
 (F^2U_S(\F_q))^p
 \subseteq F^{p+1}U_S(\F_q)\cap\ker\chi_{\Box}
 \subseteq F^4U_S(\F_q)\cap\ker\chi_{\Box}=P_3.
\]
Finally, \([\cL_2,\cL_2]=\cL_3\) is the degree-three instance of
\cref{lem:special-exact-generation}, so \(\beta_q\) is onto.
\end{proof}

\begin{proof}[Proof of \cref{thm:cartier-relation-map}]
By \cref{prop:special-transgression}, the summand
\[
 (B_q\otimes C_q)\oplus H_2(C_q;k_0)
\]
lies in the kernel of the transgression and therefore in the image of
\(H_2(U_S(\F_q);k_0)\).  The section \(C_q\to U_S(\F_q)\) splits the
\(H_2(C_q;k_0)\)-summand.
\end{proof}
\section{The canonical infinitesimal thickening}
\label{sec:infinitesimal}

Let
\[
 \mathbf G_S
 =\operatorname{Stab}_{\mathbf{Aut}(O)}(k\omega_S)
\]
be the full automorphism group scheme.  Its reduced subgroup is
\(G_S=U_S\rtimes\GL_n\).

\subsection{Relative Frobenius}

\begin{lemma}
\label{lem:frobenius-factorization}
Relative Frobenius factors through the augmentation-preserving reduced
group:
\[
 \overline F:\mathbf G_S\longrightarrow G_S^{(p)}.
\]
It is finite faithfully flat, and
\[
 \ker\overline F=(\mathbf G_S)_1.
\]
\end{lemma}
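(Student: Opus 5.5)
We use the general theory of relative Frobenius for an affine group scheme of finite type over the perfect field \(k\); see \cite{Waterhouse1979}. Relative Frobenius \(F_{\mathbf G_S}:\mathbf G_S\to\mathbf G_S^{(p)}\) is a homomorphism of group schemes. Its kernel is the Frobenius kernel \((\mathbf G_S)_1\), which is infinitesimal of height one. Since \(k\) is perfect, \((\mathbf G_S^{(p)})_{\red}=(G_S)^{(p)}\), so it suffices to show that the image of \(F_{\mathbf G_S}\) lies in the reduced subgroup \(G_S^{(p)}\subseteq\mathbf G_S^{(p)}\).

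\emph{Step one: the factorization.} We work on coordinate rings. The image of \(F^*:k[\mathbf G_S^{(p)}]\to k[\mathbf G_S]\) is generated by the \(p\)-th powers \(f^p\) (twisted by the Frobenius of \(k\), which is harmless because \(k\) is perfect). Every nilpotent \(f\) in \(k[\mathbf G_S]\) satisfies \(f^{p^e}=0\) for some \(e\), but we need the sharper fact that \(F^*\) kills the ideal \(I\) defining \((\mathbf G_S)_{\red}\), that is, \(f^p=0\) for every \(f\in I\).

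\begin{enumerate}
\item \emph{Reduction to the identity-linear part.} The linear-part map sends \(\mathbf G_S\) to \(\GL_n\), which is smooth and has a section. The Jacobian condition \(g^*(k\omega_S)=k\omega_S\) is then handled as in \cref{prop:special-radical}, so the non-reducedness is concentrated in the coefficients of the substitutions.
\item \emph{Coordinates of the coefficients.} Write a point of \(\mathbf{Aut}(O)\) by \(x_i\mapsto\sum_a c_{i,a}x^a\). The defining conditions are \(g(x_i)^p=0\), and these impose \(c_{i,0}^p=0\) on the constant terms, together with the stabilizer equations. The nilpotent coordinates are the constant terms \(c_{i,0}\), each with \(c_{i,0}^p=0\), together with the nilpotent parts of the higher coefficients forced by the volume condition.
\item \emph{The reduced locus.} The reduced subgroup is cut out by \(c_{i,0}=0\) together with the reduced volume equations, and it has the smooth description \(U_S\rtimes\GL_n\) established earlier.
\end{enumerate}

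We show that \(I\) is generated by elements whose \(p\)-th powers vanish. For the constant terms this holds by construction. For the remaining equations we use the following argument. Setting \(c_{i,0}=0\) turns the equation \(g^*\omega_S\in k\omega_S\) into the reduced equation. Any element of the ideal that remains after this substitution can be written as \(c_{i,0}\cdot(\ast)\) plus a reduced relation. Its \(p\)-th power is then \(c_{i,0}^p(\ast)^p=0\) modulo the relations. Generators whose \(p\)-th powers vanish suffice, because in characteristic \(p\) we have \((\sum r_jf_j)^p=\sum r_j^pf_j^p\), so every element of \(I\) has vanishing \(p\)-th power.

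\emph{Step two: faithful flatness and the kernel.} The image of \(F\) is contained in \(G_S^{(p)}\). The map \(\overline F:\mathbf G_S\to G_S^{(p)}\) is surjective on \(\bar k\)-points, because Frobenius is bijective on points and \(G_S(\bar k)=\mathbf G_S(\bar k)\). A surjective homomorphism of affine group schemes of finite type over a field is faithfully flat, and it is finite because its kernel \((\mathbf G_S)_1\) is finite. The kernel of \(\overline F\) equals the kernel of \(F\), namely \((\mathbf G_S)_1\), since \(G_S^{(p)}\to\mathbf G_S^{(p)}\) is a closed immersion.

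\emph{Main obstacle.} The hard step is showing that the defining ideal of the reduced subgroup is killed by \(p\)-th powers, i.e.\ that \(\mathbf G_S\) has height-one non-reducedness. We would first try to exhibit \(\mathbf G_S\) as \((\mathbf G_S)_{\red}\cdot(\mathbf G_S)_1\), with multiplication faithfully flat. This would use the translation action of the infinitesimal group \(\alpha_p^n\) (\(x_i\mapsto x_i+\epsilon_i\) with \(\epsilon_i^p=0\)), which preserves \(\omega_S\) and lies in \((\mathbf G_S)_1\). With such a decomposition, Frobenius kills the height-one factor and the factorization follows directly.
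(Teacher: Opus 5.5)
Step two of your proposal is a valid way to finish: surjectivity on \(\bar k\)-points gives faithful flatness, the finite kernel gives finiteness, and \(\ker\overline F=\ker F\). Your reduction in Step one is also right: over the perfect field \(k\), relative Frobenius factors through \(G_S^{(p)}\) exactly when every element of the nilradical of \(k[\mathbf G_S]\) has zero \(p\)-th power.

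The gap is the sentence ``Setting \(c_{i,0}=0\) turns the equation \(g^*\omega_S\in k\omega_S\) into the reduced equation.'' Put \(J=(c_{1,0},\ldots,c_{n,0})\subseteq k[\mathbf G_S]\). The identity \(\varphi(x_i)^p=c_{i,0}^p\) only shows that \(J\) is killed by \(p\)-th powers. To get the same for the whole nilradical, you need \(k[\mathbf G_S]/J\) to be reduced. That is, the augmentation-preserving volume-line stabilizer \(\mathbf G_S^0\) must already be the smooth group \(G_S\). You never prove this:
\begin{itemize}
\item \cref{prop:special-radical} concerns the reduced group and says nothing about nilpotents among the volume equations.
\item The ``reduced relation'' whose \(p\)-th power is supposed to vanish ``modulo the relations'' is never identified.
\end{itemize}
Your closing paragraph has the same hole. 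The translations give \(\varphi=\psi\circ\tau_a\) and hence \(\mathbf G_S\simeq\mathbf G_S^0\times\alpha_p^n\); the paper starts there too. But this is a decomposition \((\mathbf G_S)_{\red}\cdot(\mathbf G_S)_1\) only if \(\mathbf G_S^0=G_S\). If \(\mathbf G_S^0\) carried nilpotents of higher order, Frobenius would not factor through \(G_S^{(p)}\).

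This smoothness is the real content of the lemma. The paper proves it by square-zero lifting. Lift an augmentation-preserving volume-line substitution from \(R/I\) to \(R\) arbitrarily; its Jacobian error is an \(I\)-valued top form. Change of variables for \(\operatorname{res}(f\omega_S)=[x_1^{p-1}\cdots x_n^{p-1}]f\) gives \(\operatorname{res}(\det J_\varphi)=0\). After absorbing the constant term into the volume multiplier, the truncated de Rham computation writes each homogeneous error as \(d\beta\). Composing with \(\id+Y\), where \(\iota_Y\omega_S=-\beta\), removes it, and one inducts along the congruence filtration.

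A tangent count would also work. One has \(\Lie\mathbf G_S^0=\{X\in W(n;1):X(\fm)\subseteq\fm,\ \operatorname{div}X\in k\}=\mathfrak{gl}_n\oplus\bigoplus_{d\ge2}\cL_d\). Its dimension \(n^2+\sum_{d=2}^{D-1}\dim\cL_d\) equals \(\dim G_S=\dim\mathbf G_S^0\) by \cref{prop:special-realization}. Once \(G_S\times\alpha_p^n\xrightarrow{\sim}\mathbf G_S\) is known, Frobenius is trivial on the second factor and your Step two applies.

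A smaller point: in item 1 you treat the linear part as a homomorphism \(\mathbf G_S\to\GL_n\). With nilpotent translations present it is not one: composing \(x_1\mapsto x_1+x_2^2\) with \(x_2\mapsto x_2+a\) produces the linear term \(2ax_2\). It is a homomorphism only on \(\mathbf G_S^0\).
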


\begin{proof}
Write a universal coordinate automorphism as
\[
 \varphi(x_i)=a_i+\sum_{0<\abs\alpha}c_{i,\alpha}x^\alpha.
\]
The relation \(\varphi(x_i)^p=0\) gives \(a_i^p=0\).  On coordinate Hopf
algebras, relative Frobenius therefore sends every target constant
coordinate to zero, so its image preserves the augmentation.

For a test algebra \(R\), let \(\tau_a(x_i)=x_i+a_i\).
Every \(\varphi\in\mathbf G_S(R)\) factors uniquely as
\[
 \varphi=\psi\circ\tau_a,
\qquad
 \psi(0)=0,\quad a\in\alpha_p^n(R).
\]
Translations preserve \(\omega_S\), and the construction is compatible
with base change.

The augmentation-preserving factor is the smooth group \(G_S\).  To verify
smoothness directly, let \(R\to R/I\) be a square-zero
extension and lift an augmentation-preserving volume-line substitution
from \(R/I\) arbitrarily, retaining its invertible linear part.  Its
Jacobian error is an \(I\)-valued top form.  With
\(\operatorname{res}(f\omega_S)=[x_1^{p-1}\cdots x_n^{p-1}]f\), change of
variables gives
\(\operatorname{res}(\varphi^*(f\omega_S))=\operatorname{res}(f\omega_S)\).
Thus \(\operatorname{res}(\det J_\varphi)=0\).  After the constant part is
absorbed into the volume multiplier, the error has zero residue, so the
truncated de Rham calculation writes each homogeneous error as
\[
 d\beta,\qquad \beta\in\Omega^{n-1}(O)\otimes I.
\]
If \(Y\) is defined by \(\iota_Y\omega_S=-\beta\), composition with
\(\id+Y\) removes that homogeneous error without changing lower terms.
Induction over the finite congruence filtration removes all homogeneous
errors.  The zero-constant subfunctor is therefore smooth and coincides with
its reduced subgroup \(G_S\).  Multiplication then gives an
isomorphism of schemes
\[
 G_S\times\alpha_p^n\xrightarrow{\sim}\mathbf G_S.
\]
Under this decomposition, \(\overline F\) is relative Frobenius on the
smooth factor and is trivial on \(\alpha_p^n\).
\end{proof}

Define the fibre product
\[
 \mathbf U_S^F
 =\mathbf G_S\times_{G_S^{(p)}}U_S^{(p)}.
\]

\begin{proposition}
\label{prop:frobenius-thickening}
There are exact sequences
\[
 1\longrightarrow(\mathbf G_S)_1
 \longrightarrow\mathbf U_S^F
 \xrightarrow{\overline F}U_S^{(p)}
 \longrightarrow1
\]
and, for \(r\ge1\),
\[
 1\longrightarrow(\mathbf G_S)_1
 \longrightarrow(\mathbf U_S^F)_r
 \longrightarrow(U_S)_{r-1}^{(p)}
 \longrightarrow1.
\]
Moreover
\[
 (\mathbf U_S^F)_{\red}=U_S.
\]
\end{proposition}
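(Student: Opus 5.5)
The proof is pure group-scheme algebra on top of \cref{lem:frobenius-factorization}. The plan is to base-change the exact sequence \(1\to(\mathbf G_S)_1\to\mathbf G_S\xrightarrow{\overline F}G_S^{(p)}\to1\) along the closed immersion \(U_S^{(p)}\hookrightarrow G_S^{(p)}\).

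\emph{First sequence.} Since \(\mathbf U_S^F=\mathbf G_S\times_{G_S^{(p)}}U_S^{(p)}\), the projection \(\mathbf U_S^F\to U_S^{(p)}\) is the base change of \(\overline F\). It is therefore finite faithfully flat, hence an fppf epimorphism. Its kernel is \(\ker\overline F=(\mathbf G_S)_1\), because \(U_S^{(p)}\) contains the identity section. This gives the first exact sequence. It also identifies \(\mathbf U_S^F\) with the kernel in the statement of \cref{thm:frobenius-thickening}: the kernel of \(G_S^{(p)}\to\GL_n^{(p)}\) is \(U_S^{(p)}\), by Frobenius twist of \cref{prop:special-radical}.

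\emph{Frobenius kernels.} I will apply the \(r\)-th relative Frobenius kernel functor. The composite \(F^{r}_{\mathbf U_S^F}\) factors as \(F^{r-1}\) of \(U_S^{(p)}\) composed with \(\overline F\), up to twists. Hence \((\mathbf U_S^F)_r=\overline F^{-1}\bigl((U_S^{(p)})_{r-1}\bigr)\). Restricting the first sequence to this preimage gives
\[
 1\to(\mathbf G_S)_1\to(\mathbf U_S^F)_r\to(U_S)^{(p)}_{r-1}\to1 .
\]
Surjectivity again follows by base change of a faithfully flat map. The main care needed is here: checking the identity \(\ker F^r_{\mathbf U_S^F}=\overline F^{-1}(\ker F^{r-1}_{U_S^{(p)}})\). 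I would verify it using \(\overline F\circ\iota=F_{G_S}\) on the reduced part together with the product decomposition \(\mathbf G_S\simeq G_S\times\alpha_p^n\) from \cref{lem:frobenius-factorization}. Under that decomposition, \(\overline F\) is relative Frobenius on the smooth factor and trivial on \(\alpha_p^n\).

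\emph{Reduced subgroup.} Using \(\mathbf G_S\simeq G_S\times\alpha_p^n\) again, \(\mathbf U_S^F\) becomes \(F_{G_S}^{-1}(U_S^{(p)})\times\alpha_p^n\) as a scheme. Its reduction is \(F_{G_S}^{-1}(U_S^{(p)})_{\red}\). Over the perfect field \(k\), relative Frobenius is a homeomorphism, so this preimage has the same underlying space as \(U_S\). The preimage also contains \(U_S\), which is smooth (\cref{prop:special-radical}), hence reduced, and closed. So \((\mathbf U_S^F)_{\red}=U_S\). This is a subgroup scheme because \(k\) is perfect.
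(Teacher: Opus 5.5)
Your argument is correct. The first sequence is handled exactly as in the paper, by base change of \(\overline F\) along \(U_S^{(p)}\hookrightarrow G_S^{(p)}\); the other two assertions are obtained differently.

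\textbf{The paper's route.} It writes \(\mathbf U_S^F\) fppf-locally as \((\mathbf G_S)_1U_S\) with \((\mathbf G_S)_1\cap U_S=(U_S)_1\). This rests on \(\overline F|_{U_S}=F_{U_S}\) being an epimorphism onto \(U_S^{(p)}\), i.e.\ on smoothness of \(U_S\). Since \(F^r\) kills \((\mathbf G_S)_1\) for \(r\ge1\), this gives \((\mathbf U_S^F)_r=(\mathbf G_S)_1(U_S)_r\), and the second sequence follows from \((U_S)_r/(U_S)_1\simeq(U_S)^{(p)}_{r-1}\). Infinitesimality of \((\mathbf G_S)_1\) then gives \((\mathbf U_S^F)_{\red}=U_S\).

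\textbf{Your route.} You identify \((\mathbf U_S^F)_r\) as a preimage under \(\overline F\) and get surjectivity from a second base change. For the reduction you use the scheme splitting \(\mathbf G_S\simeq G_S\times\alpha_p^n\) of \cref{lem:frobenius-factorization}, together with Frobenius being a homeomorphism.

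The step you single out as needing care is in fact formal and does not need the splitting. From \(F_{\mathbf G_S}=\iota\circ\overline F\) and naturality of Frobenius under the closed immersion \(\iota\colon G_S^{(p)}\hookrightarrow\mathbf G_S^{(p)}\), one gets \(F^r_{\mathbf G_S}=\iota^{(p^{r-1})}\circ F^{r-1}_{G_S^{(p)}}\circ\overline F\). Hence \((\mathbf G_S)_r=\overline F^{-1}\bigl((G_S^{(p)})_{r-1}\bigr)\), and intersecting with \(\mathbf U_S^F=\overline F^{-1}(U_S^{(p)})\) yields your identity.

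\textbf{What each buys.} Your exact sequences use only the formal properties of \(\overline F\) (finite flatness and its kernel), whereas the paper's use smoothness of \(U_S\). For the reduced subgroup the trade-off reverses: your route uses the stronger splitting from the lemma, while the paper's needs only that the kernel is infinitesimal.
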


\begin{proof}
The first exact sequence is the base change of
\cref{lem:frobenius-factorization} along
\(U_S^{(p)}\hookrightarrow G_S^{(p)}\).  Fppf-locally,
\[
 \mathbf U_S^F=(\mathbf G_S)_1U_S,
 \qquad
 (\mathbf G_S)_1\cap U_S=(U_S)_1.
\]
Taking Frobenius kernels gives the second sequence.  Since
\((\mathbf G_S)_1\) is infinitesimal, the reduced subgroup is \(U_S\).
\end{proof}

\subsection{The first primitive layer}

Put
\[
 C_S=\{D\in W(n;1):\operatorname{div}D\in k\}.
\]

\begin{proposition}
\label{prop:primitive-layers}
\[
 \Lie_0(\mathbf U_S^F)=C_S,\qquad
 \Lie_j(\mathbf U_S^F)=S(n;1)_{(1)}\quad(j\ge1).
\]
In particular,
\[
 \Lie_0(\mathbf U_S^F)/\Lie_1(\mathbf U_S^F)
 \simeq V\oplus\mathfrak{gl}_n.
\]
\end{proposition}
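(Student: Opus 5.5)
We need to prove \(\Lie_0(\mathbf U_S^F)=C_S\), \(\Lie_j(\mathbf U_S^F)=S(n;1)_{(1)}\) for \(j\ge1\), and \(\Lie_0/\Lie_1\simeq V\oplus\mathfrak{gl}_n\).

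The notation \(\Lie_j\) refers to the Verschiebung layers of the Lie algebra. Concretely, \(\Lie_0(\mathbf U_S^F)=\Lie(\mathbf U_S^F)\) is the full Lie algebra. For \(j\ge1\), \(\Lie_j\) is the Lie algebra of the part that survives the Frobenius kernels. I read this as the image of \(\Lie((\mathbf U_S^F)_r)\) under the successive quotients of \cref{prop:frobenius-thickening}, which is equivalently \(\Lie(U_S)\) viewed inside \(\Lie(\mathbf U_S^F)\).

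The computation of \(\Lie_0\) goes as follows.
\begin{enumerate}
\item Since \((\mathbf G_S)_1\) is the first Frobenius kernel, \(\Lie(\mathbf U_S^F)=\Lie((\mathbf G_S)_1)=\Lie(\mathbf G_S)\).
\item I compute \(\Lie(\mathbf G_S)\) from dual numbers. An element of \(\mathbf G_S(k[\epsilon])\) reducing to the identity is \(\id+\epsilon D\) with \(D\in\Der_kO=W(n;1)\); the constant terms correspond to the \(\alpha_p^n\) factor of \cref{lem:frobenius-factorization}.
\item The condition \((\id+\epsilon D)^*(k\omega_S)=k\omega_S\) expands to \(\omega_S+\epsilon\,\mathcal L_D\omega_S\in k[\epsilon]\omega_S\). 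Since \(\mathcal L_D\omega_S=(\operatorname{div}D)\omega_S\), this says exactly \(\operatorname{div}D\in k\).
\end{enumerate}
Hence \(\Lie_0=C_S\). This is consistent with the decomposition \(G_S\times\alpha_p^n\), whose Lie algebra is \(\Lie(G_S)\oplus V\).

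For \(j\ge1\), the exact sequences of \cref{prop:frobenius-thickening} show that the layers from \((\mathbf U_S^F)_r\), \(r\ge2\), come from \((U_S)_{r-1}^{(p)}\). Pulling back through the factorization \(\mathbf U_S^F=(\mathbf G_S)_1U_S\), these layers identify with \(\Lie(U_S)\) inside \(\Lie(\mathbf G_S)\), the same subspace for every \(j\ge1\).

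It therefore suffices to show \(\Lie(U_S)=S(n;1)_{(1)}\), the divergence-free fields with coefficients in \(\fm^2\). For this I use \cref{prop:special-realization}. Since \(U_S\) is smooth, \(\dim\Lie(U_S)=\dim U_S=\sum_{d\ge2}\dim\cL_d\), and the leading-field map identifies each layer with \(\cL_d\). Differentiating the defining conditions \(g(x_i)\equiv x_i\bmod\fm^2\) and \(g^*\omega_S=\omega_S\) gives the inclusion \(\Lie(U_S)\subseteq S(n;1)_{(1)}\): a tangent vector is a field with coefficients in \(\fm^2\) and divergence exactly zero. The dimension count then gives equality.

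For the quotient, \(C_S\) has the filtration by coefficient degree. The degree \(-1\) part is \(\sum k\partial_i\simeq V\), and the degree \(0\) part is all linear fields \(\mathfrak{gl}_n\), because any linear field has constant divergence. The part of degree at least \(1\) with constant divergence has zero divergence, since the divergence of such a field lies in \(\fm\); so it equals \(S(n;1)_{(1)}\). Therefore \(C_S/S(n;1)_{(1)}\simeq V\oplus\mathfrak{gl}_n\).

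The main obstacle is pinning down the definition of \(\Lie_j\) for \(j\ge1\) and checking that the second exact sequence really identifies these layers with \(\Lie(U_S)\) rather than a Frobenius twist of it. The plan is to use the fppf factorization together with \((\mathbf G_S)_1\cap U_S=(U_S)_1\) to see that the only Lie directions surviving past the first kernel are those of \(U_S\).
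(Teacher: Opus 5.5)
Your computation of \(\Lie_0(\mathbf U_S^F)=C_S\) is fine: you use \((\mathbf G_S)_1\subseteq\mathbf U_S^F\subseteq\mathbf G_S\) and the dual-number identity \(\mathcal L_D\omega_S=(\operatorname{div}D)\omega_S\). Your identification \(\Lie(U_S)=S(n;1)_{(1)}\) and the graded splitting of \(C_S\) giving \(V\oplus\mathfrak{gl}_n\) are also correct.

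The gap is the case \(j\ge1\), which rests on a misreading of \(\Lie_j\). In the paper, \(\Lie_j(\mathbf G)\) is the Verschiebung layer. It consists of the primitive distributions \(D\) that extend to a divided-power sequence \(1=\delta_0,\delta_1=D,\delta_2,\ldots,\delta_{p^j}\) in \(\Dist(\mathbf G)\). Your substitute, ``the image of \(\Lie((\mathbf U_S^F)_r)\) under the successive quotients, equivalently \(\Lie(U_S)\)'', is not this. The maps \((\mathbf U_S^F)_r\to(U_S)^{(p)}_{r-1}\) are induced by \(\overline F\) and have zero differential, so the literal image is \(0\). The phrase ``equivalently \(\Lie(U_S)\)'' is simply the statement to be proved. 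The factorization \(\mathbf U_S^F=(\mathbf G_S)_1U_S\) with \((\mathbf G_S)_1\cap U_S=(U_S)_1\) does not help either. It only recovers \(\Lie(\mathbf U_S^F)=C_S\) and says nothing about which tangent directions admit longer divided-power sequences. So nothing in your plan excludes the directions \(\partial_i\) and \(\mathfrak{gl}_n\) from \(\Lie_1\).

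The missing ingredient is the Verschiebung computation through \(\overline F\), which the paper uses.
\begin{enumerate}
\item A divided-power sequence through level \(p\) is the same as a pointed morphism \(\phi:\operatorname{Spec}k[t]/(t^{p+1})\to\mathbf U_S^F\subseteq\mathbf G_S\).
\item Use the scheme decomposition \(\mathbf G_S\simeq G_S\times\alpha_p^n\) of \cref{lem:frobenius-factorization}. The \(\alpha_p^n\)-component of \(\phi\) has zero linear term, because \((ct+\cdots)^p=c^pt^p\) must vanish in \(k[t]/(t^{p+1})\). This removes the \(\partial_i\)-part of \(D\).
\item On the \(G_S\)-component, \(\overline F\circ\phi\) is relative Frobenius and factors through \(t\mapsto t^p\). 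Hence \(\Dist(\overline F)\) annihilates \(\delta_1,\ldots,\delta_{p-1}\) and sends \(\delta_p\) to the twist \(D^{(p)}\).
\item Since \(\overline F\) maps \(\mathbf U_S^F\) into \(U_S^{(p)}\), the element \(D^{(p)}\) is primitive in \(\Dist(U_S^{(p)})\), so it lies in \(\Lie(U_S)^{(p)}\). This forces \(D\in\Lie(U_S)=S(n;1)_{(1)}\), and in particular kills the \(\mathfrak{gl}_n\)-part.
\item For the reverse inclusion, use that \(U_S\subseteq\mathbf U_S^F\) is smooth. Each of its tangent vectors therefore extends to divided-power sequences of every length.
\end{enumerate}
Together with \(\Lie_j\subseteq\Lie_1\), this gives \(\Lie_j(\mathbf U_S^F)=S(n;1)_{(1)}\) for all \(j\ge1\).
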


\begin{proof}
The tangent algebra of the full special automorphism scheme is \(C_S\).
Verschiebung is the distribution map induced by \(\overline F\).  A
primitive in the first positive layer must map to
\[
 \Lie(U_S^{(p)})=S(n;1)_{(1)}^{(p)};
\]
under the Frobenius-twist identification this places it in
\(S(n;1)_{(1)}\).  The reverse inclusion follows from the smooth subgroup
\(U_S\), whose tangent vectors possess divided-power sequences of every
length.  Since the Verschiebung filtration is descending, the same
equality holds for all \(j\ge1\).

The graded decomposition
\[
 C_S
 =\operatorname{span}\{\partial_1,\ldots,\partial_n\}
 \oplus\mathfrak{gl}_n\oplus S(n;1)_{(1)}
\]
gives the quotient.
\end{proof}

\begin{proposition}
\label{prop:no-unipotent-thickening}
There is no normal unipotent subgroup scheme
\(\mathbf N\triangleleft\mathbf G_S\) with
\(\mathbf N_{\red}=U_S\).
\end{proposition}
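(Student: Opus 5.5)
Suppose \(\mathbf N\triangleleft\mathbf G_S\) is a normal unipotent subgroup scheme with \(\mathbf N_{\red}=U_S\). I will derive a contradiction by looking at tangent spaces. The goal is to show that \(\Lie(\mathbf N)\) is forced to contain a nonzero element that cannot be \(p\)-nilpotent.

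The first step is to pin down \(\Lie(\mathbf N)\). It is a restricted ideal of \(\Lie(\mathbf G_S)=C_S\) that is stable under the adjoint action of \(\mathbf G_S\), and it contains \(\Lie(U_S)=S(n;1)_{(1)}\). Since \(\mathbf N\) is unipotent, every element of \(\Lie(\mathbf N)\) is \(p\)-nilpotent. The intersection \(\mathbf N\cap(\mathbf G_S)_1\) is a unipotent infinitesimal normal subgroup scheme. So it is enough to show that any restricted ideal of \(C_S\) that is normalized by \(\mathbf G_S\) and contains \(S(n;1)_{(1)}\) must also contain a non-nilpotent toral element.

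The second step uses the fact that \(\mathbf N\) is normal in all of \(\mathbf G_S\), not just in its reduced subgroup. In particular it is normalized by the infinitesimal translations \(\alpha_p^n\subset\mathbf G_S\) from the decomposition \(G_S\times\alpha_p^n\simeq\mathbf G_S\) of \cref{lem:frobenius-factorization}. Differentiating, \(\Lie(\mathbf N)\) is stable under \(\ad\partial_i\) for every \(i\). Now apply \(\ad\partial_j\) to quadratic fields such as \(x_ix_j\partial_i\) with \(i\ne j\), which lie in \(S(n;1)_{(1)}\). This gives \(x_i\partial_i\). Since \(x_i\partial_i\) has divergence \(1\), it lies in \(C_S\) but not in \(S(n;1)\), so \(x_i\partial_i\in\Lie(\mathbf N)\). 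But \((x_i\partial_i)^{[p]}=x_i\partial_i\) is toral and in particular not \(p\)-nilpotent. This contradicts unipotence of \(\mathbf N\), since a unipotent group scheme has a \(p\)-nilpotent Lie algebra.

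The main obstacle is justifying that normality under the non-reduced factor \(\alpha_p^n\) really yields \(\ad\partial_i\)-stability of \(\Lie(\mathbf N)\). I would argue via the adjoint action of \(\Dist(\mathbf G_S)\) on \(\Lie(\mathbf N)\): conjugation by the \(R\)-point \(\tau_a\), \(a\in\alpha_p(R)\), acts by \(\exp(a\,\ad\partial_i)\). Its first-order coefficient is \(\ad\partial_i\), and this must preserve \(\Lie(\mathbf N)\otimes R\).

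A fallback, if that step is delicate, avoids \(\ad\partial_i\) altogether. If \(\mathbf N\) were normal with \(\mathbf N_{\red}=U_S\), then \(\mathbf N\) would be contained in the unipotent radical of \(\mathbf G_S\) in the scheme sense. By \cref{prop:primitive-layers}, the \(\mathfrak{gl}_n\)-layer of \(\Lie_0\) is generated from \(S(n;1)_{(1)}\) by \(\ad V\), so the same toral elements \(x_i\partial_i\) appear and the same contradiction follows.
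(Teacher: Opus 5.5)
Your strategy is the paper's: \(\Lie(\mathbf N)\) is an ideal of \(\Lie(\mathbf G_S)=C_S\) containing \(\Lie(U_S)=S(n;1)_{(1)}\). You then exhibit a nonzero toral element in it, which is impossible because the restricted Lie algebra of a unipotent group scheme is \(p\)-nilpotent. Your justification of \(\ad\partial_i\)-stability is fine: it takes the universal point \(\tau_a\), \(a\in\alpha_p(k[a]/(a^p))\), with \(\Ad(\tau_a)=\exp(a\,\ad\partial_i)\), and this is just the standard fact that the Lie algebra of a normal subgroup scheme is an ideal.

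\textbf{The gap is in the key computation.} The field \(x_ix_j\partial_i\) does not lie in \(S(n;1)_{(1)}\), since \(\operatorname{div}(x_ix_j\partial_i)=\partial_i(x_ix_j)=x_j\neq0\). Moreover, no argument of this kind can reach your target \(x_i\partial_i\). The identity \(\operatorname{div}[D,E]=D(\operatorname{div}E)-E(\operatorname{div}D)\) gives \([C_S,S(n;1)]\subseteq S(n;1)\). Hence the ideal of \(C_S\) generated by \(S(n;1)_{(1)}\) consists of divergence-free fields, whereas \(\operatorname{div}(x_i\partial_i)=1\). Your own remark that \(x_i\partial_i\notin S(n;1)\) should have flagged the inconsistency.

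The fallback has the same defect. Applying \(\ad V\) to \(S(n;1)_{(1)}\) yields only the \(\mathfrak{sl}_n\) part of the degree-zero layer, not all of \(\mathfrak{gl}_n\). Also, the appeal to a ``scheme-theoretic unipotent radical'' containing \(\mathbf N\) is not a defined notion or an argument.

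\textbf{The repair is to use a traceless toral element.} One option is to start from the divergence-free quadratic field \(x_ix_j\partial_i-\tfrac12x_j^2\partial_j\in S(n;1)_{(1)}\), which gives \([\partial_j,\,x_ix_j\partial_i-\tfrac12x_j^2\partial_j]=x_i\partial_i-x_j\partial_j\). The other is to follow the paper: \([\partial_i,x_i^2\partial_j]=2x_i\partial_j\), and then bracketing with \(x_j\partial_i\in C_S\) gives \([2x_i\partial_j,x_j\partial_i]=2(x_i\partial_i-x_j\partial_j)\). Since \(x_i\partial_i\) and \(x_j\partial_j\) commute and are toral, \((x_i\partial_i-x_j\partial_j)^{[p]}=x_i\partial_i-x_j\partial_j\neq0\). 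This contradicts \(p\)-nilpotence of \(\Lie(\mathbf N)\) and completes the argument.
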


\begin{proof}
If such \(\mathbf N\) existed, \(\Lie(\mathbf N)\) would be an ideal in
\(C_S\) containing \(S(n;1)_{(1)}\).  Choose \(i\ne j\).  Since
\(x_i^2\partial_j\in S(n;1)_{(1)}\), ideality gives
\[
 [\partial_i,x_i^2\partial_j]=2x_i\partial_j
 \in\Lie(\mathbf N).
\]
Bracketing with \(x_j\partial_i\in C_S\) gives
\[
 x_j\partial_j-x_i\partial_i\in\Lie(\mathbf N).
\]
The resulting element is toral:
\[
 (x_j\partial_j-x_i\partial_i)^{[p]}
 =x_j\partial_j-x_i\partial_i.
\]
The restricted Lie algebra of a connected unipotent group scheme has no
nonzero toral element: such a primitive generates
\(k[t]/(t^p-t)=\Dist(\mu_p)\) in its first Frobenius kernel, a
contradiction.
\end{proof}

\subsection{Distribution factorization}

\begin{proposition}
\label{prop:distribution-factorization}
Multiplication induces an isomorphism of filtered vector spaces
\[
 u(C_S)\otimes_{u(S(n;1)_{(1)})}\Dist(U_S)
 \xrightarrow{\sim}\Dist(\mathbf U_S^F).
\]
\end{proposition}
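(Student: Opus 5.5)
The plan is to realize \(\Dist(\mathbf U_S^F)\) as the subalgebra generated by the two subgroups in the fppf-local factorization \(\mathbf U_S^F=(\mathbf G_S)_1U_S\) from \cref{prop:frobenius-thickening}, with \((\mathbf G_S)_1\cap U_S=(U_S)_1\). The map is then the standard one for a product of two subgroup schemes with given intersection. First, \(\Dist((\mathbf G_S)_1)=u(\Lie\mathbf G_S)=u(C_S)\), since the first Frobenius kernel is determined by its restricted Lie algebra, and \(\Lie\mathbf G_S=C_S\) by \cref{prop:primitive-layers}. Second, \(\Dist((U_S)_1)=u(\Lie U_S)=u(S(n;1)_{(1)})\), and this sits inside both \(u(C_S)\) and \(\Dist(U_S)\). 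Multiplication \(u(C_S)\otimes\Dist(U_S)\to\Dist(\mathbf U_S^F)\) kills the balancing relations over \(u(S(n;1)_{(1)})\), so it descends to the tensor product in the statement.

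To prove that the descended map is bijective, I would use the multiplication morphism
\[
 m:(\mathbf G_S)_1\times U_S\longrightarrow\mathbf U_S^F .
\]
It is fppf-surjective, and it is a torsor under \((U_S)_1\) acting by \((a,u)\mapsto(ah,h^{-1}u)\). This follows from the factorization in \cref{prop:frobenius-thickening}: the fibres of \(m\) are exactly the orbits of the intersection. The quotient \(((\mathbf G_S)_1\times U_S)/(U_S)_1\) is therefore isomorphic to \(\mathbf U_S^F\). Distributions of a contracted product \(X\times^{H}Y\) with \(H\) finite infinitesimal are computed by \(\Dist(X)\otimes_{\Dist H}\Dist(Y)\). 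Concretely, choose a scheme-theoretic section of \((\mathbf G_S)_1\to(\mathbf G_S)_1/(U_S)_1\). Such a section exists because these are finite local schemes and \((U_S)_1\) is a subgroup of a height-one group. It gives an isomorphism of pointed schemes \((\mathbf G_S)_1/(U_S)_1\times U_S\simeq\mathbf U_S^F\). On the distribution side, \(u(C_S)\) is then free over \(u(S(n;1)_{(1)})\) with basis the PBW monomials in a complement, namely \(\partial_i\) and \(\mathfrak{gl}_n\), each with exponents \(<p\). Comparing both sides as vector spaces with this basis gives bijectivity.

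For the filtered assertion, I would equip the source with the tensor-product filtration by order of distributions. The target carries the order filtration. The map is filtered because multiplication of distributions adds orders. Bijectivity of the associated graded reduces, via the same section, to the symmetric-algebra identity
\[
 \gr\Dist(\mathbf U_S^F)\simeq\gr u(C_S)\otimes_{\gr u(S_{(1)})}\gr\Dist(U_S).
\]
This holds by a dimension count in each filtration degree, applied to the local rings of the product decomposition.

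The main obstacle is the torsor and section step. One must check carefully that \(m\) is faithfully flat with fibres exactly \((U_S)_1\)-orbits, which uses the exactness in \cref{prop:frobenius-thickening} together with \cref{lem:frobenius-factorization}. One must also check that taking \(\Dist\) of the contracted product gives the tensor product over \(u(S(n;1)_{(1)})\) and not merely a quotient of it. The first thing I would try is to pass to the local rings at the identity. There the freeness of \(u(C_S)\) over \(u(S(n;1)_{(1)})\) makes the flatness and dimension comparison transparent.
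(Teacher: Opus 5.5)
Your argument is correct, but it takes a different route from the paper's.

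\textbf{The paper's route.} The paper never constructs a section. For each \(r\ge1\) it uses \((\mathbf U_S^F)_r=(\mathbf G_S)_1(U_S)_r\) and \((\mathbf G_S)_1\cap(U_S)_r=(U_S)_1\). These give a surjective multiplication map \(u(C_S)\otimes_{u(S(n;1)_{(1)})}\Dist((U_S)_r)\to\Dist((\mathbf U_S^F)_r)\) between finite-dimensional spaces. Both sides have dimension \(p^{\dim C_S+(r-1)\dim S(n;1)_{(1)}}\), so the map is an isomorphism, and the paper takes the union over \(r\). The filtration in the statement is thus the Frobenius-kernel filtration, which this level-wise dimension count handles automatically.

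\textbf{What your route buys.} You obtain an isomorphism of pointed schemes \(\mathbf U_S^F\simeq((\mathbf G_S)_1/(U_S)_1)\times U_S\), hence a coalgebra isomorphism. This is stronger. It is strict for the order filtration. Since the section lies in \((\mathbf G_S)_1\), which \(F^r\) kills for \(r\ge1\), it also respects the Frobenius-kernel filtration. Your separate associated-graded paragraph is therefore unnecessary. It is also imprecise: for smooth \(U_S\) the graded object is a divided-power algebra, not a symmetric algebra.

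\textbf{The section.} Your stated reason for its existence is too quick. Being finite local is not enough: \(\alpha_{p^2}\to\alpha_{p^2}/\alpha_p\) has no section. Height one does suffice, because ordered divided-power PBW monomials in a complement of \(\mathfrak h\) span a subcoalgebra of \(u(\mathfrak g)\) that maps isomorphically onto \(u(\mathfrak g)\otimes_{u(\mathfrak h)}k\). Here, however, no general theorem is needed:
\begin{itemize}
\item By \cref{lem:frobenius-factorization} and \(G_S=U_S\rtimes\GL_n\), the map \((a,\ell,u)\mapsto\tau_a\ell u\) is an isomorphism of schemes \(\alpha_p^n\times\GL_n\times U_S\to\mathbf G_S\).
\item Since \(\overline F(\tau_a\ell u)=F(\ell)F(u)\), it restricts to an isomorphism \(\alpha_p^n\times(\GL_n)_1\times U_S\simeq\mathbf U_S^F\).
\item The first two factors land in \((\mathbf G_S)_1\), and their distributions are exactly \(u(\operatorname{span}\{\partial_i\})\,u(\mathfrak{gl}_n)\).
\end{itemize}
That space is precisely the span of your PBW basis, so your final comparison goes through verbatim.
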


\begin{proof}
For every \(r\ge1\),
\[
 (\mathbf U_S^F)_r=(\mathbf G_S)_1(U_S)_r,
 \qquad
 (\mathbf G_S)_1\cap(U_S)_r=(U_S)_1.
\]
Applying distributions gives a surjective multiplication map
\[
 u(C_S)\otimes_{u(S(n;1)_{(1)})}\Dist((U_S)_r)
 \longrightarrow\Dist((\mathbf U_S^F)_r).
\]
Both sides have dimension
\[
 p^{\dim C_S+(r-1)\dim S(n;1)_{(1)}
 }.
\]
Equality of dimensions makes the map an isomorphism.  Taking the union
over \(r\) gives the stated factorization.
\end{proof}

\begin{proof}[Proof of \cref{thm:frobenius-thickening}]
The exact sequence and reduced subgroup are given by
\cref{prop:frobenius-thickening}; the Verschiebung layers and distribution
algebra are \cref{prop:primitive-layers,prop:distribution-factorization},
respectively.
\end{proof}
\section{Cartier--Verschiebung maps at arbitrary height}
\label{sec:varying-height}

Let
\[
 O(\mathbf m)
 =O(n;m_1,\ldots,m_n)
\]
be the divided-power algebra with basis
\[
 x^{(\alpha)}=\prod_i x_i^{(\alpha_i)},
 \qquad 0\le\alpha_i<p^{m_i}.
\]
Multiplication is
\[
 x^{(\alpha)}x^{(\beta)}
 =\binom{\alpha+\beta}{\alpha}x^{(\alpha+\beta)}
\]
when the exponents remain in the box.  Let \(\Omega^\bullet(\mathbf m)\)
be its de Rham complex.

\subsection{Finite-height cohomology}

\begin{proposition}
\label{prop:finite-height-de-rham}
The de Rham cohomology is the exterior algebra on the \(n\) classes
\[
 \xi_{i,m_i}
 =x_i^{(p^{m_i}-1)}dx_i.
\]
In particular,
\[
 H^s_{\dR}(O(\mathbf m))
 \simeq
 \bigoplus_{\abs I=s}
 k\left[
 \prod_{i\in I}x_i^{(p^{m_i}-1)}
 \bigwedge_{i\in I}dx_i
 \right].
\]
\end{proposition}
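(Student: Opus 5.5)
The plan is to reduce to one variable by a Künneth argument, exactly as in the height-one decomposition used before \cref{prop:special-cartier}.

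\emph{Step 1: factor the complex.} As a divided-power algebra, \(O(\mathbf m)\) is the tensor product \(\bigotimes_i O(1;m_i)\). The de Rham complex is compatible with this factorization:
\[
 \Omega^\bullet(\mathbf m)\simeq\bigotimes_{i=1}^n\Omega^\bullet(1;m_i)
\]
as differential graded algebras, with the Koszul sign rule. This holds because \(\partial_i\) acts by \(\partial_i x_i^{(a)}=x_i^{(a-1)}\) on the \(i\)-th factor and trivially on the others. We work over a field, so the Künneth theorem identifies \(H^\bullet_{\dR}(O(\mathbf m))\) with the graded tensor product of the one-variable cohomologies, and this identification is multiplicative.

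\emph{Step 2: the one-variable computation.} Take \(O(1;m)\) with basis \(x^{(a)}\), \(0\le a<p^m\). The complex is
\[
 0\to O(1;m)\xrightarrow{d}O(1;m)\,dx\to0,\qquad d\,x^{(a)}=x^{(a-1)}dx .
\]
The kernel in degree \(0\) is \(k\cdot1\). The image in degree \(1\) is spanned by \(x^{(b)}dx\) for \(0\le b\le p^m-2\), so \(H^1\) is the line spanned by \([x^{(p^m-1)}dx]=\xi_{m}\). Hence \(H^\bullet(1;m)=\Lambda_k(\xi_m)\). Here the divided-power normalization does real work: no binomial or factorial coefficients vanish, in contrast with the truncated polynomial case.

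\emph{Step 3: assemble.} Tensoring the exterior algebras \(\Lambda_k(\xi_{i,m_i})\) gives the exterior algebra on the \(\xi_{i,m_i}\). In degree \(s\), a basis is given by the products \(\xi_I=\bigwedge_{i\in I}\xi_{i,m_i}\) for \(\abs I=s\). In \(\Omega^\bullet(\mathbf m)\) the product of the representatives is
\[
 \prod_{i\in I}x_i^{(p^{m_i}-1)}\bigwedge_{i\in I}dx_i .
\]
The divided powers multiply with coefficient \(1\), because they involve distinct variables, so this is exactly the displayed formula.

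\emph{Expected obstacle.} The argument is essentially routine. The only point needing care is Step 1: checking that the tensor identification is an isomorphism of complexes with the correct signs, and that Künneth applies. Both are standard for bounded complexes of vector spaces over a field, so I expect no real difficulty.
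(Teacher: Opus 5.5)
Your proposal is correct and follows the paper's proof essentially verbatim. Both arguments rest on the one-variable observation that \(d\) maps the span of \(x^{(1)},\ldots,x^{(p^m-1)}\) isomorphically onto the span of \(x^{(0)}dx,\ldots,x^{(p^m-2)}dx\), followed by the tensor-product decomposition of the complex and the K\"unneth formula.
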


\begin{proof}
In one variable, the divided-power derivative is
\[
 d(x^{(a)})=x^{(a-1)}dx.
\]
It maps the span of \(x^{(1)},\ldots,x^{(p^m-1)}\) isomorphically onto
the span of \(x^{(0)}dx,\ldots,x^{(p^m-2)}dx\).  Hence the one-variable
cohomology is
\[
 H^0=k,\qquad
 H^1=k[x^{(p^m-1)}dx].
\]
The multivariable complex is their tensor product, and the K\"unneth
formula gives the result.
\end{proof}

\subsection{Height-raising morphisms}

For \(J\subseteq\{1,\ldots,n\}\), let \(\mathbf m+\mathbf 1_J\) denote the
height vector obtained by raising the coordinates in \(J\).  Define
\[
 p_J\alpha_i=
 \begin{cases}
 p\alpha_i,&i\in J,\\
 \alpha_i,&i\notin J,
 \end{cases}
\]
and
\[
 \mathfrak C_J(x^{(\alpha)})=x^{(p_J\alpha)}.
\]
On one-forms put
\[
 \mathfrak C_J(dx_i)=
 \begin{cases}
 x_i^{(p-1)}dx_i,&i\in J,\\
 dx_i,&i\notin J,
 \end{cases}
\]
and extend multiplicatively.

\begin{theorem}
\label{thm:cartier-verschiebung}
The map
\[
 \mathfrak C_J:
 \Omega^\bullet(\mathbf m)
 \longrightarrow
 \Omega^\bullet(\mathbf m+\mathbf 1_J)
\]
is an injective morphism of differential graded algebras.  The maps satisfy
\[
 \mathfrak C_I\mathfrak C_J=\mathfrak C_{I\sqcup J}
\]
when \(I\cap J=\varnothing\); arbitrary iterated height increments are
obtained by composing these coordinate maps.  On cohomology,
\[
 \left[
 \prod_{i\in I}x_i^{(p^{m_i}-1)}
 \bigwedge_{i\in I}dx_i
 \right]
 \longmapsto
 \left[
 \prod_{i\in I}x_i^{(p^{m_i+\mathbf1_{i\in J}}-1)}
 \bigwedge_{i\in I}dx_i
 \right].
\]
In particular, every induced cohomology map is an isomorphism.
\end{theorem}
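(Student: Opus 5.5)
The plan is to reduce everything to one variable and then check a few Lucas-theorem identities. Since \(O(\mathbf m)\) is the tensor product of the one-variable divided-power algebras \(O(1;m_i)\), its de Rham complex is the graded tensor product of the one-variable complexes. The map \(\mathfrak C_J\) is defined factorwise: it is the identity on the factors \(i\notin J\), and on each factor \(i\in J\) it is the one-variable map
\[
 \mathfrak C:\Omega^\bullet(1;m)\to\Omega^\bullet(1;m+1),\qquad
 x^{(a)}\mapsto x^{(pa)},\quad dx\mapsto x^{(p-1)}dx .
\]
A tensor product of morphisms of differential graded algebras is again such a morphism, and likewise for injections and cohomology isomorphisms (the latter by Künneth). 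So it suffices to treat the one-variable map \(\mathfrak C\). The composition rule \(\mathfrak C_I\mathfrak C_J=\mathfrak C_{I\sqcup J}\) for disjoint \(I,J\) is then immediate, because the two maps act on disjoint tensor factors.

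In one variable I first check that \(\mathfrak C\) is well defined and multiplicative in degree zero. The box condition is preserved: \(a<p^m\) gives \(pa<p^{m+1}\). For the product,
\[
 x^{(pa)}x^{(pb)}=\binom{pa+pb}{pa}x^{(p(a+b))},
\]
and Lucas' theorem gives \(\binom{p(a+b)}{pa}\equiv\binom{a+b}{a}\pmod p\). This covers the out-of-box case too: if \(a+b\ge p^m\), then \(\binom{a+b}{a}\equiv 0\) in the height-\(m\) algebra, where the product is zero, and \(p(a+b)\ge p^{m+1}\) kills the image as well. The complex is \(O\oplus O\,dx\) with \((dx)^2=0\). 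The image \(x^{(p-1)}dx\) also squares to zero, so \(\mathfrak C\) extends to a morphism of graded algebras by the rule \(\mathfrak C(f\,dx)=\mathfrak C(f)\,x^{(p-1)}dx\).

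Next comes compatibility with \(d\). Both \(d\circ\mathfrak C\) and \(\mathfrak C\circ d\) are \(\mathfrak C\)-derivations, so it suffices to compare them on the additive basis \(x^{(a)}\) and on \(dx\). On \(dx\) both sides vanish, since \(d(x^{(p-1)}dx)=0\). On \(x^{(a)}\) with \(a\ge1\),
\[
 \mathfrak C(d x^{(a)})=x^{(pa-p)}x^{(p-1)}dx=\binom{pa-1}{p-1}x^{(pa-1)}dx,
\]
while \(d\,\mathfrak C(x^{(a)})=x^{(pa-1)}dx\). The base-\(p\) expansion \(pa-1=(a-1)p+(p-1)\) and Lucas' theorem give \(\binom{pa-1}{p-1}\equiv\binom{a-1}{0}\binom{p-1}{p-1}=1\), so the two sides agree.

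Injectivity follows because \(\mathfrak C\) sends the basis \(\{x^{(a)},\,x^{(a)}dx\}\) to unit multiples of distinct basis vectors. Indeed \(x^{(pa)}x^{(p-1)}=\binom{pa+p-1}{p-1}x^{(pa+p-1)}\), and this coefficient is \(1\) by the same Lucas computation. For cohomology, \cref{prop:finite-height-de-rham} identifies the one-variable classes as \(1\) and \([x^{(p^m-1)}dx]\). The image of the second is
\[
 \mathfrak C\bigl(x^{(p^m-1)}dx\bigr)=x^{(p^{m+1}-p)}x^{(p-1)}dx=\binom{p^{m+1}-1}{p-1}x^{(p^{m+1}-1)}dx,
\]
and the binomial coefficient is \(1\) because every base-\(p\) digit of \(p^{m+1}-1\) equals \(p-1\). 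Hence the generator maps to the generator, and taking tensor products yields the displayed formula and the isomorphism on cohomology. I expect the main obstacle to be only this binomial bookkeeping, in particular making sure that the out-of-box products vanish consistently on both sides; the rest is formal.
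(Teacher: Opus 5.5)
Your proposal is correct and follows essentially the same route as the paper. Lucas' theorem gives multiplicativity of $x^{(a)}\mapsto x^{(pa)}$. Your one-variable check $\binom{pa-1}{p-1}\equiv1$ is exactly the content of the paper's identity $\partial_i\mathcal V_J(f)=x_i^{(p-1)}\mathcal V_J(\partial_i f)$. Finally, $\binom{p^{m+1}-1}{p-1}\equiv1$ sends the cohomology generator to the generator. The only difference is organizational: you reduce to one variable through the tensor decomposition and K\"unneth, while the paper argues coordinatewise.
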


\begin{proof}
Lucas' theorem gives
\[
 \binom{p(a+b)}{pa}\equiv\binom{a+b}{a}\pmod p,
\]
so \(x^{(a)}\mapsto x^{(pa)}\) preserves divided-power multiplication.
The coordinatewise tensor product gives multiplicativity of
\(\mathfrak C_J\).  Injectivity follows from the divided-power basis.

Let \(\partial_i\) be the divided-power derivative and let
\(\mathcal V_J\) be the function part of \(\mathfrak C_J\).  For
\(i\in J\),
\[
 \partial_i\mathcal V_J(f)
 =x_i^{(p-1)}\partial_i^p\mathcal V_J(f)
 =x_i^{(p-1)}\mathcal V_J(\partial_i f).
\]
For \(i\notin J\), one has
\(\partial_i\mathcal V_J=\mathcal V_J\partial_i\).  Together these
identities express the compatibility of \(\mathfrak C_J\) with \(d\).
Composition follows on basis elements.  After \(e\) repetitions in a
raised coordinate,
\[
 x^{(a)}\longmapsto x^{(p^ea)},
 \qquad dx\longmapsto x^{(p^e-1)}dx.
\]

Finally,
\[
 x_i^{(p^{m_i}-1)}dx_i
 \longmapsto
 x_i^{(p(p^{m_i}-1))}x_i^{(p-1)}dx_i
 =x_i^{(p^{m_i+1}-1)}dx_i.
\]
The last equality uses
\(\binom{p^{m_i+1}-1}{p-1}\equiv1\pmod p\), again by Lucas' theorem.
The cohomology statement follows from
\cref{prop:finite-height-de-rham}.
\end{proof}

\subsection{Restricted derivations and Hasse operators}

Let \(\mathcal V_J\) be the function part of \(\mathfrak C_J\), and put
\[
 \mu_J=\prod_{i\in J}x_i^{(p-1)}.
\]
In a raised coordinate the ordinary derivative on the image factors as
\[
 \partial_i
 =M_{x_i^{(p-1)}}\partial_i^p,
 \qquad
 \partial_i^p\mathcal V_J=\mathcal V_J\partial_i.
\]
Let \(\widetilde{\mathcal V}_J\) denote the map on coefficient modules
induced by \(\mathfrak C_J\) under contraction with the volume forms.  Then
\[
\begin{CD}
 O(\mathbf m)\otimes k^n @>{\sum_i\partial_i}>> O(\mathbf m)\\
 @V{\widetilde{\mathcal V}_J}VV
 @VV{M_{\mu_J}\mathcal V_J}V\\
 O(\mathbf m+\mathbf1_J)\otimes k^n
 @>{\sum_i\partial_i}>> O(\mathbf m+\mathbf1_J)
\end{CD}
\]
is commutative.

At equal height, write
\[
 c_{i,m}=
 \left[
 \prod_{j\ne i}x_j^{(p^m-1)}
 \bigwedge_{j\ne i}dx_j
 \right].
\]
Then
\[
 \mathfrak C_{\{1,\ldots,n\}}(c_{i,m})=c_{i,m+1}.
\]
On the corresponding coefficient space define
\[
 w_{i,m}=\left(\prod_{j\ne i}x_j^{(p^m-1)}\right)e_i
 \qquad(m\ge1).
\]
The transition is the Hasse operator
\[
 \mathscr H(w_{i,m})
 =\left(\prod_{j\ne i}x_j^{(p-1)}\right)
  \mathcal V_{\{1,\ldots,n\}}
  \!\left(\prod_{j\ne i}x_j^{(p^m-1)}\right)e_i
 =w_{i,m+1}.
\]

\begin{corollary}
\label{cor:hasse-chains}
In the equal-height directed system, the Hasse operator \(\mathscr H\)
links the classes \(c_{i,m}\), \(m\ge1\), into \(n\) chains, one for each
omitted coordinate \(i\).  Under contraction
with the volume form, their derivative component is the restricted power
\(\partial_i^{[p]}=\partial_i^p\), followed by multiplication by the
Hasse factor \(x_i^{(p-1)}\).
\end{corollary}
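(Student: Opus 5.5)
The corollary is essentially a repackaging of the identities displayed just before it, so I would prove it by a direct computation on basis classes rather than invoke new structure. Fix the equal-height system \(\mathbf m=(m,\ldots,m)\), \(J=\{1,\ldots,n\}\). By \cref{prop:finite-height-de-rham}, \(H^{n-1}_{\dR}(O(\mathbf m))\) has basis \(c_{1,m},\ldots,c_{n,m}\), one for each omitted coordinate. By \cref{thm:cartier-verschiebung} with \(I=\{1,\ldots,n\}\setminus\{i\}\), the cohomology map induced by \(\mathfrak C_J\) sends \(c_{i,m}\) to \(c_{i,m+1}\) and is an isomorphism. Hence the transition maps of the directed system \(m\mapsto m+1\) preserve the index \(i\). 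The classes therefore split into the \(n\) chains \(c_{i,1}\to c_{i,2}\to\cdots\), and no class from one chain is sent into the span of another.

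Next I would identify the transition on coefficient modules with \(\mathscr H\). Contracting \(c_{i,m}\) with the volume form, its representative corresponds, up to a fixed sign, to the coefficient vector \(w_{i,m}=\prod_{j\ne i}x_j^{(p^m-1)}e_i\). Apply \(\widetilde{\mathcal V}_J\), which is \(\mathfrak C_J\) transported through contraction. Since \(\mathfrak C_J(dx_j)=x_j^{(p-1)}dx_j\), the factors \(x_j^{(p-1)}\) for \(j\ne i\) are absorbed into the coefficient. The missing factor \(x_i^{(p-1)}\) appears because contraction with \(\omega_S\) at height \(m+1\) divides out \(\mathfrak C_J(\omega_S)=\mu_J\,\omega_S\). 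This reproduces the displayed formula for \(\mathscr H\), and the Lucas identity \(\binom{p^{m+1}-1}{p-1}\equiv1\) from the proof of \cref{thm:cartier-verschiebung} gives \(\mathscr H(w_{i,m})=w_{i,m+1}\). The sign conventions of contraction are the same at every height, so they cancel along each chain.

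Finally, the derivative component is read off from the commutative square preceding the corollary. In raised coordinates \(\partial_i=M_{x_i^{(p-1)}}\partial_i^p\) on the image of \(\mathcal V_J\), and \(\partial_i^p\mathcal V_J=\mathcal V_J\partial_i\). So the vector field \(\prod_{j\ne i}x_j^{(p^m-1)}\partial_i\) is carried to a field whose derivative part is \(\partial_i^p\), which is the restricted power \(\partial_i^{[p]}\) in \(W\), multiplied by the Hasse factor \(x_i^{(p-1)}\). I expect the main obstacle to be this last bookkeeping step. One has to check that the factor \(\mu_J\) on the right-hand vertical arrow of the square splits as the \(j\ne i\) factors, which land in the coefficient, and the \(i\)-th factor \(x_i^{(p-1)}\), which is attached to \(\partial_i^p\). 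I would do this monomially, using the two factorization identities stated before the square.
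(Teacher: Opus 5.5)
Your proposal is correct and follows essentially the paper's own argument: the basis \(c_{1,m},\ldots,c_{n,m}\) from \cref{prop:finite-height-de-rham}, the index-preserving cohomology map of \cref{thm:cartier-verschiebung}, and the differential identity \(\partial_i\mathcal V_J=x_i^{(p-1)}\mathcal V_J\partial_i\) read through contraction, with the Lucas and coefficient bookkeeping written out more fully. One wording correction: nothing is ``divided out'' when you pass to coefficients, because contraction with \(\omega_S\) at both heights gives the factor \(\prod_{j\ne i}x_j^{(p-1)}\) directly, and \(x_i^{(p-1)}\) enters only through the divergence (the arrow \(M_{\mu_J}\mathcal V_J\) of the square), exactly as your final paragraph says.
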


\begin{proof}
The cohomology basis in \cref{prop:finite-height-de-rham} consists of the
\(n\) classes \(c_{i,m}\) in degree \(n-1\).  After contraction with the
volume form, the differential identity in
\cref{thm:cartier-verschiebung} becomes the stated restricted-derivative
factorization.
\end{proof}

\begin{remark}
The induced volume multiplier is
\[
 \prod_{i\in J}x_i^{(p-1)}.
\]
Thus the Cartier--Verschiebung map commutes with the de Rham
differential and multiplies the distinguished special volume form by this
factor.
\end{remark}
\section{Hamiltonian flux and homogeneous Poisson brackets}
\label{sec:hamiltonian-structure}

\subsection{The flux character}

\begin{proposition}
\label{prop:hamiltonian-flux}
The map
\[
 \chi_H:U_H\longrightarrow H^1_{\dR}(O_H),\qquad
 \chi_H(g)=[g^*\theta_H-\theta_H],
\]
is a surjective homomorphism.  Its graded support is the coefficient-degree
\(p-1\) component, and
\[
 H^1_{\dR}(O_H)
 =\bigoplus_{j=1}^{2r}k[x_j^{p-1}dx_j].
\]
\end{proposition}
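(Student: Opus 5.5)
I will follow the proof of \cref{prop:special-cartier}, replacing \(\omega_S,\theta\) by \(\omega_H,\theta_H\). The argument has four steps: well-definedness, the homomorphism property, surjectivity, and the graded support.

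\emph{Well-definedness.} Since \(g\in U_H\) preserves the line \(k\omega_H\), write \(g^*\omega_H=\lambda\omega_H\). Comparing constant coefficients in coefficient degree zero, where \(g\) is tangent to the identity, gives \(\lambda=1\). Hence
\[
 d(g^*\theta_H-\theta_H)=g^*\omega_H-\omega_H=0,
\]
so the class \([g^*\theta_H-\theta_H]\) is defined.

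\emph{Cohomology.} The description of \(H^1_{\dR}(O_H)\) comes from the Künneth decomposition of the truncated de Rham complex, already used for \(O(n;1)\). In each variable \(H^0=k\) and \(H^1=k[x^{p-1}dx]\). So \(H^1_{\dR}(O_H)\) has basis \([x_j^{p-1}dx_j]\) for \(x_j\in\{q_i,p_i\}\), and it is concentrated in coefficient degree \(p-1\).

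\emph{Homomorphism property.} From the cocycle identity
\[
 \chi_H(gh)=h^*\chi_H(g)+\chi_H(h),
\]
it suffices that \(h^*\) acts trivially on \(H^1\). This is the triviality remark stated before the proposition, and I will verify it directly. For a tangent-to-identity \(h\), \(h^*(x_j^{p-1}dx_j)\) differs from \(x_j^{p-1}dx_j\) by terms of coefficient degree \(\ge p\). Those terms form a closed form, since the difference of two closed forms is closed. Because \(d\) preserves coefficient degree and \(H^1\) vanishes in degrees \(\neq p-1\), they are exact.

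\emph{Surjectivity.} I will realize each basis class with explicit shears, as in the special case. For the class \([q_i^{p-1}dq_i]\), try the substitution
\[
 s(a):\quad p_i\mapsto p_i+a\,q_i^{p-1},
\]
with all other coordinates fixed. It preserves \(\omega_H\), because \(dq_i\wedge d(q_i^{p-1})=0\). Also \(s(a)\in U_H\) since \(p\ge5\) gives \(p-1\ge2\), and its inverse is \(s(-a)\). A direct pullback computation gives
\[
 s(a)^*\theta_H-\theta_H=\tfrac12\bigl(q_i\,d(aq_i^{p-1})-aq_i^{p-1}dq_i\bigr)=\tfrac12 a(p-2)\,q_i^{p-1}dq_i .
\]
Here \(p-2\equiv-2\), so this equals \(-a\,q_i^{p-1}dq_i\), a nonzero multiple of the basis class. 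The shears \(q_i\mapsto q_i+a\,p_i^{p-1}\) realize the classes \([p_i^{p-1}dp_i]\) in the same way.

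\emph{Graded support.} For \(g\in F^sU_H\) with leading field \(X\) of coefficient degree \(s\), the leading term of \(g^*\theta_H-\theta_H\) is
\[
 \mathcal L_X\theta_H=\iota_X\omega_H+d(\iota_X\theta_H).
\]
Its class is \([\iota_X\omega_H]\), which can be nonzero only when \(s=p-1\). This matches the earlier isomorphism \(\cH_d/\cE_d\simeq H^1\) in degree \(p-1\). The step needing most care is making this associated-graded claim rigorous. Homogeneous components of higher coefficient degree of a closed form representing a class supported in degree \(p-1\) are exact, so \(\chi_H\) is determined by the degree \(p-1\) component. That component is \([\iota_X\omega_H]\) when \(s=p-1\), and zero when \(s>p-1\). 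When \(s<p-1\), the degree-\(s\) part \(\iota_X\omega_H\) is exact, and the degree \(p-1\) part must be tracked through the expansion. The cleanest way is to observe that the class is computed from the degree \(p-1\) component alone, and that this component is a polynomial in the coefficients of \(g\). Restricted to \(F^{p}U_H\) it vanishes, and modulo \(F^{p}U_H\) it depends only on the degree-\((p-1)\) layer. That follows from the additivity just proved together with the computation on the leading field.
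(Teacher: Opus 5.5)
Your proposal is correct and follows the paper's proof: the cocycle identity combined with the trivial tangent-to-identity action on \(H^1_{\dR}(O_H)\), the K\"unneth computation of the cohomology, and the flux shears \(p_i\mapsto p_i+a\,q_i^{p-1}\), \(q_i\mapsto q_i+a\,p_i^{p-1}\) for surjectivity (your value \(-a\,q_i^{p-1}dq_i\) is right). You also supply details the paper leaves implicit, namely the closedness of \(g^*\theta_H-\theta_H\) and the precise graded statement \(\chi_H(F^pU_H)=0\) with \(\chi_H(g)=[\iota_X\omega_H]\) on \(F^{p-1}U_H\); one small correction is that ``\(d\) preserves coefficient degree'' should say that \(d\) is homogeneous for the total grading in which \(x_i\) and \(dx_i\) both have degree one.
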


\begin{proof}
For \(g,h\in U_H\),
\[
 (gh)^*\theta_H-\theta_H
 =h^*(g^*\theta_H-\theta_H)+(h^*\theta_H-\theta_H).
\]
The tangent-to-identity group acts trivially on the height-one de Rham
cohomology.  For a substitution congruent to the identity modulo
\(\fm^2\), each Cartier basis class \([x_j^{p-1}dx_j]\) is unchanged:
every new term is either exact or has a coefficient exponent outside the
height-one box.  Passing to cohomology gives additivity.

The displayed basis follows from the one-variable de Rham calculation.
For each
symplectic pair, the substitutions
\[
 p_i\longmapsto p_i+a q_i^{p-1},
 \qquad
 q_i\longmapsto q_i+a p_i^{p-1}
\]
are symplectic and realize its two basis directions.  Hence \(\chi_H\) is
surjective.  Since the basis forms have coefficient degree \(p-1\), the
graded support is concentrated there.
\end{proof}

\subsection{Homogeneous Poisson brackets}

Let \(O_{H,e}\) denote the degree-\(e\) homogeneous part of \(O_H\).

Generation of the positive part is the classical computation of
\cite{KostrikinShafarevich1969} and
\cite[Ch.~4, Theorems~7.2--7.5]{StradeFarnsteiner1988}.  In the height-one
truncation the exceptional degrees are the quadratic, flux, and socle
components.

\begin{proposition}[Homogeneous Poisson generation]
\label{prop:poisson-generation}
For \(4\le e\le D_H\),
\[
 \sum_{\substack{a+b=e+2\\a,b\ge3}}
 \{O_{H,a},O_{H,b}\}
 =
 \begin{cases}
 O_{H,e},&e<D_H,\\
 0,&e=D_H.
 \end{cases}
\]
Equivalently, the only nonzero homogeneous quotients in the abelianization
of the positive Hamiltonian filtration are the quadratic, flux, and socle
components:
\[
 \frac{\cH_d}
 {\sum_{a+b=d+1}[\cH_a,\cH_b]}
 \simeq
 \begin{cases}
 \cH_2,&d=2,\\
 H^1_{\dR}(O_H),&d=p-1,\\
 kX_M,&d=D_H-1,\\
 0,&\text{otherwise}.
 \end{cases}
\]
\end{proposition}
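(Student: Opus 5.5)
The plan mirrors the special case, \cref{lem:special-exact-generation}, but now in the Poisson algebra. Since \(X_{\{f,g\}}=[X_f,X_g]\) and \(f\mapsto X_f\) identifies \(O_{H,e}\) (modulo constants, absent for \(e\ge1\)) with \(\cE_{e-1}\), the first display implies the second. Bracket spans are exact, so \(\sum[\cH_a,\cH_b]\subseteq\cE_d\). Combining this with \(\cH_d/\cE_d\simeq H^1_{\dR}(O_H)\) only for \(d=p-1\) and the convention \(\cH_d=X_{O_{H,d+1}}\) produces the listed quotients in degrees \(2\), \(p-1\) and \(D_H-1\).

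For the first display, take \(e<D_H\). I would reduce to the statement that every monomial \(x^\alpha\) of degree \(e\) lies in \(\{O_{H,3},O_{H,e-1}\}\); this covers the case \(a=3\) of the sum and suffices. Work one symplectic pair at a time. The plane bracket
\[
\{q^ap^b,\,q^{A-a+1}p^{B-b+1}N\}=\bigl(b(A+1)-a(B+1)\bigr)\,q^{A}p^{B}N\cdot(\text{sign}),
\]
with \(a+b=2\) and \(N\) independent of \((q,p)\), is the analogue of \eqref{eq:special-plane-bracket}. It produces \(q^Ap^BN\) whenever the coefficient is nonzero for some admissible \((a,b)\in\{(2,0),(1,1),(0,2)\}\). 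Strictly, \(q^ap^b\) has degree \(2\), so one factor is quadratic. The degree-\(3\) restriction is met by multiplying the quadratic factor by a variable from another pair and redistributing; alternatively, I would first prove generation with arbitrary degrees \(a,b\ge2\) and then eliminate the quadratic source using the Jacobi identity, as in \cref{prop:special-commutators}. Here \(r\ge2\) supplies the second pair.

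The coefficients vanish simultaneously only when \(A\equiv B\equiv-1\pmod p\), that is, when \(A=B=p-1\) in that pair. A monomial therefore fails to be reached by in-pair brackets only if every pair it involves is saturated, with \(q_i^{p-1}p_i^{p-1}\), or is absent. Consider a mixed pair, where exactly one exponent is \(p-1\), or a pair involved in only one variable. For these, I would use cross-pair brackets such as \(\{q_ip_j\cdot(\ldots),\ldots\}\) together with a case analysis in the style of the \(A=B=p-1\) and \(C=2\) cases of \cref{lem:special-exact-generation}. The denominators stay invertible because \(p\ge5\).

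The main obstacle is the monomials \(\prod_{i\in I}q_i^{p-1}p_i^{p-1}\cdot N\), where \(N\) involves the remaining pairs. I would treat them by borrowing a variable from a non-saturated pair \(j\): write the target as \(\{q_i^{p-1}p_i^{p-2}q_j\cdot\ldots,\;q_i p_j\cdot\ldots\}\) and correct with a second bracket, solving a \(2\times2\) linear system whose determinant is a small integer. Exactly as in the special case, this works as long as some exponent is missing. It fails only for the full socle \(M\), with \(e=D_H\), which is the excluded degree.

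For \(e=D_H\), vanishing follows from a residue argument. The Poisson bracket satisfies \(\{f,g\}\omega_H^r=d(\ldots)\), and the coefficient of \(M\) in any function \(h\) is the residue of \(h\,\omega_H^r\). Exact top forms have zero residue, so \(\{f,g\}\) never has an \(M\)-component. I would prove this through the identity \(\{f,g\}\,\omega_H^{r}=r\,d f\wedge dg\wedge\omega_H^{r-1}\), which is exact.
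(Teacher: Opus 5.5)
There is a genuine gap in the main step, the generic case.

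\textbf{The plane bracket does not lower degree.} Your bracket has a quadratic first factor \(q^ap^b\) with \(a+b=2\). The second factor \(q^{A-a+1}p^{B-b+1}N\) then has degree \(e\), the same as the target. So the identity is just the \(\mathfrak{sp}_{2r}\)-action \(\{O_{H,2},O_{H,e}\}\subseteq O_{H,e}\), and it says nothing about \(\sum_{a,b\ge3}\{O_{H,a},O_{H,b}\}\).

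You have shifted degrees by one relative to \eqref{eq:special-plane-bracket}. There, \(D_{xy}(x^ay^b)\) with \(a+b=3\) is a field in \(\cL_2\), whose Hamiltonian counterpart is a cubic potential. By contrast, \(X_{q^ap^b}\) with \(a+b=2\) is a linear field.

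\textbf{Neither repair closes this.}
\begin{enumerate}
\item The Jacobi route is circular. The bracket span in degree \(e\) is already \(\mathfrak{sp}_{2r}\)-stable, so knowing \(m\in\{O_{H,2},O_{H,e}\}\) only trades \(m\) for other monomials of degree \(e\), with no base case.
\item Borrowing a variable from another pair creates a cross term that must itself be eliminated. It is also impossible for \(r=1\), which the proposition covers.
\end{enumerate}

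\textbf{The missing identity is the cubic one.} For \(\alpha+\beta=3\) and \(R\) free of the pair \((q,p)\),
\[
 \{q^{\alpha}p^{\beta},\,q^{A-\alpha+1}p^{B-\beta+1}R\}
 =\bigl(\beta(A+1)-\alpha(B+1)\bigr)\,q^Ap^BR,
\]
with factor degrees \(3\) and \(e-1\). The choices \((0,3)\), \((3,0)\), \((2,1)\), \((1,2)\) reach every monomial having a nonsocle pair of local degree at least two. In particular this covers every relevant monomial when \(r=1\).

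\textbf{The leftover monomials are misidentified.} With the correct identity, the leftovers are not ``every pair saturated or absent.'' They are the monomials in which every nonsocle pair has local degree at most one. So singleton pairs must be treated too, and your sketch gives no identity for them. Two identities suffice:
\begin{enumerate}
\item \(\{q_i^2A,\,p_iB\}=-2m\) when \(m=q_iAB\);
\item \(\{Aq_j,\,Bp_j\}=-m\) when \(m=AB\) and pair \(j\) is empty.
\end{enumerate}
In both, arrange \(\{A,B\}=0\) by putting singleton pairs wholly on one side and splitting a socle factor \((q_lp_l)^{p-1}\) into two powers of \(q_lp_l\). This keeps both factor degrees at least three, and no \(2\times2\) system is needed.

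\textbf{The top-degree argument needs a fix when \(r\ge p\).} Your argument is right in spirit, but
\[
 \omega_H^r=r!\,dq_1\wedge dp_1\wedge\cdots\wedge dq_r\wedge dp_r
\]
vanishes once \(r\ge p\). Then the identity \(\{f,g\}\omega_H^r=r\,df\wedge dg\wedge\omega_H^{r-1}\) is vacuous. Either replace \(\omega_H^r\) and \(\omega_H^{r-1}\) by their divided powers, or integrate by parts directly:
\[
 \int\partial_{q_i}f\,\partial_{p_i}g=-\int f\,\partial_{q_i}\partial_{p_i}g=\int\partial_{p_i}f\,\partial_{q_i}g,
\]
which gives \(\int\{f,g\}=0\).
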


\begin{proof}
It suffices to treat monomials.  In one symplectic pair \((q,p)\), for a
local target \(q^Ap^B\) of degree \(d=A+B\), set
\[
 f_a=q^ap^{3-a},\qquad
 g_a=q^{A-a+1}p^{B+a-2}.
\]
Whenever the exponents lie in the height-one box,
\[
 \{f_a,g_a\}=\bigl(3(A+1)-a(d+2)\bigr)q^Ap^B.
\]
If \(A\le p-2\) and \(B\ge2\), take \(a=0\); if
\(B\le p-2\) and \(A\ge2\), take \(a=3\).  The resulting coefficients
are \(3(A+1)\) and \(-3(B+1)\), respectively, and are nonzero.  For
\(d\ge3\), simultaneous failure of these alternatives forces either
\(d\le2\) or \((A,B)=(p-1,p-1)\).  In local degree two the only remaining
case is \(\{qp^2,q\}=2qp\).

Now let
\[
 m=\prod_{i=1}^r q_i^{A_i}p_i^{B_i}\in O_{H,e},\qquad e\ge4,
\]
with \(m\ne M\).  If some nonsocle pair has local degree at least two,
use the preceding identity in that pair and multiply its second
Hamiltonian by the monomial in all remaining variables.  No other
symplectic pair contributes to the bracket, and the two Hamiltonians have
degrees \(3\) and \(e-1\).

Suppose instead that every nonsocle pair has degree at most one.  If one
of them contributes, say, \(q_i\), write \(m=q_iAB\), where \(A,B\)
involve the other pairs, \(\deg A\ge1\), \(\deg B\ge2\), and
\(\{A,B\}=0\).  Such a factorization is obtained by assigning singleton
pair factors wholly to one side and, when necessary, splitting a socle
factor \((q_jp_j)^{p-1}\) between two powers of \(q_jp_j\).  Then
\[
 \{q_i^2A,p_iB\}=-2m.
\]
The case of a single \(p_i\) is symmetric.  If every nonsocle pair is
empty, choose an empty pair \(j\).  At least one other pair is a socle
pair; factor \(m=AB\) with \(\deg A,\deg B\ge2\) and \(\{A,B\}=0\) by
splitting one socle factor.  Then
\[
 \{Aq_j,Bp_j\}=-m.
\]
Hence every nonsocle monomial of degree at least four is a Poisson bracket
of homogeneous elements of degrees at least three.

For the socle, put \(\int f=[M]f\).  Since
\(\int\partial_xh=0\) for every coordinate \(x\), integration by parts
gives
\[
 \int\partial_{q_i}f\,\partial_{p_i}g
 =-\int f\,\partial_{q_i}\partial_{p_i}g
 =\int\partial_{p_i}f\,\partial_{q_i}g.
\]
Hence \(\int\{f,g\}=0\).  Since \(O_{H,D_H}=kM\) and \(\int M=1\),
the bracket span in top degree is zero.

Finally, contraction with \(\omega_H\) identifies symplectic fields with
closed one-forms and Hamiltonian fields with exact one-forms.  Height-one
de Rham cohomology occurs only in coefficient degree \(p-1\); the bracket
calculation gives every exact component in degrees \(3,\ldots,D_H-2\).
Degree two receives no bracket of positive-degree fields, and the top
Hamiltonian line is \(kX_M\).
\end{proof}

\subsection{The congruence filtration and abelianization}

\begin{proposition}[Hamiltonian congruence filtration]
\label{prop:hamiltonian-realization}
For \(2\le d\le D_H-1\), the leading-field map induces an isomorphism
\[
 F^dU_H/F^{d+1}U_H\xrightarrow{\sim}\cH_d.
\]
Under these identifications, the initial field of a group commutator is
the bracket of the initial fields.  Both identifications commute with
extension of the perfect ground field, in particular with passage to every
\(\F_q\).
\end{proposition}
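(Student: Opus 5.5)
The plan follows \cref{prop:special-realization} step by step, adapted from volume-preserving to symplectic substitutions. There are three parts: injectivity of the leading-field map into \(\cH_d\), surjectivity by successive correction, and the commutator symbol.

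\emph{Injectivity.} Let \(g\in F^dU_H\) with \(g(x)=x+f_x+(\text{higher})\), where each \(f_x\) has degree \(d\), and let \(X\) be the derivation \(x\mapsto f_x\). Expanding \(g^*\omega_H=\omega_H\) in lowest coefficient degree gives \(\mathcal L_X\omega_H=d(\iota_X\omega_H)=0\). So \(X\) is a symplectic field of coefficient degree \(d\), i.e.\ \(X\in\cH_d\). Two elements of \(F^dU_H\) have the same leading field exactly when one lies in \(F^{d+1}U_H\) times the other. Hence the induced map \(F^dU_H/F^{d+1}U_H\to\cH_d\) is an injective homomorphism, with additivity coming from leading terms of compositions.

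\emph{Surjectivity.} Given \(X\in\cH_d\), start with the substitution \(g_0:x\mapsto x+X(x)\). This is invertible because it is tangent to the identity, and it preserves \(\omega_H\) through coefficient degree \(d-1\) above the leading term. Suppose inductively that \(g_m\) has leading field \(X\) and \(g_m^*\omega_H-\omega_H\) begins in coefficient degree \(m\). This error is a closed \(2\)-form, since it is a difference of pullbacks of a closed form. The expected main obstacle is showing that the error is \emph{exact} in degree \(m\), even when height-one cohomology \(H^2_{\dR}(O_H)\) is nonzero, which happens in degree \(2(p-1)\). I would handle it as in the special case with the primitive \(\theta_H\). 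The error equals \(d(g_m^*\theta_H-\theta_H)\), and \(g_m^*\theta_H-\theta_H\) is a genuine \(1\)-form on \(O_H\). Its homogeneous component of degree \(m+1\) is therefore an explicit primitive \(\beta_m\) of the degree-\(m\) error. This sidesteps cohomology entirely. Define \(Y_m\) by \(\iota_{Y_m}\omega_H=-\beta_m\), which is possible because \(\omega_H\) is nondegenerate. Set \(g_{m+1}=g_m\circ(\id+Y_m)\). Since \(Y_m\) has coefficient degree \(m+1>d\), this kills the degree-\(m\) error without changing the leading field or the lower terms. The congruence filtration is finite, because \(\fm^{D_H+1}=0\), so the induction terminates in an element of \(U_H\).

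One point needs care for \(d\le D_H-1\): the corrections must stay in the height-one box. They do, because everything is computed in \(O_H\) itself. Every step uses only linear algebra over the prime field, namely contraction with \(\omega_H\), the homogeneous components of \(g^*\theta_H\), and composition. So the construction and the identification commute with extension of the perfect ground field and with evaluation on each \(\F_q\).

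\emph{Commutator symbol.} For \(g\in F^aU_H\) and \(h\in F^bU_H\), write \(g=\id+\Delta_g\) and \(h=\id+\Delta_h\) as operators on \(O_H\). Here \(\Delta_g\equiv X\) and \(\Delta_h\equiv Y\) modulo operators raising \(\fm\)-adic degree further. Expanding \(g h g^{-1}h^{-1}-\id\), the first possibly nonzero term is \(\Delta_g\Delta_h-\Delta_h\Delta_g\), which raises degree by \((a-1)+(b-1)\). On generators this is \([X,Y]\) applied to \(x\), because the quadratic parts of \(\Delta\) acting on products cancel in the commutator. Hence the commutator lies in \(F^{a+b-1}U_H\) and has leading field \([X,Y]\), exactly as asserted in \cref{prop:special-realization}.
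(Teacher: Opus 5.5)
Your proof is correct. For surjectivity it takes a genuinely different route from the paper; injectivity and the commutator symbol are handled essentially as the paper does.

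\textbf{The paper's route.} The paper realizes $\cH_d$ piece by piece:
\begin{enumerate}
\item $\cE_2$ by the truncated exponentials $\exp(aY_\ell)$, with $Y_\ell=X_{\ell^3/3}$ and cubes of linear forms spanning $O_{H,3}$;
\item the exact components $\cE_d$, $3\le d\le D_H-2$, by products of commutators of previously realized lifts, using \cref{prop:poisson-generation} together with the commutator-symbol assertion;
\item the top line $kX_M$ by $z_a=\id+aX_M$;
\item the flux quotient $\cH_{p-1}/\cE_{p-1}$ by the shears $p_i\mapsto p_i+aq_i^{p-1}$ and $q_i\mapsto q_i+ap_i^{p-1}$.
\end{enumerate}

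\textbf{Your route.} You run a single Moser-type correction. The key observation is that $g^*\omega_H-\omega_H=d(g^*\theta_H-\theta_H)$ is globally exact. Hence its lowest homogeneous piece has the explicit primitive $\beta_m$, and $H^2_{\dR}(O_H)$ never intervenes. This works uniformly in every degree, for exact and flux fields alike. The degree bookkeeping checks out. The first error of $\id+X$ begins no earlier than coefficient degree $2d-2\ge d$, so each correction $Y_m$ has degree $m+1>d$ and leaves the leading field alone. The quadratic term of $(\id+Y_m)^*\omega_H$ lies in degree $2m>m$.

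\textbf{What each route buys.} Yours is independent of \cref{prop:poisson-generation} and of the commutator formula. It also yields a section $\cH_d\to F^dU_H$ that is a morphism defined over $\F_p$, so the base-change and $\F_q$-point statements are immediate. The paper's route produces explicit one-parameter subgroups ($T_{\ell,a}$, the flux shears, $z_a$) that are reused in \cref{lem:hamiltonian-finite-generators} and \cref{lem:calabi-primitive-trace}. It also already shows which graded pieces are reached by commutators, which is the content of \cref{prop:hamiltonian-abelianization}.

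\textbf{A wording fix in the commutator step.} The higher-order parts of $\Delta_g$ do not cancel; they are simply of higher order. Indeed $\Delta_g-X$ raises $\fm$-adic degree by at least $a$. Then $ghg^{-1}h^{-1}-\id=[\Delta_g,\Delta_h](hg)^{-1}$ shows the commutator agrees with $[X,Y]$ modulo operators raising degree by at least $a+b-1$.
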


\begin{proof}
Linearizing \(g^*\omega_H=\omega_H\) gives a symplectic initial field,
and injectivity follows from the definition of the filtration.  For
surjectivity, first
consider \(\cE_2=X_{O_{H,3}}\).  Since \(3!\) is invertible, cubes of
linear forms span \(O_{H,3}=\Sym^3(V^*)\).  For a linear form \(\ell\),
set
\[
 Y_\ell=X_{\ell^3/3}=\ell^2X_\ell.
\]
Because \(X_\ell(\ell)=0\), one has
\(Y_\ell^m=\ell^{2m}X_\ell^m\); since \(\ell^p=0\), the exponential
\(\exp(aY_\ell)\) is a finite sum involving only \(m<p\).  The truncated
exponential identities are therefore valid in characteristic \(p\), and
\(\exp(aY_\ell)\) is symplectic.  On linear coordinates
\(Y_\ell^2=0\), so its action is
\[
 T_{\ell,a}^*(x)=x+aY_\ell(x).
\]
Products of these one-parameter subgroups realize \(\cE_2\).

Assume inductively that the exact components below coefficient degree
\(d\) have been realized, and let \(X_f\in\cE_d\),
\(3\le d\le D_H-2\).  By \cref{prop:poisson-generation},
\[
 f=\sum_\nu\{a_\nu,b_\nu\},
\]
with all corresponding Hamiltonian fields of smaller coefficient degree.
After lifting these fields, the product of their group commutators has
initial field \(\sum_\nu[X_{a_\nu},X_{b_\nu}]=X_f\).  Hence every exact
component below the socle is realized.

The top exact component is \(kX_M\).  Since \(X_M^2=0\) on linear
coordinates,
\[
 z_a^*(x)=x+aX_M(x)
\]
has inverse \(z_{-a}\); the identity \(\mathcal L_{X_M}\omega_H=0\),
together with the height-one degree bound on quadratic pullback terms,
shows that \(z_a\) is symplectic.  Finally,
\(\cH_d/\cE_d\) is nonzero only for \(d=p-1\), where its Cartier basis is
realized by
\[
 p_i\mapsto p_i+a q_i^{p-1},
 \qquad q_i\mapsto q_i+a p_i^{p-1}.
\]
Hence the leading-field map is surjective in every degree.

For substitutions with initial fields \(X\) and \(Y\), direct expansion
gives initial commutator \([X,Y]\) in coefficient degree
\(\deg X+\deg Y-1\).  The lifts and commutator calculation are defined over
the prime field and therefore commute with extension of the perfect ground
field.
\end{proof}

Let
\[
 j_{H,2}:U_H\longrightarrow Q=\Sym^3(V^*)
\]
be the quadratic Hamiltonian jet.

\begin{proposition}[Filtered abelianization]
\label{prop:hamiltonian-abelianization}
Let
\[
 D=[U_H,U_H]_{\alg},\qquad A_H=U_H/D.
\]
For the filtration induced on \(D\),
\[
 \gr_2D=0,
 \qquad
 \gr_dD=\cE_d\quad(3\le d\le D_H-2),
\]
under the leading-field identifications of
\cref{prop:hamiltonian-realization}.  It follows that the induced filtration
on \(A_H\) has only the following
possible nonzero successive quotients:
\[
 Q=\Sym^3(V^*),\qquad F=V^{(1)},\qquad T_M,
\]
of scalar weights \(1,p-2,N\), respectively, where \(T_M\) is either
zero or the socle line \(kX_M\).  The quotients \(Q\) and \(F\) are the
surjective images induced by \(j_{H,2}\) and \(\chi_H\).
\end{proposition}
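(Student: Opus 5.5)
The plan is to prove the two inclusions \(\gr_dD\subseteq\cE_d\) (or \(0\) in degree \(2\)) and \(\cE_d\subseteq\gr_dD\) separately. The first comes from homomorphisms to abelian groups that kill \(D\); the second comes from the Poisson generation of \cref{prop:poisson-generation}, lifted through \cref{prop:hamiltonian-realization}. Throughout, \(D\) is a closed connected subgroup by the remark on algebraic commutators, so the induced filtration \(F^dU_H\cap D\) and its leading-field layers make sense. Since these filtrations are separated and finite, the stated quotients of \(A_H\) are just \(\cH_d/\gr_dD\).

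\emph{Upper bound.} The quadratic jet \(j_{H,2}:U_H\to Q\) is a homomorphism to a vector group: in degree two the leading term of a product is the sum of leading terms. Hence \(D\subseteq\ker j_{H,2}=F^3U_H\), which gives \(\gr_2D=0\). Next, \(\chi_H\) is a homomorphism to the commutative group \(H^1_{\dR}(O_H)\) by \cref{prop:hamiltonian-flux}, so \(D\subseteq K_H\). The graded flux on a leading field \(X\) is \([\iota_X\omega_H]\), by the same Cartan-formula argument as in \cref{prop:special-cartier} with \(\theta_H\) in place of \(\theta\). Its kernel in degree \(p-1\) is therefore \(\cE_{p-1}\). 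In every other degree \(d\le D_H-2\) one already has \(\cH_d=\cE_d\) by the displayed contraction isomorphism. This gives \(\gr_dD\subseteq\cE_d\) for \(3\le d\le D_H-2\). The point of routing this through \(\chi_H\) is that it avoids worrying about cancellation among leading terms of products of commutators, where higher-order terms could otherwise surface as non-exact leading fields.

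\emph{Lower bound.} Fix \(3\le d\le D_H-2\) and \(X_f\in\cE_d\), so that \(f\in O_{H,d+1}\) with \(4\le d+1\le D_H-1\). By \cref{prop:poisson-generation}, write
\[
 f=\sum_\nu\{a_\nu,b_\nu\},\qquad \deg a_\nu,\deg b_\nu\ge3,\qquad \deg a_\nu+\deg b_\nu=d+3 .
\]
Then \(X_{a_\nu}\in\cH_{\deg a_\nu-1}\) and \(X_{b_\nu}\in\cH_{\deg b_\nu-1}\) lie in degrees between \(2\) and \(D_H-2\). By \cref{prop:hamiltonian-realization}, lift them to \(\alpha_\nu\in F^{\deg a_\nu-1}U_H\) and \(\beta_\nu\in F^{\deg b_\nu-1}U_H\). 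Each commutator \((\alpha_\nu,\beta_\nu)\) has initial field \([X_{a_\nu},X_{b_\nu}]=X_{\{a_\nu,b_\nu\}}\) in coefficient degree \(d\). The product \(\prod_\nu(\alpha_\nu,\beta_\nu)\) therefore lies in \(D\cap F^dU_H\) with leading field \(X_f\), which gives \(\cE_d\subseteq\gr_dD\).

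\emph{Quotients of \(A_H\).} With \(\gr_dD\) known, the layers of \(A_H\) are \(\cH_d/\gr_dD\). This is \(\cH_2=\cE_2\simeq\Sym^3(V^*)=Q\) for \(d=2\); it is \(\cH_{p-1}/\cE_{p-1}\simeq H^1_{\dR}(O_H)\simeq V^{(1)}=F\) for \(d=p-1\); and it is \(0\) for the remaining \(d\le D_H-2\). In the top layer \(d=D_H-1\) we have \(\cH_{D_H-1}=kX_M\), and the quotient \(T_M\) is \(kX_M\) or \(0\) according as \(Z_M\) meets \(D\) or not; this case is deliberately left open here. Scalar dilation acts on coefficient degree \(d\) with weight \(d-1\), which gives weights \(1\), \(p-2\), and \(D_H-2=N\). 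Finally, \(j_{H,2}\) and \(\chi_H\) both vanish on \(D\), so they factor through \(A_H\). Their surjectivity, from \cref{prop:hamiltonian-realization,prop:hamiltonian-flux}, together with the graded description of \(\chi_H\), identifies \(Q\) and \(F\) as the induced images. The main obstacle I expect is the upper bound in the flux degree \(d=p-1\). There one must check carefully that the graded flux on \(F^{p-1}U_H\) is exactly \(X\mapsto[\iota_X\omega_H]\), i.e.\ that \(\chi_H\) restricted to \(F^{p-1}U_H\) depends only on the leading field modulo \(F^{p}U_H\); I would verify this by the Cartan formula, exactly as in the special case.
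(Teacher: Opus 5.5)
Your proposal is correct and follows the paper's proof step for step. For the upper bound you use \(D\subseteq F^3U_H\) and \(\chi_H(D)=0\), together with \(\cH_d=\cE_d\) away from degree \(p-1\). For the lower bound you lift the identities of \cref{prop:poisson-generation} to products of group commutators.

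The differences are cosmetic. You get \(D\subseteq F^3U_H\) from \(j_{H,2}\) being a homomorphism, where the paper uses the depth estimate \([F^aU_H,F^bU_H]\subseteq F^{a+b-1}U_H\). You also spell out, via the Cartan formula, that the graded flux on \(F^{p-1}U_H\) is \(X\mapsto[\iota_X\omega_H]\); the paper uses this fact without stating it.
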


\begin{proof}
The depth estimate
\[
 [F^aU_H,F^bU_H]\subseteq F^{a+b-1}U_H
\]
gives \(D\subseteq F^3U_H\), hence \(\gr_2D=0\).  Moreover
\(\chi_H(D)=0\).  Since \(\cH_d/\cE_d\) is supported only in degree
\(p-1\), every initial field of \(D\) is exact; thus
\(\gr_dD\subseteq\cE_d\).

Conversely, let \(3\le d\le D_H-2\) and \(X_f\in\cE_d\).  Write
\(f=\sum_\nu\{a_\nu,b_\nu\}\) by
\cref{prop:poisson-generation}, lift the smaller-degree Hamiltonian fields
by \cref{prop:hamiltonian-realization}, and take the product of the
corresponding group commutators.  It lies in \(D\) and has initial field
\[
 \sum_\nu[X_{a_\nu},X_{b_\nu}]=X_f.
\]
Hence \(\gr_dD=\cE_d\) throughout this range.

The filtration on \(A_H\) follows from
\(\cH_{p-1}/\cE_{p-1}\simeq V^{(1)}\) and
\(\cH_{D_H-1}=kX_M\).  The surjections \(j_{H,2}\) and \(\chi_H\) kill
\(D\) and induce the \(Q\)- and \(F\)-quotients.  Their scalar weights,
and that of the socle, are \(1\), \(p-2\), and \(D_H-2=N\).
\end{proof}

\subsection{Degree-tracked Poisson generation}

Throughout this subsection \(r\ge2\).  Recall
\(\cE_d=X_{O_{H,d+1}}\), so that \(\cE_d\) corresponds to \(O_{H,d+1}\) and
\(\cE_{D_H-1}=kX_M\).  In function degrees a bracket of \(O_{H,u}\) and
\(O_{H,v}\) lands in \(O_{H,u+v-2}\).

\begin{lemma}[Degree-tracked generation]
\label{lem:tracked-poisson}
Let \(j\ge2\) and \(j+2\le e\le D_H-1\).  Then
\[
 \sum_{\substack{u+v=e+2\\ u\ge3,\ v\ge j+1}}
 \{O_{H,u},O_{H,v}\}
 =O_{H,e}.
\]
\end{lemma}

\begin{proof}
Since \(v=e+2-u\le e-1\), the conditions \(u\ge3\) and \(v\ge j+1\) are
compatible precisely when \(e\ge j+2\).

It suffices to treat a monomial
\(m=\prod_{i=1}^rq_i^{A_i}p_i^{B_i}\) of degree \(e\), necessarily \(m\ne M\).
Call a pair \emph{socle} if \((A_i,B_i)=(p-1,p-1)\).

\emph{Generic case: some nonsocle pair has degree at least two.}
Rename that pair \((q,p)\) with exponents \((A,B)\), and let \(R=m/(q^Ap^B)\).
For \(\alpha+\beta=3\) one has the single-pair identity
\begin{equation}
\label{eq:tracked-plane}
 \{q^{\alpha}p^{\beta},\,q^{A-\alpha+1}p^{B-\beta+1}R\}
 =\bigl(\beta(A+1)-\alpha(B+1)\bigr)\,m ,
\end{equation}
valid whenever all displayed exponents lie in \([0,p-1]\); no other pair
contributes, since the first factor involves only \(q,p\).  The two factors
have degrees \(3\) and \(e-1\).  As \(e\ge j+2\) we have \(e-1\ge j+1\), so
this splitting is admissible for the stated index range.

If \(A\le p-2\) and \(B\ge2\), take \((\alpha,\beta)=(0,3)\); the coefficient
is \(3(A+1)\ne0\).  If \(B\le p-2\) and \(A\ge2\), take
\((\alpha,\beta)=(3,0)\); the coefficient is \(-3(B+1)\ne0\).  If
\(A,B\ge1\), the choices \((2,1)\) and \((1,2)\) give coefficients
\(A+1-2(B+1)\) and \(2(A+1)-(B+1)\), which vanish simultaneously only if
\(A\equiv B\equiv-1\), i.e.\ only for a socle pair.  Since the pair is
nonsocle of degree at least two, at least one alternative applies.

\emph{Boundary configurations.}  Two configurations are not reached by
\eqref{eq:tracked-plane}.

\begin{enumerate}[label=\textup{(B\arabic*)}]
\item Every nonsocle pair has degree at most one, and some nonsocle pair
contributes a single variable, say \(q_i\).  Write \(m=q_iAB\) with
\(\{A,B\}=0\), \(\deg A\ge1\), \(\deg B\ge2\), \(A,B\) in the remaining
pairs.  Then \(\{q_i^2A,\,p_iB\}=-2m\), with factor degrees
\(2+\deg A\) and \(1+\deg B\).
\item Every nonsocle pair is empty.  Choose an empty pair \(j_0\) and split a
socle factor as \(m=AB\) with \(\deg A,\deg B\ge2\), \(\{A,B\}=0\).  Then
\(\{Aq_{j_0},\,Bp_{j_0}\}=-m\), with factor degrees \(1+\deg A\) and
\(1+\deg B\).
\end{enumerate}

In the index range of the lemma the small factor must have degree at
most \(e+1-j\).  In (B1) this asks for
\(\deg A\le e-1-j\); one may take \(\deg A=1\), so that the small factor has
degree \(3\), whenever some pair other than \(i\) contributes a single
variable.  The only configurations not covered are therefore

\begin{enumerate}[label=\textup{(C\arabic*)}]
\item \(m=q_i\prod_{l\in S}(q_lp_l)^{p-1}\) with \(S\ne\emptyset\) and every
pair other than \(i\) a socle pair, together with the symmetric case in which
the singleton is \(p_i\);
\item \(m=\prod_{l\in S}(q_lp_l)^{p-1}\) with \(S\ne\emptyset\) and some pair
\(j_0\notin S\) empty.
\end{enumerate}

Both are settled by the following separation of variables.  If \(u,v\) involve
only one symplectic pair and \(X,Y\) only the remaining pairs, then
\begin{equation}
\label{eq:separation}
 \{uX,vY\}=\{u,v\}\,XY+uv\,\{X,Y\},
\end{equation}
because the \(k\)-th summand of the bracket contributes \(XY\{u,v\}_k\) for
\(k\) the distinguished pair and \(uv\{X,Y\}_k\) otherwise.  Moreover, for a
socle pair \(l\in S\) and any \(k\),
\[
 \{q_lp_l,(q_lp_l)^k\}=k(q_lp_l)^{k-1}\{q_lp_l,q_lp_l\}=0 ,
\]
so with \(X=q_lp_l\) and \(Y=m/(q_lp_lq_i^{\epsilon})\) one has
\(\{X,Y\}=0\), the remaining pairs being untouched.

In case (C2) fix \(l\in S\) and put
\[
 f=q_{j_0}\cdot q_lp_l,
 \qquad
 g=p_{j_0}\cdot m/(q_lp_l).
\]
Then \(\{X,Y\}=0\) and \(\{q_{j_0},p_{j_0}\}=-1\), so \eqref{eq:separation}
gives \(\{f,g\}=-m\), with \(\deg f=3\) and \(\deg g=e-1\).

In case (C1) fix \(l\in S\) and put
\[
 f=p_i\cdot q_lp_l,
 \qquad
 g=q_i^2\cdot m/(q_i\,q_lp_l).
\]
Here \(u=p_i\), \(v=q_i^2\) give \(\{u,v\}=2q_i\), so \eqref{eq:separation}
gives \(\{f,g\}=2m\), again with \(\deg f=3\) and \(\deg g=e-1\).  All
exponents remain in the height-one box: \(q_i^2\) requires \(2\le p-1\), and
the socle pair \(l\) appears in \(g\) with exponents \(p-2\).  Since
\(p\ge5\), the scalar \(2\) is invertible.  The case of a singleton \(p_i\)
is symmetric, with \(f=q_i\,q_lp_l\) and \(g=p_i^2\,m/(p_i\,q_lp_l)\).

In all cases the small factor has degree \(3\) and the large factor degree
\(e-1\ge j+1\), which is the required index range.
\end{proof}

\subsection{The Hamiltonian central series}

\begin{theorem}[Hamiltonian lower central series and class]
\label{thm:hamiltonian-series}
Let \(p\ge5\) and \(r\ge2\).  Let
\(Z_M\) be the socle line and \(K_H=\ker\chi_H\).  All groups below are the
reduced ones, so that the induced filtration is taken set-theoretically.  Then
for \(2\le j\le D_H-2\)
\[
 F^{j+1}U_H\cap K_H
 =\Gamma_j(U_H)\times Z_M ,
\]
and in particular
\[
 [U_H,U_H]_{\alg}\times Z_M=F^3U_H\cap K_H,
 \qquad
 \operatorname{cl}(U_H)=2r(p-1)-3 .
\]
Moreover, \(\Gamma_j(U_H)=1\) for \(j\ge D_H-2\).
\end{theorem}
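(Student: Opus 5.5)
Everything is an induction on \(j\) along the congruence filtration, with \cref{lem:tracked-poisson} supplying the graded generation and \cref{thm:hamiltonian-main}(2) supplying the splitting off of \(Z_M\).

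\emph{Upper bound.}  I first show \(\Gamma_j(U_H)\subseteq F^{j+1}U_H\cap K_H\).  The depth estimate \([F^aU_H,F^bU_H]\subseteq F^{a+b-1}U_H\) of \cref{prop:hamiltonian-realization} gives \(\Gamma_j(U_H)\subseteq F^{j+1}U_H\) by induction.  Since \(\chi_H\) is a homomorphism to a commutative group, \(\Gamma_j(U_H)\subseteq\Gamma_2(U_H)\subseteq K_H\).  The socle line \(Z_M\) lies in \(F^{D_H-1}U_H\subseteq F^{j+1}U_H\) for \(j\le D_H-2\), and it lies in \(K_H\) because its initial field is exact.  It is central, because a commutator with it has depth at least \(D_H\), where the group is trivial.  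Together with \cref{thm:hamiltonian-main}(2), this shows that \(\Gamma_j(U_H)\cdot Z_M\) is an internal direct product of closed subgroups contained in \(F^{j+1}U_H\cap K_H\).

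\emph{Lower bound.}  Next I show \(F^{j+1}U_H\cap K_H\subseteq\Gamma_j(U_H)Z_M\) by descending leading-term elimination.  Take \(g\in F^{j+1}U_H\cap K_H\) with leading coefficient degree \(d\ge j+1\).
\begin{itemize}
\item If \(d\le D_H-2\), the leading field is exact.  For \(d\ne p-1\) this holds because \(\cH_d=\cE_d\); for \(d=p-1\) it follows from \(\chi_H(g)=0\) and \cref{prop:hamiltonian-flux}.  Write it as \(X_f\) with \(f\in O_{H,d+1}\).  Apply \cref{lem:tracked-poisson} with \(e=d+1\), which requires \(j+2\le e\le D_H-1\):
\[
 f=\sum_\nu\{a_\nu,b_\nu\},\qquad a_\nu\in O_{H,3},\quad b_\nu\in O_{H,\ge j+1}.
\]
The field \(X_{b_\nu}\) has coefficient degree \(\deg b_\nu-1\ge j\).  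By induction on \(j\), \(F^jU_H\cap K_H\subseteq\Gamma_{j-1}(U_H)Z_M\), and \(X_{b_\nu}\) is exact, so \(X_{b_\nu}\) lifts to an element of \(\Gamma_{j-1}(U_H)Z_M\).  The \(Z_M\)-factor is central and so disappears in commutators.  Hence \(\prod_\nu(a_\nu',b_\nu')\in\Gamma_j(U_H)\) has leading field \(X_f\) by \cref{prop:hamiltonian-realization}.  Dividing it out raises the depth.
\item If \(d=D_H-1\), the leading field is a multiple of \(X_M\), and I remove it with an element of \(Z_M\).
\end{itemize}
The filtration is finite, so the process terminates.

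\emph{Base case and endpoint.}  The base case \(j=2\) uses the same argument with \(\Gamma_1(U_H)=U_H\), and it recovers \cref{prop:hamiltonian-abelianization}.  This yields \([U_H,U_H]_{\alg}\times Z_M=F^3U_H\cap K_H\).  The leading-term elimination works on reduced closed subgroups because each step multiplies by elements of the closed subgroups \(\Gamma_j(U_H)\) and \(Z_M\).  Taking \(j=D_H-2\) gives
\[
 \Gamma_{D_H-2}(U_H)\times Z_M=F^{D_H-1}U_H\cap K_H=Z_M,
\]
so \(\Gamma_{D_H-2}(U_H)=1\).  At \(j=D_H-3\), the component \(\cE_{D_H-2}\ne0\) is present, so \(\Gamma_{D_H-3}(U_H)\ne1\).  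Therefore \(\operatorname{cl}(U_H)=D_H-3\).

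\emph{Main obstacle.}  The delicate point is the directness of the product, namely that \(Z_M\) meets \(\Gamma_j(U_H)\) trivially.  This is exactly \cref{thm:hamiltonian-main}(2), including the resonant case \(r+1\equiv0\pmod p\), and I take it as given.  Without it, the lower-bound argument would only give the product, not the direct product.
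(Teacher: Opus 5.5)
Your proposal is correct and follows essentially the same route as the paper. The paper computes \(\gamma_j(\mathfrak g)=\bigoplus_{d=j+1}^{D_H-2}\cE_d\) in the graded Lie algebra via \cref{lem:tracked-poisson}, observes that the top layer \(kX_M\) is the only discrepancy with \(F^{j+1}U_H\cap K_H\), and splits it off using centrality of \(Z_M\) and \cref{thm:hamiltonian-main}(2); your inductive leading-term elimination is the unpacked form of that comparison. One step you leave tacit, lifting an exact \(X_{b_\nu}\) into \(F^{j}U_H\cap K_H\), is harmless: for degree \(\ge3\) it is realized by commutator lifts, and otherwise a higher-degree flux shear corrects the flux.
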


\begin{proof}
By \cref{prop:hamiltonian-realization} the initial field of a group
commutator is the bracket of the initial fields, so it suffices to compute
in the graded Lie algebra \(\mathfrak g=\bigoplus_{d=2}^{D_H-1}\cH_d\).

Put \(\mathfrak b_j=\bigoplus_{d\ge j}\cH_d\).  By
\cref{lem:tracked-poisson} translated through \(\cE_d\leftrightarrow
O_{H,d+1}\), the degree-\(d\) component of \([\mathfrak g,\mathfrak b_j]\) is
\(\cE_d\) for \(j+1\le d\le D_H-2\).  It is zero for \(d\le j\) by degree and
zero for \(d=D_H-1\), since \(\int\{f,g\}=0\), no bracket reaches the top
function degree.  Hence
\[
 \gamma_j(\mathfrak g)=\bigoplus_{d=j+1}^{D_H-2}\cE_d
 \qquad(j\ge2).
\]
For \(j=2\) this is \(\bigoplus_{d=3}^{D_H-2}\cE_d\), which is exactly
\cref{prop:poisson-generation}.

The graded of \(F^{j+1}U_H\cap K_H\) is
\(\bigoplus_{d\ge j+1}\cE_d\), because \(\cH_d=\cE_d\) for \(d\ne p-1\) and
\(\ker\chi_H\) cuts \(\cH_{p-1}\) down to \(\cE_{p-1}\); the flux character
has graded support in degree \(p-1\) only, so for \(j+1>p-1\) the
intersection is all of \(F^{j+1}U_H\).  Comparing with the display above,
the two subgroups differ exactly in the top layer
\(\cE_{D_H-1}=kX_M=\Lie Z_M\).  Since \(\cH_{D_H}=0\), the line \(Z_M\) is
central in \(U_H\), and by \cref{thm:hamiltonian-main}(2) it meets
\(\Gamma_2(U_H)\supseteq\Gamma_j(U_H)\) trivially for \(r\ge2\).  The
product is therefore direct and exhausts \(F^{j+1}U_H\cap K_H\).

Finally \(\gamma_j(\mathfrak g)=0\) exactly when \(j+1>D_H-2\), so
\(\Gamma_j(U_H)=1\) exactly when \(j\ge D_H-2\), giving
\(\operatorname{cl}(U_H)=D_H-3\).
\end{proof}

\subsection{The derived series}

The class computed above permits derived length
\(\lceil\log_2(D_H-2)\rceil\), and \cref{thm:hamiltonian-derived} shows
that it is attained.  As for \cref{lem:balanced-special} in the special
case, the argument uses brackets of two factors of equal degree \(c\).

Let \(m=\prod q_i^{A_i}p_i^{B_i}\) have degree \(2c-2\), pick a
distinguished pair, and write \(m=q_i^Ap_i^BR\).  With
\(u=q_i^{\alpha}p_i^{\beta}\), \(v=q_i^{A-\alpha+1}p_i^{B-\beta+1}\),
\(s=\alpha+\beta\) and \(R=XY\), the separation identity
\eqref{eq:separation} gives
\[
 \{uX,vY\}=\bigl(\beta(A+1)-\alpha(B+1)\bigr)m+q_i^{A+1}p_i^{B+1}\{X,Y\},
\]
and \(\deg(uX)=\deg(vY)=c\) precisely when \(\deg X=c-s\).

The error term is \(uv\{X,Y\}\) with \(uv=q_i^{A+1}p_i^{B+1}\).  This term vanishes whenever the distinguished pair has exactly one exponent
equal to \(p-1\).

\begin{lemma}[Balanced brackets, saturated case]
\label{lem:balanced-saturated}
Let \(3\le c\) with \(2c-2\le D_H-1\), and let \(m\) be a monomial of degree
\(2c-2\) having a pair with exactly one exponent equal to \(p-1\).  Then
\(m\in\{O_{H,c},O_{H,c}\}\).
\end{lemma}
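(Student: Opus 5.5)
The plan is to take the distinguished pair to be the pair with exactly one exponent equal to \(p-1\) and to use the separation identity \eqref{eq:separation} in the form displayed just before the lemma. By the symmetry \(q_i\leftrightarrow p_i\), which changes the Poisson bracket only by a sign, it suffices to treat \(A=p-1\), \(0\le B\le p-2\). Write \(m=q_i^{A}p_i^{B}R\), with \(R\) a monomial in the remaining pairs, so that \(\deg R=2c-2-A-B\).

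The first point is that the error term vanishes: \(uv=q_i^{A+1}p_i^{B+1}=q_i^{p}p_i^{B+1}=0\) in \(O_H\). The identity therefore reduces to
\[
 \{uX,vY\}=\bigl(\beta(A+1)-\alpha(B+1)\bigr)m=-\alpha(B+1)\,m ,
\]
because \(A+1=p=0\) in \(k\). Since \(B+1\le p-1\), the coefficient is a unit as soon as \(1\le\alpha\le p-1\).

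It remains to choose \(\alpha,\beta\) and a splitting \(R=XY\) into monomials so that all exponents stay in the height-one box and \(\deg(uX)=\deg(vY)=c\). The box conditions on the distinguished pair read
\[
 0\le\alpha,\beta\le p-1,\qquad 0\le A-\alpha+1=p-\alpha\le p-1,\qquad 0\le B-\beta+1\le p-1 .
\]
With \(\alpha\ge1\) these become \(1\le\alpha\le p-1\) and \(0\le\beta\le B+1\). Hence \(s=\alpha+\beta\) can be any integer in \([1,\,p+B]\). The factors \(X,Y\) are complementary submonomials of \(R\), so the box is automatic for them, and \(\deg X=c-s\) can be any value in \([0,\deg R]\). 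We therefore need an integer \(s\) with
\[
 \max(1,\,c-\deg R)\le s\le\min(c,\,p+B).
\]
I take \(s=\max(1,c-\deg R)\). The bound \(s\le c\) holds because \(c\ge3\). The bound \(s\le p+B\) holds because \(c-\deg R=A+B+2-c=p+1+B-c\le p+B\), using \(c\ge1\), and because \(1\le p+B\).

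Finally, decompose \(s=\alpha+\beta\) with \(1\le\alpha\le p-1\) and \(0\le\beta\le B+1\), taking \(\alpha\) as large as possible, and choose any submonomial \(X\mid R\) of degree \(c-s\), with \(Y=R/X\). Then \(uX,vY\in O_{H,c}\), and \(m=-(\alpha(B+1))^{-1}\{uX,vY\}\in\{O_{H,c},O_{H,c}\}\). The hypothesis \(2c-2\le D_H-1\) only guarantees \(m\ne M\), consistent with one exponent being below \(p-1\).

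I expect the only delicate step to be this bookkeeping of admissible \((\alpha,\beta,\deg X)\), in particular ensuring \(\alpha\ge1\) so the coefficient is nonzero while \(p-\alpha\) stays in the box. The vanishing of the error term, the feature that distinguishes this saturated case, is immediate.
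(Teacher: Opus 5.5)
Your proof is correct and matches the paper's argument step for step. The error term \(uv\{X,Y\}\) vanishes because \(q_i^{p}=0\), the coefficient reduces to \(-\alpha(B+1)\), the box conditions allow every \(s\in[1,p+B]\), and one then picks \(s\) in the nonempty interval \([\max(1,c-\deg R),\min(c,p+B)]\).

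One small justification is misattributed. When \(s=c-\deg R\), the bound \(s\le c\) follows from \(\deg R\ge0\), i.e.\ \(2c-2\ge p-1+B\), as the paper notes, not from \(c\ge3\).
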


\begin{proof}
Rename that pair as pair \(1\) and say \(A=p-1\), \(B\le p-2\).  Then
\(uv=q_1^{p}p_1^{B+1}=0\), so \eqref{eq:separation} reduces to
\(\{uX,vY\}=\{u,v\}XY\) for \emph{every} factorization \(R=XY\); no
commuting condition is imposed, and \(\deg X\) may be any value in
\([0,\deg R]\).

The exponents of \(v\) are \(\gamma=A-\alpha+1=p-\alpha\) and
\(\delta=B-\beta+1\).  The condition \(\gamma\in[0,p-1]\) forces
\(\alpha\ge1\), and \(\delta\in[0,p-1]\) forces
\(\beta\le B+1\).  The coefficient is
\[
 \beta(A+1)-\alpha(B+1)\equiv-\alpha(B+1)\pmod p,
\]
nonzero for every \(\alpha\ge1\) since \(B\le p-2\).  Hence every
\(s=\alpha+\beta\in[1,p+B]\) is admissible.

It remains to find such an \(s\) with \(\deg X=c-s\in[0,\deg R]\), where
\(\deg R=2c-2-A-B=2c-p-1-B\).  The two conditions read
\[
 s\in\bigl[\max(1,\,p+B+1-c),\ \min(c,\,p+B)\bigr],
\]
and this interval is nonempty: \(p+B+1-c\le c\) is equivalent to
\(p+B+1\le2c\), which follows from
\(2c-2=\deg m\ge A+B=p-1+B\).  Choosing such an \(s\), splitting \(R\)
arbitrarily with \(\deg X=c-s\), and taking \(\alpha\ge1\) with
\(\alpha+\beta=s\) gives \(\{uX,vY\}=\lambda m\) with \(\lambda\ne0\).
The case \(B=p-1\), \(A\le p-2\) is symmetric.
\end{proof}

\begin{lemma}[Balanced brackets]
\label{lem:balanced-hamiltonian}
Let \(3\le c\) with \(2c-2\le D_H-1\).  Then
\[
 \{O_{H,c},O_{H,c}\}=O_{H,2c-2}.
\]
\end{lemma}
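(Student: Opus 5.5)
The plan is to reduce to monomials and to split on the shape of the exponent pairs. Let \(m=\prod_i q_i^{A_i}p_i^{B_i}\) have degree \(2c-2\le D_H-1\), so \(m\ne M\). The inclusion \(\{O_{H,c},O_{H,c}\}\subseteq O_{H,2c-2}\) holds by degree. If some pair has exactly one exponent equal to \(p-1\), then \cref{lem:balanced-saturated} gives \(m\in\{O_{H,c},O_{H,c}\}\) directly. So I may assume that every pair is either a socle pair \((p-1,p-1)\) or has both exponents \(\le p-2\). Since \(m\ne M\), at least one non-socle pair exists.

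\emph{Main case: some non-socle pair has positive local degree.} Call this pair \((q,p)\) with exponents \(A,B\le p-2\), and write \(m=q^Ap^BR\). I use the separation identity \eqref{eq:separation} with
\[
u=q^\alpha p^\beta,\qquad v=q^{A-\alpha+1}p^{B-\beta+1},
\]
where \(0\le\alpha\le A+1\) and \(0\le\beta\le B+1\). For \(R=XY\) I want \(\{X,Y\}=0\). To get this, I assign each remaining non-socle pair wholly to \(X\) or to \(Y\), and I split each socle factor \((q_lp_l)^{p-1}\) as \((q_lp_l)^a(q_lp_l)^{p-1-a}\). Factors living on disjoint pairs Poisson-commute, and so do powers of \(q_lp_l\); hence \(\{X,Y\}=0\). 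The result is \(\{uX,vY\}=(\beta(A+1)-\alpha(B+1))m\).

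Two requirements remain. First, I need a total degree \(s=\alpha+\beta\) such that \(\deg X=c-s\) is realizable. Second, for that \(s\) I need some \((\alpha,\beta)\) in the box with nonzero coefficient. The coefficient vanishes only on the single residue ratio \(\beta(A+1)\equiv\alpha(B+1)\). On a fixed line \(\alpha+\beta=s\) with \(1\le s\le A+B+2\) there are at least two lattice points whenever \(s\ge1\), and at most one of them lies on that ratio. The exception is \(s=A+B+2\), where \((\alpha,\beta)=(A+1,B+1)\) gives coefficient \(0\); this endpoint I discard.

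\emph{Realizable degrees of \(X\).} The set of realizable \(\deg X\) consists of sums of whole non-socle pair degrees and even numbers \(2a\) from the socle splits. I would check that this set, shifted by the admissible range \(s\in[1,A+B+1]\), always contains \(c\). The estimate for this uses \(\deg R=2c-2-A-B\), in the same way as the interval computation in \cref{lem:balanced-saturated}. If parity obstructs the match, I would instead choose the distinguished pair differently, for instance one of odd local degree.

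\emph{Residual case: every non-socle pair is empty.} Then \(m=\prod_{l\in S}(q_lp_l)^{p-1}\). Pick an empty pair \(j_0\) and use
\[
\{q_{j_0}X,\;p_{j_0}Y\}=-XY,\qquad XY=m,\qquad \deg X=\deg Y=c-1.
\]
If \(c-1\) is even, the split is realized by powers of the \(q_lp_l\), so \(\{X,Y\}=0\) and the bracket is exactly \(-m\). If \(c-1\) is odd, \(X\) must be unbalanced in some socle pair, for example carrying an extra \(q_l\). Then \(\{X,Y\}\ne0\), and the error term \(q_{j_0}p_{j_0}\{X,Y\}\) has to be cancelled by a second bracket. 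For that I would try \(\{q_{j_0}^2X',\,p_{j_0}Y'\}\), or the degree-\(3\) trick of case (C2) in \cref{lem:tracked-poisson} adapted to balanced degrees.

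I expect the parity/granularity matching to be the main obstacle. This concerns both the main case with few non-socle variables and the residual odd case. There I would first try to absorb the odd unit into the distinguished pair's \(s\), which can have either parity, and fall back on an explicit two-term cancellation only if that fails.
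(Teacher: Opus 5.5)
There is a genuine gap in the matching step of your main case, which you leave as ``I would check''. As you have set it up, that step is false. Because you insist on \(\{X,Y\}=0\), the admissible values of \(\deg X\) are only sums of whole non-socle pair degrees of \(R\), plus even numbers from balanced socle splits. For an arbitrary distinguished pair this set can miss the entire window \(\{c-s:1\le s\le A+B+1\}\).

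For instance, take \(p=5\), \(r=2\), \(c=3\), \(m=q_1q_2^2p_2\). No exponent equals \(p-1\), so you are in your main case. Distinguishing pair \(1\) forces \(s\in\{1,2\}\), hence \(\deg X\in\{1,2\}\). But \(R=q_2^2p_2\) permits only \(\deg X\in\{0,3\}\). Both pairs have odd degree, so your fallback of choosing a pair of odd local degree does not detect this; the obstruction is granularity, not parity. Nor does the interval computation of \cref{lem:balanced-saturated} transfer, since there \(uv=0\) made every \(\deg X\in[0,\deg R]\) admissible.

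The missing idea is to distinguish a non-socle pair \(i\) of \emph{maximal} degree \(d_i\ge1\).
\begin{itemize}
\item The prefix sums of the remaining non-socle degrees then have consecutive gaps at most \(d_i\).
\item Adding the even values \(0,2,\dots,2\sigma(p-1)\) from the socle splits, the admissible set contains \(0\) and \(\deg R\), and meets every run of \(\max(2,d_i)\le d_i+1\) consecutive integers inside \([0,\deg R]\).
\item The window \([c-d_i-1,c-1]\) has \(d_i+1\) integers. It contains \(0\) if \(c-d_i-1\le0\). It contains \(\deg R=2c-2-d_i\) if \(c-1\ge\deg R\). Otherwise it lies inside \([0,\deg R]\).
\end{itemize}
So some admissible \(s\) always works, with no error term.

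Your residual ``\(c-1\) odd'' case never occurs. There \(2c-2=|S|(2p-2)\), so \(c-1=|S|(p-1)\) is even, and the balanced split you describe already finishes the argument.

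With these two repairs your route is complete, and it differs from the paper's. The paper also takes \(i\) of maximal degree, but splits one further non-socle pair arbitrarily so as to reach every \(\deg X\in[0,\deg R]\). It then absorbs the resulting error \(q_i^{A+1}p_i^{B+1}\{X,Y\}\) by downward induction on \(\Psi(m)=\sum_l(A_l+B_l)^2\); maximality is what forces \(\Psi\) to rise by at least \(8\). Your repaired version uses maximality instead for the gap estimate and avoids the induction altogether.
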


\begin{proof}
Let \(\mathfrak V\) be the span of the brackets.  Every monomial \(m\) of degree
\(2c-2\) satisfies \(m\ne M\), since \(2c-2<D_H\).  For such an \(m\) put
\[
 \Psi(m)=\sum_{l}(A_l+B_l)^2 ,
\]
a quantity bounded above on monomials of degree \(2c-2\).  We argue by
downward induction on \(\Psi\).

If some pair of \(m\) has exactly one exponent equal to \(p-1\), then
\cref{lem:balanced-saturated} gives \(m\in\mathfrak V\) with no error term.  So assume
every pair of \(m\) is either socle or has both exponents at most \(p-2\).
Let \(i\) be a nonsocle pair of maximal degree \(d_i=A+B\), which exists as
\(m\ne M\), and write \(m=q_i^Ap_i^BR\).  Let \(t\) be the total degree of the
nonsocle pairs of \(R\) and \(\sigma\) the number of its socle pairs, so that
\(\deg R=\sigma(2p-2)+t\) and \(2c-2=d_i+t+\sigma(2p-2)\).

\emph{Admissible \(s\).}  Since \(A,B\le p-2\) the exponent conditions on
\(\alpha,\beta\) reduce to \(0\le\alpha\le A+1\) and \(0\le\beta\le B+1\), so
\(s=\alpha+\beta\) may be any value in \([0,d_i+2]\).  With \(s\) fixed,
\(\lambda=s(A+1)-\alpha(d_i+2)\).  If \(d_i+2\not\equiv0\) this vanishes for
at most one residue \(\alpha\), while the admissible \(\alpha\) for a given
\(s\in[1,d_i+1]\) form a range of at least two integers; if
\(d_i+2\equiv0\) then \(d_i=p-2\), \(\lambda=s(A+1)\), and \(A+1\le p-1\) is
invertible while \(s\le p-1\).  In both cases every \(s\in[1,d_i+1]\) admits
a choice of \(\alpha\) with \(\lambda\ne0\).

\emph{Achievable \(\deg X\).}  Split each socle pair of \(R\) as
\((a,a)\) and \((p-1-a,p-1-a)\); such a split contributes any even value in
\([0,2p-2]\) and satisfies \(b_lA_l-a_lB_l=0\), so it creates no error.
Assign the nonsocle pairs of \(R\) wholly to \(X\) or to \(Y\), except that
one of them may be split arbitrarily.  If \(t\ge1\) the nonsocle part
contributes every value in \([0,t]\), and since \(\sigma(2p-2)\) is even the
two contributions together realize every \(\deg X\in[0,\deg R]\).

\emph{Choice of \(s\) when \(t\ge1\).}  We need
\(s\in[1,d_i+1]\) with \(c-s\in[0,\deg R]\), that is
\(s\in[\max(1,d_i+2-c),\min(d_i+1,c)]\).  This interval is nonempty: \(1\le
d_i+1\) and \(1\le c\), while \(d_i+2-c\le d_i+1\) and \(d_i+2-c\le c\)
because \(d_i\le2c-2\).

\emph{Choice of \(s\) when \(t=0\).}  Here \(R\) is a product of socle pairs,
so \(\deg X\) must be even.  If \(d_i\ge1\) then \(s\in\{1,2\}\) are both
admissible and have opposite parities, so one of \(\deg X=c-1,c-2\) is even;
both lie in \([0,\deg R]\) because \(2c-2=d_i+\sigma(2p-2)\) with
\(\sigma\ge1\) forces \(d_i\le c-2\), whence \(\deg R=2c-2-d_i\ge c\).  If
\(d_i=0\) then \(A=B=0\), the only admissible value is \(s=1\) with
\((\alpha,\beta)=(0,1)\) and \(\lambda=1\), and \(\deg X=c-1\).  In that case
\(2c-2=\sigma(2p-2)\); as \(p\) is odd, \(4\mid2p-2\), so \(4\mid2c-2\) and
\(c\) is odd.  Hence \(c-1\) is even and is an achievable value.

With these choices,
\[
 \{q_i^{\alpha}p_i^{\beta}X,\;q_i^{A-\alpha+1}p_i^{B-\beta+1}Y\}
 =\lambda m+q_i^{A+1}p_i^{B+1}\{X,Y\},
 \qquad\lambda\ne0,
\]
and both factors have degree \(c\).  The error term is supported on monomials
obtained from \(m\) by raising pair \(i\) by \((1,1)\) and lowering the single
shared nonsocle pair \(l\) by \((1,1)\); socle pairs contribute nothing to
\(\{X,Y\}\).  Such a monomial has
\[
 \Psi=\Psi(m)+8+4(d_i-d_l)\ \ge\ \Psi(m)+8 ,
\]
since \(l\) is nonsocle and \(i\) has maximal nonsocle degree.  If
\(\Psi(m)\) is maximal among monomials of degree \(2c-2\) the error term
therefore vanishes; otherwise it lies in \(\mathfrak V\) by the induction hypothesis.
In both cases \(\lambda m\in\mathfrak V\), hence \(m\in\mathfrak V\).
\end{proof}

\begin{theorem}[Hamiltonian derived series]
\label{thm:hamiltonian-derived}
Let \(p\ge5\) and \(r\ge2\).  Then \(U_H^{(j)}=\Gamma_{2^j}(U_H)\) for every
\(j\ge1\), and
\[
 \operatorname{dl}(U_H)
 =\bigl\lceil\log_2(D_H-2)\bigr\rceil
 =\bigl\lceil\log_2\bigl(2r(p-1)-2\bigr)\bigr\rceil .
\]
\end{theorem}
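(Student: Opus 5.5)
We argue by induction on \(j\), proving the stronger statement that \(U_H^{(j)}=\Gamma_{2^j}(U_H)\) and that, by \cref{thm:hamiltonian-series}, its associated graded is \(\bigoplus_{d=2^j+1}^{D_H-2}\cE_d\), with the socle line excluded. For \(j=1\) there is nothing to prove: \(U_H^{(1)}=[U_H,U_H]_{\alg}=\Gamma_2(U_H)\). The containment \(U_H^{(j)}\subseteq\Gamma_{2^j}(U_H)\) holds in every group, so at each step only the reverse inclusion needs proof. Assume \(U_H^{(j)}=\Gamma_{2^j}(U_H)=:\Gamma\) and put \(s=2^j+1\). By \cref{thm:hamiltonian-series}, \(\Gamma\) has graded pieces \(\cE_d\) for \(s\le d\le D_H-2\). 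We must show that \([\Gamma,\Gamma]_{\alg}\) contains \(\Gamma_{2^{j+1}}(U_H)\), whose graded pieces are \(\cE_d\) for \(2s-1\le d\le D_H-2\).

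The graded step comes first. Since \(\cE_d=X_{O_{H,d+1}}\), the bracket of fields translates into the Poisson bracket \(O_{H,s+1}\times O_{H,s+1}\to O_{H,2s}\). \cref{lem:balanced-hamiltonian} with \(c=s+1\) gives \(\{O_{H,c},O_{H,c}\}=O_{H,2c-2}\), provided \(2c-2\le D_H-1\). Hence \([\cE_s,\cE_s]=\cE_{2s-1}\) whenever \(2s-1\le D_H-2\). The flux layer needs no separate treatment, since \(\cE_{p-1}\subset\cH_{p-1}\) is exactly the part lying in \(K_H\). For the higher degrees \(2s-1<d\le D_H-2\), we copy the Jacobi argument of \cref{prop:special-commutators}. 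By \cref{lem:tracked-poisson} every such \(\cE_d\) lies in \([\cH_2,\cE_{d-1}]\), possibly after splitting off degree-three factors. Jacobi then shows
\[
[\cH_2,[\cE_a,\cE_b]]\subseteq[[\cH_2,\cE_a],\cE_b]+[\cE_a,[\cH_2,\cE_b]],
\]
and the right-hand side consists again of brackets of two elements of degree at least \(s\), both lying in \(\operatorname{gr}\Gamma\). Upward induction on \(d\), starting from \(d=2s-1\), then puts every \(\cE_d\) with \(2s-1\le d\le D_H-2\) into the graded span of \([\operatorname{gr}\Gamma,\operatorname{gr}\Gamma]\).

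Next we lift to the group by leading-term elimination. Fix \(g\in\Gamma_{2^{j+1}}(U_H)\) with leading degree \(d\). Write its leading field as a sum of brackets of elements of \(\operatorname{gr}\Gamma\), choose lifts in \(\Gamma\) using \cref{prop:hamiltonian-realization}, and multiply the resulting group commutators. The product lies in \([\Gamma,\Gamma]_{\alg}\) and has the same leading field as \(g\), so dividing it off moves \(g\) to a strictly deeper layer. The process stays inside \(\Gamma_{2^{j+1}}(U_H)\) and never produces a socle contribution: no bracket reaches function degree \(D_H\), because \(\int\{f,g\}=0\), and \(Z_M\) meets \(\Gamma_2\) trivially by \cref{thm:hamiltonian-main}. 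Since the filtration is finite, the iteration terminates and gives \(g\in[\Gamma,\Gamma]_{\alg}\). If \(2s-1>D_H-2\), both sides are trivial.

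The derived length then follows. By \cref{thm:hamiltonian-series}, \(\Gamma_{2^j}(U_H)=1\) exactly when \(2^j\ge D_H-2\), so \(\operatorname{dl}(U_H)=\lceil\log_2(D_H-2)\rceil\). The main obstacle is the Jacobi step. Its right-hand side must really stay within brackets of two components of degree at least \(s\), including across the flux degree \(p-1\) and without ever needing the non-exact part \(\cH_{p-1}\). The first thing to try is simply to note that \(\cH_2=\cE_2\) is exact, so all brackets involved are exact fields of degree at least \(s\).
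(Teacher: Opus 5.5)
Your proposal is correct and follows essentially the same route as the paper: induction on \(j\), the balanced-bracket lemma with \(c=2^j+2\) for the bottom layer \(\cE_{2^{j+1}+1}\), Jacobi filling via \(O_{H,e}=\{O_{H,3},O_{H,e-1}\}\) (exactly what the tracked-generation lemma supplies), and leading-term elimination through the finite congruence filtration, with the final socle remainder removed by the triviality of \(Z_M\cap\Gamma_2(U_H)\). One small correction: the lifts of the bracket factors must lie in \(\Gamma=\Gamma_{2^j}(U_H)\), which follows from the graded description of \(\Gamma\) in \cref{thm:hamiltonian-series} (as you already note), not from \cref{prop:hamiltonian-realization} alone.
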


\begin{proof}
By \cref{thm:hamiltonian-series} the graded of \(\Gamma_j(U_H)\) is
\(\bigoplus_{d=j+1}^{D_H-2}\cE_d\), which under
\(\cE_d\leftrightarrow O_{H,d+1}\) is \(O_{H,[j+2,\,D_H-1]}\).  The inclusion
\(G^{(j)}\subseteq\gamma_{2^j}(G)\) holds in every group.  For the reverse,
induct on \(j\), and suppose that the graded of \(U_H^{(j)}\) is
\(O_{H,[c,\,D_H-1]}\) with \(c=2^j+2\).  Let \(\mathfrak B\) be the span of
the brackets \(\{O_{H,a},O_{H,b}\}\) with \(a,b\ge c\).  By
\cref{lem:balanced-hamiltonian}, \(O_{H,2c-2}\subseteq\mathfrak B\).  The
proof of \cref{lem:tracked-poisson} writes every monomial of degree \(e\),
\(4\le e\le D_H-1\), as a bracket whose first factor has degree \(3\), so
\(O_{H,e}=\{O_{H,3},O_{H,e-1}\}\).  By the Jacobi identity,
\[
 \{O_{H,3},\{O_{H,a},O_{H,b}\}\}
 \subseteq\{O_{H,a+1},O_{H,b}\}+\{O_{H,a},O_{H,b+1}\}
 \subseteq\mathfrak B ,
\]
and induction on \(e\) gives \(O_{H,e}\subseteq\mathfrak B\) for
\(2c-2\le e\le D_H-1\).  Lifting these brackets to commutators of elements
of \(U_H^{(j)}\) and removing leading terms through the finite congruence
filtration, as in the proof of \cref{prop:special-commutators}, shows that
the graded of \(U_H^{(j+1)}\) contains \(O_{H,[2^{j+1}+2,\,D_H-1]}\).  Hence
\(U_H^{(j+1)}=\Gamma_{2^{j+1}}(U_H)\).

Finally \(\Gamma_i(U_H)=1\) exactly when \(i\ge D_H-2\), so
\(U_H^{(j)}=1\) exactly when \(2^j\ge D_H-2\).
\end{proof}

\subsection{Finite-field points and the Frattini quotient}

\begin{lemma}[One-parameter generators]
\label{lem:hamiltonian-finite-generators}
Let \(p\ge5\), \(r\ge2\), and \(q=p^f\).  Choose linear forms
\(\ell_1,\ldots,\ell_t\) over \(\F_p\) such that
\(\ell_\nu^3/3\) form a basis of the cubic polynomials, where
\(t=\binom{2r+2}{3}\).  Then \(U_H(\F_q)\) is generated by the
\(\F_q\)-points of
\[
 T_{\ell_\nu,a},\qquad
 p_i\longmapsto p_i+a q_i^{p-1},\qquad
 q_i\longmapsto q_i+a p_i^{p-1},\qquad z_a
 \quad(a\in\F_q).
\]
Each displayed family is a homomorphism from \((\F_q,+)\).
\end{lemma}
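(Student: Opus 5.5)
The argument has two parts: the homomorphism claims, and generation via the filtered abelianization of \cref{prop:hamiltonian-abelianization} together with a finite $p$-group Burnside-basis argument.

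\emph{Homomorphism claims.}
\begin{itemize}
\item For $T_{\ell,a}=\exp(aY_\ell)$, the exponential is a truncated exponential of a single nilpotent derivation, as in the proof of \cref{prop:hamiltonian-realization}. Hence $\exp(aY_\ell)\exp(bY_\ell)=\exp((a+b)Y_\ell)$, because only exponents below $p$ occur and the binomial identities hold there.
\item For the flux shears $p_i\mapsto p_i+aq_i^{p-1}$, composing two of them gives $p_i\mapsto p_i+(a+b)q_i^{p-1}$, since $q_i$ is fixed. The same holds for $q_i\mapsto q_i+ap_i^{p-1}$.
\item For $z_a$, the paper already states $z_a^{-1}=z_{-a}$. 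Additivity follows because $X_M(x)$ has coefficient degree $D_H-1$, so the second-order terms of $X_M$ applied to $X_M(x)$ land in degree $2D_H-3>D_H$ and vanish.
\end{itemize}

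\emph{Generation, reduction.} Let $H\le U_H(\F_q)$ be the subgroup generated by the displayed families, and write $P=U_H(\F_q)$. Since $P$ is a finite $p$-group, it suffices to show that $H$ surjects onto $P/\Phi(P)$, where $\Phi(P)=P^p[P,P]$. It is therefore enough to find a normal subgroup $N\subseteq\Phi(P)$ such that $H N=P$. I would take $N=[P,P]$.

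\emph{Commutator subgroup over $\F_q$.} By \cref{prop:hamiltonian-realization}, the congruence filtration and its leading-field identifications commute with passage to $\F_q$. The lifting argument of \cref{prop:hamiltonian-abelianization} then runs over $\F_q$: it expresses every element of the exact layers $\cE_d$, $3\le d\le D_H-2$, as a product of commutators of $\F_q$-points modulo deeper terms. So $[P,P]$ contains a subgroup with graded pieces $\cE_d(\F_q)$ in that range. Consequently every element of $P$ is a product of three things modulo $[P,P]$:
\begin{itemize}
\item a lift of its quadratic jet in $\cH_2(\F_q)$;
\item flux data in $V^{(1)}(\F_q)$;
\item a socle coordinate in $kX_M(\F_q)$.
\end{itemize}

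\emph{Peeling the quotient layers with the generators.}
\begin{itemize}
\item Quadratic layer: the $T_{\ell_\nu,a}$ have quadratic jets $aY_{\ell_\nu}$. These span $\cE_2(\F_q)=\cH_2(\F_q)$ as $a$ ranges over $\F_q$, because the $\ell_\nu^3/3$ form an $\F_p$-basis, hence an $\F_q$-basis after extension.
\item Flux layer: the shears realize the flux basis with arbitrary $\F_q$ coefficients, by \cref{prop:hamiltonian-flux}.
\item Socle layer: $z_a$ covers the socle layer.
\end{itemize}
Peel layers in increasing filtration degree. At degree $2$, multiply by a product of $T$'s. At degree $p-1$, the non-exact part of the leading field is killed by shears, and the exact part is absorbed into $[P,P]$. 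The remaining degrees up to $D_H-2$ are absorbed into $[P,P]$. The top layer $\cH_{D_H-1}=kX_M$ is killed by $z_a$. The filtration is finite and each correction only changes deeper terms, so the procedure terminates and gives $P=H[P,P]$. Hence $H=P$.

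\emph{Main obstacle.} The delicate step is the ordering at the flux degree. There one must check that a product of $T$'s, which is chosen only to fix the quadratic jet, does not obstruct later steps. Its flux value and its higher exact terms are handled in later layers, so interleaving the corrections degree by degree resolves this.

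Strictly speaking, the socle generator is redundant only if $Z_M$ lies in $[P,P]$. For $r\ge2$ it does not, by \cref{thm:hamiltonian-main}(3), so including $z_a$ is essential. No further input is needed for generation.
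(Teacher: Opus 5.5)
Your proof is correct. It closes the generation argument differently from the paper.

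The paper works entirely inside the subgroup $B$ generated by the displayed families. The $T_{\ell_\nu,a}$ give all of $\cH_2(\F_q)$. Then, by induction on degree, \cref{prop:poisson-generation} writes each exact initial field below the socle as a sum of brackets of fields already realized \emph{by elements of $B$}, so commutators of elements of $B$ realize it. The shears and $z_a$ supply the flux directions and the top line. Thus $B$ surjects onto every congruence quotient $\cH_d(\F_q)$, and successive removal of initial terms gives $B=U_H(\F_q)$ directly.

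You instead realize the exact layers $3\le d\le D_H-2$ by commutators of arbitrary elements of $P$. You then peel to get $P=H[P,P]$ and finish with the Burnside basis theorem; in fact $H[P,P]=P\Rightarrow H=P$ holds in any nilpotent group.

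Your route avoids the bookkeeping of keeping every lift inside the generated subgroup. It also isolates exactly what the generators must do: cover the quadratic, flux and socle quotients of the abelianization, which is how the lemma is used in \cref{thm:hamiltonian-finite}. The paper's route is more constructive: it shows that $B$ realizes every congruence quotient through its own commutators, with no appeal to the Frattini property.

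One small point outside the statement concerns your closing remark that $z_a$ is essential. The conclusion is true, but ``redundant only if $Z_M\subseteq[P,P]$'' is not a justified implication. The clean reason is a count: the other $t+2r$ families generate at most $q^{t+2r}$ elements of $P/[P,P]$, while the layer count in \cref{thm:hamiltonian-finite} gives that quotient order $q^{t+2r+1}$.
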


\begin{proof}
Polarization of cubes, with \(2\) and \(3\) invertible, gives the chosen
basis over \(\F_p\).  In \(T_{\ell,a}\), the linear form \(\ell\) is
fixed, so substitution on the linear coordinates gives
\(T_{\ell,a}T_{\ell,b}=T_{\ell,a+b}\).  The same identity holds for
each flux shear and for \(z_a\).

Let \(B\) be the subgroup generated by these points.  The quadratic
initial fields of \(B\) span \(\cH_2\).  Inductively,
\cref{prop:poisson-generation} expresses every exact initial field below
the socle as a sum of brackets of already realized smaller-degree fields.
Products of their commutators in \(B\) realize that sum.  The flux
shears supply the nonexact directions in degree \(p-1\), and \(z_a\)
supplies the top line.  All coefficients and lifts are over \(\F_q\)
by \cref{prop:hamiltonian-realization}.  Thus \(B\) realizes every
congruence quotient.  Successively removing initial terms, up to
\(F^{D_H}U_H=1\), proves \(B=U_H(\F_q)\).
\end{proof}

\begin{proof}[Proof of \cref{thm:hamiltonian-finite}]
Put \(H=U_H(\F_q)\).  Abstract commutators give
\[
 \gamma_j(H)\subseteq\Gamma_j(U_H)(\F_q).
\]
We prove the reverse
inclusion by induction on \(j\).  For \(j=2\), the monomial bracket
identities of \cref{prop:poisson-generation}, lifted over \(\F_q\),
realize every \(\cE_d\) with \(3\le d\le D_H-2\) inside
\([H,H]\).  For the induction step, the cubic bracket expressions in
\cref{lem:tracked-poisson} use a quadratic initial field and an exact
field already realized in \(\gamma_{j-1}(H)\).  They realize every
\(\cE_d\) with \(j+1\le d\le D_H-2\) in \(\gamma_j(H)\).
Scaling one lifted field supplies each required coefficient in \(\F_q\).

Given an \(\F_q\)-point of \(\Gamma_j(U_H)\), cancel its initial field by these
finite-point commutators, successively through the congruence filtration.
The final remainder belongs to
\(Z_M(\F_q)\cap\Gamma_2(U_H)(\F_q)=1\), by
\cref{thm:hamiltonian-main}.  Hence
\(\gamma_j(H)=\Gamma_j(U_H)(\F_q)\).  In particular,
\[
 \gamma_j(H)\times Z_M(\F_q)
 =(F^{j+1}U_H\cap K_H)(\F_q)
 \quad(2\le j\le D_H-2).
\]
For the derived series, the cases \(a=0,1\) are already established.
Suppose \(H^{(a)}=\gamma_{2^a}(H)\), with \(a\ge1\).  The balanced
brackets and Jacobi filling in \cref{thm:hamiltonian-derived} are defined
over \(\F_p\); lifted to \(H^{(a)}\), they realize every exact initial
field in degrees \(2^{a+1}+1,\ldots,D_H-2\).  The same successive
cancellation, with trivial final socle remainder, gives
\(\gamma_{2^{a+1}}(H)\subseteq H^{(a+1)}\).  The reverse inclusion
holds in every group, completing the induction.

By \cref{lem:hamiltonian-finite-generators}, the abelianization of \(H\)
is generated by elements of order dividing \(p\).  Thus
\(H^p\subseteq[H,H]\) and \(\Phi(H)=[H,H]\).
For every \(j\), the quotient
\(\gamma_j(H)/\gamma_{j+1}(H)\) is generated by the images of
left-normed commutators of length \(j\).  Modulo \(\gamma_{j+1}(H)\)
this commutator is a homomorphism in each variable and factors through
\(H/[H,H]\).  Its image is therefore killed by \(p\), so
\(\gamma_j(H)^p\subseteq\gamma_{j+1}(H)\).  Induction in the defining
recurrence for \(P_j(H)\) now gives \(P_j(H)=\gamma_j(H)\).

The congruence quotients of \(H\) are \(\cH_d(\F_q)\).
Those of \([H,H]\) are \(\cE_d(\F_q)\) for
\(3\le d\le D_H-2\), with no top socle term.  Consequently
\[
 |H/[H,H]|=q^{\dim\cH_2+2r+1}
 =q^{\binom{2r+2}{3}+2r+1}.
\]
The \(t\) quadratic, \(2r\) flux, and one socle families in
\cref{lem:hamiltonian-finite-generators} define a surjective homomorphism
from \((\F_q,+)^{t+2r+1}\) to this abelianization.  The order calculation
makes it an isomorphism, and gives the stated generator number.  Finally,
the last nonzero lower-central term has degree \(D_H-2\), and
\(\Gamma_j(U_H)=1\) exactly for \(j\ge D_H-2\).  The finite-point
equalities prove the nilpotency class and derived length formulas.
\end{proof}

\begin{corollary}[Power depth at the socle]
\label{cor:hamiltonian-socle-power}
Let \(p\ge5\), \(r\ge2\), and \(q=p^f\).  If \(a\ge2\) and
\(p(a-1)\ge D_H-2\), then every element of
\(F^aU_H(\F_q)\) has order dividing \(p\).
\end{corollary}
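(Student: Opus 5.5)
The plan is to combine a power-depth estimate for \(U_H\) with the two finite-point facts already established: the Frattini quotient of \(U_H(\F_q)\) is elementary abelian, and the socle meets the commutator subgroup trivially.

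\emph{Step 1: power depth in \(U_H\).} Let \(g\in F^aU_H(\F_q)\). I first transport \cref{lem:power-depth} to the Hamiltonian group. Its proof uses only that \(g=\id+\Delta\) on \(O_H\), with \(\Delta\) raising \(\fm\)-adic degree by at least \(a-1\). It then uses that the terms of operator degree \(1,\ldots,p-1\) in \(g^p=(\id+\Delta)^p\) have binomial coefficients divisible by \(p\). Nothing there depends on the volume form, so the same argument gives
\[
 g^p\in F^{1+p(a-1)}U_H(\F_q).
\]
By hypothesis \(p(a-1)\ge D_H-2\), so \(1+p(a-1)\ge D_H-1\). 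Hence \(g^p\in F^{D_H-1}U_H(\F_q)\).

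\emph{Step 2: the bottom layer is the socle line.} By \cref{prop:hamiltonian-realization}, the layer \(F^{D_H-1}U_H/F^{D_H}U_H\) is \(\cH_{D_H-1}=kX_M\). Moreover \(F^{D_H}U_H=1\), because \(\cH_d=0\) for \(d\ge D_H\) and the filtration is finite. The one-parameter subgroup \(z_a\) lies in \(F^{D_H-1}U_H\) and has leading field \(aX_M\), so it realizes this layer. It follows that
\[
 F^{D_H-1}U_H(\F_q)=Z_M(\F_q)=\{z_a:a\in\F_q\}.
\]
Consequently \(g^p\in Z_M(\F_q)\).

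\emph{Step 3: \(g^p\) is a commutator.} By \cref{thm:hamiltonian-finite}, \(\Phi(U_H(\F_q))=[U_H(\F_q),U_H(\F_q)]\), and the quotient by it is isomorphic to a power of \((\F_q,+)\), which has exponent \(p\). Hence every \(p\)-th power lies in the commutator subgroup, and
\[
 g^p\in Z_M(\F_q)\cap[U_H(\F_q),U_H(\F_q)].
\]
This intersection is trivial by \cref{thm:hamiltonian-main}(3), which uses \(r\ge2\). Therefore \(g^p=1\).

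The step needing most care is Step 2: the claim that \(F^{D_H-1}U_H\) contains nothing beyond \(Z_M\). It rests on \(F^{D_H}U_H=1\), which holds because a field with coefficients in \(\fm^{D_H}\) has coefficients in the socle degree. I would check directly that any such substitution preserving \(\omega_H\) is trivial, or read it off from the vanishing of \(\cH_{D_H}\) in \cref{prop:hamiltonian-realization}. The only other delicate point is the hypothesis \(r\ge2\), which is used exactly through \cref{thm:hamiltonian-main}(3).
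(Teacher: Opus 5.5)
Your proposal is correct and follows the paper's own proof step for step: power depth gives \(g^p\in F^{1+p(a-1)}U_H(\F_q)\subseteq F^{D_H-1}U_H(\F_q)=Z_M(\F_q)\), \cref{thm:hamiltonian-finite} places \(g^p\) in the commutator subgroup, and \cref{thm:hamiltonian-main}(3) makes the intersection trivial. Your check that \(F^{D_H}U_H=1\) (from \(\cH_{D_H}=0\)) together with \(\cH_{D_H-1}=kX_M\) only makes explicit the identification \(F^{D_H-1}U_H=Z_M\), which the paper uses without comment.
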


\begin{proof}
For \(g\in F^aU_H(\F_q)\), write \(g^*=1+\Delta\).  The operator
\(\Delta\) raises degree by at least \(a-1\), and
\((g^*)^p=1+\Delta^p\).  Hence
\[
 g^p\in F^{1+p(a-1)}U_H(\F_q)
 \subseteq F^{D_H-1}U_H(\F_q)=Z_M(\F_q).
\]
By \cref{thm:hamiltonian-finite}, \(g^p\in[U_H(\F_q),U_H(\F_q)]\).
The intersection with \(Z_M(\F_q)\) is trivial, so \(g^p=1\).
\end{proof}

\subsection{The Calabi homomorphism}

Recall \(K_H=\ker\chi_H\).  For \(g\in K_H\), choose
\(S_g\in O_H/k\) satisfying
\[
 dS_g=g^*\theta_H-\theta_H.
\]

\begin{lemma}[Vanishing of the primitive trace]
\label{lem:calabi-primitive-trace}
Let \(h\in K_H\) and let \(\alpha\) be a closed one-form.  If
\[
 dR=(h^*-1)\alpha,
\]
then
\[
 \int R=0.
\]
\end{lemma}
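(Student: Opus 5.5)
Here \(\int f=[M]f\) is the coefficient of the socle monomial. I would show that \(\int\) kills every element of the form \((h^*-1)\alpha\) once that element has been divided by \(d\). The key input is the change-of-variables identity \(\int h^*(f\,\omega_H^r)=\int f\,\omega_H^r\), which holds because \(h\) preserves \(\omega_H\). The residue argument in the proof of \cref{lem:frobenius-factorization} gives this invariance for any tangent-to-identity automorphism with Jacobian one, and symplectic maps in \(U_H\) have Jacobian one. In other words, \(\int\) is an \(h\)-invariant functional on the top forms \(O_H\,\omega_H^r\).

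First reduction: \(\alpha\) is closed. Cohomology is represented by the flux classes \(x_j^{p-1}dx_j\), so I write \(\alpha=dF+c\). Here \(F\in O_H\), and \(c\) is a combination of the classes \(x_j^{p-1}dx_j\). The exact part gives \(d(h^*F-F)=(h^*-1)dF\), so modulo constants its contribution to \(R\) is \(h^*F-F\). Its integral vanishes by the invariance: \(\int(h^*F)=\int h^*(F\omega_H^r)/\omega_H^r=\int F\), since \(h^*\omega_H^r=\omega_H^r\). Constants are harmless, because \(\int 1=0\) when \(D_H>0\).

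Second step: the cohomological part. For a basis class \(c=x^{p-1}dx\), I take \(x=q_i\); the case \(x=p_i\) is symmetric. Write \(h^*q_i=q_i+\phi\) with \(\phi\in\fm^2\). Then \(h^*c=(q_i+\phi)^{p-1}d(q_i+\phi)\). The idea is to expand this in characteristic \(p\). Using \((q_i+\phi)^{p-1}d(q_i+\phi)=d\bigl((q_i+\phi)^p/p\bigr)\) formally, one gets the primitive
\[
 R=\sum_{k=1}^{p}\frac{1}{p}\binom{p}{k}q_i^{p-k}\phi^k .
\]
Every coefficient \(\tfrac1p\binom pk\) is a \(p\)-integral integer, so this expression makes sense over \(\F_p\). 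Equivalently, \(R=\sum_{k\ge1}\frac{(p-1)!}{k!\,(p-k)!}q_i^{p-k}\phi^k\) up to the \(k=p\) term \(\phi^p/p\). That term must be treated separately, since \(\phi^p\) need not vanish in \(O_H\); I would handle it by the Witt-vector identity or by noting \(\phi^p\in\fm^{2p}\) lies in the span of \(p\)-th powers.

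Main obstacle. The hard part is showing \(\int R=0\) for this explicit primitive. The hypothesis \(h\in K_H\) (zero flux) must be what makes this work, since for the flux shears themselves the conclusion can fail. I would use that \(h\in K_H\) means \(h^*\theta_H-\theta_H=dS_h\), and pair the socle coefficient through integration by parts. The plan is to write \(\int R\,\omega_H^r\) as \(\int R\,d\theta_H\wedge\omega_H^{r-1}\). Then Stokes (\(\int d(\cdot)=0\)) turns this into \(-\int dR\wedge\theta_H\wedge\omega_H^{r-1}\), which equals \(-\int(h^*\alpha-\alpha)\wedge\theta_H\wedge\omega_H^{r-1}\). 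Invariance under \(h^*\) converts \(\int h^*\alpha\wedge\theta_H\wedge\omega^{r-1}\) into \(\int\alpha\wedge(h^{-1})^*\theta_H\wedge\omega^{r-1}\). The difference is then \(\int\alpha\wedge d(S_{h^{-1}})\wedge\omega^{r-1}\), which vanishes by Stokes again because \(\alpha\) is closed. This Stokes route may bypass the explicit expansion entirely, so I would try it first. The delicate point is whether \(\int\) of exact top forms vanishes in the truncated setting; I expect it does, since \(\int\partial_x h=0\) as used in \cref{prop:poisson-generation}.
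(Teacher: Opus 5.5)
Your Stokes argument has a genuine gap: it degenerates for \(r\ge p\). Each step is correct. \(\int\) kills exact top forms, a tangent-to-identity substitution acts trivially on \(H^{2r}_{\dR}(O_H)\), and \(h^{-1}\in K_H\) gives \((h^{-1})^*\theta_H=\theta_H+dS\). But what the computation yields is \(\int R\,\omega_H^r=0\), and \(\omega_H^r=r!\,dq_1\wedge dp_1\wedge\cdots\wedge dq_r\wedge dp_r\). For \(r\ge p\) this form is zero, so your identity reads \(0=0\) and says nothing about \(\int R\).

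The lemma is stated for every \(r\) and is needed in that range. \cref{prop:calabi} and \cref{cor:hamiltonian-nonresonant} rely on it for every nonresonant \(r\ge2\), for instance \(r=p\). Your derivation of \(\int h^*F=\int F\) by dividing by \(\omega_H^r\) has the same defect, but that one is easy to repair: \(\det J_h=1\) follows by taking Pfaffians in \(J_h^{T}\Omega_0J_h=\Omega_0\), where \(\Omega_0\) is the Gram matrix of \(\omega_H\).

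So as written you prove the lemma only for \(r\le p-1\). In that range the proof is complete, much shorter than the paper's, and makes your first reduction and the explicit expansion unnecessary. In that abandoned expansion, note that \(\phi^p=0\) for every \(\phi\in\fm\); the real obstruction is that \(\phi^{p-1}d\phi\) need not be exact.

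Run the same computation with an arbitrary primitive \(\beta\) of the volume form. It shows that \(\int R\) is, up to sign, the pairing of \([\alpha]\) with the class of \((h^{-1})^*\beta-\beta\) in \(H^{2r-1}_{\dR}(O_H)\). So what you actually need is a primitive of a nonzero multiple of the volume form that changes only by exact forms under \(K_H\).

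Replacing \(\omega_H^{r-1}\) by the divided power \(\omega_H^{[r-1]}=\sum_i\prod_{j\ne i}dq_j\wedge dp_j\) is a partial fix. This form is still preserved by \(h\), by naturality of divided powers of \(2\)-forms, and \(d(\theta_H\wedge\omega_H^{[r-1]})=r\,dq_1\wedge dp_1\wedge\cdots\wedge dq_r\wedge dp_r\). That rescues \(p\nmid r\) but not \(p\mid r\).

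The paper avoids powers of \(\omega_H\) altogether:
\begin{enumerate}
\item It shows \(\tau_\alpha(g)=\int R_{g,\alpha}\) is a homomorphism \(U_H\to\Ga\), using the cocycle identity for primitives and invariance of \(\int\) under Jacobian-one substitutions. Hence \(\tau_\alpha\) vanishes on \([U_H,U_H]_{\alg}\), and it is zero for exact \(\alpha\).
\item Cartan's formula and \(\int X(B)=0\) for Hamiltonian \(X\) give vanishing on the quadratic lifts \(\exp(aY_\ell)\) and on \(z_a\).
\item These, together with \(\gr_dD=\cE_d\) for \(3\le d\le D_H-2\), fill every layer of \(\gr K_H\), so the congruence filtration forces \(\mathscr P=K_H\).
\end{enumerate}
That argument costs the Poisson-generation and realization results, but it is uniform in \(r\), which is exactly what your route lacks when \(r\ge p\).
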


\begin{proof}
For a closed \(\alpha\) and \(g\in U_H\), the form
\((g^*-1)\alpha\) is exact because \(U_H\) acts trivially on
\(H^1_{\dR}(O_H)\).  Let \(R_{g,\alpha}\in O_H/k\) be its unique
primitive and put \(\tau_\alpha(g)=\int R_{g,\alpha}\).  Since
\(d:O_H/k\to B^1_{\dR}(O_H)\) is a linear isomorphism,
\(\tau_\alpha\) is regular.  Moreover
\[
 R_{g_1g_2,\alpha}=g_2^*R_{g_1,\alpha}+R_{g_2,\alpha}\pmod k,
\]
and symplectic change of variables preserves \(\int\).  Hence
\(\tau_\alpha:U_H\to\Ga\) is a homomorphism and vanishes on
\(D=[U_H,U_H]_{\alg}\).

If \(\alpha=dA\), then \(R_{g,\alpha}=g^*A-A\), so
\(\tau_\alpha=0\).  It therefore suffices to consider the Cartier basis forms
\(\alpha_j=x_j^{p-1}dx_j\).  Set
\[
 \mathscr P=K_H\cap\bigcap_j\ker\tau_{\alpha_j};
\]
this is a closed subgroup containing \(D\).

For the quadratic lifts
\(T_{\ell,a}=\exp(aY_\ell)\) of
\cref{prop:hamiltonian-realization}, Cartan's formula gives
\[
 T_{\ell,a}^*\alpha_j-\alpha_j
 =d\left(
   \sum_{1\le m<p}\frac{a^m}{m!}
   \mathcal L_{Y_\ell}^{m-1}\bigl(\alpha_j(Y_\ell)\bigr)
 \right).
\]
The first term has zero socle coefficient: for any Hamiltonian field
\(X_f\), the function \(x_j^{p-1}X_f(x_j)\) could contain the socle only
if \(f\) had exponent \(p\) in the coordinate paired with \(x_j\).
For the higher terms, integration by parts gives
\[
 \int X(B)=-\int(\operatorname{div}X)B=0
\]
for every Hamiltonian field \(X\).  Hence the quadratic lifts lie in \(\mathscr P\).  The same calculation applies to
\(z_a=\exp(aX_M)=1+aX_M\), since \(X_M^2=0\) by degree.

By \cref{prop:hamiltonian-abelianization},
\(\gr_dD=\cE_d\) for \(3\le d\le D_H-2\).  The quadratic lifts span
\(\cE_2=\cH_2\), and \(z_a\) spans \(\cE_{D_H-1}=kX_M\).  Hence
\(\gr_d\mathscr P\supseteq\cE_d\) in every degree.  On the other hand,
\(\gr_dK_H=\cE_d\): in degree \(p-1\) this is the kernel of the Cartier
projection, and elsewhere \(\cH_d=\cE_d\).  Thus
\(\gr_d\mathscr P=\gr_dK_H\) for all \(d\), and the finite congruence
filtration gives \(\mathscr P=K_H\).
\end{proof}

\begin{proposition}[Calabi homomorphism]
\label{prop:calabi}
The formula
\[
 \Cal_H(g)=\int S_g
\]
defines a homomorphism \(K_H\to\Ga\), invariant under conjugation by
\(U_H\).  On the socle subgroup \(Z_M=\{z_a\}\),
\[
 \Cal_H(z_a)=a(r+1).
\]
\end{proposition}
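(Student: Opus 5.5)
First I will show that \(\Cal_H\) is well defined and regular. For \(g\in K_H\) the form \(g^*\theta_H-\theta_H\) is closed, because \(g\) preserves \(\omega_H=d\theta_H\), and its class is \(\chi_H(g)=0\). Hence it is exact. Since \(d:O_H/k\to B^1_{\dR}(O_H)\) is a linear isomorphism, the primitive \(S_g\in O_H/k\) is unique. The functional \(\int f=[M]f\) kills constants, because \(D_H>0\). So \(\int S_g\) is well defined and depends regularly on \(g\).

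For the homomorphism property, I will pull back along a product:
\[
 (gh)^*\theta_H-\theta_H=h^*(g^*\theta_H-\theta_H)+(h^*\theta_H-\theta_H),
\]
which gives \(S_{gh}=h^*S_g+S_h\) modulo \(k\). The term \(h^*S_g\) needs care, because \(\int\) is not obviously invariant under \(h^*\). Write \(h^*S_g=S_g+R\), where \(dR=(h^*-1)dS_g\). The form \(\alpha=dS_g\) is closed, and \(h\in K_H\), so \cref{lem:calabi-primitive-trace} gives \(\int R=0\). Therefore \(\Cal_H(gh)=\Cal_H(g)+\Cal_H(h)\). Alternatively, symplectic change of variables preserves \(\int\), via the residue formula used in \cref{lem:frobenius-factorization}, and this gives the same conclusion directly.

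For conjugation invariance, take \(u\in U_H\) and \(g\in K_H\). \emph{Step 1:} \(K_H\) is normal, since \(\chi_H\) is a homomorphism to an abelian group. \emph{Step 2:} I compute the primitive of the conjugate,
\[
 S_{ugu^{-1}}\equiv (u^{-1})^*S_g+\bigl[(u^{-1})^*\beta_u+\beta_{u^{-1}}\text{-type terms}\bigr],
\]
where \(\beta_u=u^*\theta_H-\theta_H\) is only closed, not exact. The cleaner route is to write \(ugu^{-1}=(ugu^{-1}g^{-1})\,g\). The commutator lies in \([U_H,U_H]_{\alg}\subseteq K_H\). The proof of \cref{lem:calabi-primitive-trace} shows that every \(\tau_\alpha\) is a homomorphism on all of \(U_H\) and vanishes on \(D\). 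Applying this with \(\alpha=\theta_H\) is not allowed, since \(\theta_H\) is not closed. Instead I will check directly that \(\Cal_H\) vanishes on commutators \([u,g]\) with \(g\in K_H\). I expand the primitive of \([u,g]\) as \((g^{-1})^*\)-twisted differences of \(u^*\theta_H-\theta_H\). These differences are of the form \((h^*-1)\alpha\) with \(\alpha=\beta_u\) closed and \(h\in K_H\), so \cref{lem:calabi-primitive-trace} kills them. I expect this step to be the main obstacle: the bookkeeping of non-exact flux pieces \(\beta_u\) must be arranged so that only expressions of exactly this form appear.

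Finally, I will evaluate on the socle. By \cref{prop:hamiltonian-realization}, \(z_a^*=1+aX_M\) on functions, since \(X_M^2=0\) by degree. Then Cartan's formula gives
\[
 z_a^*\theta_H-\theta_H=a\mathcal L_{X_M}\theta_H=a\bigl(\iota_{X_M}\omega_H+d(\theta_H(X_M))\bigr).
\]
Using \(\iota_{X_f}\omega_H=\pm df\), this becomes \(d\bigl(a(\theta_H(X_M)\mp M)\bigr)\); there are no higher terms because \(X_M\) has top degree. I will fix the sign convention so that \(\iota_{X_M}\omega_H=-dM\) in the paper's normalization. Euler's formula gives
\[
 \theta_H(X_M)=\tfrac12\sum_i\bigl(q_i\partial_{q_i}M+p_i\partial_{p_i}M\bigr)=\tfrac12 D_H M=r(p-1)M\equiv -rM.
\]
Combining the two contributions gives \(S_{z_a}=\pm a(r+1)M\) up to sign. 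Checking that the sign is \(+\) gives \(\Cal_H(z_a)=a(r+1)\).
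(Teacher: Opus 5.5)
Your overall route is the paper's. Additivity comes from \(S_{gh}=h^*S_g+S_h\) together with invariance of \(\int\) under Jacobian-one substitutions; your first argument, via \cref{lem:calabi-primitive-trace} with the exact form \(\alpha=dS_g\), is legitimate, but that case of the lemma is itself just this invariance. Conjugation invariance comes from the lemma applied to the closed, non-exact form \(c(u)=u^*\theta_H-\theta_H\), and the socle value from Cartan's formula and Euler's identity. Two places, however, are not yet a proof.

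\emph{Conjugation invariance.} You only announce this step, and your description of the terms that appear is not accurate. The detour through \([u,g]\) is unnecessary: from \(c(ab)=b^*c(a)+c(b)\) and \(c(u^{-1})=-(u^{-1})^*c(u)\) one gets directly
\[
 c(ugu^{-1})=(u^{-1})^*\bigl(dS_g+(g^*-1)c(u)\bigr).
\]
The lemma, with \(h=g\in K_H\) and \(\alpha=c(u)\), gives \(R\) with \(dR=(g^*-1)c(u)\) and \(\int R=0\). Thus \(S_{ugu^{-1}}=(u^{-1})^*(S_g+R)\), and invariance of \(\int\) finishes. If you expand the commutator instead, you also meet the term \(((u^{-1})^*-1)\,dS_g\). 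Here \(u^{-1}\notin K_H\) in general, so the lemma as stated does not apply. That term is exact and is disposed of by invariance of \(\int\), as are the outer pullbacks. So it is not the case that only terms \((h^*-1)\beta_u\) with \(h\in K_H\) occur, although the repair is immediate.

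\emph{The socle value.} Your computation contains two sign errors. With the values you commit to, \(\iota_{X_M}\omega_H=-dM\) and \(\theta_H(X_M)\equiv-rM\), the sum is \(-(r+1)M\), so your final ``checking that the sign is \(+\)'' would fail. With the paper's \(X_f\) and \(\omega_H=\sum_i dq_i\wedge dp_i\),
\[
 \iota_{X_f}\omega_H=\sum_i\bigl(X_f(q_i)\,dp_i-X_f(p_i)\,dq_i\bigr)=df,
\]
so \(\iota_{X_M}\omega_H=+dM\). Since \(X_M(q_i)=\partial_{p_i}M\) and \(X_M(p_i)=-\partial_{q_i}M\),
\[
 \theta_H(X_M)=\tfrac12\sum_i\bigl(q_iX_M(p_i)-p_iX_M(q_i)\bigr)
 =-\tfrac{D_H}{2}M\equiv rM .
\]
Hence \(z_a^*\theta_H-\theta_H=a\,\mathcal L_{X_M}\theta_H=d\bigl(a(1-\tfrac{D_H}{2})M\bigr)=d\bigl(a(r+1)M\bigr)\). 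This is exactly the paper's formula and gives \(\Cal_H(z_a)=a(r+1)\). Your justification that \(z_a^*=1+a\,\mathcal L_{X_M}\) on forms is correct: every term quadratic in \(a\) has coefficient degree above \(D_H\) and so vanishes.
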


\begin{proof}
The primitive \(S_g\) is unique modulo constants.  Since every symplectic
substitution has Jacobian one,
\[
 \int h^*f=\int f.
\]
The identity
\[
 S_{gh}=h^*S_g+S_h\pmod k
\]
therefore makes \(\Cal_H\) additive.

Set
\[
 c(u)=u^*\theta_H-\theta_H.
\]
For \(k\in U_H\), the form \(c(k)\) is closed.  If \(g\in K_H\),
\cref{lem:calabi-primitive-trace} gives \(R\) with
\[
 dR=(g^*-1)c(k),
 \qquad
 \int R=0.
\]
Using
\[
 c(ab)=b^*c(a)+c(b),
 \qquad
 c(k^{-1})=-(k^{-1})^*c(k),
\]
one obtains
\[
 c(kgk^{-1})
 =(k^{-1})^*\!\left(c(g)+(g^*-1)c(k)\right)
 =d\Bigl((k^{-1})^*(S_g+R)\Bigr).
\]
Thus
\[
 \Cal_H(kgk^{-1})
 =\int(k^{-1})^*(S_g+R)
 =\Cal_H(g),
\]
so \(\Cal_H\) is conjugation-invariant.

For \(z_a\), every nonlinear term of the pullback vanishes in the
height-one box.
Cartan's formula for the symmetric primitive \(\theta_H\) gives
\[
 z_a^*\theta_H-\theta_H
 =d\!\left(\left(1-\frac{\deg M}{2}\right)aM\right)
 =d\bigl(a(r+1)M\bigr),
\]
because \(\deg M=2r(p-1)\).  Hence
\(\Cal_H(z_a)=a(r+1)\).
\end{proof}

\begin{lemma}[Calabi vanishing on commutators]
\label{lem:calabi-derived}
If \(r\ge2\), then
\[
 \Cal_H\bigl([U_H,U_H]_{\alg}\bigr)=0.
\]
\end{lemma}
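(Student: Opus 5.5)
The plan is to reduce to commutators of flux shears, using the conjugation invariance from \cref{prop:calabi}, and then to check the remaining pairs by a weight argument and a one-pair support argument.

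\emph{Step 1: commutators with an entry in \(K_H\).} Let \(g\in K_H\) and \(h\in U_H\). Since \(\chi_H\) is a homomorphism to an abelian group, \((g,h)\in K_H\), and
\[
 (g,h)=g\cdot\bigl(hg^{-1}h^{-1}\bigr).
\]
Both factors lie in \(K_H\). Additivity of \(\Cal_H\) on \(K_H\) and its invariance under \(U_H\)-conjugation give \(\Cal_H((g,h))=\Cal_H(g)-\Cal_H(g)=0\). Hence \(\Cal_H\) kills \([K_H,U_H]\).

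\emph{Step 2: reduction to a pairing on flux classes.} By Step 1 and the usual commutator identities \((xy,z)=x(y,z)x^{-1}(x,z)\), the map
\[
 (g,h)\longmapsto\Cal_H\bigl((g,h)\bigr)
\]
is biadditive and alternating, and it factors through
\[
 U_H/K_H\times U_H/K_H\simeq H^1_{\dR}(O_H)\times H^1_{\dR}(O_H)\simeq V^{(1)}\times V^{(1)}.
\]
When expanding \((x,z)\) one uses that \(\Cal_H\) is conjugation invariant and additive on \(K_H\). The resulting map \(\beta\) is a morphism of varieties, because the flux shears \(p_i\mapsto p_i+aq_i^{p-1}\) and \(q_i\mapsto q_i+ap_i^{p-1}\) give a regular section of \(\chi_H\) up to order. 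The group \(D=[U_H,U_H]_{\alg}\) is generated by the commutators \((g,h)\), and \(\Cal_H\) is a homomorphism on \(D\subseteq K_H\). It therefore suffices to prove \(\beta=0\), that is, \(\Cal_H((s,s'))=0\) for any two flux shears \(s,s'\).

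\emph{Step 3: weights.} Let \(T\) be the diagonal torus of \(\operatorname{CSp}_{2r}\), including scalar dilation. Then \(\Cal_H\) is \(T\)-equivariant with target character the weight of \(M\), which is trivial on the symplectic torus and has dilation weight \(N\). The flux line attached to the pair \(i\) has symplectic weight supported on the single index \(i\), with opposite weights for the \(q_i\)- and \(p_i\)-directions. By \cref{lem:frobenius-decomposition}, a nonzero component of \(\beta\) on flux lines \(\lambda,\lambda'\) forces \(p^{e}\lambda+p^{e'}\lambda'\) to have zero symplectic part. This is impossible unless \(\lambda\) and \(\lambda'\) come from the same pair \(i\), one of them from the \(q_i\)-direction and the other from the \(p_i\)-direction. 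The dilation condition \((p^e+p^{e'})(p-2)=N\) does not always exclude the remaining case, so it must be handled directly.

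\emph{Step 4: the same-pair case, where \(r\ge2\) enters.} For the two shears of a single pair \(i\), both substitutions, their inverses, and their commutator \(c\) fix every variable outside pair \(i\). Computing \(c^*\theta_H-\theta_H\), the pairs \(l\ne i\) contribute nothing, so \(S_c\) may be chosen as a polynomial in \(q_i,p_i\) alone. The socle \(M\) involves every pair, and \(r\ge2\), so \(\int S_c=0\). Hence \(\beta\) vanishes on these components too, and therefore \(\beta=0\).

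The main obstacle I expect is Step 2: checking carefully that the commutator pairing descends to a genuinely biadditive regular map on \(V^{(1)}\times V^{(1)}\), including the regularity of the section of \(\chi_H\), so that \cref{lem:frobenius-decomposition} applies. As a fallback, one could bypass that lemma and compute \(\Cal_H\) directly on commutators of shears from distinct pairs. Those commutators are products of separated substitutions, so \(S_c\) would be a sum of terms each omitting some pair, and the same support argument would kill \(\int S_c\).
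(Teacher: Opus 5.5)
Your argument is correct and is essentially the paper's proof. Step 1 is the same conjugation-invariance computation. Step 2 is an explicit biadditive form of the paper's remark that, modulo \([U_H,K_H]_{\alg}\), the derived subgroup is generated by pairwise commutators of flux-shear lifts. Step 4 is exactly the paper's one-pair support argument, which is where \(r\ge2\) enters.

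The only divergence is Step 3, and it is unnecessary. Shears on distinct symplectic pairs act on disjoint sets of variables, so they commute. Same-type shears on one pair lie in a single one-parameter subgroup, so they also commute. Hence all those commutators are trivial outright. This removes the regularity and \(T\)-equivariance checks for \(\beta\) that you flagged as the main obstacle. Your fallback misdescribes these cases slightly: the distinct-pair commutators are the identity, not nontrivial ``products of separated substitutions.''
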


\begin{proof}
Since \(\chi_H\) is a homomorphism, the derived subgroup is contained in
\(K_H\).  Conjugation invariance gives, for \(u\in U_H\) and
\(k\in K_H\),
\[
 \Cal_H([u,k])
 =\Cal_H(uku^{-1})-\Cal_H(k)=0.
\]
The quotient \(U_H/K_H\simeq H^1_{\dR}(O_H)\) is generated by the flux
shears of \cref{prop:hamiltonian-flux}.  Modulo
\([U_H,K_H]_{\alg}\), the derived subgroup is therefore generated by
pairwise commutators of these lifts.  Shears supported on distinct
symplectic pairs commute.  A commutator of the two shear types supported
on the same pair fixes all other coordinates, so its normalized Calabi
primitive may be chosen in the subalgebra \(k[q_i,p_i]\).  When
\(r\ge2\), such a primitive has zero coefficient at
\[
 M=\prod_jq_j^{p-1}p_j^{p-1}.
\]
Hence \(\Cal_H\) vanishes on these commutators.
\end{proof}

\begin{corollary}
\label{cor:hamiltonian-nonresonant}
If \(r\ge2\) and \(p\nmid(r+1)\), then
\[
 \bigl(Z_M\cap[U_H,U_H]_{\alg}\bigr)_{\red}=1.
\]
If \(r=1\), then \(Z_M\subseteq[U_H,U_H]_{\alg}\).
\end{corollary}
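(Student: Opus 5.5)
For \(r\ge2\) with \(p\nmid(r+1)\), the plan is to combine \cref{prop:calabi} with \cref{lem:calabi-derived}. For \(r=1\), the plan is to produce \(Z_M\) directly as commutators.

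\emph{Case \(r\ge2\), \(p\nmid(r+1)\).} First check that \(Z_M\subseteq K_H\). The initial field of \(z_a\) is \(aX_M\), which sits in coefficient degree \(D_H-1>p-1\), while \(\chi_H\) has graded support only in degree \(p-1\) (\cref{prop:hamiltonian-flux}). More directly, the computation in the proof of \cref{prop:calabi} shows that \(z_a^*\theta_H-\theta_H=d(a(r+1)M)\) is exact. Hence \(\Cal_H\) is defined on \(Z_M\), with \(\Cal_H(z_a)=a(r+1)\). Since \(r+1\) is invertible in \(k\), the restriction \(\Cal_H|_{Z_M}:\Ga\to\Ga\) is an isomorphism of algebraic groups. \Cref{lem:calabi-derived} gives \(\Cal_H([U_H,U_H]_{\alg})=0\). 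Therefore \(Z_M\cap[U_H,U_H]_{\alg}\subseteq\ker(\Cal_H|_{Z_M})\). This kernel is the trivial group scheme, so the reduced intersection is \(1\); on points the intersection is already trivial. The only care needed is to note that \(\Cal_H\) is a morphism of varieties on \(K_H\), which the regularity argument of \cref{lem:calabi-primitive-trace} supplies, so that the kernel is a closed subgroup.

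\emph{Case \(r=1\).} Here \(D_H=2(p-1)\), and the socle field \(X_M\) lies in coefficient degree \(D_H-1=2p-3\). I would show that \(M=q^{p-1}p^{p-1}\) is a Poisson bracket of homogeneous elements of positive degree \(\ge3\), or at least of elements of \(O_H\) of function degree \(\ge3\). One candidate is \(\{q^{p-1}p^{a},q^{b}p^{p-1}\}\)-type expressions, but integration by parts (\(\int\{f,g\}=0\)) shows that polynomial brackets never reach \(M\). So the mechanism must instead be group-theoretic, through the flux shears. Take \(s(a):p\mapsto p+aq^{p-1}\) and \(t(b):q\mapsto q+bp^{p-1}\), both in \(U_H\), and compute their commutator. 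Its initial field lies in degree \(2(p-1)-1=D_H-1\), which is the socle degree. To leading order it is \(ab[\,q^{p-1}\partial_p,\;p^{p-1}\partial_q\,]\). This is a symplectic (closed, hence in degree \(D_H-1\) exact, since \(\cH_{D_H-1}=kX_M\)) field, so it is a multiple \(\lambda abX_M\). Because \(F^{D_H}U_H=1\), the commutator equals \(z_{\lambda ab}\) exactly. Equivalently, the flux classes \([q^{p-1}dq]\) and \([p^{p-1}dp]\) pair to the socle.

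The main obstacle is verifying \(\lambda\ne0\). This means computing \([q^{p-1}\partial_p,p^{p-1}\partial_q]=q^{p-1}(p-1)p^{p-2}\partial_q-p^{p-1}(p-1)q^{p-2}\partial_p\) and comparing it with \(X_M=(p-1)q^{p-1}p^{p-2}\partial_q-(p-1)q^{p-2}p^{p-1}\partial_p\). These agree, so \(\lambda=1\). I would also check that the higher-order correction terms in the group commutator do not affect the leading term; they cannot, since they lie in degree \(\ge D_H\). Then \(z_{ab}\in[U_H,U_H]\) for all \(a,b\), so \(Z_M\subseteq[U_H,U_H]_{\alg}\). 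This is also consistent with \cref{lem:calabi-derived} failing for \(r=1\), where the primitive lives in \(k[q,p]\) and can hit \(M\).
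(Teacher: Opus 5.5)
Your proposal is correct and follows essentially the same route as the paper: for \(r\ge2\) and \(p\nmid(r+1)\), the Calabi homomorphism takes the value \(a(r+1)\ne0\) on \(z_a\) (\cref{prop:calabi}) and vanishes on \([U_H,U_H]_{\alg}\) (\cref{lem:calabi-derived}); for \(r=1\), the commutator of the two flux shears lies in \(F^{D_H-1}U_H=Z_M\), with leading field a nonzero multiple of \(abX_M\) and no higher correction. The only quibble is that your constant \(\lambda\) is \(\pm1\) depending on the commutator and pullback conventions, which does not affect the argument.
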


\begin{proof}
For \(r\ge2\), by \cref{prop:calabi,lem:calabi-derived},
\(\Cal_H\) is nonzero on \(Z_M\) and zero on the derived subgroup.  If
\(r=1\), the two flux shears on the unique symplectic pair have a
commutator with leading Hamiltonian
\(q_1^{p-1}p_1^{p-1}=M\); there is no higher correction degree, so they
generate the socle line.
\end{proof}

\section{The resonant Hamiltonian case}
Assume throughout this section that \(r\ge2\) and
\[
 r+1=hp,\qquad h\ge1.
\]
Thus
\[
 N=D_H-2=2p\bigl(h(p-1)-1\bigr).
\]

\subsection{Linearization and the socle line}

Let
\[
 \Gamma_1=U_H,\qquad
 \Gamma_{j+1}=[U_H,\Gamma_j]_{\alg},\qquad
 C_j=\Gamma_j/\Gamma_{j+1}.
\]

\begin{proposition}[Linear lower-central quotients]
\label{prop:lower-central-vector-groups}
Multiplication by \(p\) on
\[
 A_H=U_H/[U_H,U_H]_{\alg}
\]
is zero.  For every \(j\ge1\), the group \(C_j\) is smooth, connected,
commutative, unipotent, and killed by \(p\).  If
\[
 T=T_0\times T_{\mathrm{sp}}
\]
is the product of scalar dilation and the standard diagonal symplectic
torus, then \(C_j\) admits a \(T\)-equivariant vector-group structure.
The left-normed \(j\)-fold commutator induces a \(T\)-equivariant
multi-homomorphism
\[
 w_j:A_H^j\longrightarrow C_j
\]
whose image generates \(C_j\) as a closed subgroup.
\end{proposition}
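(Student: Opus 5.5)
Our plan is to establish, in order: $p$-torsion of $A_H$, the structure of the $C_j$, the $T$-linearization, and finally the multi-homomorphism $w_j$.

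\emph{Step 1: $pA_H=0$.} By \cref{prop:hamiltonian-abelianization}, $A_H$ carries a finite filtration with successive quotients $Q$, $F=V^{(1)}$ and $T_M$, each a vector group. So $A_H$ is a smooth connected commutative unipotent group of dimension at most $\dim Q+2r+1$. To prove $pA_H=0$ we use \cref{lem:hamiltonian-finite-generators}. Over an algebraic closure, the one-parameter subgroups $T_{\ell_\nu,a}$, the flux shears and $z_a$ are homomorphisms from $\Ga$, and their product map $\Ga^{t+2r+1}\to A_H$ is surjective. It is surjective because its image realizes every graded quotient. Each generator has image killed by $p$, and $A_H$ is commutative, so the whole group is killed by $p$.

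\emph{Step 2: properties of $C_j$.} Each $C_j=\Gamma_j/\Gamma_{j+1}$ is a quotient of connected smooth unipotent groups over a perfect field, hence smooth, connected and unipotent. It is commutative because $[\Gamma_j,\Gamma_j]\subseteq\Gamma_{2j}\subseteq\Gamma_{j+1}$. By the standard commutator calculus, the left-normed commutator $(a_1,\ldots,a_j)$ modulo $\Gamma_{j+1}$ is multiplicative in each variable and depends only on the classes in $A_H$. This gives a morphism of varieties $w_j:A_H^j\to C_j$ that is a homomorphism in each variable. Because $\Gamma_j$ is generated, as a closed subgroup, by commutators of elements of $U_H$ with $\Gamma_{j-1}$, induction shows that $C_j$ is generated by the image of $w_j$. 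Multilinearity then gives $p\,w_j(a_1,\ldots,a_j)=w_j(pa_1,a_2,\ldots)=0$, so $C_j$ is generated by $p$-torsion and is therefore killed by $p$.

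\emph{Step 3: $T$-equivariant linearization.} A smooth connected commutative unipotent group killed by $p$ over a perfect field is isomorphic to a vector group. The delicate part is making this isomorphism $T$-equivariant, and I expect it to be the main obstacle. The subgroups $\Gamma_j$ are characteristic, and conjugation by $G_H\supseteq T$ preserves $U_H$, so $T$ acts on each $C_j$ and $w_j$ is $T$-equivariant.

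For the equivariant structure, I would first try the following. The scalar dilation $T_0$ acts on the congruence filtration with positive weights and contracts $U_H$ to the identity, as in \cref{prop:special-radical}. A commutative unipotent $p$-torsion group with a torus action having only positive weights on its Lie algebra is $T$-equivariantly a vector group. This can be proved by filtering $C_j$ by the images of the congruence filtration. The graded pieces are the $T$-modules $\cE_d$, or subquotients of them. One then splits the extensions by choosing $T$-eigen one-parameter subgroups: for each weight vector of $\Lie C_j$, lift it to a $T$-stable $\Ga$ using the homogeneous lifts from \cref{prop:hamiltonian-realization}. Their product gives a $T$-equivariant isomorphism from $\Lie C_j$, viewed as a vector group, onto $C_j$.

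The point to verify is that the multiplication map from the product of these eigen-$\Ga$'s is a homomorphism. This holds because $C_j$ is commutative, and bijectivity follows from dimension counting along the filtration. If eigen-lifts at the group level prove awkward, the fallback is the general theorem that $\Gm$-equivariant $p$-torsion unipotent groups with positive weights are equivariantly linearizable. Homomorphisms $\Ga\to C_j$ that are $T$-eigen exist because $\operatorname{Hom}(\Ga,C_j)$ is a $T$-module containing enough vectors to cover $\Lie C_j$.
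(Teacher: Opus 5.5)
Your proposal is correct, provided you rely on your fallback in Step~3. Step~1 takes a different route from the paper.

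\textbf{Step 1.} The paper proves \([p]=0\) on \(A_H\) intrinsically. \([p]\) is a \(T_0\)-equivariant endomorphism of \(A_H\) that preserves the image of the congruence filtration and has zero differential. So a nonzero graded piece of it would be, by \cref{lem:frobenius-decomposition}, a Frobenius-twisted map from scalar weight \(a\) to weight \(p^ea\), with \(e\ge1\) and \(a,p^ea\in\{1,p-2,N\}\). Parity and size rule this out.

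You instead note that the graded quotients \(Q\), \(F\), \(T_M\) of \(A_H\) are covered by the images of \(T_{\ell,a}\), the flux shears and \(z_a\), each landing in the right filtration step. These are homomorphisms from \(\Ga\), so the commutative group \(A_H(\bar k)\) is generated by \(p\)-torsion elements. Reducedness of \(A_H\) then gives \([p]=0\) as a morphism.

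Your argument is more elementary and skips the Frobenius-decomposition and parity bookkeeping. It does depend on explicit additive one-parameter subgroups in every layer where \(A_H\) has a nonzero quotient, i.e.\ on the truncated-exponential and additivity checks of \cref{prop:hamiltonian-realization,lem:hamiltonian-finite-generators}. The paper's version needs only the three scalar weights, and it is the same weight technique that drives the later socle arguments.

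\textbf{Step 2.} This is the paper's argument.

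\textbf{Step 3.} Keep the fallback, which is exactly what the paper does. Every scalar weight on \(\Lie C_j\) is positive, so no \(T\)-character occurring there is trivial, and the torus-linearization theorem of Conrad--Gabber--Prasad (Theorem~B.4.3) applies.

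Your primary route does not work as written, for two reasons.
\begin{enumerate}
\item Apart from \(T_{\ell,a}\), the flux shears and \(z_a\), the lifts in \cref{prop:hamiltonian-realization} are products of group commutators with a prescribed leading field. They are not one-parameter subgroups, not \(T\)-eigen, and nothing places them in \(\Gamma_j\). So they do not give \(T\)-stable copies of \(\Ga\) in \(C_j\) lifting weight vectors of \(\Lie C_j\).
\item Even with such eigen-\(\Ga\)'s, an isomorphism on Lie algebras plus a dimension count only makes the product map \'etale and surjective. A finite \'etale kernel, as for the Artin--Schreier map \(a\mapsto a^p-a\), is not excluded until you use \(T\)-equivariance: the connected torus fixes that kernel pointwise, but has no nonzero fixed points on a product of \(\Ga\)'s with nontrivial characters.
\end{enumerate}
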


\begin{proof}
Let \(A_H^{\ge d}\) be the image of \(F^dU_H\) in \(A_H\).  The
multiplication morphism \([p]:A_H\to A_H\) is \(T_0\)-equivariant,
preserves this finite filtration, and has zero differential.  Suppose that
\([p]\ne0\), and choose \(d\) maximal with
\([p](A_H^{\ge d})\ne1\).  Then \([p]\) vanishes on
\(A_H^{\ge d+1}\).  Choose \(m\) maximal such that
\([p](A_H^{\ge d})\subseteq A_H^{\ge m}\).  Projection to the next
quotient gives a nonzero \(T_0\)-equivariant homomorphism
\[
 A_H^{\ge d}/A_H^{\ge d+1}
 \longrightarrow
 A_H^{\ge m}/A_H^{\ge m+1}.
\]
By \cref{prop:hamiltonian-abelianization}, the nonzero source and target
have scalar weights \(a,b\in\{1,p-2,N\}\).  Since the differential of
\([p]\) is zero, \cref{lem:frobenius-decomposition} gives a nonzero
Frobenius component only if
\[
 b=p^ea,\qquad e\ge1.
\]
For \(a\in\{1,p-2\}\), the integer \(p^ea\) is odd and at least \(p\):
it is neither \(1\) nor \(p-2\), and it cannot equal the even integer
\(N\).  For \(a=N\), one has \(p^eN>N\), again impossible.  Hence \([p]=0\) on
\(A_H\).

The image of the left-normed \(j\)-fold commutator word on \(U_H^j\) is
connected and contains the identity; its closed subgroup closure is
\(\Gamma_j\).  Hence every \(\Gamma_j\), and therefore every \(C_j\), is
connected.  By definition of \([G,H]_{\alg}\), the \(\Gamma_j\) are reduced; over the
perfect field \(k\) they and their quotients are smooth.  The quotient
\(C_j\) is commutative and unipotent.

Modulo \(\Gamma_{j+1}\), the commutator word is a homomorphism in each
variable and depends only on the classes in \(A_H\).  It thus induces the
\(T\)-equivariant multi-homomorphism \(w_j\), whose image generates
\(C_j\).  Since \([p]A_H=0\), the image of \(w_j\) lies in the
closed subgroup \(C_j[p]=\ker([p]:C_j\to C_j)\).  Hence
\[
 C_j=C_j[p].
\]

All scalar weights in the congruence filtration of \(U_H\) are positive,
and the same holds on every subquotient \(\Lie(C_j)\).  Hence no
\(T\)-character occurring in
\(\Lie(C_j)\) is trivial.  The torus-linearization theorem for smooth
connected commutative \(p\)-torsion unipotent groups
\cite[Theorem B.4.3]{CGP2015} therefore supplies a \(T\)-equivariant
vector-group structure on \(C_j\).
\end{proof}

\begin{lemma}[Retraction onto the socle line]
\label{lem:socle-retraction}
Suppose that \(Z_M\cap\Gamma_j\) contains a nontrivial geometric point,
and choose \(j\) maximal with this property.  Then
\(Z_M\subseteq\Gamma_j\), and the quotient map induces a
\(T\)-equivariant linear closed immersion
\[
 \iota_j:Z_M\hookrightarrow C_j.
\]
It admits a \(T\)-equivariant linear retraction
\[
 \rho_j:C_j\longrightarrow Z_M.
\]
Thus
\[
 \Phi_j=\rho_j\circ w_j:A_H^j\longrightarrow Z_M
\]
is a nonzero multi-homomorphism.
\end{lemma}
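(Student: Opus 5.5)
The plan has four steps: show that \(Z_M\cap\Gamma_j\) is all of \(Z_M\); show that \(Z_M\) injects into \(C_j\); build an equivariant linear retraction by a weight argument; and deduce nonvanishing from \cref{prop:lower-central-vector-groups}.

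\textbf{Step 1: \(Z_M\subseteq\Gamma_j\).} The intersection \(Z_M\cap\Gamma_j\) is a closed subgroup of \(Z_M\simeq\Ga\). It is stable under scalar dilation \(T_0\), because both \(Z_M\) and \(\Gamma_j\) are normalized by the torus: dilation normalizes \(U_H\), hence every term of the algebraic lower central series. Dilation acts on \(Z_M\) by the character \(t\mapsto t^N\), with \(N>0\). A \(T_0\)-stable closed subgroup of \(\Ga\) that has a nonzero geometric point contains the full orbit of that point, which is dense in \(\Ga\). Hence its reduced part is all of \(Z_M\), and so \(Z_M\subseteq\Gamma_j\).

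\textbf{Step 2: injectivity into \(C_j\).} By maximality of \(j\), \(Z_M\cap\Gamma_{j+1}\) has no nontrivial geometric point. Therefore the composite \(Z_M\to\Gamma_j\to C_j\) has finite, indeed infinitesimal, kernel. Two facts are needed to upgrade this to a linear closed immersion. First, \(Z_M\) is central (\cref{thm:hamiltonian-series}), so the map is a \(T\)-equivariant homomorphism of commutative unipotent groups. Second, \(C_j\) carries the \(T\)-equivariant vector-group structure of \cref{prop:lower-central-vector-groups}. With these in hand, \cref{lem:frobenius-decomposition} writes the homomorphism \(\Ga\to C_j\) as a sum of Frobenius-twisted linear pieces of weights \(p^eN\).

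The obstacle is to exclude \(e\ge1\). I would do this by comparing congruence filtration degrees. The scalar weights on \(\Lie(C_j)\) come from subquotients of \(\bigoplus_{d\le D_H-1}\cH_d\), so they are at most \(N\). Since \(p^eN>N\) for \(e\ge1\), only \(e=0\) survives. The map is then linear with nonzero differential, hence a closed immersion of the line onto a weight-\(N\) line of \(C_j\). Its kernel is a \(T\)-stable finite subgroup of \(\Ga\) whose differential vanishes unless the map is linear; once linearity is established, the kernel is trivial.

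\textbf{Step 3: the retraction.} The vector group \(C_j\) decomposes into \(T\)-weight spaces. The image of \(\iota_j\) is a line contained in the \(T\)-isotypic component of the character of \(Z_M\), namely weight \(N\) for \(T_0\) together with the \(T_{\mathrm{sp}}\)-weight of \(X_M\). I would choose any \(T\)-stable linear complement to this line and define \(\rho_j\) as the projection onto it, composed with \(\iota_j^{-1}\). Then \(\rho_j\circ\iota_j=\id\), and \(\rho_j\) is linear and \(T\)-equivariant. A complement exists because \(T\) is diagonalizable, so its representations are completely reducible.

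\textbf{Step 4: \(\Phi_j\) is nonzero.} If \(\Phi_j=0\), the image of \(w_j\) would lie in the closed subgroup \(\ker\rho_j\). Since that image generates \(C_j\) (\cref{prop:lower-central-vector-groups}), this would give \(C_j=\ker\rho_j\), contradicting \(\rho_j\iota_j=\id\ne0\). Finally, \(\Phi_j\) is a multi-homomorphism because it is the composite of the multi-homomorphism \(w_j\) with the linear map \(\rho_j\).
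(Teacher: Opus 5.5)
Your proposal is correct and follows the paper's proof essentially step for step: density of the dilation orbit $t\cdot z_a=z_{t^Na}$, the expansion $\iota_j(a)=\sum_e v_ea^{p^e}$ with $v_e=0$ for $e\ge1$ because every scalar weight of $\Lie(C_j)$ is at most $N$, and nonvanishing of $\Phi_j$ because the image of $w_j$ generates $C_j$.

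The differences are minor. For the retraction, you project along an arbitrary $T$-stable complement. The paper instead observes that the scalar-weight-$N$ space of $C_j$ is exactly $kv_0$, since the weight-$N$ part of $\Lie U_H$ is $kX_M$. So $\rho_j$ is the canonical weight projection, with differential the projection onto $kX_M$, which \cref{lem:symplectic-socle-retraction} and \cref{prop:gamma3-socle} later use. Centrality of $Z_M$ is not needed here, because the restriction of $\Gamma_j\to C_j$ is already a homomorphism. Citing \cref{thm:hamiltonian-series} for centrality would also be circular, since that theorem's proof passes through this lemma in the resonant case.
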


\begin{proof}
We may assume that \(k\) is algebraically closed.  The closed intersection
\(Z_M\cap\Gamma_j\) is stable under scalar dilation, and
\[
 t\cdot z_a=z_{t^Na}.
\]
For \(a\ne0\), the map \(t\mapsto t^N\) is surjective on \(\Gm\), so
the orbit of \(z_a\) is the punctured socle line and its closure is all of
\(Z_M\).  Hence \(Z_M\subseteq\Gamma_j\) after base change; since both
subgroups are reduced, this is a scheme-theoretic inclusion, and faithful
flat descent gives the inclusion over \(k\).

If the induced map \(\iota_j:Z_M\to C_j\) were zero, then
\(Z_M\subseteq\Gamma_{j+1}\), contrary to the maximality of \(j\).
Use the vector-group coordinates of
\cref{prop:lower-central-vector-groups}.  As a homomorphism from \(\Ga\),
\(\iota_j\) has the form
\[
 \iota_j(a)=\sum_{e\ge0}v_ea^{p^e}.
\]
Scalar equivariance places \(v_e\) in weight \(Np^e\).  Every scalar weight of \(\Lie(C_j)\) is inherited from the congruence
filtration of \(U_H\), whose weights lie between \(1\) and \(N\).  Hence
\(v_e=0\) for
\(e>0\), while \(v_0\ne0\).  Hence
\[
 \iota_j(a)=av_0,
\]
so \(\iota_j\) is a linear closed immersion with trivial scheme kernel.

The scalar-weight-\(N\) subspace of \(\Lie(U_H)\) is the socle line
\(kX_M\); therefore the corresponding weight space of \(C_j\) has
dimension at most one.  Since it contains \(v_0\), it is exactly
\(kv_0\).  The projection of the \(T\)-module \(C_j\) onto this scalar
weight space is \(T\)-equivariant, because \(T_{\mathrm{sp}}\) commutes
with \(T_0\).  Composing that projection with \(\iota_j^{-1}\) gives
\(\rho_j\).

If \(\rho_j\circ w_j\) were zero, the image of \(w_j\) would generate a
closed subgroup contained in \(\ker\rho_j\).  Since that image generates
\(C_j\), this would force \(C_j\subseteq\ker\rho_j\), contrary to
\(\rho_j\iota_j=\operatorname{id}_{Z_M}\).  Hence \(\Phi_j\ne0\).
\end{proof}

\subsection{Frobenius weights and higher commutators}

By \cref{prop:lower-central-vector-groups}, \(A_H=C_1\) is a
\(T\)-equivariant vector group.  Its \(T\)-stable filtration from
\cref{prop:hamiltonian-abelianization} has successive quotients
\[
 Q=\Sym^3(V^*),\qquad F=V^{(1)},\qquad T_M,
\]
of scalar weights \(1,p-2,N\).  Let \(q_H:U_H\to A_H\) be the quotient map.
The \(T\)-equivariant homomorphism \(q_H|_{Z_M}:\Ga\to A_H\) has additive
polynomial form \(q_H(z_a)=\sum_{e\ge0}v_ea^{p^e}\).  Scalar equivariance
places \(v_e\) in weight \(Np^e\); since all weights of \(A_H\) are at most
\(N\), only \(v_0\) can occur.  Thus the image of \(Z_M\), which is the top
filtration step, is either zero or a linear \(T\)-submodule of weight \(N\);
denote it by \(T_M\).  Since a torus is linearly reductive, fix a \(T\)-equivariant splitting
\begin{equation}
\label{eq:AH-weight-splitting}
 A_H\simeq Q\oplus F\oplus T_M.
\end{equation}
For \(j\ge2\), both \(w_j\) and \(\Phi_j\) vanish whenever one argument
lies in \(T_M\), because \(Z_M\) is central.  Every nonzero Frobenius-homogeneous component
of \(\Phi_j\) therefore factors through a product of copies of \(Q\) and
\(F\).

Applying \cref{lem:frobenius-decomposition}, a component with source
weights \(a_\nu\in\{1,p-2\}\) and Frobenius exponents \(e_\nu\ge0\)
can be nonzero only if its scalar weight equals that of the socle target:
\begin{equation}
\label{eq:frobenius-weight}
 \sum_{\nu=1}^j a_\nu p^{e_\nu}=N.
\end{equation}

Let \(\mathfrak s\) be the symplectic
fields, \(\mathfrak h=X_{O_H/k}\), and
\(\mathfrak h_0=X_{\ker\int}\).  For \(\Delta\subseteq U_H\), let
\(I_\Delta\subseteq k[\Delta]\) be the augmentation ideal acting through \(\Ad\).

\begin{lemma}[Augmentation depth]
\label{lem:two-augmentation}
For every \(\Delta\subseteq U_H\),
\[
 I_\Delta\mathfrak s\subseteq\mathfrak h,
 \qquad
 I_\Delta\mathfrak h\subseteq\mathfrak h_0,
 \qquad
 I_\Delta\mathfrak h_0\subseteq\mathfrak h_0.
\]
In particular, \(I_\Delta^m\mathfrak s\subseteq\mathfrak h_0\) for \(m\ge2\).
\end{lemma}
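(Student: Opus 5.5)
The plan is to work entirely with the adjoint action of \(U_H\) on the Lie algebra \(\mathfrak s\) of symplectic fields, identified by contraction with \(\omega_H\) with closed one-forms. Under this identification \(\mathfrak h=X_{O_H/k}\) corresponds to exact one-forms, and \(\mathfrak h_0\) to the differentials \(df\) with \(\int f=0\). Since \(I_\Delta\) is spanned by the elements \(g-1\) with \(g\in\Delta\), and \(g\) acts on \(\mathfrak s\) through \(\Ad(g)\), it suffices to prove the three inclusions for a single operator \(\Ad(g)-1\), \(g\in U_H\). The final assertion then follows by composing the first two inclusions and applying the third repeatedly.

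\emph{First inclusion.} Take \(X\in\mathfrak s\) with closed one-form \(\alpha=\iota_X\omega_H\). Because \(g\) preserves \(\omega_H\), the field \(\Ad(g)X\) corresponds, up to the fixed convention of \(g^*\) versus \((g^{-1})^*\), to the pulled-back form \(g^*\alpha\). The difference \((g^*-1)\alpha\) is exact, since \(U_H\) acts trivially on \(H^1_{\dR}(O_H)\) (used in \cref{prop:hamiltonian-flux}). Hence \((\Ad(g)-1)X\in\mathfrak h\).

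\emph{Second inclusion.} Take \(X=X_f\). Then \(\Ad(g)X_f=X_{g^*f}\), up to the same inverse convention, because \(g\) is a Poisson automorphism. Thus \((\Ad(g)-1)X_f=X_{g^*f-f}\). By the change-of-variables invariance \(\int g^*f=\int f\) (Jacobian one, as in \cref{prop:calabi}), the primitive \(g^*f-f\) has zero integral. The constant ambiguity in \(f\) is harmless, since \(g^*\) fixes constants. This gives \((\Ad(g)-1)X_f\in\mathfrak h_0\).

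\emph{Third inclusion.} This is immediate from the same formula: \(\mathfrak h_0\) is \(\Ad(g)\)-stable because \(\int g^*f=\int f\).

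The main point needing care is the identification \(\Ad(g)X_f=X_{g^*f}\) together with \(\int g^*f=\int f\) over an arbitrary base. The integral invariance should be justified as in \cref{lem:frobenius-factorization}, via the residue formula and the identity \(\det J_g=1\) for symplectic \(g\). The Poisson-automorphism property follows from \(g^*\omega_H=\omega_H\), which is the defining condition of \(U_H\).
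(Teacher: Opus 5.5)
Your proposal is correct and follows essentially the same route as the paper. The first inclusion comes from the trivial action of \(U_H\) on \(\mathfrak s/\mathfrak h\simeq H^1_{\dR}(O_H)\). The second and third come from \(\Ad(g)X_f=X_{g^*f}\) together with \(\int g^*f=\int f\); the paper phrases this as \(\Delta\)-invariance of the functional \(\lambda(X_f)=\int f\) on \(\mathfrak h\), whose kernel is \(\mathfrak h_0\).
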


\begin{proof}
The action on
\(\mathfrak s/\mathfrak h\simeq H^1_{\dR}(O_H)\) is trivial, giving the
first inclusion.  On \(\mathfrak h\), the functional
\(\lambda(X_f)=\int f\) is \(\Delta\)-invariant: if
\(\Ad(g)X_f=X_{\rho_gf}\), then \(\int\rho_gf=\int f\) by symplectic
change of variables.  Hence
\(I_\Delta\mathfrak h\subseteq\ker\lambda=\mathfrak h_0\), and
\(\mathfrak h_0\) is \(\Delta\)-stable.
\end{proof}

\begin{lemma}[Fox depth]
\label{lem:fox-depth}
Let
\[
 W_j=[\cdots[[x_1,x_2],x_3],\ldots,x_j]
\]
be the left-normed commutator in the free group \(\mathfrak F\), with
\([a,b]=aba^{-1}b^{-1}\), and let
\(I_{\mathfrak F}\subseteq\mathbf Z[\mathfrak F]\) be the augmentation ideal.  Then
\[
 W_j-1\in I_{\mathfrak F}^{\,j},
 \qquad
 \frac{\partial W_j}{\partial x_i}\in I_{\mathfrak F}^{\,j-1}
 \quad(1\le i\le j).
\]
\end{lemma}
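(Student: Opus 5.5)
We argue by induction on \(j\), using the standard Fox calculus identities in \(\mathbf Z[\mathfrak F]\):
\[
 \frac{\partial(uv)}{\partial x_i}=\frac{\partial u}{\partial x_i}+u\frac{\partial v}{\partial x_i},
 \qquad
 \frac{\partial u^{-1}}{\partial x_i}=-u^{-1}\frac{\partial u}{\partial x_i},
\]
together with the fundamental formula \(w-1=\sum_i\frac{\partial w}{\partial x_i}(x_i-1)\). The second assertion is the real content. The first follows from it through the fundamental formula, since each \(x_i-1\in I_{\mathfrak F}\) and \(I_{\mathfrak F}^{j-1}I_{\mathfrak F}\subseteq I_{\mathfrak F}^j\).

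For \(j=1\), \(W_1=x_1\) and \(\partial W_1/\partial x_1=1\in I^0=\mathbf Z[\mathfrak F]\), which is trivial. For the inductive step we need the derivative of a commutator \([a,b]=aba^{-1}b^{-1}\). A direct application of the product and inverse rules gives
\[
 \frac{\partial[a,b]}{\partial x_i}
 =(1-aba^{-1})\frac{\partial a}{\partial x_i}
 +(a-[a,b])\frac{\partial b}{\partial x_i}.
\]
We apply this with \(a=W_{j-1}\) and \(b=x_j\).

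In the first term, \(1-aba^{-1}=-(ab a^{-1}-1)\) lies in \(I\). By induction \(\partial a/\partial x_i\in I^{j-2}\), so the product lies in \(I^{j-1}\). In the second term, \(a-[a,b]=(a-1)-([a,b]-1)\). The induction hypothesis (first assertion) gives \(a-1\in I^{j-1}\). For \([a,b]-1\) we use the classical identity \([a,b]-1=\bigl((a-1)(b-1)-(b-1)(a-1)\bigr)a^{-1}b^{-1}\). This places it in \(I^{j-1}I=I^{j}\subseteq I^{j-1}\). Finally \(\partial x_j/\partial x_i\in\{0,1\}\), so the second term also lies in \(I^{j-1}\). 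This handles every \(i\le j\), including \(i=j\), where \(\partial a/\partial x_j=0\).

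It remains to prove the first assertion for \(W_j\) itself. Either invoke the fundamental formula as above, or use the identity \([a,b]-1\in I^{j-1}I\) directly. The only point needing care is verifying the commutator derivative formula and the identity for \([a,b]-1\) with the paper's convention \([a,b]=aba^{-1}b^{-1}\). This is a routine expansion, and it is the only place where a sign or ordering error could creep in. The induction itself presents no obstacle.
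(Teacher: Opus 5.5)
Your proposal is correct and follows the paper's proof: the same induction with \(a=W_{j-1}\), \(b=x_j\), the same Fox-derivative formula \(\partial[a,b]/\partial x_i=(1-aba^{-1})\,\partial a/\partial x_i+(a-[a,b])\,\partial b/\partial x_i\), and the same identity \([a,b]-1=\bigl((a-1)(b-1)-(b-1)(a-1)\bigr)a^{-1}b^{-1}\) giving \(W_j-1\in I_{\mathfrak F}^{j}\). Your alternative of deducing the first assertion from the second via the fundamental formula is also valid, though the paper, like your main argument, simply uses the commutator identity directly.
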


\begin{proof}
Induct on \(j\).  Write \(a=W_{j-1}\), \(b=x_j\), and
\(W_j=[a,b]\).  The identity
\[
 W_j-1=(ab-ba)a^{-1}b^{-1}
\]
and
\[
 ab-ba=(a-1)(b-1)-(b-1)(a-1)
\]
give \(W_j-1\in I_{\mathfrak F}^j\) by induction.  Fox differentiation
also gives
\[
 \frac{\partial W_j}{\partial x_i}
 =(1-aba^{-1})\frac{\partial a}{\partial x_i}
 +(a-W_j)\frac{\partial b}{\partial x_i}.
\]
If \(i<j\), the second term vanishes, while the first lies in
\(I_{\mathfrak F} I_{\mathfrak F}^{j-2}=I_{\mathfrak F}^{j-1}\).  If \(i=j\), the first term vanishes and
\(a-W_j\in I_{\mathfrak F}^{j-1}\).  The two assertions follow.
\end{proof}

Put \(U=U_H\), \(D=D_H\), \(G=\operatorname{Sp}_{2r}\), and
\(O_d=O_{H,d}\).

\begin{lemma}[The single-weight prefix]
\label{lem:single-weight-prefix}
Suppose that $j\ge3$ is maximal with $Z_M\subseteq\Gamma_j$.
Then $C_{j-1}$ is a $G$-equivariant vector group with
\[
 C_{j-1}\simeq O_{j+1},
 \qquad \text{scalar weight }j-1.
\]
If $j<N$, the two scalar-weight spaces of $C_j$ are
$O_{j+2}$, of weight $j$, and $Z_M$, of weight $N$.
If $j=N$, then $C_j=Z_M$.
\end{lemma}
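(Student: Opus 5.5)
The plan is to transport the graded computation of \cref{thm:hamiltonian-series} to the algebraic quotients \(C_{j-1}\) and \(C_j\). I will use the leading-field identifications of \cref{prop:hamiltonian-realization} and the bracket computation \(\gamma_i(\mathfrak g)=\bigoplus_{d=i+1}^{D_H-2}\cE_d\). That computation used only \cref{lem:tracked-poisson} and the vanishing \(\int\{f,g\}=0\); it did not use \cref{thm:hamiltonian-main}(2), so it stays available in the resonant case without circularity. By induction on \(i\), I will show that the filtration induced on \(\Gamma_i\) has graded pieces \(\cE_d\) for \(i+1\le d\le D_H-2\), together with either \(0\) or \(kX_M\) in the top degree \(D_H-1\). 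The lower bound comes from lifting the tracked brackets to commutators and removing leading terms through the finite congruence filtration. The upper bound comes from the depth estimate \([F^aU,F^bU]\subseteq F^{a+b-1}U\), from the fact that commutators have exact leading fields, and from the fact that \(Z_M\) is central, with \(\cH_{D_H}=0\).

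The maximality hypothesis fixes the top piece. Since \(Z_M\subseteq\Gamma_j\) and \(\Gamma_{j-1}\supseteq\Gamma_j\), the line \(X_M\) occurs in the graded of both \(\Gamma_{j-1}\) and \(\Gamma_j\). By maximality of \(j\), and by the orbit argument of \cref{lem:socle-retraction}, \(Z_M\cap\Gamma_{j+1}\) is trivial. Hence
\[
 \gr C_{j-1}=\cE_j\simeq O_{j+1},
 \qquad
 \gr C_j=\cE_{j+1}\oplus kX_M\simeq O_{j+2}\oplus kX_M ,
\]
with \(\cE_{j+1}\) omitted when \(j+1>D_H-2\), that is, when \(j=N\). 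In that case \(C_N=Z_M\). The identification \(\cE_d\leftrightarrow O_{d+1}\) is \(\operatorname{Sp}_{2r}\)-equivariant, since \(f\mapsto X_f\) commutes with symplectic linear substitutions. Scalar dilation acts on \(\cE_d\) with weight \(d-1\), which gives weights \(j-1\), \(j\) and \(N\).

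Next I will upgrade these graded statements to statements about the groups. \(C_{j-1}\) has a single scalar weight, so the vector-group structure of \cref{prop:lower-central-vector-groups} makes it a linear \(T_0\)-isotypic group. The leading-field map \(\Gamma_{j-1}\to\cE_j\) is a homomorphism modulo \(F^{j+2}\), and it kills \(\Gamma_j\). It therefore induces a \(G\)-equivariant homomorphism \(C_{j-1}\to O_{j+1}\) that is bijective on points, because its kernel is the graded of \(\Gamma_j\) in degrees at least \(j+1\). I will then use \cref{lem:frobenius-decomposition} with scalar weights to show this map is linear. A Frobenius component would send weight \(j-1\) to weight \(p^e(j-1)\ne j-1\), so the map is linear. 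A linear bijection of vector groups is an isomorphism. For \(C_j\), the weight decomposition of the \(T\)-linear structure separates the weight-\(j\) and weight-\(N\) pieces, and these weights differ when \(j<N\). The weight-\(N\) piece is \(\iota_j(Z_M)\) by \cref{lem:socle-retraction}, and the same leading-field argument identifies the weight-\(j\) piece with \(O_{j+2}\).

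I expect the main obstacle to be the \(G\)-equivariance of the vector-group structure on \(C_{j-1}\). \Cref{prop:lower-central-vector-groups} only provides \(T\)-equivariance. I will obtain \(G\)-equivariance by transporting the linear structure of \(O_{j+1}\) through the isomorphism above, which is equivariant under \(\operatorname{Sp}_{2r}\) acting by conjugation. I must also check that this isomorphism is compatible with the \(T\)-linear structure, which should follow from uniqueness of linear structures on a single-weight group with no Frobenius-compatible endomorphisms of other weights.
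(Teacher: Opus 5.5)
Your argument is correct, but it works on $\bar k$-points, whereas the paper works with Lie algebras.

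\textbf{The paper's route.} It differentiates the commutator morphism to get $[\Lie U,\Lie\Gamma_{m-1}]\subseteq\Lie\Gamma_m$. It then squeezes $\Lie\Gamma_m$ between $\bigoplus_{d=m+1}^{D-2}\cE_d$ and the same sum plus $kX_M$, the upper bound coming from $\Gamma_m\subseteq F^{m+1}U\cap K_H$. Smoothness then gives $\Lie C_{j-1}=\cE_j$. The weights of the vector-group structure are those of $\Lie C_{j-1}$, so the single weight $j-1$ is immediate. Linearity of the $G$-action in the already chosen coordinates follows because a Frobenius term would need $(j-1)p^b=j-1$. For $C_j$, the injective differential of $\iota_j$ shows $X_M\notin\Lie\Gamma_{j+1}$.

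\textbf{Your route.} You compute the set-theoretic graded of $\Gamma_i$ and build the degree-$j$ jet $\lambda\colon C_{j-1}\to\cE_j\simeq O_{j+1}$. This produces the $\operatorname{Sp}_{2r}$-equivariant isomorphism explicitly, so $G$-linearity is just transport of structure.

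\textbf{One step needs fixing.} You assert that $C_{j-1}$ has the single scalar weight $j-1$ before proving it. A points-level graded does not by itself determine the weights of $\Lie C_{j-1}$, so this cannot be presupposed. What you can use is that every scalar weight of $C_{j-1}$ is at least $j-1$, because $\Gamma_{j-1}\subseteq F^jU$. Then:
\begin{enumerate}
\item \cref{lem:frobenius-decomposition} makes $\lambda$ linear, since a component needs $j-1=p^ea$ with $a\ge j-1$, forcing $e=0$.
\item The linear kernel of $\lambda$ has no nonzero points, so it vanishes.
\item Bijectivity on points then makes $\lambda$ a linear isomorphism.
\end{enumerate}
This is exactly the single-weight statement. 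It also settles the compatibility with the $T$-structure that you flagged as the main obstacle, so no uniqueness-of-linear-structures argument is needed.

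The same repair works for $C_j$. The degree-$(j+1)$ jet is linear from weights $\ge j$ to weight $j$. Its linear kernel has the same points as the linear line $\iota_j(Z_M)$, hence equals it, and this yields the two weight spaces.
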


\begin{proof}
For smooth reduced subgroups one has
\[
 [\Lie U,\Lie\Gamma_{m-1}]\subseteq\Lie\Gamma_m.
\]
Indeed, differentiate $[u,g]$ in $u$ at the identity and then
differentiate in $g$. The commutator morphism factors through the
reduced commutator subgroup because its source is reduced.
The cubic bracket identities in \cref{lem:tracked-poisson}
therefore give, by induction,
\[
 \bigoplus_{d=m+1}^{D-2}\cE_d
 \ \subseteq\ \Lie\Gamma_m
 \ \subseteq\
 \bigoplus_{d=m+1}^{D-2}\cE_d\oplus kX_M
 \qquad(m\ge2).
\]
The second inclusion follows from
$\Gamma_m\subseteq F^{m+1}U\cap K_H$.
Both $\Gamma_{j-1}$ and $\Gamma_j$ contain $Z_M$, so the top
line cancels in their Lie quotient. Smoothness of the subgroup
and quotient gives the exact Lie sequence, whence
\[
 \Lie C_{j-1}=\cE_j=X_{O_{j+1}},
\]
of scalar weight $j-1$. The map from potentials to fields is injective
in this positive degree and is $G$-equivariant.

In the chosen vector-group coordinates every additive polynomial
term in the $G$-action commutes with scalar dilation. A nonlinear
term would require $(j-1)p^b=j-1$ for $b>0$.
Thus the $G$-action on $C_{j-1}$ is linear and equals its Lie action.

The linear closed immersion $Z_M\hookrightarrow C_j$ supplied by
\cref{lem:socle-retraction} has injective differential. Consequently
$\Lie\Gamma_{j+1}$ does not contain $kX_M$. The Lie calculation
therefore gives the stated weight spaces of $C_j$.
\end{proof}

\begin{lemma}[A symplectic cocycle calculation]
\label{lem:symplectic-cocycle-vanishing}
For $0\le d\le D$ with $2(p-1)\mid d$, and every $a\ge0$,
\[
 H^1(G,O_d^{(a)})=0.
\]
Here $H^1$ denotes rational group cohomology.
\end{lemma}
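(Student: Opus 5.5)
The plan is to reduce first to the untwisted case \(a=0\), and then to rule out nonzero extensions of the trivial module by \(O_d\) using linkage together with the explicit composition factors of truncated symmetric powers.

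\emph{Removing the twist.} Consider the Lyndon--Hochschild--Serre spectral sequence for the Frobenius kernel \(G_1\triangleleft G\), with \(G/G_1\simeq G^{(1)}\). It gives an exact sequence
\[
 0\to H^1(G^{(1)},M^{(a)})\to H^1(G,M^{(a)})\to \operatorname{Hom}_{G}\bigl(k,\,H^1(G_1,k)\otimes M^{(a)}\bigr).
\]
For \(p\ne2\) one has \(H^1(G_1,k)=0\) (Jantzen~II.12.2), so \(H^1(G,O_d^{(a)})\simeq H^1(G,O_d^{(a-1)})\). Inducting on \(a\) reduces the lemma to \(a=0\).

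\emph{The untwisted case.} Write \(O_d=O_{H,d}\). This is the degree-\(d\) truncated symmetric power \(S^d(V^*)/(\text{\(p\)-th powers})\) of the natural \(2r\)-dimensional module. There are three steps.

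\begin{enumerate}
\item As a \(\GL_{2r}\)-module, \(O_d\) is irreducible for \(0\le d\le D\) (Doty). Its highest weight is \((p-1)\varepsilon_1+\cdots+(p-1)\varepsilon_m+c\,\varepsilon_{m+1}\), where \(d=m(p-1)+c\) and \(0\le c<p-1\). Restricting to \(G=\operatorname{Sp}_{2r}\), I will identify its \(G\)-composition factors. The natural first attempt is a Steinberg tensor product, or a good filtration by symplectic truncated pieces, indexed by how many \emph{socle pairs} \((q_ip_i)^{p-1}\) are present.
\item The hypothesis \(2(p-1)\mid d\) does two things. First, \(d\) is even, so the central element \(-1\) acts trivially, and every composition factor has highest weight in the root lattice. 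Second, it is exactly the condition under which the "fully paired" weights occur. I expect these are the only weights that could be linked to \(0\) under the dot action of the affine Weyl group.
\item For each composition factor \(L(\lambda)\) with \(\lambda\) linked to \(0\), I will check \(H^1(G,L(\lambda))=0\) directly. For \(\lambda=0\) this holds because \(H^1(G,k)=0\) for a simply connected semisimple group. For \(\lambda\ne0\), I will compare the weight bound \(\lambda\le(p-1)\sum_i(\text{weights})\) against the first nontrivial linkage class, where the known \(H^1\) values occur at \(\lambda=p\alpha_0-\alpha\). To assemble the pieces, I will use the self-duality \(O_d^*\simeq O_{D-d}\) coming from the residue pairing \(\int\). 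This lets the argument run on whichever side has a good filtration or a Weyl filtration, so that Kempf vanishing \(H^1(G,H^0(\lambda))=0\) applies.
\end{enumerate}

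\emph{Expected main obstacle.} The hard part is step (1)–(3) together: controlling the \(\operatorname{Sp}_{2r}\)-structure of \(O_d\) well enough to exclude extensions with the trivial module. The cleanest route would be to prove that \(O_d\), for \(2(p-1)\mid d\), has a good filtration, or dually a Weyl filtration via \(O_d^*\simeq O_{D-d}\). A natural tool is the symplectic Lefschetz operator, multiplication by \(\sum_i q_ip_i\), which relates the degrees \(d\) and \(d+2\). If that fails, the fallback is a direct linkage analysis of the weights of the monomials \(\prod_i q_i^{A_i}p_i^{B_i}\): show that no dominant weight occurring in \(O_d\) lies in \(W_p\cdot0\setminus\{0\}\) with nonvanishing \(H^1\).
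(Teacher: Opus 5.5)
Your reduction to \(a=0\) is fine. Since \(\mathfrak{sp}_{2r}\) is perfect for \(p\ge5\), \(H^1(G_1,k)=0\), and the five-term sequence identifies \(H^1(G,O_d^{(a)})\) with \(H^1(G,O_d^{(a-1)})\). This is the same mechanism as the paper's twist reduction, which shows the differential vanishes and then takes regular \(p\)-th roots of the coefficients.

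The gap is the untwisted case, which carries all of the content and which you do not prove. Doty's theorem concerns \(\GL_{2r}\) and says nothing about the \(\operatorname{Sp}_{2r}\)-composition factors of \(O_d\). You neither compute those factors nor produce a good or Weyl filtration, and you never locate where \(2(p-1)\mid d\) is used.

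Your weight-level fallback is in fact false. Take \(r=2\), \(p=5\), \(d=8\), with \(\varpi_1=\varepsilon_1\), \(\varpi_2=\varepsilon_1+\varepsilon_2\) and \(\alpha_0=\varepsilon_1+\varepsilon_2\).
\begin{itemize}
\item The monomial \(q_1^3p_1q_2^3p_2\in O_8\) has dominant weight \(2\varpi_2=(p-2r+1)\varpi_2=s_{\alpha_0,p}\cdot0\).
\item \(H^1(\operatorname{Sp}_4,L(2\varpi_2))\neq0\), because \(0\to L(2\varpi_2)\to H^0(2\varpi_2)\to k\to0\) does not split.
\item The lemma survives here only because the character of \(O_8\) equals \(\chi(4\varpi_2)+\chi(3\varpi_2)\), and the Jantzen sum formula shows both Weyl modules are simple. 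So the only composition factors are \(L(4\varpi_2)\) and \(L(3\varpi_2)\), both singular and hence not linked to \(0\).
\end{itemize}
That composition-factor information is exactly what your plan assumes rather than proves. Moreover, \(p\ge5\) allows \(p<h=2r\), and in that range I do not know of a general result on \(H^1(\operatorname{Sp}_{2r},L(\lambda))\) in the principal block that your step (3) could quote.

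The filtration route is not automatic either. For \(r=2\), \(p=5\), the intermediate piece \(O_6\) is the simple module \(L(2\varpi_1+2\varpi_2)\). Its dimension is \(68<81=\dim H^0(2\varpi_1+2\varpi_2)\), so it has neither a good nor a Weyl filtration. An induction along \(\sum_iq_ip_i\) through the degrees \(d\to d+2\) therefore cannot rely on such filtrations in the intermediate degrees.

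The paper avoids module structure altogether and normalizes a cocycle by hand.
\begin{itemize}
\item It kills the torus part.
\item It removes \(c(E_i)\) and \(c(F_i)\) with one common balanced potential \(\prod_i(q_ip_i)^{a_i}\).
\item It passes to the root groups, where torus weights keep the parameter degree below \(p\).
\item On the short roots, \((\mathrm{SL}_2)^r\)-equivariance puts the two active pairs in degree \(1\) or \(2p-3\), and forces every other pair to be empty or saturated.
\item The hypothesis \(2(p-1)\mid d\) excludes the patterns \((1,1)\) and \((2p-3,2p-3)\). What remains is the coboundary of \(\sum_{|S|=s}\alpha_S\prod_{i\in S}(q_ip_i)^{p-1}\).
\end{itemize}
To finish along your lines you would need input of comparable precision. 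That means either the principal-block composition factors of \(O_d\) together with their extensions, or a direct cocycle computation of this kind.
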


\begin{proof}
First let $a=0$ and let $c:G\to O_d$ be a regular cocycle.
After subtracting a coboundary, its restriction to the diagonal
torus is zero. Its differential, still denoted $c$, is then a
torus-equivariant Lie cocycle, zero on the Cartan subalgebra.
Put
\[
 E_i=q_i\partial_{p_i},\quad F_i=p_i\partial_{q_i},\quad
 H_i=[E_i,F_i].
\]
The weight $2\varepsilon_i$ part of $O_d$ has monomials
$q_i^{b+2}p_i^b\prod_{\ell\ne i}(q_\ell p_\ell)^{a_\ell}$,
where $0\le b\le p-3$; the corresponding weight
$-2\varepsilon_i$ monomials interchange $q_i,p_i$.
The identity
\[
 E_i c(F_i)-F_i c(E_i)=c(H_i)=0
\]
equates their coefficients: the common factor $b+2$ is nonzero.
Both coefficients are therefore obtained from a balanced potential
by division by $b+1$.

These potentials can be chosen simultaneously. For a fixed balanced
monomial $\prod(q_i p_i)^{a_i}$, the coefficient prescribed by the
$i$-th equation is divided by $a_i$, for
$1\le a_i\le p-2$. If two indices $i,\ell$ are interior, the relation
$[E_i,E_\ell]=0$ equates these divided coefficients, because the
coefficient of their common image is multiplied by $a_i a_\ell\ne0$.
If there is no interior index, prescribe coefficient zero.
Thus a single balanced $f\in O_d$ makes
$c(E_i)=E_i f$ and $c(F_i)=F_i f$ for every $i$.
Subtract this coboundary.

The group cocycle now vanishes on all long-root groups as well.
On a long-root group its torus-equivariant coefficient of $t^n$
has weight $2n\varepsilon_i$, so
$1\le n\le(p-1)/2<p$. The cocycle identity gives
$c'(t)=u_i(t)c'(0)=0$, and a polynomial of degree less than $p$
with zero derivative is constant. Its value at zero is zero.
Hence $c$ vanishes on $L=(\mathrm{SL}_2)^r$, and its remaining
Lie cocycle is $L$-equivariant.

For one pair, the invariant vectors in its truncated algebra occur
only in degrees $0$ and $2p-2$. The natural two-dimensional module
maps into that algebra only in degrees $1$ and $2p-3$.
For the first assertion, a torus-weight-zero monomial is
$(qp)^b$; being killed by $q\partial_p$ forces $b=0$ or $p-1$.
For the second, the highest-weight-one monomials are
$q^{b+1}p^b$; being killed by $q\partial_p$ forces $b=0$ or $p-2$.
The natural module is generated by its highest vector.

The off-diagonal block of $\mathfrak{sp}_{2r}$ for pairs $i,\ell$
is the external tensor product $V_i\otimes V_\ell$.
The tensor-Hom factorization for a product of groups, together with
the preceding pairwise calculation, classifies its $L$-equivariant
maps to $O_d$.
Since $2(p-1)\mid d$, the two distinguished pairs must have one
degree $2p-3$ and one degree $1$. The alternatives with both degrees
low or both high have residues $2$ and $-2$ modulo $2(p-1)$ and
are excluded. All other pairs are empty or saturated.

Write $d=2s(p-1)$ and $m_i=(q_i p_i)^{p-1}$.
For $|S|=s$, $i\in S$, and $\ell\notin S$, let
$a_{S;i,\ell}$ be the coefficient in $c(E_{i\ell})$ of
\[
 M_{S;i,\ell}
 =q_i^{p-1}p_i^{p-2}p_\ell
   \prod_{b\in S\setminus\{i\}}m_b,
 \qquad
 E_{i\ell}=q_i\partial_{q_\ell}-p_\ell\partial_{p_i}.
\]
These coefficients determine the remaining cochain by
$L$-equivariance.

If $i\in S$ and $\ell,b\notin S$ are distinct, use
$[E_{i\ell},E_{\ell b}]=E_{ib}$ and compare
$M_{S;i,b}$. The term $E_{i\ell}c(E_{\ell b})$ contributes nothing:
an empty $i$-pair cannot become a high pair in one differentiation,
whereas a saturated $i$-pair either is killed by multiplication by
$q_i$ or leaves a nonempty $\ell$-pair. The other term contributes
only $E_{\ell b}M_{S;i,\ell}=-M_{S;i,b}$.
Thus $a_{S;i,b}=a_{S;i,\ell}$.

If $i,i'\in S$ are distinct and $\ell\notin S$, use
$[E_{ii'},E_{i'\ell}]=E_{i\ell}$ and compare
$M_{S;i,\ell}$. The only contribution in the first term is
\[
 E_{ii'}M_{S;i',\ell}=M_{S;i,\ell},
\]
since $-(p-1)=1$ in $k$. The second term contributes nothing:
an $i'$-pair of degree one cannot become saturated after one
differentiation, and the alternative has the $i$-pair low.
Hence $a_{S;i,\ell}=a_{S;i',\ell}$.

All coefficients belonging to a fixed $S$ are consequently a single
$\alpha_S$. When $r=2$ and $s=1$, each $S$ carries a single coefficient. Subtract the coboundary of
\[
 f_0=\sum_{|S|=s}\alpha_S\prod_{i\in S}m_i.
\]
Indeed $E_{i\ell}\prod_{b\in S}m_b=M_{S;i,\ell}$ if
$i\in S$, $\ell\notin S$. The other orientation and all other
short-root vectors are determined by $L$-equivariance.
For $s=0$ or $s=r$ there are no off-diagonal coefficients.
The Lie cocycle is now zero everywhere.

On every short-root group torus equivariance bounds the parameter
degree by $p-1$. The same derivative argument makes its group
cocycle zero. Root groups generate $G$, proving the assertion for
$a=0$.

Now suppose $a>0$. The differential of the action on $O_d^{(a)}$
is zero, so $dc$ is a Lie homomorphism from the perfect Lie algebra
$\mathfrak{sp}_{2r}$ to an abelian Lie algebra and is zero.
The cocycle identity then makes every differential of every
coefficient function of $c$ zero. Set $R=k[G]$ and $K=\operatorname{Frac}R$.
Over the perfect ground field, the kernel of
$d:K\to\Omega_{K/k}$ is $K^p$. A coefficient $f\in R$ is therefore
$h^p$ with $h\in K$. Since $h$ is integral over $R$ and $R$ is normal,
$h\in R$. Taking these regular $p$-th roots gives a cocycle with
coefficients in $O_d^{(a-1)}$: the representation matrices are defined
over the prime field, and injectivity of Frobenius takes the cocycle
identity to the corresponding identity one twist lower.
Repeating this step reduces to the case $a=0$. Taking powers of its coboundary
potential proves the assertion for every twist.
\end{proof}

\begin{lemma}[A symplectic socle retraction]
\label{lem:symplectic-socle-retraction}
Under the maximality hypothesis of
\cref{lem:single-weight-prefix}, the retraction
$\rho_j:C_j\to Z_M$ can be chosen $G$-equivariant as well as
$T_0$-equivariant. It remains an additive group homomorphism and
its differential is the projection onto $kX_M$.
\end{lemma}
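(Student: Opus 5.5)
The plan is to build the retraction from a $G$-equivariant complement to the socle line inside $C_j$, using the linear structure supplied by \cref{lem:single-weight-prefix} together with the cohomology vanishing of \cref{lem:symplectic-cocycle-vanishing}.

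First, make $G=\operatorname{Sp}_{2r}$ act on $C_j$. Conjugation by the Levi factor $\operatorname{CSp}_{2r}\subseteq G_H$ normalizes $U_H$ and each $\Gamma_m$, so it induces an action of $G\times T_0$ on $C_j$ by group automorphisms. This action commutes with scalar dilation, so it preserves the scalar-weight decomposition of $C_j$. By \cref{lem:single-weight-prefix}, $C_j$ has two weight pieces: $O_{j+2}$ in weight $j$, and $Z_M$ in weight $N$; when $j=N$ only $Z_M$ occurs. In the case $j=N$, $C_j=Z_M$ and we simply take $\rho_j=\mathrm{id}$.

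Now assume $j<N$. The weight-$N$ subgroup is the image $\iota_j(Z_M)$. It is characteristic for the $T_0$-action and $G$-stable, and $G$ acts on it trivially, because $X_M$ is $\operatorname{Sp}$-invariant (it is the socle of $O_H$). I would first try to take the weight-$j$ part $C_j^{(j)}$ as the complement, in the $T_0$-vector-group coordinates of \cref{prop:lower-central-vector-groups}. The difficulty is that $G$ need not act linearly in those coordinates. An additive polynomial term of the $G$-action commuting with dilation can send weight $j$ to weight $jp^b$. This is harmless except when $jp^b=N$, in which case it mixes $O_{j+2}^{(b)}$ into $Z_M$. So $C_j$ is an extension
\[
 0\to Z_M\to C_j\to O_{j+2}^{(b)}\text{-type quotient}\to0,
\]
and the obstruction to a $G$-equivariant splitting is a class in $H^1(G,\operatorname{Hom}(O_{j+2}^{\ast},k)^{(b)})$, or more precisely a regular cocycle $c:G\to \operatorname{Hom}_{\text{add}}(O_{j+2},Z_M)$ of Frobenius-twisted type.

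Next, identify the coefficient module with some $O_d^{(a)}$ to which \cref{lem:symplectic-cocycle-vanishing} applies. The pairing $O_{j+2}\otimes O_{D-j-2}\to O_D=kM$ given by $\int$ is perfect and $\operatorname{Sp}$-equivariant. Hence $O_{j+2}^{\ast}\simeq O_{D-j-2}$, and the twisted homomorphisms correspond to $O_{d}^{(a)}$ for suitable $d,a$. Weight matching $jp^b=N$, combined with the resonance $N=2p(h(p-1)-1)$, should give the divisibility $2(p-1)\mid d$ required by the lemma. Establishing this divisibility is the step I expect to be the main obstacle. Since $j$ is at least $3$ and $N$ is even, I would verify it by checking the congruence of $d$ modulo $2(p-1)$ from $d=D-(N/p^b)-2$ and $D\equiv0$, $N\equiv -2\pmod{2(p-1)}$. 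Where the numerics fail, I would argue directly that the twisted term vanishes by weight. Once the cocycle is a coboundary, correct the coordinate on $C_j^{(j)}$ by the corresponding potential to obtain a $G$-stable complement $C'$. This complement is also $T_0$-stable, because the correction has pure weight.

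Finally, define $\rho_j$ as the projection $C_j=C'\times Z_M\to Z_M$ followed by $\iota_j^{-1}$. It is additive because $C'$ is a subgroup and $C_j$ is commutative, and it is $G\times T_0$-equivariant by construction. Its differential kills $\Lie C'$ and is the identity on $kX_M$. Since the differential of the correcting coboundary is $T_0$-homogeneous of weight different from $N$ in the linear term, the differential of $\rho_j$ is the projection onto $kX_M$.
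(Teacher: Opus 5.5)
Your proposal follows the paper's proof. In both, the only obstruction to $G$-equivariance of the weight projection is an additive cross term when $jp^a=N$. That cross term gives a rational cocycle in $(O_{j+2}^*)^{(a)}\simeq O_{D-j-2}^{(a)}$, which \cref{lem:symplectic-cocycle-vanishing} kills; your graph complement is the paper's coordinate change $(v,z)\mapsto(v,z-\ell(v)^{p^a})$, which is additive, fixes $Z_M$, and has identity differential since $a\ge1$.

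The divisibility you flagged as the main obstacle is immediate, so no fallback is needed. Since $N$ is even and $p$ is odd, $j=N/p^a$ is even, hence $j\equiv jp^a=N\equiv-2\pmod{2(p-1)}$. This gives $2(p-1)\mid j+2$ and $2(p-1)\mid D-j-2$.
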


\begin{proof}
For $j=N$ this is immediate. Otherwise the two scalar weights of
$C_j$ are $j$ and $N$. In its $T$-equivariant vector coordinates
write a point as $(v,z)$, with $v\in O_{j+2}$ and $z\in Z_M$.
The $G$-action on $v$ is its linear action, and $G$ fixes $Z_M$.
An additive cross term from $v$ to $z$ is possible only if
\[
 N=jp^a\qquad(a\ge1).
\]
If this equality fails there is no cross term, and the original
projection is already $G$-equivariant.

In the remaining case the cross term is a Frobenius-degree-$a$
linear form in $v$. The action identity gives a rational cocycle
with coefficient module $(O_{j+2}^*)^{(a)}$ (convert a right cocycle
to a left cocycle by composing its form with the inverse action).
The apolar pairing identifies $O_{j+2}^*$ with $O_{D-j-2}$.
The equality $jp^a=D-2$ makes $j=2b$ even. Reducing
$bp^a=r(p-1)-1$ modulo $p-1$ gives $b\equiv-1$, and hence
\[
 2(p-1)\mid(j+2),\qquad 2(p-1)\mid(D-j-2).
\]
The preceding lemma kills the cocycle.
Thus an additive change of coordinates
\[
 (v,z)\longmapsto(v,z-\ell(v)^{p^a})
\]
removes the cross term. The potential can be chosen of diagonal
torus weight zero: the cocycle already vanishes on that torus,
and the preceding proof uses balanced potentials.
Scalar equivariance of the change follows from $jp^a=N$.
In the new vector-group coordinates the projection to $z$ is the
required retraction. The coordinate change is additive, fixes $Z_M$ pointwise,
and has identity differential because $a\ge1$.
\end{proof}

\begin{lemma}[Two low-degree symplectic targets]
\label{lem:low-symplectic-targets}
For $k=1,3$,
\[
 \operatorname{Hom}_G(O_d,\operatorname{Sym}^kV)=0
 \quad\text{unless }d=k\text{ or }d=D-k.
\]
\end{lemma}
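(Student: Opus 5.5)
Throughout, $G=\operatorname{Sp}_{2r}$ acts on $O_d=O_{H,d}$ by linear substitution, and $O_d$ is the degree-$d$ part of the truncated algebra $\bigotimes_{i=1}^r k[q_i,p_i]/(q_i^p,p_i^p)$. Let $L=(\mathrm{SL}_2)^r\subseteq G$ be the product of the pairwise symplectic groups. The plan is to restrict a $G$-equivariant map $\phi:O_d\to\Sym^kV$ ($k=1,3$) to $L$ and use the pairwise structure already established in the proof of \cref{lem:symplectic-cocycle-vanishing}. As an $L$-module,
\[
 O_d=\bigoplus_{d_1+\cdots+d_r=d}\ \bigotimes_i O^{(i)}_{d_i},
\]
where $O^{(i)}_{e}$ is the degree-$e$ part of the truncated algebra in the $i$th pair. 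Likewise
\[
 \Sym^kV=\bigoplus_{k_1+\cdots+k_r=k}\ \bigotimes_i\Sym^{k_i}V_i .
\]
For $k=1$ the only target summands have one $k_i=1$ and the rest $0$. For $k=3$ the possible patterns are $(3)$, $(2,1)$ and $(1,1,1)$ in the pairs.

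First I would determine the one-pair $\mathrm{SL}_2$-homomorphisms $O^{(1)}_e\to\Sym^mV_1$ for $m\le3$, arguing as in the cited lemma. The image of $\phi$ is generated by the images of highest-weight vectors, and a highest-weight vector in $O^{(1)}_e$ must be killed by $q\partial_p$. This condition forces explicit truncation constraints. For $m=0$ it gives $e\in\{0,2p-2\}$, and for $m=1$ it gives $e\in\{1,2p-3\}$, as already shown in that lemma. Similarly I expect $m=2$ to give $e\in\{2,2p-4\}$ and $m=3$ to give $e\in\{3,2p-5\}$.

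To justify the last two cases, I would check that the weight-$m$ vectors killed by $q\partial_p$ are $q^{b+m}p^b$ with $b=0$, or with $b+m=p-1$ in the saturated case. The saturated case gives degree $2p-2-m$. Since $\phi$ need only be a homomorphism, I would also use the dual argument with lowest-weight vectors. Alternatively, I would note that the composite to $\Sym^mV$ factors through the $m$-th graded piece of the socle/radical structure. Because $m<p$, $\Sym^mV_1$ is irreducible and self-dual, so a map from $O_e$ is nonzero only if $\Sym^mV_1$ appears in the head of $O_e$. Using the apolar self-duality $O_e^*\simeq O_{2p-2-e}$ from the preceding lemma, this reduces to the socle computation. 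I expect this head-versus-socle issue to be the main obstacle, and I would handle it by this duality.

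Next I would assemble the degree constraints. Each pair contributes a degree either equal to $k_i$ or to $2(p-1)-k_i$, including $k_i=0$, which gives $0$ or $2(p-1)$. Hence
\[
 d=\sum_i k_i+2(p-1)\,\#\{\text{high pairs}\}-2\sum_{\text{high}}k_i .
\]
For $k=1$, $d\equiv\pm1\pmod{2(p-1)}$. For $k=3$, the residue is $\equiv 3-2\sum_{\text{high}}k_i$, which can be $\pm3$ or $\pm1$. The residues $\pm1$ arise only in the patterns where the high pairs carry total $k$-weight $1$ or $2$.

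Finally I would use the off-diagonal root vectors $E_{i\ell}$ to eliminate the mixed cases. This is the same mechanism as in \cref{lem:symplectic-cocycle-vanishing}, where an $L$-equivariant map must also commute with $V_i\otimes V_\ell$. I would show that $G$-equivariance forces all pairs to be on the same side, all low or all high. A mixed low/high configuration is moved by $E_{i\ell}$ to a configuration with no counterpart in the target, as in the coefficient comparisons in that proof.

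With every pair low, the total degree is $d=k$. With every pair high, the total degree is $d=r(2p-2)-k=D-k$. This gives the statement. If the mixed-case elimination resists, I would instead appeal to the known simplicity of the Hamiltonian graded pieces and to the pairing between $O_d$ and $O_{D-d}$ to transfer the case $d=D-k$ from the case $d=k$.
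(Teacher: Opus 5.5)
\emph{Verdict: there is a genuine gap.} The step that carries the whole content of the lemma, eliminating mixed configurations, is announced but never proved.

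Your restriction to $L=(\mathrm{SL}_2)^r$ is sound, but it only yields necessary conditions. Each one-pair piece $O^{(i)}_e$ is irreducible: it is $\Sym^eV_i$ for $e\le p-1$, and via the apolar pairing it is $\Sym^{2p-2-e}V_i$ for $e\ge p-1$. So the head-versus-socle issue you single out as the main obstacle is harmless, and $\operatorname{Hom}_L$ factors over the pairs to give $d_i\in\{k_i,2p-2-k_i\}$.

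These conditions allow many degrees besides $k$ and $D-k$. Already for $r=2$, $k=1$ there are nonzero $L$-maps $O_d\to V$ in degrees $d=2p-3$ and $d=2p-1$. So everything depends on the step you only announce: that the off-diagonal root vectors kill every mixed low/high configuration.

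That step is not carried out, and your description of it is too loose to check. The operator $E_{i\ell}=q_i\partial_{q_\ell}-p_\ell\partial_{p_i}$ is not homogeneous for the pair-degree multigrading: its two terms shift $(d_i,d_\ell)$ by $(+1,-1)$ and $(-1,+1)$. So you would need an explicit test vector for every kind of mixed configuration, across the patterns $(1)$, $(3)$, $(2,1)$, $(1,1,1)$. This can be done case by case. For example, with $r=2$, $k=1$, $d=2p-1$, one has $E_{12}\bigl(p_1(q_2p_2)^{p-1}\bigr)=-q_1p_1q_2^{p-2}p_2^{p-1}$. This lies in the $(2,2p-3)$ component, on which every $L$-map to $V$ vanishes, while $E_{12}p_1=-p_2\neq0$, so the coefficient must be zero. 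But you have not done this in general.

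Your fallback does not rescue the argument. The pieces $O_d$ are not simple $G$-modules in general. For $r=2$, $p=5$, $O_8$ has dimension $85$, while a simple module with its highest weight $4(\varepsilon_1+\varepsilon_2)$ has dimension at most $55$. And transferring $d=D-k$ from $d=k$ would not exclude the intermediate degrees anyway.

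The idea that disposes of all mixed cases at once, and the one the paper uses, is to dualize first.
\begin{enumerate}
\item The apolar pairing $O_d^*\simeq O_{D-d}$ and the self-duality of $\Sym^kV$ turn a nonzero $G$-map $O_d\to\Sym^kV$ into a nonzero $G$-map $\Sym^kV\to O_{D-d}$.
\item Since $\Sym^kV$ is generated by the $G$-orbit of $q_1^k$, the image of $q_1^k$ is a nonzero vector of weight $k\varepsilon_1$ annihilated by all positive root vectors.
\item Its monomials have the form $q_1^{a_1+k}p_1^{a_1}\prod_{i>1}(q_ip_i)^{a_i}$. The long roots $q_i\partial_{p_i}$ act injectively on monomials, so they force $a_i\in\{0,p-1\}$ and $a_1\in\{0,p-1-k\}$ monomial by monomial.
\item The short roots $q_1\partial_{p_i}+q_i\partial_{p_1}$ then force either all pairs low ($a_1=0$, every $a_i=0$) or all pairs high ($a_1=p-1-k$, every $a_i=p-1$). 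Hence $D-d\in\{k,D-k\}$.
\end{enumerate}
This is exactly your mixed-case elimination, but performed on a single highest-weight vector rather than on a family of $L$-components. That is why no pattern-by-pattern analysis is needed.
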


\begin{proof}
The apolar pairing identifies $O_d^*$ with $O_{D-d}$.
Since $k!$ is invertible, $\operatorname{Sym}^kV$ is self-dual
and is generated by the $G$-orbit of its highest pure power
$q_1^k$: pure powers span the symmetric power.
A nonzero map dual to a map in the statement must therefore take
$q_1^k$ to a nonzero highest-weight vector in $O_{D-d}$.

A monomial of weight $k\varepsilon_1$ has the form
\[
 q_1^{a_1+k}p_1^{a_1}\prod_{i>1}(q_i p_i)^{a_i}.
\]
The kernels of the positive long roots $q_i\partial_{p_i}$ force
$a_i\in\{0,p-1\}$ for $i>1$, and
$a_1\in\{0,p-1-k\}$.  Different monomials have different nonzero
images, so there is no cancellation.
For $a_1=0$, the positive root
$q_1\partial_{p_i}+q_i\partial_{p_1}$ forces every $a_i=0$.
For $a_1=p-1-k$, the same root forces every $a_i=p-1$.
The two possible nonzero images cannot cancel across branches:
their $i$-pair has respectively the high exponents
$(p-1,p-2)$ and the low exponents $(1,0)$.
Thus the only possible degrees are $k$ and $D-k$.
\end{proof}

\begin{proposition}[Intersection with \(\Gamma_3\)]
\label{prop:gamma3-socle}
\[
 \bigl(Z_M\cap\Gamma_3\bigr)_{\red}=1.
\]
\end{proposition}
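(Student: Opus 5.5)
We argue by contradiction, in the resonant setting \(r+1=hp\) of this section. The nonresonant case is already settled by \cref{cor:hamiltonian-nonresonant}. Suppose \(Z_M\cap\Gamma_3\) has a nontrivial geometric point. Choose \(j\ge3\) maximal with \(Z_M\cap\Gamma_j\) nontrivial; this is possible because \(\Gamma_i=1\) for \(i\) large. By \cref{lem:socle-retraction}, \(Z_M\subseteq\Gamma_j\). By \cref{lem:symplectic-socle-retraction}, the retraction \(\rho_j:C_j\to Z_M\) can be chosen \(G\)- and \(T\)-equivariant. Then \(\Phi_j=\rho_j\circ w_j:A_H^j\to Z_M\) is a nonzero \(G\times T_0\)-equivariant multi-homomorphism. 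Because \(Z_M\) is central, it factors through \((Q\oplus F)^j\) via the splitting \eqref{eq:AH-weight-splitting}.

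The plan is to show that every Frobenius-homogeneous component of \(\Phi_j\) vanishes. By \cref{lem:frobenius-decomposition}, a nonzero component must satisfy the weight equation \eqref{eq:frobenius-weight}, \(\sum_\nu a_\nu p^{e_\nu}=N\) with \(a_\nu\in\{1,p-2\}\). This equation alone has many solutions, so we add a second constraint. Write \(\Phi_j=\rho_j\circ[\,w_{j-1}(x_1,\dots,x_{j-1}),x_j]\). The inner word lands in \(C_{j-1}\simeq O_{j+1}\) by \cref{lem:single-weight-prefix}, and this identification is \(G\)-linear. The outer bracket followed by \(\rho_j\) is therefore a \(G\)-equivariant biadditive pairing \(O_{j+1}\times(Q\oplus F)\to Z_M\), where \(Z_M\) is trivial for \(G\). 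Linearity in the first slot comes from the single-weight argument of \cref{lem:single-weight-prefix}: a Frobenius twist would move weight \(j-1\) off itself.

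The Frobenius decomposition of this pairing in the last variable gives components \(O_{j+1}\otimes(\Sym^3V^{(e)}\ \text{or}\ V^{(e+1)})\to k\). The weight condition reads \((j-1)+p^e=N\) or \((j-1)+(p-2)p^e=N\). After dualizing, \(O_{j+1}\) pairs with \(O_{D-j-1}\), and a nonzero component requires a nonzero element of \(\operatorname{Hom}_G(O_{j+1},(\Sym^kV)^{(e')})\) with \(k\in\{1,3\}\). For \(e'=0\), \cref{lem:low-symplectic-targets} forces \(j+1\in\{k,D-k\}\). Since \(j\ge3\), \(j+1=k\) is impossible. The case \(j+1=D-k\) gives weight \(j-1=D-k-2\), which must be compared with \(N-1\) or \(N-(p-2)\). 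For \(k=3\) this yields weight \(N-3\ne N-1\), a contradiction. For \(k=1\), coming from \(F=V^{(1)}\), it yields \(N-1\) against the required \(N-(p-2)\), again incompatible since \(p\ge5\).

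For twisted targets \(e'\ge1\), the source \(O_{j+1}\) restricted to the Lie algebra would have to map to a module with trivial \(\mathfrak{sp}_{2r}\)-action. This forces the map to factor through \(\mathfrak{sp}_{2r}\)-coinvariants. We plan to control these coinvariants by the same highest-weight analysis as in \cref{lem:low-symplectic-targets}, where \(q_i\partial_{p_i}\)-killed monomials are saturated or empty. They then produce the weight constraint \(2(p-1)\mid j+1\). This constraint must be played off against \((j-1)+a p^e=N=2p(h(p-1)-1)\) modulo \(p-1\) and modulo \(p\). This twisted case is the main obstacle. If the coinvariant route fails, the fallback is a Frobenius-kernel argument: the differential of such a component vanishes, so it descends through a Frobenius twist of \(O_{j+1}\), and then \cref{lem:symplectic-cocycle-vanishing}-style arguments apply to a lower twist. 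Once all components are killed, \(\Phi_j=0\) contradicts \cref{lem:socle-retraction}, and so \((Z_M\cap\Gamma_3)_{\red}=1\).
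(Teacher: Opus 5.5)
There is a genuine gap where you declare the pairing \(B=\rho_j\circ\beta\colon C_{j-1}\times A_H\to Z_M\) linear in the first slot. The single-weight argument of \cref{lem:single-weight-prefix} is about additive self-maps of \(C_{j-1}\), namely the \(G\)-action. There source and target both have scalar weight \(j-1\), so a Frobenius term would need \((j-1)p^b=j-1\). Your pairing lands in \(Z_M\), which has weight \(N\). A component with Frobenius exponents \(c\) and \(e\) in the two slots only has to satisfy \((j-1)p^c+ap^e=N\) with \(a\in\{1,p-2\}\), and nothing rules out \(c\ge1\). Such components really do occur as candidates. For \(p=5\) and \(h=1\) one has \(N=30\), and \(c=e=1\) on a \(Q\)-factor with \(j=6\) satisfies \(jp^c=N\) with equal effective twists on both slots. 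So neither the scalar weights nor the symplectic torus weights exclude it, and your argument never reaches these components.

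The case you call the main obstacle is actually harmless, and the coinvariant or Frobenius-kernel detour is unnecessary. Suppose the first slot is untwisted and the second is twisted by \(e'\ge1\). A \(T_{\mathrm{sp}}\)-weight-zero term would need \(\mu=-p^{e'}\lambda\), with \(\mu\) a weight of \(O_{j+1}\) (coordinates in \([-(p-1),p-1]\)) and \(\lambda\ne0\) a weight of \(\Sym^3V\) (coordinates in \([-3,3]\)) or of \(V\). This is impossible. Also, \(F^{(e)}=V^{(e+1)}\) is always twisted, so your untwisted \(k=1\) case never arises.

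The missing idea is to run this coordinate bound with both slots allowed to be twisted. It forces equal effective twists: \(c=e\) for \(Q\) and \(c=e+1\) for \(F\). Extracting the common \(p^c\)-th root over the perfect field then gives an untwisted \(G\)-invariant pairing \(O_{j+1}\times\Sym^3V\to k\) or \(O_{j+1}\times V\to k\). \cref{lem:low-symplectic-targets} then forces \(j\in\{2,N-2\}\) or \(j\in\{0,N\}\). The scalar identities \(p^cj=N\) and \(p^{c-1}(pj-2)=N\), respectively, exclude these values for \(j\ge3\).

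This is the paper's argument. The paper also first disposes of odd \(j\) by the parity of \eqref{eq:frobenius-weight}. It then uses \cref{lem:two-augmentation,lem:fox-depth} to kill components linear in either variable, so that \(c,e\ge1\) before the twists are compared.
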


\begin{proof}
Suppose that $j\ge3$ is maximal with $Z_M\subseteq\Gamma_j$.
For odd $j$ the scalar-weight equation for every component of
$\Phi_j$ is impossible: it is a sum of an odd number of odd
integers, whereas $N$ is even.

Let $j\ge4$ be even. Let
$\beta:C_{j-1}\times A_H\to C_j$ be the biadditive map induced by
$[\gamma,u]$. Choose the retraction of
\cref{lem:symplectic-socle-retraction} and put
\[
 B=\rho_j\circ\beta:C_{j-1}\times A_H\longrightarrow Z_M.
\]
For a tangent $Y\in\Lie\Gamma_{j-1}\subseteq\mathfrak h$,
differentiation in the first variable at the identity gives
$(1-\Ad u)Y\in I_U\mathfrak h\subseteq\mathfrak h_0$.
For $Z\in\Lie U\subseteq\mathfrak s$, differentiation in the
second variable gives $(\Ad\gamma-1)Z$. Since
$\gamma\in\Gamma_{j-1}\subseteq\Gamma_2$, \cref{lem:fox-depth} gives
$\Ad\gamma-1\in I_U^2$, so this tangent lies in
$I_U^2\mathfrak s\subseteq\mathfrak h_0$.
The inclusions are those of \cref{lem:two-augmentation}.
The differential of the new retraction is the same projection
onto weight $N$, and $\mathfrak h_0$ has no such weight.
Thus every component linear in either variable is zero.
A nonzero component therefore has parameter exponents $c,e\ge1$.

By \cref{lem:single-weight-prefix} its first source is
$O_{j+1}$, of scalar weight $w=j-1$, which is odd.
Every symplectic weight $\mu$ in this source is nonzero and has
coordinates in $[-(p-1),p-1]$: a zero weight would require equal
exponents in every pair and thus even potential degree, whereas
$j+1$ is odd.
For a $Q$ last variable the symplectic weights have coordinates
in $[-3,3]$ and are nonzero; for $F=V^{(1)}$ the effective twist
of its parameter degree $e$ is $e+1$.
The zero-target-weight condition and these coordinate bounds force
\[
 c=e\quad(Q),\qquad c=e+1\quad(F).
\]
Indeed an unequal effective twist would make a nonzero integer
weight coordinate at least $p$ times another nonzero coordinate,
outside the stated bounds.

In the $Q$ case, taking the common Frobenius root gives a
$G$-equivariant bilinear pairing
$O_{j+1}\times\operatorname{Sym}^3V\to k$.
Its existence requires $j+1=3$ or $D-3$ by \cref{lem:low-symplectic-targets}.
The first case is $j=2$. In the second $j=N-2$, whereas scalar
equivariance says
\[
 p^c((j-1)+1)=p^c j=N.
\]
This is impossible since $c\ge1$ and $p(N-2)>N$.

In the $F$ case, its parameter-degree-$e$ source is
$V^{(e+1)}=V^{(c)}$.
The same common-twist removal gives a bilinear pairing
$O_{j+1}\times V\to k$.
It requires $j+1=1$ or $D-1$, hence $j=0$ or $j=N$.
The scalar identity in the latter case is
\[
 p^c(j-1)+p^{c-1}(p-2)
   =p^{c-1}(pj-2)=N,
\]
which is impossible for $j=N$ and $c\ge2$.
The central summand $T_M$ contributes nothing.
Consequently $B=0$ and
$\Phi_j=B(w_{j-1},x_j)=0$, contradicting the fact that the image
of $w_j$ generates $C_j$ and $\rho_j$ is a retraction.
This proves $(Z_M\cap\Gamma_3)_{\mathrm{red}}=1$.
\end{proof}

\subsection{The two-step quotient}

\begin{lemma}[Two-variable Frobenius constraints]
\label{lem:two-variable-weights}
Let
\[
 \Psi:X\times Y\longrightarrow Z_M,
 \qquad X,Y\in\{Q,F\},
\]
be a nonzero zero-\(T_{\mathrm{sp}}\)-weight Frobenius-homogeneous component,
with parameter exponents \(e,f\ge1\).  Then \(X=Y=F\), the two parameter
exponents are equal, and
\begin{equation}
\label{eq:flux-weight}
 (p-2)p^{e-1}=h(p-1)-1.
\end{equation}
\end{lemma}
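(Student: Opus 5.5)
The plan is to combine two constraints on $\Psi$: the torus $T=T_0\times T_{\mathrm{sp}}$ acts on $Z_M$ with scalar weight $N$ and trivial symplectic weight, since $M=\prod_i(q_ip_i)^{p-1}$ is balanced in every pair. By \cref{lem:frobenius-decomposition}, a Frobenius-homogeneous component with parameter exponents $e,f$ can be nonzero only if two identities hold. The first is a scalar identity,
\[
 a_Xp^{e}+a_Yp^{f}=N=2p\bigl(h(p-1)-1\bigr),\qquad a_Q=1,\ a_F=p-2 .
\]
The second is a symplectic identity $p^{e}\mu+p^{f}\nu=0$, where $\mu,\nu$ are $T_{\mathrm{sp}}$-weights occurring in $X$ and $Y$. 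The bookkeeping to be careful about is that $F=V^{(1)}$ is already a Frobenius twist. Its $T_{\mathrm{sp}}$-weights in the linear coordinates are $\pm p\varepsilon_i$, so the effective twist of an $F$-variable with parameter exponent $e$ is $e+1$, exactly as in the proof of \cref{prop:gamma3-socle}. The weights of $Q=\Sym^3(V^*)$ are nonzero, because the degree is odd, and have coordinates in $[-3,3]$. Since $p\ge5>3$, an equation $p^{x}\mu=-p^{y}\nu$ with nonzero coordinates bounded by $3$ (or of the form $\pm\varepsilon_i$) forces the effective twists to agree.

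I would run through the three cases in order.

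\emph{Case $Q\times Q$.} Equal effective twists give $e=f$. The scalar identity then becomes $2p^{e}=2p(h(p-1)-1)$, that is, $p^{e-1}+1=h(p-1)$. Reducing modulo $p-1$, the left side is $\equiv2$, so $(p-1)\mid2$. This contradicts $p\ge5$.

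\emph{Case $Q\times F$ (and symmetrically $F\times Q$).} The $F$-weight is $\pm p^{f+1}\varepsilon_i$, so $\mu$ must be a multiple of $\varepsilon_i$ and the twists must match: $e=f+1$. The scalar identity becomes
\[
 p^{f+1}+(p-2)p^{f}=2(p-1)p^{f}=2p\bigl(h(p-1)-1\bigr),
\]
that is, $(p-1)p^{f-1}=h(p-1)-1$. The left side is divisible by $p-1$, while the right side is $\equiv-1$ modulo $p-1$. This is impossible.

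\emph{Case $F\times F$.} The weights $\pm p^{e+1}\varepsilon_i$ and $\pm p^{f+1}\varepsilon_l$ cancel only if $i=l$ and $e=f$. The scalar identity $2(p-2)p^{e}=2p(h(p-1)-1)$ is then exactly \eqref{eq:flux-weight}.

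The hypothesis $e,f\ge1$ enters only to guarantee $p^{e-1}$ and $p^{f-1}$ are integers. Linear components have already been excluded elsewhere; this assumption is what lets the divisibility arguments run.

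The main obstacle I expect is justifying the symplectic-weight step rigorously. I need that a nonzero multi-homomorphism component from the character $\mu$ of $X$ and $\nu$ of $Y$ to the trivial $T_{\mathrm{sp}}$-character of $Z_M$ forces $p^{e}\mu+p^{f}\nu=0$ with the correct effective twists. This requires fixing $T$-equivariant linear coordinates on $Q$ and $F$ via the splitting \eqref{eq:AH-weight-splitting}, and reading off \cref{lem:frobenius-decomposition} simultaneously for $T_0$ and $T_{\mathrm{sp}}$. The remaining steps are elementary congruences modulo $p-1$.
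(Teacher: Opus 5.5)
Your proposal is correct and follows the paper's proof essentially step for step. You equalize the effective twists using the coordinate bound \(3<p\) together with \(F^{(e)}=V^{(e+1)}\), rule out the \(Q\)--\(Q\) and \(Q\)--\(F\) cases by reducing the scalar identity modulo \(p-1\), and read off \((p-2)p^{e-1}=h(p-1)-1\) in the \(F\)--\(F\) case.
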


\begin{proof}
Put \(\delta_Q=0\) and \(\delta_F=1\).  The effective symplectic twist of
an \(X\)-factor with parameter exponent \(e\) is
\[
 u=e+\delta_X,
\]
because \(Q^{(e)}\) has twist \(e\), whereas
\(F^{(e)}=V^{(e+1)}\).  The weights of \(Q\) have coordinates in
\([-3,3]\), those of \(V\) have coordinates in \([-1,1]\), and neither
module has weight zero.  If the two effective twists were \(u>v\), a
zero-weight pair would give untwisted weights \(\lambda,\mu\) with
\[
 \mu=-p^{u-v}\lambda.
\]
Some coordinate of the right-hand side has absolute value at least
\(p\ge5\), whereas every coordinate of the left-hand side has absolute
value at most \(3\).  Hence the effective twists are equal.

Write
\[
 H=h(p-1)-1,
 \qquad N=2pH.
\]
For two \(Q\)-factors the common effective twist is \(u=e=f\), and scalar
equivariance gives
\[
 2p^u=2pH,
 \qquad p^{u-1}=H.
\]
Modulo \(p-1\) this is \(1=-1\), impossible since \(p\ge5\).  For a
mixed pair, say \(Q\) and \(F\), equality of effective twists gives
\(e=u\), \(f=u-1\); since \(f\ge1\), one has \(u\ge2\).  The scalar
identity becomes
\[
 p^u+(p-2)p^{u-1}=2(p-1)p^{u-1}=2pH,
\]
so \((p-1)p^{u-2}=H\), which reduces modulo \(p-1\) to \(0=-1\).
The same calculation applies with the factors reversed.

Only the \(F\)--\(F\) case remains.  Equality of effective twists gives
\(e=f\), and scalar equivariance becomes
\eqref{eq:flux-weight}.
\end{proof}

\begin{lemma}[Embedded rank-one socle]
\label{lem:rank-one-socle-gamma3}
For each \(i\), put
\[
 m_i=q_i^{p-1}p_i^{p-1},\qquad
 Z_i=\{\exp(cX_{m_i}):c\in k\}\le U_H.
\]
Then \(Z_i\subseteq\Gamma_3\); in particular, the commutator morphism of
the two flux-shear subgroups on the \(i\)-th pair is zero in \(C_2\).
\end{lemma}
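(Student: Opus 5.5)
The plan is to show that \(X_{m_i}\) is realized by elements of \(\Gamma_3\), and then to deal with the top correction separately. First I record where \(Z_i\) sits. The Hamiltonian \(m_i\) has function degree \(2(p-1)\), so \(X_{m_i}\in\cE_{2p-3}\) lies in coefficient degree \(2p-3\). Because \(p\ge5\) and \(r\ge2\), this degree lies in \([4,D_H-2]\). Since \(X_{m_i}^2=0\) on linear coordinates, the group \(Z_i\) is a genuine \(\Ga\), namely \(x\mapsto x+cX_{m_i}(x)\). Its flux is trivial, since the pullback of \(\theta_H\) differs by an exact form, so \(Z_i\subseteq F^{2p-2}U_H\cap K_H\).

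I cannot quote \cref{thm:hamiltonian-series} directly, because that theorem rests on \cref{thm:hamiltonian-main}(2), which presumably uses the present lemma in the resonant case. I will therefore use only the non-circular Lie sandwich from the proof of \cref{lem:single-weight-prefix}:
\[
 \bigoplus_{d=4}^{D_H-2}\cE_d\subseteq\Lie\Gamma_3\subseteq\bigoplus_{d=4}^{D_H-2}\cE_d\oplus kX_M .
\]

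The key step is a group-level commutator realization inside the substitutions supported on two pairs \(i\) and \(l\ne i\). Following case (C1)/(C2) of \cref{lem:tracked-poisson} and the separation identity \eqref{eq:separation}, I write \(m_i\) as a sum of brackets \(\{f,g\}\). Here \(f\) is cubic and involves pair \(l\) (for instance \(f=q_lp_l\,q_i\)), and \(g\) is chosen so that the \(l\)-exponents cancel. I then take exponentials \(a=\exp(X_f)\in U_H\) and \(b=\exp(X_g)\in\Gamma_2\). The second lies in \(\Gamma_2\) because \(X_g\) is exact of degree at least \(3\), by the realization in \cref{prop:hamiltonian-abelianization}; this places \([a,b]\in\Gamma_3\). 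Its leading field is \(X_{m_i}\). The higher terms are computed by the truncated exponential and Baker--Campbell--Hausdorff expansion. All exponents live in the height-one box in the variables of pairs \(i,l\), so the expansion is finite. Torus equivariance under \(T_{\mathrm{sp}}\) restricts the correction terms to diagonal weight zero in pairs \(i,l\), that is, to balanced monomials in \(q_ip_i,q_lp_l\).

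These corrections are removed by successive cancellation. Every correction of coefficient degree between \(2p-2\) and \(D_H-2\) is exact and lies in \(\Lie\Gamma_3\) by the sandwich. It is cancelled by a further commutator of the same two-pair type, built in the same way from \cref{lem:tracked-poisson}. This produces \(\gamma_c\in\Gamma_3\) with \(\exp(cX_{m_i})\gamma_c^{-1}\in Z_M\). If \(r\ge3\), the two-pair subgroup never reaches \(M\), since \(M\) requires every pair to be saturated, so the remainder is trivial. If \(r=2\), I use the scalar-dilation weights. The remainder map \(c\mapsto\exp(cX_{m_i})\gamma_c^{-1}\) is a \(T_0\)-equivariant morphism to \(Z_M\). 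Arranging it to be a homomorphism, for example by passing to \((F^{4}U_H\cap K_H)/\Gamma_3\), it would have the form \(a\mapsto\sum v_e a^{p^e}\) and would need \((2p-4)p^e=N\). Outside resonance this is excluded modulo \(p\); in the resonant case \(r+1=hp\) with \(r=2\) we have \(p=3\), which is excluded by \(p\ge5\). The final clause of the lemma follows because the commutator of the two flux shears on pair \(i\) has leading Hamiltonian \(m_i\), as in the proof of \cref{cor:hamiltonian-nonresonant}. It therefore lands in \(Z_i\cdot\Gamma_3\subseteq\Gamma_3\), so its image in \(C_2\) vanishes.

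I expect the main obstacle to be the control of the top socle correction for \(r\ge3\) in general. There one must check that the iterated cancellation never introduces a factor from a third pair, so that the corrections really stay in the two-pair subalgebra. If that fails, my fallback is to argue with \(T_{\mathrm{sp}}\) together with the Weyl element swapping the other pairs, to force the \(Z_M\)-component to be invariant under a transformation that negates it.
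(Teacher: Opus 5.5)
\textbf{Gap: the case \(r\ge3\).} Your route differs from the paper's. It has a genuine gap exactly where you locate the main obstacle, \(r\ge3\). Let \(U^{(il)}\) be the substitutions supported on the pairs \(i,l\), and put \(m_l=q_l^{p-1}p_l^{p-1}\).

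If every cancelling commutator is taken inside \(U^{(il)}\), the remainder lies in the top layer \(F^{4p-5}U^{(il)}=\exp(kX_{m_im_l})\), not in \(Z_M\). It cannot be removed there, for two reasons.
\begin{enumerate}
\item Two-pair brackets satisfy \(\int\{f,g\}=0\) over the two-pair box. Since \(3\not\equiv0\pmod p\), the rank-two Calabi homomorphism separates this layer from \([U^{(il)},U^{(il)}]\).
\item \cref{lem:tracked-poisson} reaches \(m_im_l\) only through case (C2), which needs a third, empty pair.
\end{enumerate}
Deciding whether this remainder lies in \(\Gamma_3\) is the original problem one level up. If instead you leave \(U^{(il)}\), nothing forces the \(Z_M\)-remainder to vanish. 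So ``the two-pair subgroup never reaches \(M\)'' closes neither branch.

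Your Weyl-element fallback cannot produce a sign either, because \(\operatorname{Sp}_{2r}\) has no nontrivial characters and so fixes \(X_M\). A \(T_0\)-weight argument in the full group also fails. For \(r=p-1\) (resonant, \(h=1\)) one has \(N=2p(p-2)=(2p-4)p\), so a Frobenius-twisted equivariant map \(Z_i\to Z_M\) is weight-compatible. The resonant case is exactly where this lemma is used.

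Smaller points:
\begin{enumerate}
\item \cref{prop:hamiltonian-abelianization} controls only leading fields. With successive cancellation it puts \(\exp(X_g)\) in \(\Gamma_2Z_M\), not in \(\Gamma_2\). This is harmless, since \(Z_M\) is central.
\item Your sample \(f=q_lp_lq_i\) cannot produce \(m_i\): every bracket with it vanishes on monomials of \(l\)-bidegree \((0,0)\).
\item \(Z_i\subseteq F^{2p-3}U_H\), not \(F^{2p-2}U_H\).
\item The Lie sandwich describes tangent spaces. Cancellation needs group elements of \(\Gamma_3\) with prescribed leading fields.
\end{enumerate}

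\textbf{The missing idea.} Choose \(f,g\) so that no corrections arise at all. With \(u=q_ip_i\) and \(l\ne i\), the paper takes \(f=q_lu^2\) and \(g=-p_lu^{p-3}\). Then \(\{f,g\}=m_i\) and \(\{f,m_i\}=\{g,m_i\}=0\), so \(X_f,X_g,X_{m_i}\) span a Heisenberg algebra. Moreover \(X_f^p=X_g^p=0\), because each field acts on \(q_i,p_i\) through a nilpotent multiplier that it fixes. Conjugation then gives
\[
 [\exp(sX_f),\exp(tX_g)]=\exp(stX_{m_i})
\]
exactly and uniformly in \(r\ge2\), with no BCH terms and no socle remainder. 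A \(T_0\)-weight check on \(q_H\circ\sigma\), with odd source weights \(3\) and \(2p-7\), puts both one-parameter subgroups in \(\Gamma_2\). Hence \(Z_i\subseteq[\Gamma_2,\Gamma_2]_{\alg}\subseteq\Gamma_3\). The (C2) pair \(f=q_lu\), \(g=-p_lu^{p-2}\) works equally well, using \([U_H,\Gamma_2Z_M]_{\alg}=\Gamma_3\).

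Alternatively, your \(r=2\) weight argument, which is sound, repairs your own route if you run it inside \(U^{(il)}\), which is the \(r=2\) group, rather than in \(U_H\). Indeed \(\Gamma_3(U^{(il)})\subseteq\Gamma_3(U_H)\), and the top weight of \(U^{(il)}\) is \(4p-6\). The condition \((2p-4)p^e=(4p-6)p^{e'}\) has no solution, so \(Z_i\subseteq\Gamma_3\) for every \(r\ge2\) without any cancellation bookkeeping.
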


\begin{proof}
Choose \(j\ne i\), set \(u=q_ip_i\), and take
\(f=q_ju^2\), \(g=-p_ju^{p-3}\).  Then
\[
 \{f,g\}=m_i,\qquad \{f,m_i\}=\{g,m_i\}=0,
 \qquad X_f^p=X_g^p=X_{m_i}^p=0.
\]
For the \(p\)-power identities, \(X_f\) fixes \(u,q_j\) and acts on
\(q_i,p_i\) by multiplication by \(\pm2q_ju\), while the analogous
coefficients for \(X_g\) are multiples of \(p_j\); hence their \(p\)-th
powers vanish because \(q_j^p=p_j^p=0\).  The same conclusion for
\(X_{m_i}\) follows from \(X_{m_i}(u)=0\) and \(u^{2p-4}=0\).
In each case the relevant nilpotent coefficient is fixed by the derivation,
so the truncated exponential identities hold.  Thus
\(\sigma_f(s)=\exp(sX_f)\) and
\(\sigma_g(t)=\exp(tX_g)\) are one-parameter subgroups of \(U_H\), of
scalar weights \(3\) and \(2p-7\).  Their composites with
\(q_H:U_H\to A_H\) are \(T_0\)-equivariant.  Relative to
\eqref{eq:AH-weight-splitting}, \cref{lem:frobenius-decomposition} allows
a nonzero component only when a target weight in \(\{1,p-2,N\}\) is a
\(p\)-power multiple of the source weight.  Parity excludes \(N\), and
the only remaining equality is \(p=5\), target weight \(p-2\), with
Frobenius exponent zero.  Its differential is the flux class of the exact
field \(X_f\) or \(X_g\), hence vanishes.  Thus \(\sigma_f(\Ga),\sigma_g(\Ga)\subseteq\Gamma_2\).  Since
\([X_f,X_g]=X_{m_i}\) commutes with both generators,
\[
 \sigma_f(s)\sigma_g(t)\sigma_f(s)^{-1}
 =\exp\!\bigl(t(X_g+sX_{m_i})\bigr)
 =\sigma_g(t)\exp(stX_{m_i}).
\]
Hence
\([\sigma_f(s),\sigma_g(t)]=\exp(stX_{m_i})\in
[\Gamma_2,\Gamma_2]_{\alg}\subseteq\Gamma_3\), and taking \(s=1\) gives
\(Z_i\subseteq\Gamma_3\).

The two flux shears on the \(i\)-th pair have congruence depth \(p-1\),
so their commutator lies in depth \(2p-3\), the top nonzero congruence
subgroup of the embedded rank-one Hamiltonian group.  That subgroup is
\(Z_i\), proving the last assertion.
\end{proof}

\begin{proposition}[Intersection with \(\Gamma_2\)]
\label{prop:gamma2-socle}
\[
 \bigl(Z_M\cap\Gamma_2\bigr)_{\red}=1.
\]
\end{proposition}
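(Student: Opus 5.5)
We are in the resonant case \(r+1=hp\), and we argue by contradiction. By \cref{prop:gamma3-socle}, \(Z_M\) meets \(\Gamma_3\) trivially. So if \(Z_M\cap\Gamma_2\) had a nontrivial geometric point, the maximal \(j\) in \cref{lem:socle-retraction} would be \(j=2\). That lemma then gives \(Z_M\subseteq\Gamma_2\), together with a \(T\)-equivariant linear retraction \(\rho_2:C_2\to Z_M\) and a nonzero biadditive map \(\Phi_2=\rho_2\circ w_2:A_H\times A_H\to Z_M\). Using the splitting \eqref{eq:AH-weight-splitting} and centrality of \(Z_M\), \(\Phi_2\) factors through \((Q\oplus F)^2\). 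Every Frobenius-homogeneous component must have zero \(T_{\mathrm{sp}}\)-weight, since \(T_{\mathrm{sp}}\) acts trivially on \(Z_M\), and scalar weight \(N\). The plan is to show that every such component vanishes, which contradicts \(\Phi_2\ne0\).

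\emph{First step: components linear in some variable.} We treat components with some parameter exponent equal to \(0\) exactly as in the proof of \cref{prop:gamma3-socle}. Differentiating \((x,y)\mapsto\rho_2([x,y])\) in one variable at the identity produces a tangent of the form \((1-\Ad u)Z\) with \(Z\in\mathfrak s\), which lies in \(I_U\mathfrak s\subseteq\mathfrak h\). To land in \(\mathfrak h_0\) we need a second augmentation factor, and \cref{lem:two-augmentation} supplies only one here. The fix is to restrict to \(\gamma=[x,y]\) with \(x\) and \(y\) each lifted into \(\Gamma_1\). We then observe that the weight-\(N\) projection of \((\Ad u-1)Z\), for \(Z\) a tangent in \(Q\) or \(F\), is \(\int\) of the Hamiltonian primitive. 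This integral is a trace \(\tau\) as in \cref{lem:calabi-primitive-trace}, and that lemma shows it vanishes on the generators (quadratic lifts, flux shears, \(z_a\)) and on \(K_H\). For the flux-shear pairs we use \cref{lem:rank-one-socle-gamma3}. The conclusion of this step is that every surviving component has parameter exponents \(e,f\ge1\).

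\emph{Second step: nonlinear components.} By \cref{lem:two-variable-weights}, the only possible nonlinear components are \(F\times F\to Z_M\) with equal exponents \(e=f\), and they can occur only when \((p-2)p^{e-1}=h(p-1)-1\). Taking the common Frobenius root, such a component becomes a \(\operatorname{Sp}_{2r}\)-invariant, hence symplectic, bilinear form on \(V\) composed with a twist. Because the pairing is invariant for the standard diagonal torus and is compatible with the Weyl group permuting pairs, it is concentrated on the pairings within each symplectic pair. Its restriction to the \(i\)-th pair is the commutator of the two flux shears on that pair, and \cref{lem:rank-one-socle-gamma3} shows that this commutator is zero in \(C_2\). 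An invariant form on the irreducible module \(V\) is determined by any single pair, so the component vanishes identically.

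We expect the linear-component analysis in the first step to be the main obstacle. There the augmentation depth is one short of what the \(\Gamma_3\) argument had, so one must work with the Calabi-type trace and the explicit generators of \cref{lem:hamiltonian-finite-generators} rather than with \(\mathfrak h_0\) alone. Once all components vanish, \(\Phi_2=0\), which contradicts \cref{lem:socle-retraction}. This gives \((Z_M\cap\Gamma_2)_{\red}=1\).
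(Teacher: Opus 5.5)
Your outline is the paper's: contradiction, \(j=2\) maximal by \cref{prop:gamma3-socle}, the \(T\)-equivariant retraction and nonzero alternating \(\Phi_2\) from \cref{lem:socle-retraction}, reduction via \cref{lem:two-variable-weights} to equal-exponent \(F\)--\(F\) components, and annihilation of each pair of opposite flux lines by \cref{lem:rank-one-socle-gamma3}. The one real divergence is your first step. As written it has a gap, although the step itself is easy.

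In the paper this step is a congruence. In the resonant case \(N=2p\bigl(h(p-1)-1\bigr)\) is divisible by \(p\), so reduce \(ap^e+bp^f=N\), with \(a,b\in\{1,p-2\}\), modulo \(p\). If exactly one exponent is \(0\), the surviving coefficient \(1\) or \(p-2\) is a unit mod \(p\). If both are \(0\), then \(a+b\in\{2,p-1,2p-4\}\), and none of these is divisible by \(p\ge5\). Hence \(e,f\ge1\) by scalar weights alone, with no differentiation, and \cref{lem:two-variable-weights} applies.

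Your Step 1 does not reach this conclusion as written, for two reasons.
\begin{enumerate}
\item Restricting to \(x,y\) lifted into \(\Gamma_1\) is vacuous, since \(\Gamma_1=U_H\). It supplies no second augmentation factor.
\item \cref{lem:calabi-primitive-trace} gives \(\int R=0\) only for \(h\in K_H\). The flux shears are not in \(K_H\), so the lemma says nothing about them, contrary to your reading. Your appeal to \cref{lem:rank-one-socle-gamma3} for the flux shears is not connected to the differential you are computing.
\end{enumerate}

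The route can be repaired. Up to sign conventions, the \(y\)-differential of \(\Phi_2\) at \(x\) in direction \(Z\) is \(\tau_{\alpha_Z}(x)\). This is a homomorphism in \(x\) that vanishes on \(K_H\), so it is determined by its values on the flux shears. Directly, \((S_i(a)^*-1)(p_i^{p-1}dp_i)=a\,d\bigl((q_ip_i)^{p-1}\bigr)\), whose primitive has zero \(M\)-coefficient when \(r\ge2\). Moreover \(S_i(a)\) fixes the other Cartier forms, and \(T_i(b)\) behaves symmetrically. So \(\tau_\alpha\equiv0\) on \(U_H\), and every component linear in one variable vanishes. This works, but it is a long detour around a one-line congruence.

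There is also a smaller point in Step 2. You treat the component as \(\operatorname{Sp}_{2r}\)-invariant and Weyl-compatible, and propagate from one pair using irreducibility of \(V\). At \(j=2\) this is not available. \cref{lem:socle-retraction} gives only a \(T\)-equivariant retraction; the \(G\)-equivariant \cref{lem:symplectic-socle-retraction} needs the \(j\ge3\) hypothesis of \cref{lem:single-weight-prefix}. The splitting \eqref{eq:AH-weight-splitting} is also only \(T\)-equivariant.

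The invariance is unnecessary anyway. On \(F^{(e)}=V^{(e+1)}\) the weights are \(\pm p^{e+1}\varepsilon_i\), so \(T_{\mathrm{sp}}\)-weight zero already leaves one coefficient per pair \(i\). Since \cref{lem:rank-one-socle-gamma3} holds for every \(i\), each coefficient is killed directly, exactly as in the paper.
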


\begin{proof}
Suppose that the intersection contains a nontrivial geometric point.  By
\cref{prop:gamma3-socle}, \(j=2\) is maximal with this property, so
\cref{lem:socle-retraction} gives a \(T\)-equivariant retraction
\(C_2\to Z_M\).  Its composite with \(w_2\) is a nonzero alternating
biadditive map
\[
 \Phi_2:A_H\times A_H\longrightarrow Z_M.
\]

Use the fixed splitting~\eqref{eq:AH-weight-splitting}.  Since the central
summand \(T_M\) does not contribute, some nonzero Frobenius-homogeneous
component of \(\Phi_2\) factors through \(X\times Y\), with
\(X,Y\in\{Q,F\}\).  If its parameter exponents are \(e,f\), the
\(j=2\) case of \eqref{eq:frobenius-weight} reads
\[
 ap^e+bp^f=2p\bigl(h(p-1)-1\bigr),
 \qquad a,b\in\{1,p-2\}.
\]
Reduction modulo \(p\) shows that \(e,f\ge1\): if exactly one exponent
were zero, the corresponding coefficient \(1\) or \(p-2\) would survive
modulo \(p\), while if both were zero then
\(a+b\in\{2,p-1,2p-4\}\), none of which is divisible by \(p\).
By \cref{lem:two-variable-weights}, the component must therefore be an
equal-exponent \(F\)--\(F\) component satisfying
\eqref{eq:flux-weight}.

Now \(F^{(e)}=V^{(e+1)}\) has weights
\[
 \pm p^{e+1}\varepsilon_i,
 \qquad 1\le i\le r.
\]
Since the target has \(T_{\mathrm{sp}}\)-weight zero, the zero-weight part
of an alternating law on \(F^{(e)}\times F^{(e)}\) is determined by one
coefficient for each pair of opposite weight lines
\[
 +p^{e+1}\varepsilon_i,
 \qquad -p^{e+1}\varepsilon_i.
\]
Fix \(i\).  These two lines are realized by the flux shears
\[
 S_i(a)^*:p_i\mapsto p_i+a q_i^{p-1},
 \qquad
 T_i(b)^*:q_i\mapsto q_i+b p_i^{p-1}.
\]
By \cref{lem:rank-one-socle-gamma3}, their commutator morphism is
zero in \(C_2\).  Hence the restriction of \(w_2\), and therefore of
\(\Phi_2\), to the corresponding pair of flux lines is zero.  This kills
the \(i\)-th coefficient, including all of its Frobenius terms.  Varying
\(i\) annihilates the zero-weight component, a contradiction.
\end{proof}

\subsection{Finite-field points and the socle line}

\begin{theorem}[The resonant case]
\label{thm:resonant-hamiltonian}
Let \(r\ge2\) and \(r+1=hp\).  Then
\[
 \bigl(Z_M\cap[U_H,U_H]_{\alg}\bigr)_{\red}=1.
\]
For every \(q=p^f\),
\[
 Z_M(\F_q)\cap[U_H(\F_q),U_H(\F_q)]=1.
\]
If
\[
 1\longrightarrow Z_M(\F_q)\longrightarrow U_H(\F_q)
 \longrightarrow U_H(\F_q)/Z_M(\F_q)\longrightarrow1
\]
is the central extension by the socle line, then its integral transgression
\[
 \tau_{M,q}:H_2(U_H(\F_q)/Z_M(\F_q);\mathbf Z)
 \longrightarrow Z_M(\F_q)
\]
is zero.
\end{theorem}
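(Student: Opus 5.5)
The algebraic statement follows at once from \cref{prop:gamma2-socle}, since \(\Gamma_2=[U_H,U_H]_{\alg}\) by definition. So the substance of the proof is passing to finite points and handling the transgression.

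\textbf{Finite points.} Let \(z\in Z_M(\F_q)\cap[U_H(\F_q),U_H(\F_q)]\). Abstract commutators of \(\F_q\)-points are \(\F_q\)-points of \([U_H,U_H]_{\alg}\), so
\[
 z\in\bigl(Z_M\cap[U_H,U_H]_{\alg}\bigr)(\F_q).
\]
This closed subgroup scheme has trivial reduced subscheme. Its \(\F_q\)-points, viewed as \(\overline{\F}_q\)-points, are geometric points of the reduced subscheme, hence trivial. Therefore \(z=1\). No use of the finite-point series of \cref{thm:hamiltonian-finite} is needed here, which avoids any circularity: that theorem itself invokes \cref{thm:hamiltonian-main}, whose resonant case is exactly the present statement.

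\textbf{Transgression.} Write \(G=U_H(\F_q)\) and \(Z=Z_M(\F_q)\); recall that \(Z\) is central. For a central extension \(1\to Z\to G\to G/Z\to1\), the five-term exact sequence in integral homology (Brown, Ch.~VII) gives
\[
 H_2(G;\mathbf Z)\to H_2(G/Z;\mathbf Z)\xrightarrow{\tau}Z\to H_1(G;\mathbf Z)\to H_1(G/Z;\mathbf Z)\to0,
\]
where the coinvariants \(Z_G\) equal \(Z\) because the action is trivial. The image of \(\tau_{M,q}\) is the kernel of \(Z\to G^{\mathrm{ab}}\), namely \(Z\cap[G,G]\). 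By the previous paragraph this is trivial, so \(\tau_{M,q}=0\).

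Equivalently, in Hopf form, \(\tau\) sends a class represented by a relation among commutators of lifts to the corresponding product of commutators, which lies in \(Z\cap[G,G]\).

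\textbf{Main obstacle.} The only delicate point is the reduced-to-rational-points step. It is important to use the scheme-theoretic intersection with \([U_H,U_H]_{\alg}\), which is defined over \(\F_p\) since all the lifts of \cref{prop:hamiltonian-realization} are. Then the conclusion that \((\cdot)_{\red}=1\) forces trivial \(\overline{\F}_q\)-points, and hence trivial \(\F_q\)-points, is immediate.
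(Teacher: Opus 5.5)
Your proposal is correct and matches the paper's proof essentially step for step. The algebraic statement is \cref{prop:gamma2-socle} with \(\Gamma_2=[U_H,U_H]_{\alg}\); the finite-point statement holds because \(\F_q\)-points factor through the reduced intersection and abstract commutators lie in \([U_H,U_H]_{\alg}(\F_q)\) (the paper simply takes \(k=\F_q\) rather than descending from \(\F_p\)); and \(\tau_{M,q}=0\) because the integral five-term sequence identifies \(\im\tau_{M,q}\) with \(Z_M(\F_q)\cap[U_H(\F_q),U_H(\F_q)]\).
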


\begin{proof}
Since \(\Gamma_2=[U_H,U_H]_{\alg}\), \cref{prop:gamma2-socle} gives
\[
 \bigl(Z_M\cap[U_H,U_H]_{\alg}\bigr)_{\red}=1.
\]
Taking \(k=\F_q\), the reduced intersection is trivial over \(\F_q\).  Every \(\F_q\)-point factors through the reduction, and every
abstract commutator belongs to \([U_H,U_H]_{\alg}(\F_q)\); the finite-point
intersection is therefore trivial.

The integral five-term homology sequence of the central extension gives
\[
 \im\tau_{M,q}
 =Z_M(\F_q)\cap[U_H(\F_q),U_H(\F_q)].
\]
The right-hand side is trivial, so \(\tau_{M,q}=0\).
\end{proof}

\begin{proof}[Proof of \cref{thm:hamiltonian-main}]
The rank-one assertion and the nonresonant algebraic intersection are
\cref{cor:hamiltonian-nonresonant}.  In the nonresonant case with
\(r\ge2\), every \(\F_q\)-point of
\(Z_M\cap[U_H,U_H]_{\alg}\) factors through its reduced subscheme; hence
the triviality of the reduced intersection also gives
\[
 Z_M(\F_q)\cap[U_H(\F_q),U_H(\F_q)]=1.
\]
The resonant algebraic and finite-point assertions are
\cref{thm:resonant-hamiltonian}.
\end{proof}
\section{The Witt--Jacobson and contact families}
\label{sec:witt-contact}

\subsection{Reduced radicals}

\begin{proposition}
\label{prop:wh-radicals}
The linear-part maps give split decompositions
\[
 G_W=U_W\rtimes\GL_n,
 \qquad
 G_H=U_H\rtimes\operatorname{CSp}_{2r},
\]
and
\[
 R_u(G_W)=U_W,
 \qquad R_u(G_H)=U_H.
\]
\end{proposition}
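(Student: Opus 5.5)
The proof follows \cref{prop:special-radical} almost verbatim, so I would set up the two families in parallel.

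\emph{Splitting.} For the Witt--Jacobson group every linear substitution \(A\in\GL_n\) defines an automorphism of \(O(n;1)\) fixing the augmentation. This gives a section of the linear-part map \(G_W\to\GL_n\), whose kernel is \(U_W\) by definition.

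For the Hamiltonian group, first check that the linear part of an element of \(G_H\) lies in \(\operatorname{CSp}_{2r}\). If \(g^*\omega_H=c\,\omega_H\), then comparing constant coefficients shows the linear part scales the constant symplectic form by \(c\). Conversely, a linear substitution in \(\operatorname{CSp}_{2r}\) preserves the line \(k\omega_H\), so linear substitutions again give a section. The image statement is already recorded in the setup. The kernel is \(U_H\) by definition, and both sequences split as semidirect products.

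\emph{Unipotence and connectedness.} Order the monomial basis of the truncated polynomial algebra by total degree. An identity-linear substitution \(g\) sends \(x^a\) to \(x^a\) plus terms of higher degree, so it acts upper unitriangularly. Hence \(U_W\) and \(U_H\) are unipotent subgroups of \(\GL(O)\).

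For connectedness, use scalar dilation \(\delta_t\). It lies in \(\GL_n\), and it lies in \(\operatorname{CSp}_{2r}\) with multiplier \(t^2\). Conjugating gives
\[
 \delta_tg\delta_t^{-1}(x_i)=x_i+\sum_{\abs a\ge2}t^{\abs a-1}c_{i,a}x^a,
\]
which is a morphism \(\mathbf A^1\to U\) joining \(g\) (at \(t=1\)) to the identity (at \(t=0\)). One has to check that this path stays inside \(U_H\). For \(t\neq0\) this holds because conjugation by \(\delta_t\) preserves the line \(k\omega_H\) and the identity-linear condition. At \(t=0\) the limit is the identity, and \(U_H\) is closed.

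\emph{Radical.} The quotients \(G_W/U_W\simeq\GL_n\) and \(G_H/U_H\simeq\operatorname{CSp}_{2r}\) are reductive. So every connected normal unipotent subgroup maps to a connected normal unipotent subgroup of a reductive group, which is trivial, and hence lies in \(U\). Since \(U\) is itself connected, normal and unipotent, this gives \(R_u=U\).

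\emph{Main obstacle.} The delicate point is the reducedness bookkeeping. \(G_W\) and \(G_H\) are defined as reduced subgroups, so the kernels \(U_W,U_H\) must be taken within these smooth groups, and surjectivity onto \(\GL_n\) or \(\operatorname{CSp}_{2r}\) must hold at the level of reduced groups. The explicit sections settle surjectivity. For the kernel, \(U\) is defined as the kernel inside the reduced group. Connectedness via the contracting \(\Gm\)-action also needs the limit map to be a morphism of varieties, and this is clear from the polynomial coefficients.
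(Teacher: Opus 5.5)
Your proposal is correct and is essentially the paper's proof: linear substitutions split the linear-part maps, and upper unitriangularity on the degree-ordered monomial basis gives unipotence. The scalar-dilation contraction \(\delta_tg\delta_t^{-1}\to\id\) gives connectedness, and reductivity of \(\GL_n\) and \(\operatorname{CSp}_{2r}\) forces every connected normal unipotent subgroup into the kernel; your extra checks (that the linear part lands in \(\operatorname{CSp}_{2r}\), and that the contracting path stays in the closed normal subgroup \(U_H\)) only make explicit details the paper leaves implicit.
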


\begin{proof}
Linear coordinate substitutions split both maps.  In the Hamiltonian
case they act on \(\omega_H\) through the defining similitude character.
An element in either kernel of the linear-part map is upper unitriangular
on the monomial basis ordered by total degree, hence lies in a unipotent
algebraic subgroup.

Scalar dilation \(\delta_t:x_i\mapsto tx_i\) normalizes both kernels and
satisfies
\[
 \delta_tg\delta_t^{-1}(x_i)
 =x_i+\sum_{\abs\alpha\ge2}
 t^{\abs\alpha-1}c_{i,\alpha}x^\alpha.
\]
The action extends to \(t=0\) with value the identity, so each kernel is
connected and unipotent.  Since \(\GL_n\) and
\(\operatorname{CSp}_{2r}\) are reductive, no larger connected normal
unipotent subgroup can occur.  Hence these kernels are the unipotent
radicals.
\end{proof}

\subsection{The Witt--Jacobson congruence group}

Assume throughout this subsection that \(n\ge2\).  Let \(U_W\) be the
augmentation-preserving automorphism group of \(O(n;1)\) with identity
linear part, and set
\[
 F^sU_W=\{g\in U_W:g(x_i)-x_i\in\fm^s\text{ for every }i\},
 \qquad s\ge2.
\]
Its degree-\(d\) tangent component is
\[
 \cW_d=O_d\otimes V.
\]

\begin{lemma}[Homogeneous Witt--Jacobson brackets]
\label{lem:witt-generation}
For \(3\le d\le D\),
\[
 [\cW_2,\cW_{d-1}]=\cW_d.
\]
\end{lemma}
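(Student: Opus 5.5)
We prove the lemma by showing that every monomial field \(x^a\partial_i\) with \(\abs a=d\), \(3\le d\le D\), lies in \([\cW_2,\cW_{d-1}]\). The inclusion \([\cW_2,\cW_{d-1}]\subseteq\cW_d\) holds by degree, and these monomial fields span \(\cW_d\). The basic identity we will use is, for \(k,l\ne i\) (with \(k=l\) allowed),
\[
 [x_kx_l\partial_k,\;x^{a-e_l}\partial_i]
 =(a-e_l)_k\,x^a\partial_i-x^{a-e_l}\,\partial_i(x_kx_l)\,\partial_k
 =(a-e_l)_k\,x^a\partial_i .
\]
The second term vanishes because \(i\notin\{k,l\}\). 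The left factor lies in \(\cW_2\), and the right factor lies in \(\cW_{d-1}\) whenever \(a_l\ge1\).

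\emph{Generic case.} Suppose some \(k\ne i\) has \(a_k\ge1\). If \(a_k\ge2\), take \(l=k\); the coefficient is \(a_k-1\in[1,p-2]\), which is nonzero in \(k\). If \(a_k=1\) and some other \(l\ne i,k\) has \(a_l\ge1\), take that \(l\); the coefficient is \(a_k=1\). This covers every monomial whose support outside \(i\) has total degree at least \(2\). It covers in particular the top degree \(d=D\), where every exponent equals \(p-1\ge4\) and \(n\ge2\) guarantees an index \(k\ne i\).

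\emph{Remaining configurations.} Only \(x^a=x_i^{A}\) and \(x^a=x_i^{A}x_k\) remain, with \(A=d\) or \(A=d-1\ge2\). For these we bracket with fields involving \(\partial_i\) and track both terms. For \(x_i^Ax_k\partial_i\) we use
\[
 [x_ix_k\partial_i,\;x_i^{A-1}\partial_i]=(A-2)\,x_i^Ax_k\partial_i .
\]
When \(A=2\) we use instead \([x_i^2\partial_i,\,x_ix_k\partial_i]\). Its coefficient is \(1-2=-1\), and it too yields \(x_i^2x_k\partial_i\) up to sign.

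For the pure power \(x_i^A\partial_i\), we first obtain \(x_i^{A}\partial_k\) directly as \([x_i^2\partial_k,\,x_i^{A-1}\partial_i]\), up to a nonzero scalar. Since \(\partial_k(x_i^{A-1})=0\), this bracket is \(-(A-1)x_i^A\partial_k\), and \(A-1\ne0\) because \(2\le A-1\le p-2\) when \(A=d\ge3\). Then
\[
 [x_i^2\partial_i,\;x_i^{A-1}\partial_i]=(A-3)\,x_i^A\partial_i .
\]
If \(A=3\), we combine \([x_ix_k\partial_i,x_i^2\partial_k]\) with \([x_i^2\partial_k,x_ix_k\partial_i]\)-type brackets, or use \([x_ix_k\partial_k,x_i^2\partial_i]\) together with \([x_k^2\partial_k,\ldots]\), to reach \(x_i^3\partial_i\). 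The required scalars are \(2\) and \(3\), which are invertible since \(p\ge5\). We also need that the exponents stay at most \(p-1\), which holds because \(A\le p-1\).

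The main obstacle is this bookkeeping of the exceptional small-support monomials, especially \(x_i^{A}\partial_i\) with \(A\equiv 3\) or \(A=2,3\), where the obvious coefficient vanishes. There we must exhibit a second bracket, using the auxiliary variable \(x_k\) supplied by \(n\ge2\), whose coefficient is a unit. The top degree and the generic case are unproblematic.
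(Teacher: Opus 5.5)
\textbf{Verdict: gap.} Your generic case is correct; it is the special case \(h=e_k\) of the paper's identity \([x_sH_h,x^{\alpha-e_s}\partial_j]=\langle h,\alpha-e_s-e_j\rangle x^\alpha\partial_j\). The pure powers \(x_i^A\partial_i\) with \(4\le A\le p-1\) are also handled correctly, by \([x_i^2\partial_i,x_i^{A-1}\partial_i]=(A-3)x_i^A\partial_i\). The boundary cases, however, are not right as written.

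\textbf{Mixed case \(x_i^Ax_k\partial_i\).} Your identity is false. A bracket of coefficient degrees \(2\) and \(A-1\) has degree \(A\), and indeed \([x_ix_k\partial_i,x_i^{A-1}\partial_i]=(A-2)x_i^{A-1}x_k\partial_i\), not \((A-2)x_i^Ax_k\partial_i\). You should use the factor \(x_i^A\partial_i\in\cW_{d-1}\), which gives \([x_ix_k\partial_i,x_i^A\partial_i]=(A-1)x_i^Ax_k\partial_i\) with \(1\le A-1\le p-2\). This covers every \(A\) at once, so your separate \(A=2\) case is unnecessary, though its bracket is correct. Likewise \([x_i^2\partial_k,x_i^{A-1}\partial_i]=-2x_i^A\partial_k\), not \(-(A-1)x_i^A\partial_k\). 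That step plays no role anyway: \(x_i^A\partial_k\) is not the target and is already a generic monomial.

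\textbf{The missing case \(x_i^3\partial_i\).} This is the case you yourself call the main obstacle, where \(A-3=0\), and neither suggestion settles it.
\begin{itemize}
\item[(i)] The brackets \([x_ix_k\partial_i,x_i^2\partial_k]\) and \([x_i^2\partial_k,x_ix_k\partial_i]\) are negatives of each other, so combining them gives nothing.
\item[(ii)] \([x_ix_k\partial_k,x_i^2\partial_i]=-x_i^2x_k\partial_k\) has no \(\partial_i\)-component. The \(\partial_i\)-component of any \([x_k^2\partial_k,Y]\) is \(x_k^2\partial_k(Y_i)\), which is divisible by \(x_k\). So these brackets can never produce \(x_i^3\partial_i\).
\end{itemize}
The missing step is to actually compute
\[
 [x_ix_k\partial_i,x_i^2\partial_k]=2x_i^2x_k\partial_k-x_i^3\partial_i
\]
and observe that the extra term is a generic monomial: it has derivation index \(k\) and exponent \(2\) at \(i\ne k\). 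Indeed \(x_i^2x_k\partial_k=[x_i^2\partial_i,x_ix_k\partial_k]\), so
\[
 x_i^3\partial_i=2[x_i^2\partial_i,x_ix_k\partial_k]-[x_ix_k\partial_i,x_i^2\partial_k]\in[\cW_2,\cW_2].
\]
This is precisely the paper's device. It uses
\[
 [x_j^2\partial_s,x_sx_j^{d-2}\partial_j]=x_j^d\partial_j-2x_sx_j^{d-1}\partial_s
\]
for every pure power and pushes the error term into the generic case. That uniform treatment makes both your \((A-3)\) formula and the special handling of \(A=3\) unnecessary.
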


\begin{proof}
It suffices to generate a monomial field
\(X=x^\alpha\partial_j\in\cW_d\).  Suppose first that
\(\alpha_s>0\) for some \(s\ne j\).  For
\[
 H_h=\sum_{\ell=1}^n h_\ell x_\ell\partial_\ell,
 \qquad Q_h=x_sH_h,
 \qquad Y=x^{\alpha-e_s}\partial_j,
\]
one has
\[
 [Q_h,Y]
 =\langle h,\alpha-e_s-e_j\rangle X.
\]
The coefficient vanishes for every \(h\in k^n\) only if
\(\alpha-e_s-e_j=0\) in \(\F_p^n\).  Because every box exponent lies in
\([0,p-1]\), this forces \(\alpha=e_s+e_j\), contrary to \(d\ge3\).
A suitable choice of \(h\) therefore yields \(X\).

If no such \(s\) exists, then \(X=x_j^d\partial_j\), with \(d\le p-1\).
Choose \(s\ne j\).  Since
\[
 [x_j^2\partial_s,x_sx_j^{d-2}\partial_j]
 =x_j^d\partial_j-2x_sx_j^{d-1}\partial_s,
\]
the second summand is covered by the first case, and hence so is
\(x_j^d\partial_j\).
\end{proof}

\begin{lemma}[Balanced Witt--Jacobson brackets]
\label{lem:witt-balanced}
If \(a,b\ge2\) and \(a+b-1\le D\), then
\[
 [\cW_a,\cW_b]=\cW_{a+b-1}.
\]
\end{lemma}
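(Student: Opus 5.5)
The inclusion \([\cW_a,\cW_b]\subseteq\cW_{a+b-1}\) is immediate from degrees, since the bracket of fields with coefficient degrees \(a\) and \(b\) has coefficient degree \(a+b-1\). So the plan is to show that every monomial field \(X=x^\gamma\partial_j\) with \(\abs\gamma=a+b-1\) lies in the bracket span. There are two natural routes: a direct monomial construction, or bootstrapping from \cref{lem:witt-generation} with the Jacobi identity, in the manner of the self-commutator step of \cref{prop:special-commutators}. I will start with the direct route and use the inductive one as a fallback.

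\emph{Direct route.} The first tool is the Euler-type trick already used in \cref{lem:witt-generation}. Write \(\gamma=\beta+\delta\) with \(\abs\beta=a\) and \(\abs\delta=b-1\). Take
\[
 P_h=x^{\beta-e_s}H_h,\qquad H_h=\sum_\ell h_\ell x_\ell\partial_\ell,
\]
which has coefficient degree \(a\), and \(Y=x^{\delta+e_s}\partial_j\), which has degree \(b\). Here \(s\) is any coordinate with \(\beta_s\ge1\).

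Expanding the bracket gives
\[
 [P_h,Y]=\langle h,\delta+e_s-e_j\rangle X-(\text{terms }Y(x^{\beta-e_s})H_h).
\]
The correction term is nonzero only when \(\beta_j>0\) with \(s\ne j\), or when \(\beta_j\ge2\).

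To avoid it, I choose the splitting so that \(\beta\) has no \(x_j\)-component. Then the correction vanishes, and the coefficient \(\langle h,\delta+e_s-e_j\rangle\) can be made nonzero for a suitable \(h\) unless \(\delta+e_s-e_j\equiv0\) in \(\F_p^n\). Box bounds on the exponents force that degenerate case to be \(\delta=e_j-e_s\), which is impossible because \(\delta\ge0\) and \(s\ne j\).

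Such a splitting exists as soon as \(\abs\gamma-\gamma_j\ge a\), that is, when there is enough non-\(x_j\) mass to place in \(\beta\). By the symmetric construction with the roles of \(a\) and \(b\) interchanged, it is enough that \(\abs\gamma-\gamma_j\ge\min(a,b)\).

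\emph{The exceptional configuration.} What remains is the case where \(x_j\) dominates: \(\gamma_j>a+b-1-\min(a,b)\), so \(\gamma_j\ge\max(a,b)\). This is the analogue of the case \(x_j^d\partial_j\) in \cref{lem:witt-generation}, and I expect it to be the main obstacle.

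Here I would use a correction bracket of the form
\[
 [x_j^{a}\partial_s,\;x_sx^{\gamma-a e_j}\partial_j]
 = X-(\text{coefficient})\cdot x_s x^{\gamma-e_j}\partial_s .
\]
This is legitimate provided \(\gamma_j\ge a\), and it needs \(x_s\)-exponent room so that \(x_s x^{\gamma-a e_j}\) stays inside the box.

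The error field \(x_sx^{\gamma-e_j}\partial_s\) has its target coordinate \(s\) with small exponent, so its \(x_s\)-free mass is large. It therefore falls under the first case, this time with target index \(s\). One must check the box constraints, namely that \(\gamma_s\le p-2\) for some \(s\ne j\); when every other exponent is already \(p-1\), pick \(s\) to be an empty coordinate, or use \(n\ge2\) together with the degree bound \(a+b-1\le D\).

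\emph{Fallback route.} If the box boundary cases become messy, I would fall back on induction on \(a+b\). Starting from \([\cW_a,\cW_b]\supseteq[\cW_a,[\cW_2,\cW_{b-1}]]\), Jacobi gives
\[
 [\cW_a,[\cW_2,\cW_{b-1}]]\subseteq[\cW_{a+1},\cW_{b-1}]+[\cW_2,[\cW_a,\cW_{b-1}]].
\]
Combined with \cref{lem:witt-generation}, this transports equality between pairs \((a,b)\) with the same sum. The base case for that transport is the unbalanced pair \((2,a+b-2)\), which is exactly \cref{lem:witt-generation}. The direction needs care, since Jacobi yields inclusions of the form "sum contains". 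So I would run the argument as showing that the span \(\sum_{a'+b'=a+b,\,a'\ge a}[\cW_{a'},\cW_{b'}]\) is independent of \(a\), using the direct monomial construction where the transport fails.
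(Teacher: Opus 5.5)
\textbf{Gap in the balanced case with \(n=2\).} Your first case is sound, but you impose \(\beta_j=0\). That means it only covers \(\abs\gamma-\gamma_j\ge\min(a,b)\), so the exceptional configuration includes \(\gamma_j=\max(a,b)\). There your claim that the error field ``falls under the first case'' fails.

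Take \(n=2\), \(a=b\), and \(X=x_s^{a-1}x_j^{a}\partial_j\), for instance \(x_1x_2^2\partial_2\in\cW_3\) with \(a=b=2\).
\begin{enumerate}
\item Its non-\(x_j\) mass is \(a-1<a\), so your first case does not apply, and there is only one choice of \(s\).
\item Your correction bracket is \([x_j^a\partial_s,x_s^a\partial_j]=a(X-Z)\) with \(Z=x_s^{a}x_j^{a-1}\partial_s\). This \(Z\) is the mirror image of \(X\) and is again exceptional.
\item Applying the correction to \(Z\) returns the same relation \(a(Z-X)\).
\end{enumerate}
So nothing in the direct route produces \(X\) itself. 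The Jacobi fallback does not rescue this either. As you observe, Jacobi only shows that the sum of \([\cW_{a'},\cW_{b'}]\) over splittings with fixed \(a'+b'\) is \(\cW_{a+b-1}\). That is already given by the term \((a+b-2,2)\), and it says nothing about the balanced term alone. Since \(n=2\) is allowed, and \(a=b\) is exactly the case used for the derived series in \cref{prop:witt-series}, this is a real hole.

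\textbf{Repair and comparison with the paper.} The repair is already in your own expansion: the correction term also vanishes when \(s=j\) and \(\beta_j=1\).
\begin{enumerate}
\item With \(u=\beta-e_j\) of degree \(a-1\) supported off \(j\), one gets \([x^uH_h,x^{\gamma-u}\partial_j]=\langle h,\gamma-u-e_j\rangle X\).
\item Here \(\gamma-u-e_j=\delta\) is nonzero mod \(p\), because \(\abs\delta=b-1\ge1\) and its entries lie in \([0,p-1]\).
\item This lowers your threshold to \(\gamma_j\le\max(a,b)\), which is exactly the paper's first case (\(\alpha_j\le b\) for \(a\le b\)).
\item In the remaining configuration \(\gamma_j\ge\max(a,b)+1\), every \(s\ne j\) has \(\gamma_s\le\min(a,b)-2\).
\item Your bracket \([x_j^a\partial_s,x_sx^{\gamma-ae_j}\partial_j]=(\gamma_s+1)X-a\,x_sx^{\gamma-e_j}\partial_s\) then works. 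Note the factor \(\gamma_s+1\), which you omitted; it is nonzero. The error field now lies in the first case with target \(s\).
\end{enumerate}

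That gives a complete proof whose second case differs from the paper's. The paper uses the pure-\(\partial_j\) family \(A_u,B_u\), with coefficient \(b-a-R+2t\), and needs a cross-index correction only for \(x_j^{2s-1}\partial_j\). Your correction bracket instead handles the whole dominant-\(x_j\) range with one uniform construction.
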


\begin{proof}
The inclusion from left to right follows from coefficient degree.  For the
reverse inclusion, assume \(a\le b\) and take a monomial field
\[
 X=x^\alpha\partial_j\in\cW_{a+b-1}.
\]
Throughout we use
\[
 [x^u\partial_i,x^v\partial_j]
 =v_i x^{u+v-e_i}\partial_j-u_jx^{u+v-e_j}\partial_i,
\]
a monomial being understood to vanish once one of its exponents exceeds
\(p-1\).

Suppose first that \(\alpha_j\le b\).  Then \(|\alpha|-\alpha_j\ge a-1\), so
\(x^\alpha\) has a monomial divisor \(x^u\) supported away from \(x_j\) with
\(|u|=a-1\).  Put
\[
 H_h=\sum_{\ell=1}^n h_\ell x_\ell\partial_\ell,\qquad
 Q=x^uH_h,\qquad Y=x^{\alpha-u}\partial_j,
\]
so that \(Q\in\cW_a\) and \(Y\in\cW_b\).  Since \(u_j=0\), the \(\ell\)-th
summand of \(Q\) contributes \((\alpha-u-e_j)_\ell X\) to \([Q,Y]\).
Truncation to the height-one box removes only summands with coefficient
zero: if \(x^{u+e_\ell}\) vanishes, then \(u_\ell=p-1\), hence \(\alpha_\ell=p-1\) and
\((\alpha-u-e_j)_\ell=0\).  Therefore
\[
 [Q,Y]
 =\bigl\langle h,\alpha-u-e_j\bigr\rangle X.
\]
The vector \(\alpha-u-e_j\) is nonzero in \(\F_p^n\): its coordinates away
from \(j\) lie in \([0,p-1]\) and its \(j\)-th coordinate in \([-1,p-2]\), so
it can vanish only for \(\alpha-u=e_j\), against \(b\ge2\).  A suitable
choice of \(h\) now produces \(X\).

Now suppose that \(\alpha_j>b\).  Put
\[
 R=a+b-1-\alpha_j,\qquad
 \beta=\alpha-\alpha_je_j,
\]
so that \(0\le R\le a-2\) and \(|\beta|=R\).  For a monomial divisor
\(x^u\mid x^\beta\) of degree \(t\), define
\[
 A_u=x^ux_j^{a-t}\partial_j,\qquad
 B_u=x^{\beta-u}x_j^{b-R+t}\partial_j.
\]
These lie in \(\cW_a\) and \(\cW_b\), and neither vanishes: the two
exponents of \(x_j\) sum to \(a+b-R=\alpha_j+1\le p\), while \(a-t\ge2\)
because \(t\le R\le a-2\), and \(b-R+t=\alpha_j+1-a+t\ge2\) because
\(\alpha_j>b\ge a\); hence neither exponent exceeds \(p-2\).  The bracket
formula gives
\[
 [A_u,B_u]=(b-a-R+2t)X.
\]
If \(R\ge1\), the values \(t=0,1\) are both available and the two
coefficients differ by \(2\), so one of them is nonzero.  If \(R=0\) and
\(a\ne b\), the single coefficient \(b-a\) is nonzero, since
\(0<b-a<a+b=\alpha_j+1\le p\).

There remains the case \(a=b=s\) with \(R=0\), that is
\(X=x_j^{2s-1}\partial_j\).  Choose \(i\ne j\), which is possible because
\(n\ge2\).  Then
\[
 [x_j^s\partial_i,x_ix_j^{s-1}\partial_j]
 =X-sx_ix_j^{2s-2}\partial_i,
\]
both entries lying in \(\cW_s\) because \(s\le2s-1=\alpha_j\le p-1\).  The
field \(x_ix_j^{2s-2}\partial_i\) carries the exponent \(1\le s\) in its
derivation coordinate and so falls under the first case.  Hence \(X\) is a
bracket as well.
\end{proof}

\begin{proposition}[Witt--Jacobson commutators]
\label{prop:witt-series}
The quadratic-jet map
\[
 j_2(g)=\sum_i\bigl(g(x_i)-x_i\bigr)_{[2]}\partial_i
\]
is a surjective homomorphism \(U_W\to\cW_2\), with kernel \(F^3U_W\), and
\[
 U_W/[U_W,U_W]_{\alg}\simeq\cW_2.
\]
More generally, for \(s\ge2\) and every \(q=p^f\),
\[
 [F^sU_W,U_W]_{\alg}=F^{s+1}U_W,
 \qquad
 [F^sU_W(\F_q),U_W(\F_q)]=F^{s+1}U_W(\F_q).
\]
Moreover,
\[
 [F^sU_W,F^sU_W]_{\alg}=F^{2s-1}U_W,
 \qquad
 [F^sU_W(\F_q),F^sU_W(\F_q)]=F^{2s-1}U_W(\F_q).
\]
Consequently
\[
 [U_W,U_W]_{\alg}=F^3U_W,
 \qquad \gamma_j(U_W)=F^{j+1}U_W,
 \qquad U_W^{(j)}=F^{2^j+1}U_W=\gamma_{2^j}(U_W),
\]
and, with \(D=n(p-1)\),
\[
 \operatorname{cl}(U_W)=D-1,
 \qquad
 \operatorname{dl}(U_W)=\left\lceil\log_2D\right\rceil.
\]
The same lower-central and derived-series formulas hold for
\(U_W(\F_q)\), and
\[
 \Phi(U_W(\F_q))=F^3U_W(\F_q),
 \qquad P_j(U_W(\F_q))=F^{j+1}U_W(\F_q).
\]
\end{proposition}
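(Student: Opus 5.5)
The proof imitates the special case of \cref{prop:special-abelianization,prop:special-commutators,prop:special-frattini}. The Witt--Jacobson case is simpler because the full Witt algebra has no de Rham obstruction: every homogeneous component \(\cW_d=O_d\otimes V\) with \(2\le d\le D\) is a layer. So there is no Cartier character to remove, and \(K_s\) is replaced by \(F^sU_W\).

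\textbf{Step 1: the associated graded.} For \(2\le d\le D\), the leading-field map gives \(F^dU_W/F^{d+1}U_W\simeq\cW_d\), and \(F^{D+1}U_W=1\). Surjectivity is immediate: every homogeneous field \(X\in\cW_d\) lifts to the automorphism \(x_i\mapsto x_i+X(x_i)\). This is an automorphism of \(O(n;1)\) since \(d\ge2\), and it needs no volume correction. As in \cref{prop:special-realization}, direct expansion gives \([F^aU_W,F^bU_W]\subseteq F^{a+b-1}U_W\), and the leading term of a group commutator is the Lie bracket of the leading fields. Every lift and computation here is defined over \(\F_p\), so the same statements hold on \(\F_q\)-points.

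\textbf{Step 2: the jet \(j_2\) and the recurrences.} The map \(j_2\) is a homomorphism, because in a composite the quadratic parts add modulo \(\fm^3\). It is surjective by Step 1, and its kernel is \(F^3U_W\) by definition. The inclusion \([F^sU_W,U_W]\subseteq F^{s+1}U_W\) comes from the depth estimate. For the reverse, take \(g\in F^{s+1}U_W\) with leading degree \(d\ge s+1\).
\begin{itemize}
\item[] By \cref{lem:witt-generation}, write its leading field as \(\sum_\nu[A_\nu,B_\nu]\) with \(A_\nu\in\cW_2\) and \(B_\nu\in\cW_{d-1}\); note \(d-1\ge s\).
\item[] Lift \(A_\nu,B_\nu\) to \(a_\nu\in U_W\) and \(b_\nu\in F^{d-1}U_W\subseteq F^sU_W\). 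Then \(\prod_\nu[a_\nu,b_\nu]\) has the same leading field as \(g\).
\item[] Divide \(g\) by this product and iterate down the finite filtration.
\end{itemize}
This proves \([F^sU_W,U_W]=F^{s+1}U_W\), over \(k\) and over \(\F_q\). Over \(\F_q\), the coefficients are reached by scaling one lifted field by elements of \(\F_q\). The self-commutator identity \([F^sU_W,F^sU_W]=F^{2s-1}U_W\) is proved the same way, using \cref{lem:witt-balanced} in every degree \(e\ge2s-1\). Writing \(\cW_e=[\cW_a,\cW_b]\) with \(a=s\) and \(b=e-s+1\ge s\) gives both factors in \(F^sU_W\), so the Jacobi filling used in the special case is not needed. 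When \(2s-1>D\), both sides are trivial.

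\textbf{Step 3: the series and the finite-group statements.} The case \(s=2\) gives \([U_W,U_W]=F^3U_W=\ker j_2\), hence \(U_W/[U_W,U_W]\simeq\cW_2\). Induction then gives \(\gamma_j(U_W)=F^{j+1}U_W\) and \(U_W^{(j)}=F^{2^j+1}U_W\), since \(2(2^j+1)-1=2^{j+1}+1\). The top layer \(\cW_D\) is nonzero, so \(\gamma_{D-1}\ne1=\gamma_D\), giving class \(D-1\). The derived length is then the least \(j\) with \(2^j+1>D\), namely \(\lceil\log_2D\rceil\). On \(\F_q\)-points, \(U_W(\F_q)/[\,,]\simeq\cW_2(\F_q)\) has exponent \(p\), so \(\Phi(U_W(\F_q))=F^3U_W(\F_q)\). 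The power-depth argument of \cref{lem:power-depth} applies verbatim and gives \((F^{i+1}U_W(\F_q))^p\subseteq F^{1+pi}U_W(\F_q)\subseteq F^{i+2}U_W(\F_q)\). Combined with the first recurrence, this gives \(P_j=F^{j+1}U_W(\F_q)\).

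\textbf{Main obstacle.} The only delicate point is the leading-term elimination at the top of the filtration. One must check that every degree \(\le D\) is actually reached, including the top layer \(\cW_D\), which is a socle-type layer. \cref{lem:witt-generation} covers \(d=D\), and \cref{lem:witt-balanced} allows \(a+b-1=D\), so I expect this to go through without the exceptional corrections needed in types \(S\) and \(H\).
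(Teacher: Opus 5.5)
Your proposal is correct and follows the paper's proof essentially step for step. Homogeneous lifts \(x_i\mapsto x_i+f_i\) identify the layers with \(\cW_d\); \cref{lem:witt-generation} and \cref{lem:witt-balanced} (with \(a=s\), \(b=d-s+1\)) then drive the leading-term elimination for both recurrences over \(k\) and over \(\F_q\), and power depth gives the lower \(p\)-central series. The only cosmetic difference is that you obtain \(\Phi(U_W(\F_q))=F^3U_W(\F_q)\) from the exponent-\(p\) abelianization \(\cW_2(\F_q)\), whereas the paper uses \((F^2U_W(\F_q))^p\subseteq F^{p+1}U_W(\F_q)\subseteq F^3U_W(\F_q)\); both are immediate.
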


\begin{proof}
Every \(X=\sum_i f_i\partial_i\in\cW_d\) has a homogeneous lift in
\(F^dU_W\): since every element of \(\fm\) has \(p\)-th power zero, the
substitution \(x_i\mapsto x_i+f_i\) defines an endomorphism of \(O(n;1)\),
and its identity action on \(\fm/\fm^2\) makes it an automorphism by
Nakayama's lemma.  Therefore
\[
 F^dU_W/F^{d+1}U_W\simeq\cW_d.
\]
Substitution expansion gives
\[
 [F^aU_W,F^bU_W]\subseteq F^{a+b-1}U_W,
\]
with Lie bracket on the first nonzero quotient.  Hence
\cref{lem:witt-generation} gives, for \(2\le s<D\),
\[
 \gr^{s+1}[F^sU_W,U_W]_{\alg}=\cW_{s+1}.
\]
Equivalently, any \(g\in F^{s+1}U_W\) can be multiplied by a product of
commutators from \([F^sU_W,U_W]\) to raise its depth by one.  Iteration
through the finite filtration proves
\([F^sU_W,U_W]_{\alg}=F^{s+1}U_W\); for \(s\ge D\) both sides are trivial.
Each depth-raising commutator is obtained by lifting an identity in the
homogeneous Witt--Jacobson algebra and may therefore be chosen over \(\F_q\).  The
same induction gives the finite-point equality.

Fix \(s\ge2\).  The coefficient-degree estimate gives
\([F^sU_W,F^sU_W]_{\alg}\subseteq F^{2s-1}U_W\).  For every
\(d\ge2s-1\), \cref{lem:witt-balanced}, applied with
\(a=s\) and \(b=d-s+1\), gives
\[
 \cW_d=[\cW_s,\cW_{d-s+1}].
\]
Thus every leading field in depth \(d\) is the leading field of a product
of commutators of elements whose depths are at least \(s\).  Multiplying by
such a product raises the depth, and iteration through the finite
filtration proves
\[
 [F^sU_W,F^sU_W]_{\alg}=F^{2s-1}U_W.
\]
All monomial identities in the proof of \cref{lem:witt-balanced} have
coefficients in the prime field.  Scaling one factor realizes arbitrary
\(\F_q\)-coefficients, so the same depth-raising argument proves the
finite-point equality.  If \(2s-1>D\), both sides are trivial.

Taking \(s=2\) gives \([U_W,U_W]_{\alg}=F^3U_W\).  Quadratic terms add under composition, and homogeneous lifting makes
\(j_2\) surjective.  Hence \(\ker j_2=F^3U_W\), giving the stated
abelianization; the lower-central formula follows by induction.  The
self-commutator equality gives, again by induction,
\[
 U_W^{(j)}=F^{2^j+1}U_W=\gamma_{2^j}(U_W),
\]
over both \(k\) and \(\F_q\).  Since \(F^DU_W\ne1\) and
\(F^{D+1}U_W=1\), the class is \(D-1\), and the least \(j\) for which
\(2^j+1>D\) is \(\lceil\log_2D\rceil\).
For the finite-point series,
\[
 (F^sU_W(\F_q))^p
 \subseteq F^{1+p(s-1)}U_W(\F_q)
 \subseteq F^{s+1}U_W(\F_q).
\]
Hence \(U_W(\F_q)^p\subseteq F^3U_W(\F_q)\), and therefore
\(\Phi(U_W(\F_q))=F^3U_W(\F_q)\).  Since \(P_1=F^2U_W(\F_q)\), the
induction hypothesis \(P_j=F^{j+1}U_W(\F_q)\) gives
\[
 P_{j+1}=P_j^p[P_j,U_W(\F_q)]=F^{j+2}U_W(\F_q),
\]
completing the induction.
\end{proof}

\subsection{Distribution comparison}

For an affine group scheme \(\mathbf G\) of finite type, write
\(\Lie_j(\mathbf G)\) for the primitive distributions admitting a
divided-power sequence through level \(p^j\).  Thus
\(\Lie_0(\mathbf G)=\Lie(\mathbf G)\), and these spaces decrease with \(j\).

\begin{lemma}[Distribution comparison criterion]
\label{lem:scheme-comparison}
Let \(\varphi:\mathbf G\to\mathbf H\) be a morphism of affine group
schemes of finite type over a perfect field.  Suppose that
\(\varphi_{\red}:\mathbf G_{\red}\to\mathbf H_{\red}\) is an
isomorphism and that the induced map on distributions restricts to
isomorphisms
\[
 \Lie_j(\mathbf G)\xrightarrow{\sim}\Lie_j(\mathbf H)
 \qquad(j\ge0).
\]
Then \(\varphi\) is an isomorphism.
\end{lemma}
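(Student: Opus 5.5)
The plan is to show that \(\varphi\) is a closed immersion and then that it is faithfully flat. Both properties will be read off from Frobenius kernels and distribution algebras, using the splitting of a finite type group scheme over a perfect field into a reduced part and infinitesimal parts.

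\emph{Step 1: the kernel.} Put \(\mathbf K=\ker\varphi\). Because \(\varphi_{\red}\) is an isomorphism, \(\mathbf K(\bar k)=1\), so \(\mathbf K\) is infinitesimal. Distributions are left exact, so \(\Dist(\mathbf K)=\ker(\Dist\varphi)\). An infinitesimal group of height \(h\) has nonzero Lie algebra when it is nontrivial. Suppose \(\mathbf K\ne1\) and choose a nonzero primitive \(X\in\Lie(\mathbf K)\subseteq\Lie_0(\mathbf G)\). Then \(\varphi(X)=0\), which contradicts the injectivity of \(\Lie_0(\mathbf G)\to\Lie_0(\mathbf H)\). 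Hence \(\mathbf K=1\), and \(\varphi\) is a closed immersion, since a monomorphism of affine group schemes of finite type is a closed immersion.

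\emph{Step 2: surjectivity.} We now regard \(\mathbf G\subseteq\mathbf H\) as a closed subgroup scheme with the same reduced subgroup. Over a perfect field we have \(\mathbf H=\mathbf H_{\red}\cdot\mathbf H_r\) for \(r\gg0\), because the quotient \(\mathbf H/\mathbf H_{\red}\) is infinitesimal and hence killed by a Frobenius power. The same holds for \(\mathbf G\). It therefore suffices to prove \(\mathbf G_r=\mathbf H_r\) for every \(r\). Both are finite infinitesimal groups, so this is equivalent to \(\Dist(\mathbf G_r)=\Dist(\mathbf H_r)\) inside \(\Dist(\mathbf H)\).

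\emph{Step 3: the Frobenius kernels, by induction on \(r\).} For \(r=1\), we have \(\Dist(\mathbf H_1)=u(\Lie_0\mathbf H)\), the restricted enveloping algebra, and the hypothesis for \(j=0\) gives equality. For the inductive step, use the exact sequences
\[
1\to\mathbf H_1\to\mathbf H_{r+1}\xrightarrow{F}\mathbf H_r^{(p)},
\]
with the analogous sequence for \(\mathbf G\). The image of \(F\) restricted to \(\mathbf H_{r+1}\) is an infinitesimal subgroup of \(\mathbf H_r^{(p)}\). Its Lie algebra is identified, via Verschiebung, with \(\Lie_1(\mathbf H)^{(p)}\), and more generally its layered primitives are the \(\Lie_{j+1}\). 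Consequently the image subgroups for \(\mathbf G\) and \(\mathbf H\) satisfy the same hypotheses with indices shifted by one. Inducting simultaneously on \(r\) over all such shifted systems, the images coincide. The kernels coincide by the \(r=1\) case. An inclusion of extensions with equal kernel and equal image is an equality, so \(\mathbf G_{r+1}=\mathbf H_{r+1}\).

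\emph{Main obstacle.} The delicate point is the identification of \(\Lie\) of the Frobenius image with the next filtration layer \(\Lie_{j+1}\), together with the recursive application to that image. The first thing to try is a direct dimension count. Both \(\Dist(\mathbf G_r)\) and \(\Dist(\mathbf H_r)\) have dimension \(p^{\sum_{j<r}\dim\Lie_j}\); this is the standard formula for the order of an infinitesimal group in terms of its Verschiebung filtration, the same count used in \cref{prop:distribution-factorization}. Given that formula, the inclusion \(\Dist(\mathbf G_r)\subseteq\Dist(\mathbf H_r)\) together with equal dimensions finishes the argument without further induction.
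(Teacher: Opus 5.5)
Your proof is correct, but it takes a different route from the paper. The paper stays inside the distribution Hopf algebras. It quotes the criterion recalled in Bahturin--Kochetov for injectivity of \(\Dist(\varphi)\), reads \(\Lie_j\) as \(V^j(D)\cap\operatorname{Prim}(D)\) via Sweedler, and applies Dieudonn\'e's Hopf-subalgebra criterion to get \(\Dist(\mathbf G)=\Dist(\mathbf H)\). It then globalizes from the formal completion at the identity by translating with the common reduced subgroup.

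You instead get the closed immersion directly: the kernel is infinitesimal with zero Lie algebra. You globalize through the decomposition \(\mathbf H=\mathbf H_{\red}\mathbf H_r\) for \(r\gg0\). Dieudonn\'e's criterion is replaced by the finite count \(\dim k[\mathbf H_r]=p^{\sum_{j<r}\dim\Lie_j(\mathbf H)}\), applied to \(\mathbf G_r\subseteq\mathbf H_r\). This makes the injectivity transparent and turns the last step into a dimension comparison of finite group schemes, in the spirit of \cref{prop:distribution-factorization}.

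The cost is that the order formula (equivalently \(\Lie_j(F\mathbf H)=\Lie_{j+1}(\mathbf H)^{(p)}\)) is not merely ``standard''. It follows from the Cartier--Dieudonn\'e structure theorem \(\widehat{\mathcal O}_{\mathbf H,e}\simeq k[[t_1,\ldots,t_m]]/(t_i^{p^{n_i}})\) with \(1\le n_i\le\infty\). From it one reads off \(\dim\Lie_j(\mathbf H)=\#\{i:n_i>j\}\) and \(\dim k[\mathbf H_r]=p^{\sum_i\min(n_i,r)}\). So both proofs rest on the same structure theory, packaged differently, and you should cite it explicitly.

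Two small corrections:
\begin{itemize}
\item \(\Dist(\mathbf K)\) is not the linear kernel of \(\Dist(\varphi)\), which does not even contain \(1\). You only need \(\Lie(\mathbf K)=\ker\Lie(\varphi)\), and that is correct.
\item \(\mathbf H_{\red}\) need not be normal in \(\mathbf H\), so ``\(\mathbf H/\mathbf H_{\red}\) is killed by a Frobenius power'' is better phrased as follows. For \(r\gg0\) the image of \(F^r\) is reduced, hence equal to \(\mathbf H_{\red}^{(p^r)}\), and this gives \(\mathbf H=\mathbf H_r\mathbf H_{\red}\).
\end{itemize}
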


\begin{proof}
Write \(D_G=\Dist(\mathbf G)\) and \(D_H=\Dist(\mathbf H)\).  The
distribution criterion recalled in \cite[Section~3]{BahturinKochetov2011}
shows that the reduced-group isomorphism together with the tangent-space
isomorphism makes \(\Dist(\varphi):D_G\to D_H\) injective.  Identify
\(D_G\) with its image.  For either distribution Hopf algebra,
\[
 \Lie_j(D)=V^j(D)\cap\operatorname{Prim}(D),
\]
and a primitive lies in \(\Lie_j(D)\) exactly when it extends to a
divided-power sequence of length \(p^j\)
\cite[Theorem~2]{Sweedler1967}.  Hence the hypotheses give
\(\Lie_j(D_G)=\Lie_j(D_H)\) for every \(j\).  Dieudonn\'e's
Hopf-subalgebra criterion
\cite[II, \S3, no.~2, Corollary~1]{Dieudonne1973} therefore yields
\[
 D_G=D_H.
\]
Thus \(\varphi\) induces an isomorphism on the formal completions at the
identity.  Translation by the common reduced subgroup gives the same
statement at every geometric point; the formal criterion for finite-type
schemes then implies that \(\varphi\) is an isomorphism.
\end{proof}

\subsection{The full contact automorphism scheme}

Put
\[
 O_K=O(2r+1;1)
 =k[q_1,\ldots,q_r,p_1,\ldots,p_r,z]/
 (q_1^p,\ldots,q_r^p,p_1^p,\ldots,p_r^p,z^p)
\]
and
\[
 \alpha_K=dz+\sum_{i=1}^r(q_i\,dp_i-p_i\,dq_i).
\]
Let
\[
 \mathbf C_K
 =\operatorname{Stab}_{\mathbf{Aut}(O_K)}(O_K^\times\alpha_K).
\]
Write \(K'\) for the full contact algebra and
\(L=[K',K']\) for its simple restricted derived algebra.  Then
\(L=K'\) unless \(p\mid r+2\), in which case \(L\) has codimension one
in \(K'\).  In contact degree,
\[
 K'_{-2}=k1,
 \qquad K'_{-1}=\bigoplus_{i=1}^r(kq_i\oplus kp_i),
 \qquad K'_{\ge0}=K'_{(0)}.
\]
Set \(\mathfrak c_{K,(0)}=K'\cap W(2r+1;1)_{(0)}\).

Faithfully flat descent reduces the contact comparison to the case in which
\(k\) is algebraically closed.  The contact automorphism and derivation
theorems give \cite[Theorems~7.3.2 and~7.1.2]{Strade2004}
\[
 \Ad:(\mathbf C_K)_{\red}\xrightarrow{\sim}
       \mathbf{Aut}(L)_{\red},
 \qquad
 \ad:K'\xrightarrow{\sim}\Der(L).
\]

\begin{lemma}[The length-\(p\) squaring obstruction]
\label{lem:length-p-obstruction}
Let \(A\) be a finite-dimensional Lie algebra in characteristic \(p>2\).
If \(D\in\Der(A)\cap\Lie_1(\mathbf{Aut}(A))\), then
\([\Sq(D)]=0\) in \(H^2(A,A)\).  Moreover,
\(D\mapsto[\Sq(D)]\) is Frobenius-semilinear on \(\Der(A)\).  For
\(D=\ad y\), write \([\Sq(y)]\).
\end{lemma}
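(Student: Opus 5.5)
The plan is to read \(\Sq(D)\) as the obstruction to extending the truncated exponential sequence of \(D\) by one step, and then compare it with an actual divided-power sequence of length \(p\).

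\emph{Normalization.} I take \(\Sq(D)\) to be the \(2\)-cochain
\[
 \Sq(D)(x,y)=\sum_{0<i<p}\frac{1}{i!\,(p-i)!}\,[D^ix,D^{p-i}y].
\]
Let \(D_i=D^i/i!\) for \(i<p\); these satisfy the divided-power Leibniz rule \(D_n[x,y]=\sum_{i+j=n}[D_ix,D_jy]\) for \(n<p\). A degree-\(p\) term \(D_p\) completing the sequence exists exactly when
\[
 D_p[x,y]-[D_px,y]-[x,D_py]=\Sq(D)(x,y),
\]
that is, when \(\Sq(D)=\delta D_p\) in the Chevalley--Eilenberg complex with adjoint coefficients.

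\emph{Step 1.} First I would check that \(\Sq(D)\) is a cocycle. The cleanest route is the universal one: work over \(\mathbf Z\) in the free Lie ring on \(A\) with a free derivation, where
\[
 \Sq(D)=\frac{1}{p!}\bigl(D^p\circ[\,,]-[D^p\,,\,]-[\,,D^p\,]\bigr)
\]
is formally a coboundary. Its divided form therefore satisfies the cocycle identity integrally, and hence after reduction mod \(p\).

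\emph{Step 2.} Next I would use the hypothesis \(D\in\Lie_1(\mathbf{Aut}(A))\). By the Sweedler description recalled in \cref{lem:scheme-comparison}, this gives a divided-power sequence \(E_0=1,E_1=D,E_2,\ldots,E_p\) in \(\Dist(\mathbf{Aut}(A))\), acting on \(A\) by operators satisfying the divided Leibniz rule. Below level \(p\) any such sequence differs from the exponential one by lower-order derivations: inductively, \(E_n-(\text{the polynomial in }E_1,\ldots,E_{n-1}\text{ forced by Leibniz})\) is a derivation. Concretely, I would write \(E=\exp(D)\exp(\epsilon_2)\cdots\), where \(\epsilon_j\) is a derivation placed in degree \(j\), with the exponentials truncated.

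The level-\(p\) obstruction for \(E\) then equals \(\Sq(D)\) plus cross terms. These are of two kinds: terms built from \(\epsilon_j\) with \(j\ge2\), whose own squaring contributions occur only in degree \(jp>p\) and so vanish; and mixed terms of the form \([D^ax,\epsilon_jD^by]\). I would show the mixed terms are exactly \(\delta\) of an explicit operator built from products of \(D\) and the \(\epsilon_j\). Since \(E_p\) exists, the total obstruction is \(\delta E_p\), and subtracting gives \([\Sq(D)]=0\). I expect this bookkeeping of the mixed terms to be the main obstacle. To make it manageable, I would do it once in the universal enveloping setting, using the Hopf structure of \(\Dist\) and the fact that for \(n<p\) the divided-power sequences form a group under convolution.

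\emph{Step 3.} Finally, Frobenius semilinearity. Homogeneity \(\Sq(cD)=c^p\Sq(D)\) is immediate from the formula. For additivity I would again pass to the integral free model, where \(\Sq(D)=\frac{1}{p!}\delta(D^p)\) formally. Jacobson's formula
\[
 (D+E)^p=D^p+E^p+\sum_i s_i(D,E)
\]
has Lie-polynomial correction terms \(s_i(D,E)\), and these are derivations. Their divided versions should therefore contribute a coboundary, or in fact zero, modulo \(p\), so that \(\Sq(D+E)-\Sq(D)-\Sq(E)\) is a coboundary.

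The delicate point here is that the division by \(p!\) must be justified. I would handle it by checking that the cross term equals
\[
 \sum_{0<i<p}\frac{1}{i!\,(p-i)!}\,\bigl(\text{mixed words in }D,E\bigr),
\]
a sum with \(p\)-integral coefficients, and that it is \(\delta\) of an explicit operator with \(p\)-integral coefficients. The notation \([\Sq(y)]\) for \(D=\ad y\) then requires nothing further.
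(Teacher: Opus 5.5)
Your strategy is sound, and Step~1 is a real addition. The paper leaves implicit that \(\Sq(D)\) is a cocycle for an arbitrary \(D\in\Der(A)\), and this is needed for the semilinearity statement to make sense. Your torsion-free universal model, the free Lie ring over \(\mathbf Z_{(p)}\) on symbols \(D^ax,D^by,D^cz\) with \(p!\,\Sq(D)=\delta(D^p)\), proves it. However, the two verifications that actually carry the lemma are left as expectations, and for additivity the mechanism you describe is not the right one.

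\textbf{Step~2.} The mixed-term bookkeeping disappears once you note that defects add. If \(\Delta(t)\equiv1\pmod t\) is multiplicative modulo \(t^p\), define \(c_\Delta\) by \(\Delta[a,b]=[\Delta a,\Delta b]+t^pc_\Delta(a,b)\) modulo \(t^{p+1}\).
\begin{itemize}
\item Expanding \(\Delta_1\Delta_2[a,b]\) gives \(c_{\Delta_1\Delta_2}=c_{\Delta_1}+c_{\Delta_2}\).
\item The truncated \(\exp(tD)\) has defect \(-\Sq(D)\).
\item The truncated \(\exp(t^j\epsilon_j)\), \(j\ge2\), has defect \(0\), because it is multiplicative modulo \(t^{jp}\).
\end{itemize}
Hence for \(F(t)=\exp(tD)\prod_{j\ge2}\exp(t^j\epsilon_j)\), which agrees with \(E(t)\) below order \(p\), your mixed terms are exactly \(\delta F_p\), and \(\Sq(D)=\delta(E_p-F_p)\).

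The paper runs the same idea in reverse. Composing the given sequence with truncated \(\exp(-t^m\epsilon_m)\) keeps it a length-\(p\) divided-power sequence and makes \(\delta_i=D^i/i!\) for \(i<p\) (Sweedler's normalization). After that, the level-\(p\) Leibniz identity is literally \(\Sq(D)=\pm d_{\mathrm{CE}}\delta_p\).

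\textbf{Step~3.} Jacobson's formula is only a congruence modulo \(p\) in \(\mathbf Z_{(p)}\langle a,b\rangle\), not an identity. Moreover, the \(s_i(D,E)\) are derivations, so \(\delta\) kills them and they contribute nothing. Used as an integral identity, your argument would force the cross term to vanish on the nose.

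The coboundary actually comes from the integral remainder. Write \((a+b)^p=a^p+b^p+\sum_is_i(a,b)+pR(a,b)\) with \(R\) integral. In the free model with two derivations this gives
\[
 p!\bigl(\Sq(D+E)-\Sq(D)-\Sq(E)\bigr)=\delta\bigl((D+E)^p-D^p-E^p\bigr)=p\,\delta R(D,E).
\]
So the cross term is \(\delta\bigl(R(D,E)/(p-1)!\bigr)\). This is \(p\)-integral, and since \((p-1)!\equiv-1\) it reduces to \(-\delta\bar R(D,E)\). Repaired this way, your route yields an explicit coboundary.

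The paper needs no integral model here. By the same defect additivity, \(E_D(t)E_E(t)\) has defect \(-(\Sq(D)+\Sq(E))\) and first coefficient \(D+E\). Normalizing its lower coefficients to \((D+E)^i/i!\) leaves the defect unchanged. Comparing with the truncated \(\exp(t(D+E))\) then exhibits the cross term as \(\delta\) of the resulting \(t^p\)-coefficient. One mechanism thus handles both halves of the lemma.
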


\begin{proof}
Let \(\Delta(t)=\sum_{i=0}^p t^i\delta_i\) be a length-\(p\)
divided-power sequence with \(\delta_1=D\).  Once the terms of order below
\(m<p\) are normalized, \(\delta_m-D^m/m!\) is primitive, hence a
derivation; composing with the truncation of its exponential removes that
term without changing lower orders.  Sweedler's normalization
\cite[Lemma~7]{Sweedler1967} therefore gives \(\delta_i=D^i/i!\) for
\(i<p\), and the order-\(p\) identity becomes
\[
 \delta_p([a,b])-[\delta_p(a),b]-[a,\delta_p(b)]
 =\sum_{i=1}^{p-1}\frac{[D^i(a),D^{p-i}(b)]}{i!(p-i)!}=\Sq(D).
\]
The left side is \(-d_{\mathrm{CE}}\delta_p\), proving \([\Sq(D)]=0\).
Scalar semilinearity follows directly.  For additivity, the truncated Hasse
exponential \(E_D(t)=\sum_{i<p}t^iD^i/i!\) has order-\(p\) bracket defect
\(\Sq(D)\).  Hence \(E_D(t)E_E(t)\) has defect
\(\Sq(D)+\Sq(E)\) and first coefficient \(D+E\).  Normalizing its lower
coefficients changes the order-\(p\) defect only by a
Chevalley--Eilenberg coboundary, so
\([\Sq(D+E)]=[\Sq(D)]+[\Sq(E)]\).
\end{proof}

\begin{proposition}[Positive contact Verschiebung spaces]
\label{prop:contact-verschiebung}
For every \(j\ge1\),
\[
 \Lie_j(\mathbf C_K)=\mathfrak c_{K,(0)},\qquad
 \Lie_j(\mathbf{Aut}(L))=\ad\mathfrak c_{K,(0)},
\]
and \(\Ad\) induces an isomorphism between these spaces.
\end{proposition}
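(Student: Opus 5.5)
We compute both Verschiebung spaces separately and then compare them through \(\Ad\). The main tool is the length-\(p\) squaring obstruction, \cref{lem:length-p-obstruction}.

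\emph{Lower bound on both sides.} The reduced group \((\mathbf C_K)_{\red}\) is smooth, and by \cite[Theorem~7.3.2]{Strade2004} it is the stabilizer of the contact line inside the augmentation-preserving substitutions, extended by the translations generated by \(K'_{-2}\) and \(K'_{-1}\). Its identity-fixing part is smooth, and its Lie algebra is \(\mathfrak c_{K,(0)}\): these are the contact fields vanishing at the origin. Every tangent vector of a smooth group has divided-power sequences of every length, so
\[
 \mathfrak c_{K,(0)}\subseteq\Lie_j(\mathbf C_K)\quad(j\ge0).
\]
Applying \(\Ad\), which is an isomorphism on reduced groups, gives
\[
 \ad\mathfrak c_{K,(0)}\subseteq\Lie_j(\mathbf{Aut}(L)).
\]

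\emph{Upper bound for \(\mathbf C_K\).} The Lie algebra \(\Lie(\mathbf C_K)\) is \(K'\), realized as contact vector fields. The translation directions coming from \(K'_{-2}\oplus K'_{-1}\) integrate only to the infinitesimal factor \(\alpha_p^{2r+1}\), exactly as in \cref{lem:frobenius-factorization} for type \(S\). So we factor
\[
 \mathbf C_K\simeq(\mathbf C_K)_{0}\times\alpha_p^{2r+1},
\]
with the augmentation-preserving factor \((\mathbf C_K)_0\) smooth by the same square-zero lifting argument, now applied to the contact line. A primitive in \(\Lie_1\) maps under Verschiebung into \(\Lie\bigl(((\mathbf C_K)_{\red})^{(p)}\bigr)\). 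Since \(\alpha_p\) has no length-\(p\) divided-power extension of its generator, \(\Lie_1(\mathbf C_K)\) has zero component on the translation part. This gives \(\Lie_1(\mathbf C_K)\subseteq\mathfrak c_{K,(0)}\), and since the spaces \(\Lie_j\) decrease in \(j\), the same holds for all \(j\ge1\).

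\emph{Upper bound for \(\mathbf{Aut}(L)\).} By \cite[Theorem~7.1.2]{Strade2004} we have \(\Lie(\mathbf{Aut}(L))=\Der(L)=\ad K'\). Take \(D=\ad y\in\Lie_1\). By \cref{lem:length-p-obstruction}, \([\Sq(y)]=0\) in \(H^2(L,L)\), and \(y\mapsto[\Sq(y)]\) is Frobenius-semilinear. Since \(\mathfrak c_{K,(0)}\) already lies in \(\Lie_1\), its image under \([\Sq]\) is zero. Hence \([\Sq]\) factors through the quotient \(K'/\mathfrak c_{K,(0)}\simeq K'_{-2}\oplus K'_{-1}\), and it suffices to show that \([\Sq]\) is injective there.

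This injectivity is the main obstacle. For \(y=u\in K'_{-1}\) or \(y=1\in K'_{-2}\), the operator \(\ad y\) acts as a lowering operator. On the standard basis one has \((\ad y)^p=0\) on the relevant graded pieces, and \(\Sq(\ad y)\) is a nonzero cochain of degree \(p\deg y\). The plan is to evaluate this cochain on a pair of homogeneous elements near the top of \(L\), for instance socle elements, where \(\sum_i[D^ia,D^{p-i}b]/(i!(p-i)!)\) is a nonzero multiple of a single basis element. Then we would show that no coboundary \(d_{\mathrm{CE}}\varphi\) with \(\varphi\) of degree \(p\deg y\) can match it, using the grading: such a \(\varphi\) would have to be an outer derivation-like map of degree \(\le-p\), and \(\Der(L)\) has nothing in degree below \(-2\).

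Semilinearity and the grading reduce the general combination \(\lambda\cdot1+u\) to the homogeneous cases, because components of different degree cannot cancel in \(H^2\). The exceptional case \(p\mid r+2\) needs \(L\) rather than \(K'\) as the module. The test elements should be chosen inside \(L\), away from the missing codimension-one top piece.

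Finally, \(\Ad\) restricted to the smooth factor induces \(\mathfrak c_{K,(0)}\to\ad\mathfrak c_{K,(0)}\). This map is injective because \(\ad\) is injective on \(K'\), so it is an isomorphism between the two \(\Lie_j\) spaces.
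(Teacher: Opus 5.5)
Your reduction matches the paper's. You get the lower bounds from smoothness of \((\mathbf C_K)_{\red}\) and from the fact that \(\Dist(\Ad)\) carries divided-power sequences to divided-power sequences. On the \(\mathbf{Aut}(L)\) side, you use \cref{lem:length-p-obstruction} and additivity to reduce the upper bound to injectivity of \(y\mapsto[\Sq(y)]\) on \(K'_{-2}\oplus K'_{-1}\). That injectivity is the entire content of the upper bound, and the argument you sketch for it does not work.

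Since \(\Der(L)=\ad K'\) vanishes in degrees below \(-2\), the coboundary map \(d_{\mathrm{CE}}\colon C^1(L,L)_{p\deg y}\to C^2(L,L)_{p\deg y}\) is injective. That makes the coboundaries in this degree as large as possible, so it gives no obstruction to \(\Sq(y)=d_{\mathrm{CE}}\varphi\). A putative \(\varphi\) with \(d_{\mathrm{CE}}\varphi=\Sq(y)\ne0\) is not a derivation, so nothing forces it to be ``derivation-like''. Exhibiting socle elements \(a,b\) with \(\Sq(y)(a,b)\ne0\) only shows the cochain is nonzero: \([\varphi a,b]+[a,\varphi b]-\varphi[a,b]\) can take the same value for suitable \(\varphi\). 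Nonvanishing of the class needs a genuine computation of \(H^2(L,L)\). The paper cites Viviani's theorem that \([\Sq(1)]\), \([\Sq(q_i)]\), \([\Sq(p_i)]\) form a basis of \(H^2(L,L)\); Frobenius-semilinearity then gives \([\Sq(y)]\ne0\) for every nonzero \(y\in K'_{-2}\oplus K'_{-1}\).

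The grading separates only the degree-\(-2p\) class from those of degree \(-p\). Within \(K'_{-1}\) you still need the \(2r\) classes \([\Sq(q_i)],[\Sq(p_i)]\) to be linearly independent, and grading cannot give that. Equivariance could reduce this to a single class: the kernel of an additive, Frobenius-semilinear, \(\operatorname{Sp}_{2r}\)-equivariant map on the irreducible module \(K'_{-1}\simeq V\) is an invariant subspace. Even so, \([\Sq(1)]\ne0\) and \([\Sq(q_1)]\ne0\) would still have to be proved or cited.

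There is also a smaller error on the \(\mathbf C_K\) side. Ordinary translations in the \(q_i,p_i\) directions change \(\alpha_K\) by \(\sum_i(a_{q_i}\,dp_i-a_{p_i}\,dq_i)\), so they do not lie in \(\mathbf C_K\). Moreover, \((\mathbf C_K)_{\red}\) contains no nontrivial translations at all. The factorization must instead use Heisenberg translations such as \(q_i\mapsto q_i+a\), \(z\mapsto z-ap_i\) with \(a^p=0\). With that correction, projecting onto the infinitesimal factor does give \(\Lie_1(\mathbf C_K)\subseteq\mathfrak c_{K,(0)}\), because its nonzero primitives admit no divided-power sequence of length \(p\). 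This needs no smoothness claim. The paper obtains the same inclusion from the Allen--Sweedler identity \(\Lie_j(\mathbf{Aut}(O_K))=W(2r+1;1)_{(0)}\) for \(j\ge1\).
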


\begin{proof}
By the divided-power calculation of Allen and Sweedler for the coordinate
automorphism scheme
\cite[Lemma~3.5]{AllenSweedler1969},
\(\Lie_j(\mathbf{Aut}(O_K))=W(2r+1;1)_{(0)}\) for \(j\ge1\).  Hence
\(\Lie_j(\mathbf C_K)\subseteq\mathfrak c_{K,(0)}\).  Conversely,
\((\mathbf C_K)_{\red}\) is smooth with tangent algebra
\(\mathfrak c_{K,(0)}\), so every tangent vector admits divided-power sequences of arbitrary length.  Hence
\(\Lie_j(\mathbf C_K)=\mathfrak c_{K,(0)}\).  The map
\(\Dist(\Ad)\) preserves primitives and commutes with Verschiebung, so
\(\ad\mathfrak c_{K,(0)}\subseteq\Lie_j(\mathbf{Aut}(L))\).

It suffices to prove the reverse inclusion for \(j=1\).  Viviani's computation
\cite[Theorem~1.1]{Viviani2009} gives
\[
 H^2(L,L)=k[\Sq(1)]\oplus
 \bigoplus_{i=1}^r\bigl(k[\Sq(q_i)]\oplus k[\Sq(p_i)]\bigr),
\]
and the displayed classes are linearly independent.  Hence for
\(0\ne y=a1+\sum_i(b_iq_i+c_ip_i)\in K'_{-2}\oplus K'_{-1}\),
\[
 [\Sq(y)]=a^p[\Sq(1)]+
 \sum_i\bigl(b_i^p[\Sq(q_i)]+c_i^p[\Sq(p_i)]\bigr)\ne0.
\]
Now take \(\xi\in\Lie_1(\mathbf{Aut}(L))\).  Since
\(\ad:K'\xrightarrow{\sim}\Der(L)\), write uniquely
\(\xi=\ad(y+D_0)\) with \(y\) as above and
\(D_0\in\mathfrak c_{K,(0)}\).  The inclusion already proved gives
\(\ad D_0\in\Lie_1(\mathbf{Aut}(L))\), hence also
\(\ad y\in\Lie_1(\mathbf{Aut}(L))\).  By \cref{lem:length-p-obstruction}, \([\Sq(y)]=0\), forcing \(y=0\).  Hence
\(\Lie_1(\mathbf{Aut}(L))=\ad\mathfrak c_{K,(0)}\).  Since the
Verschiebung spaces decrease with \(j\),
\[
 \ad\mathfrak c_{K,(0)}\subseteq\Lie_j(\mathbf{Aut}(L))
 \subseteq\Lie_1(\mathbf{Aut}(L))=\ad\mathfrak c_{K,(0)},
\]
and the proposition follows.
\end{proof}

\begin{proof}[Proof of \cref{thm:witt-contact-main}]
The radical assertions are \cref{prop:wh-radicals}, and the Witt--Jacobson series is
\cref{prop:witt-series}.  For the contact algebra, the contact automorphism
theorem identifies the reduced subgroup schemes under \(\Ad\), while the
derivation theorem identifies the tangent map
\(\Lie(\mathbf C_K)=K'\xrightarrow{\sim}\Der(L)\).  By
\cref{prop:contact-verschiebung}, \(\Ad\) also induces isomorphisms on
\(\Lie_j\) for every \(j\ge1\).  The comparison criterion
\cref{lem:scheme-comparison} then gives
\(\Ad:\mathbf C_K\xrightarrow{\sim}\mathbf{Aut}(L)\).
\end{proof}
 
\bibliographystyle{amsplain}
\section*{Acknowledgements}

I am grateful to Serge Skryabin for his helpful corrections and
bibliographical suggestions.  His comments also prompted me to work out the
Hamiltonian lower central and derived series.

\end{document}